\newtheorem{defi}{Definition}[subsection]
\newtheorem{thm}[defi]{Theorem}
\newtheorem{lem}[defi]{Lemma}
\newtheorem{prop}[defi]{Proposition}
\newtheorem{cor}[defi]{Corollary}
\newtheorem{rem}[defi]{Remark}
\newtheorem{conv}[defi]{Convention}
\newcommand{\lcr}{[\![}
\newcommand{\rcr}{]\!]}
\begin{document}

\title{Abelian Sandpile Model on Randomly Rooted Graphs and Self-Similar Groups}

\renewcommand{\thefootnote}{\fnsymbol{footnote}}
\author{M. Matter \footnotemark[1] \footnotemark[2] and T. Nagnibeda \footnotemark[1] \footnotemark[3]}

\footnotetext[1]{The authors acknowledge the support of the Swiss National Science Foundation Grant PP0022-118946.}
\footnotetext[2]{Section de mathématiques, 2-4 Rue du Lièvre CP 64, 1211 Genève 4, \texttt{Michel.Matter@unige.ch}}
\footnotetext[3]{Section de mathématiques, 2-4 Rue du Lièvre CP 64, 1211 Genève 4, \texttt{Tatiana.Smirnova-Nagnibeda@unige.ch}}

\date{\today}

\maketitle
\begin{abstract}

Abelian sandpile model is an archetypical model of the physical phenomenon of self-organized criticality. It is also well studied in combinatorics under the name of chip-firing games on graphs. One of the main open problems about this model is to provide rigorous mathematical explication for  predictions about the values of its critical exponents, originating in physics. The model was initially defined on the cubic lattices $\mathbb Z^d$, but the only case where the value of some critical exponent was established so far is the case of the infinite regular tree -- the Bethe lattice.

This paper is devoted to the study of the Abelian sandpile model on a large class of graphs that serve as approximations to Julia sets of postcritically finite polynomials  and occur naturally in the study of automorphism group actions on infinite rooted trees. While different from the square lattice, these graphs share many of its geometric properties: they are of polynomial growth, have one end, and random walks on them are recurrent. This ensures that the behaviour of sandpiles on them is quite different from that observed on the infinite tree. We compute the critical exponent for the decay of mass of sand avalanches on these graphs and prove that it is inverse proportional to the rate of polynomial growth of the graph, thus providing the first rigorous derivation of the critical exponent different from the mean-field (the tree) value.
\end{abstract}

\noindent\emph{\textbf{keywords:}} Abelian sandpile model, critical exponent, avalanche,
random weak limits of graphs, self-similar group, Schreier graph, polynomial growth.

\noindent\emph{\textbf{2010 Mathematics Subject Classification:}} Primary: 60K35; Secondary: 82C22, 20E08.

\section{Introduction}

\indent The \emph{Sandpile Model} was introduced in the late eighties by physicists Bak, Tang and Wiesenfeld \cite{BakTangWies88} in the aim of constructing an analytically tractable model of a phenomenon often observed in nature and called \emph{self-organized criticality}. Its mathematical study was initiated by Dhar in \cite{Dhar90}; in particular, he proved that the model is abelian. This result was also recovered independently in the work of Bj\"orner, Lovasz and Shor \cite{BLSch} where the same model was studied under the name of \emph{chip-firing game on graphs}. A detailed treatment of the ASM can be found in \cite{MeestRedZnam01}, \cite{Dhar99}, \cite{RedigHouches05}.\\
\indent A \textbf{configuration} is a distribution of an amount of chips (or of grains of sand) on the vertices of a connected, possibly infinite, locally finite multigraph. When the number of chips on a given vertex $v$ exceeds its degree, the vertex is declared to be \textbf{unstable} and is fired: a chip is sent along each edge incident to $v$ to the corresponding neighbour of $v$, providing a new configuration of the model. The term \emph{abelian} stands for the following convenient feature of the model: the order in which we stabilize unstable vertices of a configuration does not affect the result \cite{Dhar90}.\\
\indent Once a stable configuration is reached, the game can be reactivated by adding an extra chip on a randomly chosen vertex. In the case of a finite graph, this defines a Markov chain whose stationary distribution is the uniform distribution supported by the unique recurrent class (more details in Section \ref{Chip-firing} below).
The dynamics of the model is described by \textbf{avalanches}, that is, sequences of consecutive firings triggered by adding an extra chip
to a random recurrent configuration. Given a growing sequence of finite subgraphs $\{\Gamma_n\}_{n\geq 1}$ of an infinite graph $\Gamma$, \textbf{criticality} of the ASM on $\Gamma$ is manifested in that various spatial statistics associated with avalanches (such as their \emph{mass, length, diameter}, etc.) decay asymptotically according to a power law (with a cut-off), as $\Gamma_n\nearrow\Gamma$.
Although many numerical simulations have been done in order to exhibit criticality of ASM on lattices, as well as to determine various \textbf{critical exponents}, there are only very few rigorously proven cases so far.\\
\indent In the case of the $d$-regular tree, $d\geq 3$, (also called the Bethe lattice), Dhar and Majumdar proved in \cite{DharMajumdar90} that the critical exponent corresponding to the mass $M$ of an avalanche is $\delta_M=3/2$ in large volume limit.\\
\indent On the one-dimensional lattice $\mathbb{Z}$, the probability of observing an avalanche of mass $M>0$ (respectively length $L>0$) on a segment of length $n$ is independent of $M$ (respectively $L$) and 
the behaviour of the model is not critical \cite{RuSen}.\\
\indent Numerical experiments as well as non-rigorous scaling arguments yield the conjecture that on the two-dimensional lattice $\mathbb{Z}^2$, the critical exponent for the mass of an avalanche is $\delta_M=5/4$, \cite{PriezzKitIva96}, whereas for $d>4$ this critical exponent is expected to be $3/2$, by universality \cite{Priezz00}. Criticality of ASM on $\mathbb{Z}^d$ is confirmed in \cite{DharMajum91}: the correlation between the indicator functions of having no chip on vertex $0$ and no chip on a vertex $x\in\mathbb{Z}^d$ behaves as $C|x|^{-2d}$ in large volume limit.\\
\indent Another family of graphs on which extensive simulations of avalanches have been performed is the Sierpi\'nski gasket where it is shown that $\delta_M\approx1.46$ \cite{DaerdenVander04}. See also \cite{Sierpinski}.\\

\indent In this paper we exhibit a family of infinite graphs for which we can explicitly compute the critical exponent for the decay of avalanches (on the approximating sequence of finite graphs).
Our examples are regular graphs with geometric properties significantly different from those of regular trees: the generic number of ends is $1$, they have polynomial growth rate, and the simple random walk on them is recurrent. The method that we develop to prove criticality of the model is also quite different from the Majumdar-Dhar's technique used to prove criticality in the case of regular trees.\\

\indent Our examples come from the theory of \textbf{self-similar groups} developed in the past ten years by Grigorchuk, Nekrashevych and others (see \cite{Grig05}, \cite{NekBook} and references therein) -- a natural source of families of finite graphs with interesting infinite limits of self-similar nature. More precisely, a finitely generated group $G<Aut(T)$ acting by automorphisms on a regular rooted tree $T$ defines a covering sequence $\{\Gamma_n\}_{n\geq 1}$ of finite Schreier graphs describing the action of $G$ on each level of the tree. These graphs converge to infinite orbital Schreier graphs $\{\Gamma_{\xi}\}_{\xi\in\partial T}$ of the limit action of $G$ on the boundary $\partial T$ of the tree (see Section
\ref{SelfSimilarGroups} for details).\\
\indent One eminent example in the class of self-similar groups is the so-called \emph{Basilica group} introduced by Grigorchuk and \.{Z}uk in \cite{grizuk}.
It can be realized as the iterated monodromy group of the complex polynomial $z^2-1$, which means in particular that its Schreier graphs form an approximating sequence of the Julia set of $z^2-1$, the so-called Basilica fractal \cite{NekBook}. It is a $2$-generated group which acts by automorphisms on the binary tree.\\
\indent We show that the Basilica group provides us with an uncountable family of $4$-regular one-ended graphs of quadratic growth where the critical exponent for the mass of avalanches in the ASM is equal to $1$ (Theorem \ref{Main}). It also gives an uncountable family of $2$-ended graphs of quadratic growth with non-critical ASM -- first such examples not quasi-isometric to $\Bbb Z$ (Theorem \ref{Main2ends}).\\
\indent Technically, our approach relies on the fact that the Schreier graphs of the Basilica group are \emph{cacti}, i.e., separable graphs whose blocks are either cycles or single edges (see Section \ref{SubsectAvOnCT}). 
The groups of automorphisms of rooted trees whose Schreier graphs are cacti, and to which our method therefore applies, form a large class of groups characterized by Nekrashevych as iterated monodromy groups of post-critically finite backward iterations of topological polynomials \cite{Nekr09}. Another example from this class of groups is the so-called \lq\lq interlaced adding machines\rq\rq , or the $IMG(-z^3/2+3z/2)$  \cite{Nek}. This group shares many properties with the Basilica group, and the same goes for their Schreier graphs. For the ASM, this group provides examples of graphs with the critical exponent $\delta_M=2 \log 2/\log 3 >1$ (see Section \ref{SectionIAM} and Theorem \ref{IAM}).\\
\indent More generally, in Theorem \ref{Growth} we establish a connection between the critical exponent for the mass of avalanches in the ASM on one-ended Schreier graphs and the degree of their polynomial growth. Quadratic polynomials $z^2+c$ with the values of $c$ taken in smaller and smaller hyperbolic components attached to the main cardioid of the Mandelbrot set provide examples of iterated monodromy groups whose Schreier graphs have polynomial growth of arbitrarily high degree. Consequently, probability distributions of the mass of avalanches on these Schreier graphs decay as power laws with arbitrarily small critical exponent. These examples are discussed in Subsection \ref{k(v)}.\\

\indent In order to address all these examples we develop the study of the Abelian sandpile model for unimodular random rooted graphs, a natural generalization of homogeneous graphs (see \cite{AlLyons07} and Subsection \ref{ASMonGamma_n} below) which are by definition invariant probability distributions on the space
$\mathcal{X}$ of (rooted isomorphism classes of) locally finite, connected \textbf{rooted graphs}. They occur naturally as \textbf{random weak limits} of finite graphs. Introduced by Benjamini and Schramm \cite{RandWeakLimit}, the random weak limit stands for passing to the limit in the space $\mathcal{X}$, for a sequence of (unrooted) graphs $\{\Gamma_n\}_{n\geq 1}$, by choosing the root uniformly at random, thus considering each unrooted graph $\Gamma_n$ in the sequence as a probability distribution $\rho_n$ on $\mathcal{X}$. The random weak limit of a sequence $\{\Gamma_n\}_{n\geq 1}$ of connected graphs of bounded degree is defined to be the weak limit of the measures $\rho_n$ in the space of probability measures on $\mathcal{X}$. \\
\indent In Subsection \ref{ASMonGamma_n} below, we introduce the ASM on sequences of graphs converging in the space $\mathcal{X}$ of rooted graphs and discuss such issues as the choice of dissipative vertices, the choice of root, and criticality in the random weak limit.\\

\indent As previously mentioned, exhibiting the criticality of the ASM goes through studying the statistical behaviour of avalanches, by looking at different observable quantities related to them. In this paper, we focus our attention on the mass of avalanches (i.e. the number of distinct vertices fired), however a similar approach may be applied for studying their diameter (i.e. the diameter of the subgraph spanned by vertices touched by the avalanche) or their length (i.e. the total number of firings). Indeed, the key step in Subsection \ref{CT} (Proposition \ref{RemOnAv}) depends on the avalanche and not only on its mass. It turns out that the diameter can be studied in a very similar way to the mass (see Remark \ref{RemOnDiam}). For the length, however, computations become more tedious. Also, there is no clear relation between the length of avalanches and global geometrical properties of the underlying graph, as it is the case for the mass (or the diameter) of avalanches and the degree of polynomial growth of the graph.\\


\indent The paper is structured as follows: in Section \ref{ASM}, we collect some facts and notations about the ASM and then consider general properties of the model on separable graphs and, in particular, on cacti. In Subsection \ref{ASMonGamma_n}, we introduce and discuss the ASM on sequences of graphs converging in the space $\mathcal X$ of rooted graphs, as well as criticality of the ASM in the random weak limit. Section \ref{SelfSimilarGroups} recalls basic notions about groups of automorphisms of rooted trees, self-similar groups and their Schreier graphs. We show that any covering sequence of finite regular graphs of even degree can be realized as Schreier graphs for an action of a finitely generated group on a spherically homogeneous rooted tree, by automorphisms. In Section \ref{SubsecErgodicityAval}, we go back to the study of avalanches and show that for covering sequences of regular cacti critical in the random weak limit, the critical exponent is almost surely constant. Section \ref{Basilica} recalls results from \cite{DanDonMat09} about the structure of finite and infinite Schreier graphs of the Basilica group. In Section \ref{AvOnBasilica} we study the ASM on these graphs, in particular we show that almost all orbital Schreier graphs of the Basilica group are critical with the critical exponent equal to $1$. In Section \ref{SectionIAM}, we consider the group generated by two interlaced adding machines and exhibit examples with the critical exponent equal to $2\log 2/\log 3>1$. In Section \ref{LastSection}, a relation is established between the critical exponent for the mass of avalanches and the degree of polynomial growth, for $1$-ended cacti; and graphs with arbitrarily small critical exponents are discussed.

\section{Abelian Sandpile Model} \label{ASM}

\subsection{Chip-firing Game on a graph} \label{Chip-firing}

Let $\Gamma=(V,E)$ be a finite connected graph, possibly with multiple edges and loops, with a vertex set $V\equiv V(\Gamma)$ and an edge set $E\equiv E(\Gamma)$. Let $P\subset V$ be a non-empty set of vertices that will be called \textbf{dissipative vertices}. We will write $V_0:=V\backslash P$. A \textbf{configuration} on $\Gamma$ is a function $\eta:V_0\longrightarrow \mathbb{N}$. We say that $\eta$ is \textbf{stable} if $\eta(v)<\deg(v)$ for all $v\in V_0$ where $\deg(v)$ denotes the degree of $v$, that is, the number of edges incident to $v$ (each loop contributes two to the degree.) An unstable configuration evolves by \textbf{firing} its unstable vertices as long as there are some. Firing an unstable vertex $v$ corresponds to sending one chip along each edge incident to $v$ to the corresponding neighbour. We will adopt the convention that all chips reaching a dissipative vertex $p\in P$ leave the graph. The basic theorem about the game asserts that every configuration reaches through a finite number of firings a stable configuration. Moreover, the resulting stable configuration, the set of vertices fired in the stabilization and the number of times each of these vertices were fired are all independent of the order in which the unstable vertices are fired \cite{Dhar90}. Given a configuration $\eta$, a consecutive sequence of firings resulting in the stabilization of $\eta$ is called an \textbf{avalanche}. The number of vertices (respectively distinct vertices) fired during the avalanche is called its length (respectively mass). By the result cited above, both the mass and the length are the same for all avalanches leading to the stabilization of a given configuration.\\
\indent Let $\Omega$ denote the set of all stable configurations, and let us consider the following Markov chain on $\Omega$ \cite{MeestRedZnam01}. Starting from some initial stable configuration $\eta_0$, we add an extra chip to $\eta_0$ on a vertex $v\in V_0$ chosen accordingly to some initially fixed probability distribution $\pi:V_0\longrightarrow ]0,1]$ satisfying the condition $\pi(v)>0$ for all $v\in V_0$. Then, we let the configuration $\eta_0+\delta_v$ stabilize and denote by $\eta_1\in\Omega$ the resulting stable configuration. We then repeat the previous operation with $\eta_1$, and so on. Recurrent states of this Markov chain form a single (communication) class, denoted by $\mathcal{R}_\Gamma$; consequently, the Markov chain admits a unique stationary measure $\mu$ which is supported by $\mathcal{R}_\Gamma$. It turns out that the set $\mathcal{R}_\Gamma$ of \textbf{recurrent} (or \textbf{critical}) configurations can be given the structure of a group, and therefore $\mu$ is in fact the uniform measure on $\mathcal{R}_\Gamma$, independently of the distribution $\pi$ (see more on this in the very end of this subsection).\\
\indent The set of recurrent configurations can be constructed by a deterministic procedure called the \textbf{Burning Algorithm} \cite{Dhar90}. Given a configuration $\eta$ and a subgraph $H\subset\Gamma$ not containing dissipative vertices, we say that the restriction $\eta_H$ is a \textbf{forbidden sub-configuration} of $\eta$ if $\eta(v)<\deg_H(v)$ for every $v\in H$ (where $\deg_H(v)$ denotes the degree of $v$ in $H$).
Dhar has shown that a stable configuration on $\Gamma$ is recurrent if and only if it does not contain any forbidden sub-configuration. The Burning Algorithm decides, given a configuration $\eta$, whether it contains a forbidden sub-configuration or not, as follows. For $t\geq 1$, we define inductively the sets $B_t$ and $U_t$, where $B_t$ stands for the set of vertices \lq\lq burnt\rq\rq\ at time $t$, and $U_t$ stands for the set of vertices \lq\lq un-burnt\rq\rq\ up to time $t$. We also denote by $\Gamma_t$ the subgraph of $\Gamma$ spanned by the vertices in $U_t$, whereas $\Gamma_0$ denotes the subgraph of $\Gamma$ spanned by $V_0$.

$B_1:=\{v\in V_0|\eta(v)\geq\deg_{\Gamma_0}(v)\}$;

$U_t:=V_0\backslash\bigcup_{s=1}^{t}B_s$;

$B_{t+1}:=\{v\in U_{t}|\eta(v)\geq\deg_{\Gamma_{t}}(v)\}$.

\noindent If there exists $t_0$ such that $B_{t_0+1}$ is empty, then $\eta_{U_{t_0}}$ is forbidden. Otherwise, every vertex of $\Gamma$ is eventually burnt, which implies that $\eta$ does not contain any forbidden configuration.\\
\indent We will use the following equivalent reformulation of Dhar's theorem \cite{MeestRedZnam01}.

\begin{thm} \label{BA}
A configuration $c$ on $\Gamma$ is recurrent if and only if there exists a sequence of firings (with respect to $c$) $p_1\dots p_kv_1v_2\dots v_{|V_0|}$ which is an enumeration of $V$. Here firing a dissipative vertex $p_i$ means that we add on each neighbour $v$ of $p_i$ as many chips as there are edges between $v$ and $p_i$ in $\Gamma$. We call such a sequence a \textbf{burning sequence for $c$ on $\Gamma$}.
\end{thm}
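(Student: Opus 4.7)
The plan is to prove the equivalence by translating between the existence of a burning sequence and the successful termination of the Burning Algorithm, which by Dhar's criterion (cited just above) characterizes recurrent configurations. The key calculation is a chip-counting identity: if $p_1 \ldots p_k v_1 \ldots v_{|V_0|}$ is a proposed firing sequence (with respect to $c$), and $\Gamma'_j$ denotes the subgraph of $\Gamma$ spanned by $\{v_j, v_{j+1}, \ldots, v_{|V_0|}\}$, then after firing $p_1, \ldots, p_k, v_1, \ldots, v_{j-1}$ the number of chips at $v_j$ equals
\[
c(v_j) + \bigl|\{\text{edges from } v_j \text{ to } P \cup \{v_1,\ldots,v_{j-1}\}\}\bigr| = c(v_j) + \deg(v_j) - \deg_{\Gamma'_j}(v_j).
\]
So firing $v_j$ is legal at its turn (i.e., $v_j$ carries at least $\deg(v_j)$ chips) if and only if $c(v_j) \geq \deg_{\Gamma'_j}(v_j)$, which is exactly the Burning Algorithm's criterion for burning $v_j$ when the un-burnt set equals $\{v_j, \ldots, v_{|V_0|}\}$.

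For the direction $c$ recurrent $\Rightarrow$ burning sequence exists, I would use Dhar's theorem to conclude that the Burning Algorithm burns every vertex of $V_0$, then list the dissipative vertices $p_1,\ldots,p_k$ in any order, and list the non-dissipative ones as $v_1, v_2, \ldots$ in the order they are burnt, breaking ties within each $B_t$ arbitrarily. The identity above, together with the observation that removing earlier-tied vertices only decreases $\deg_{\Gamma'_j}(v_j)$ relative to $\deg_{\Gamma_{s_j-1}}(v_j)$, shows that each $v_j$ is unstable at its turn, so the resulting sequence is indeed a burning sequence.

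For the converse, I would induct on $j$ to show that the Burning Algorithm burns $v_j$ at some step $\leq j$: the legality of firing $v_j$ in the sequence forces $c(v_j) \geq \deg_{\Gamma'_j}(v_j)$ by the identity, and the inductive hypothesis guarantees that $\{v_j, \ldots, v_{|V_0|}\} \subseteq U_{j-1}$, so $v_j$ is eligible for burning at that step. Hence all of $V_0$ is burnt, and by Dhar's criterion $c$ is recurrent. The only delicate point is the bookkeeping in the chip-counting identity, namely handling multi-edges (and loops, which count twice toward the degree) and recognizing that the initial block of dissipative firings exactly accounts for the shift from $\deg(v_j)$ to $\deg_{\Gamma_0}(v_j)$ built into the definition of $B_1$; once the identity is pinned down, both implications are essentially mechanical.
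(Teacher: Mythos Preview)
The paper does not give its own proof of this theorem; it is stated as a known equivalent reformulation of Dhar's criterion and attributed to \cite{MeestRedZnam01}. Your argument is the standard one and is essentially correct: the chip-counting identity is exactly right, and both directions follow from it as you indicate.

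There is one slip in the converse direction. You write that the inductive hypothesis gives $\{v_j,\ldots,v_{|V_0|}\}\subseteq U_{j-1}$, but the inclusion goes the other way: since $v_1,\ldots,v_{j-1}$ are already burnt by step $j-1$, one has $U_{j-1}\subseteq\{v_j,\ldots,v_{|V_0|}\}$, hence $\Gamma_{j-1}\subseteq\Gamma'_j$ and $\deg_{\Gamma_{j-1}}(v_j)\leq\deg_{\Gamma'_j}(v_j)\leq c(v_j)$. That is the inequality needed to conclude $v_j\in B_t$ for some $t\leq j$. With this correction the proof is complete.
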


\noindent Note that applying a burning sequence to a recurrent configuration $c$ returns $c$.\\
\indent It can be deduced from this theorem that, if $|P|=1$,  the Burning Algorithm establishes a bijection between recurrent configurations and spanning trees in $\Gamma$ (in the general case the bijection is between recurrent configurations and spanning forests where each tree contains exactly one dissipative vertex \cite{ChungEllis02}.)\\
\indent An interesting result about recurrent configurations is that, when endowed with the operation of adding the configurations coordinatewise and then stabilizing, they form an abelian group denoted by $K(\Gamma)$ and called the \textbf{critical group} \cite{Biggs99} of the graph. In general, it is not easy to determine the algebraic structure of $K(\Gamma)$ and a particularly intriguing problem consists in establishing connections between the decomposition of the critical group into invariant factors and the graph structure \cite{Lor08}, \cite{BachHarpNagn97}. There are but a few examples of families of graphs where the critical group and its decomposition have been computed, including the complete graphs \cite{BachHarpNagn97}, the wheel graphs \cite{Biggs99}, finite balls in a regular tree \cite{Lev09}, \cite{Toump07}, cacti \cite{Mat09} (see also Subsection \ref{SubsectAvOnCT} below). More detailed information about the critical group, such as explicit description of the neutral element and of the inverses in terms of recurrent configurations is very sparse, see \cite{ChenShed07} for "thick trees", \cite{Mat09} for cacti, and \cite{BorRoss02} for a study of the neutral element on growing rectangles in $\mathbb{Z}^2$.

\subsection{Avalanches on unimodular random rooted graphs} \label{ASMonGamma_n}

As explained in the introduction, in this paper we propose to study avalanches on unimodular random rooted graphs. Let $\mathcal{X}$ denote the space (of rooted isomorphism classes) of locally finite, connected graphs having a distinguished vertex called \textbf{the root}; $\mathcal{X}$ can be endowed with the following metric: given two rooted graphs $(\Gamma,v)$ and $(\Gamma',v')$,

\begin{equation} \label{EqDist}
Dist((\Gamma,v),(\Gamma',v')):=\inf\left\{\frac{1}{r+1};\textrm{$B_{\Gamma}(v,r)$
is isomorphic to $B_{\Gamma'}(v',r)$}\right\}
\end{equation}

\noindent where $B_{\Gamma}(v,r)$ is the ball of radius $r$ in $\Gamma$ centered in $v$. We say that a sequence of rooted graphs $\{(\Gamma_n,v_n)\}_{n\geq1}$ converges to a \textbf{limit graph} $(\Gamma,v)$ if
$\lim_{n\rightarrow\infty}Dist((\Gamma,v),(\Gamma_n,v_n))=0$. If one supposes moreover that elements in $\mathcal{X}$ have uniformly bounded degrees, then $(\mathcal{X},Dist)$ is a compact space.\\
\indent Let us consider the ASM on an (infinite) rooted graph $(\Gamma,v)\in\mathcal{X}$. As usual, we shall approximate it by an exhaustive sequence of subgraphs, however, in this non-homogeneous situation, we shall require that all subgraphs in the exhaustion contain the root.

\begin{conv} \label{convDissipative} \rm \emph{(Choice of dissipative vertices)}\\
Given a infinite rooted graph $(\Gamma,v)$, and an exhaustion $\{H_n\}_{n\geq 1}$ of $(\Gamma,v)$ such that $v\in V(H_n)$ for each $n\geq 1$, set $P_n$, the set of dissipative vertices in each $H_n$, to be the \textbf{internal boundary} of $H_n$ in $\Gamma$, i.e., the vertices of $H_n$ that have neighbours in the complement $\Gamma\setminus H_n$.
\end{conv}

For all $n\geq 1$, consider the probability space $(\mathcal{R}_{H_n},\mu_n)$ (with the natural $\sigma$-algebra) where $\mathcal{R}_{H_n}$ is the set of recurrent configurations on the subgraph $H_n$ of $\Gamma$, and $\mu_n$ denotes the uniform distribution on $\mathcal{R}_{H_n}$. Define the random variable $Mav_{H_n}(\cdot,v):(\mathcal{R}_{H_n},\mu_n)\longrightarrow \mathbb{N}$ that maps a recurrent configuration on $H_n$ to the \textbf{mass} (i.e. the number of distinct vertices fired) of the avalanche triggered by adding to this configuration an extra chip on the root $v$. Note that the choice of the dissipative vertices specified in Convention \ref{convDissipative} ensures that the distance between the vertex on which we add an extra chip to some recurrent configuration to trigger avalanches (we have chosen the root) and the dissipative vertices grows as $n\to\infty$.

\begin{defi} \label{DefCrit}
Let $(\Gamma,v)$ be an infinite rooted graph and let $\{H_n\}_{n\geq 1}$ be as in Convention \ref{convDissipative}. We say that the ASM on the sequence $\{H_n\}_{n\geq 1}$ approximating $(\Gamma,v)$ has critical behaviour (with respect to the mass of avalanches) if there are constants $C_1,C_2>0$ such that, for any $\epsilon>0$ there exists $M_\epsilon\geq 1$ such that for any $M>M_\epsilon$,

\begin{equation} \label{eqCriticality}
C_1M^{-\delta-\epsilon}\leq\lim_{n\to\infty}\mathbb{P}_{\mu_n}(Mav_{H_n}(\cdot,v)=M)\leq C_2M^{-\delta+\epsilon}
\end{equation}

\noindent for some exponent $\delta>0$ (called the critical exponent). If this is the case, we write\\ $\lim_{n\to\infty}\mathbb{P}_{\mu_n}(Mav_{H_n}(\cdot,v)=M)\sim M^{-\delta}$.
\end{defi}

\begin{rem}\rm
If we denote $L(M):=\lim_{n\to\infty}\mathbb{P}_{\mu_n}(Mav_{H_n}(\cdot,v)=M)$, then criticality (condition \eqref{eqCriticality}) implies that $\lim_{M\to\infty}\log(L(M))/\log(M)=-\delta$.
\end{rem}

\indent Note that, depending on the geometry of the underlying graph, it may happen that not every integer $M$ can be realized as the mass of an avalanche. In such situations, we restrict our considerations to those integers which can be realized as the mass of an avalanche.\\
\indent The existence of the limit in Definition \ref{DefCrit} is, \emph{a priori}, not obvious; it is well-defined if the measures $\mu_n$ converge weakly, as $n\to\infty$, to some probability measure $\mu$. This has been proven in the case of the regular tree \cite{MaesRedSaa02} and of the lattice $\mathbb{Z}^d$. In the case of $\mathbb{Z}^d$ with $d=1$, the limit $\mu$ is the Dirac measure concentrated on the constant recurrent configuration $c\equiv1$ \cite{MaesRedSaaMoff00}. If $d\geq 2$, it is proven in \cite{AthJar04} that the measures $\mu_n$ weakly converge to a translation invariant probability measure $\mu$; for $2\leq d\leq 4$, this holds for any exhaustion $\{H_n\}_{n\geq 1}$ of $\mathbb{Z}^d$, and the measure $\mu$ is independent of the exhaustion; for $d>4$, there is an extra condition on the geometry of the $H_n$, but the authors conjecture that the former stronger version also holds. The proof is based on the bijection between recurrent configurations and spanning trees, and uses the fact that the uniform distribution on spanning trees on $H_n$ (with \emph{wired} boundary conditions) converges weakly to a probability measure supported by spanning forests on $\mathbb{Z}^d$ and called the Wired Uniform Spanning Forest (WUSF) \cite{BenLyPeSch01}. If $2\leq d\leq 4$, the WUSF is almost surely a one-ended tree. The proof of convergence of the $\mu_n$'s in this case directly applies to any infinite graph $\Gamma$ such that the WUSF on $\Gamma$ is almost surely a one-ended tree. (Of course, if $\Gamma$ is not transitive, one cannot expect translation invariance of the limit measure $\mu$.) If $d>4$, then the WUSF has almost surely infinitely many connected components, which makes the proof of the convergence of the measures $\mu_n$ more complicated, and not directly adaptable to general infinite graphs whose WUSF has many connected components. Luckily, all examples that we consider in this paper satisfy the condition that the WUSF is almost surely a one-ended tree. The proof of the following statement is the same than the proof of Theorem 1 in \cite{AthJar04}, in the case $2\leq d\leq 4$.

\begin{thm} \label{ThmWeakConv}
Let $\Gamma$ be an infinite graph such that the WUSF on $\Gamma$ is almost surely a one-ended tree. Then, for any rooting $(\Gamma,v)$ of $\Gamma$ and for any exhaustion $\{H_n\}_{n\geq 1}$ of $(\Gamma,v)$ satisfying Convention \ref{convDissipative}, the measures $\mu_n$ converge weakly to a measure $\mu$ which is independent of the choice of the exhaustion.
\end{thm}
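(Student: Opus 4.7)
The plan is to adapt the argument of Athreya--J\'arai \cite{AthJar04} for $\mathbb Z^d$ with $2 \leq d \leq 4$, isolating the two ingredients that are genuinely used there: the bijection between recurrent configurations and spanning trees of the wired graph, and the one-endedness of the WUSF. The first is a general feature of the sandpile model, and the second is our standing hypothesis, so the proof transfers once the local-convergence framework is set up carefully.

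First I would reduce weak convergence of the $\mu_n$ to convergence of cylinder probabilities. Fix a finite set $F \subset V(\Gamma)$ and a prescribed profile $\sigma \colon F \to \mathbb N$; since $\{H_n\}$ exhausts $\Gamma$, for all sufficiently large $n$ we have $F \subset V(H_n) \setminus P_n$, and it suffices to show that $\mu_n(\eta|_F = \sigma)$ converges to a limit depending only on $(\Gamma, F, \sigma)$. Collapsing $P_n$ to a single sink yields a finite connected multigraph $H_n^\ast$; Theorem \ref{BA} together with \cite{ChungEllis02} gives a bijection $\Phi_n$ between $\mathcal R_{H_n}$ and the set of spanning trees of $H_n^\ast$, and the Matrix--Tree theorem identifies $\mu_n$ with the uniform measure on those spanning trees. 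Through the burning algorithm, each height $\eta(v)$ for $v \in F$ becomes a measurable function of the tree $T = \Phi_n(\eta)$; this function is a priori non-local, because it depends on the burning order and hence on the unique path in $T$ from $v$ to the sink.

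The key step is to localise this decoding using one-endedness. By \cite{BenLyPeSch01}, the uniform spanning tree on $H_n^\ast$ converges weakly in the local topology to the WUSF on $\Gamma$, and one-endedness ensures that the paths from the finitely many vertices of $F$ to infinity in the WUSF coalesce after almost surely finitely many steps into a single infinite ray. Inspecting the Majumdar--Dhar rule reveals that the burning-order information required to recover $\eta|_F$ is determined by the combinatorial type of the tree in a neighbourhood of $F$ together with the relative order in which the neighbours of $F$ are burnt; thanks to coalescence, for every $\varepsilon > 0$ there is a radius $R = R(F, \varepsilon)$ such that with WUSF-probability at least $1 - \varepsilon$, $\eta|_F$ is already determined by the restriction of $T$ to the ball of radius $R$ around $F$. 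Coupling the uniform spanning tree of $H_n^\ast$ with the WUSF so that they agree on this ball with high probability then shows that $\{\mu_n(\eta|_F = \sigma)\}_n$ is Cauchy and converges to a limit defined intrinsically through the WUSF of $\Gamma$; this limit is automatically independent of the exhaustion, since the WUSF is.

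The main obstacle is precisely the localisation step: rigorously showing that the global burning order on $F$ can be reconstructed from bounded-range tree data outside a negligible event. Pinpointing which portion of the path in $T$ from each $v \in F$ to the sink actually enters the computation of $\eta(v)$, and showing it is contained in an almost surely finite WUSF-neighbourhood of $F$, is where one-endedness is crucial; once this is in place, the remaining ingredients -- the weak convergence of USTs to the WUSF and the exhaustion-independence of the WUSF itself -- are standard, and the convergence of $\mu_n$ follows.
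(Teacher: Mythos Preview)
Your approach is correct and is exactly what the paper does: it gives no independent proof but simply states that the argument of Theorem~1 in \cite{AthJar04} for $2\le d\le 4$ applies verbatim to any infinite graph whose WUSF is almost surely a one-ended tree. Your sketch in fact spells out more of the Athreya--J\'arai machinery (the reduction to cylinder events, the tree bijection, local convergence of UST to WUSF, and the localisation via one-endedness) than the paper itself does.
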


\begin{cor}
Under the assumptions of Theorem \ref{ThmWeakConv}, the limit $\lim_{n\to\infty}\mathbb{P}_{\mu_n}(Mav_{H_n}(\cdot,v)=M)$ exists and does not depend on the exhaustion $\{H_n\}_{n\geq 1}$.
\end{cor}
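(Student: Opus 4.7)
The plan is to realize the event $\{Mav_{H_n}(\cdot,v)=M\}$ as a cylinder event depending only on a bounded neighbourhood of the root, and then to invoke the weak convergence $\mu_n\to\mu$ supplied by Theorem \ref{ThmWeakConv} to conclude.

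First I would show that for every fixed $M$, the mass of the avalanche triggered at $v$ is a \emph{local} function of the configuration. Indeed, the set of vertices fired during stabilization of $\eta+\delta_v$ is connected and contains $v$; so if it has cardinality $M$, every fired vertex lies within graph distance $M$ of $v$. Whether the avalanche terminates with exactly $M$ firings depends, in addition, on the chip values on the one-step boundary of the fired set, so the event is determined by $\eta|_{B_\Gamma(v,M+1)}$. By Convention \ref{convDissipative}, for $n$ large enough the ball $B_\Gamma(v,M+1)$ contains no dissipative vertex and sits strictly inside $H_n$; hence the firings performed during stabilization in $H_n$ coincide with those one would perform restricted to this fixed finite ball, and the indicator $\mathbf{1}_{\{Mav_{H_n}(\cdot,v)=M\}}$ becomes the \emph{same} function of $\eta|_{B_\Gamma(v,M+1)}$ for all sufficiently large $n$.

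Next I would apply the weak convergence of $\mu_n$ to $\mu$. Since the configuration space carries the product topology on $\mathbb{N}^{V(\Gamma)}$, weak convergence is characterized by convergence of probabilities of cylinder events on finite vertex sets. Applying this to the cylinder event identified above gives
\[
\lim_{n\to\infty}\mathbb{P}_{\mu_n}(Mav_{H_n}(\cdot,v)=M)\;=\;\mathbb{P}_{\mu}(Mav(\cdot,v)=M),
\]
and since $\mu$ itself is independent of the choice of the exhaustion by Theorem \ref{ThmWeakConv}, so is this limit.

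The main obstacle, modest in this setting, is the localization step: one has to make sure that no boundary effect from the dissipative set $P_n$ can alter the firing record of the avalanche once $B_\Gamma(v,M+1)$ sits in the interior of $H_n$. This is exactly where Convention \ref{convDissipative} is crucial, as it pushes the dissipative vertices to the combinatorial boundary of $H_n$ and hence out of any fixed ball around $v$ for $n$ large enough. A secondary remark is that no convergence issue arises at the level of the limiting measure $\mu$: the avalanche event under consideration has finite range $M+1$, so its indicator is automatically a bounded continuous function in the product topology, for which weak convergence is directly applicable.
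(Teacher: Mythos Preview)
Your argument is correct and is essentially the same as the paper's: you identify $\{Mav_{H_n}(\cdot,v)=M\}$ as a cylinder event depending only on the configuration in a fixed ball around $v$ (using connectedness of the fired set), note that this ball is eventually contained in the interior of $H_n$ by Convention \ref{convDissipative}, and then appeal to weak convergence of $\mu_n$ to $\mu$. The paper's proof is terser---it simply asserts that the event is a cylinder event and refers to \cite{AthJar04}---while you spell out the localization and the role of the dissipative boundary more explicitly, but the underlying idea is identical.
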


\begin{proof}
Note that, for fixed $0<M<\infty$, observing an avalanche of mass $M$ triggered at $v$ is a cylinder event. Indeed, the set of vertices of $H_n$ fired during an avalanche triggered by adding an extra chip on $v$ induces a connected subgraph containing $v$. For every $n$ large enough, there exists $r_M$ such that the ball of radius $r_M$ centered in $v$ and contained in $H_n$ contains all vertices which may be involved in an avalanche of mass not greater than $M$. (See also Remark 1. (v) in \cite{AthJar04}.)
\end{proof}

We now turn to criticality of the ASM on unimodular random rooted graphs. Let us recall (see \cite{AlLyons07} and references therein) that a \textbf{unimodular random rooted graph} is a probability distribution $\rho$ on the space $\mathcal{X}$ (with respect to the Borel $\sigma$-algebra) which satisfies

\begin{displaymath} 
\int\sum_{w\in V(\Gamma)}f(\Gamma,v,w)d\rho(\Gamma,v)
=\int\sum_{w\in V(\Gamma)}f(\Gamma,w,v)d\rho(\Gamma,v)
\end{displaymath}

\noindent for all Borel functions $f:\widetilde{\mathcal{X}}\longrightarrow [0,\infty]$, where $\widetilde{\mathcal{X}}$ denotes the space of isomorphism classes of locally finite connected graphs with an ordered pair of distinguished vertices, and the natural topology thereon.


\begin{defi}
Let $\rho$ be an infinite unimodular random rooted graph. We say that the ASM is $\rho$-critical, with critical exponent $\delta$, if it is critical, with critical exponent $\delta$ (in the sense od Definition \ref{DefCrit}), for $\rho$-almost every rooted graph.
\end{defi}

\begin{rem}\rm
Note that the classical setup for studying the ASM, that is, a sequence of finite graphs exhausting $\mathbb{Z}^d$ fits into our, more general setup and corresponds to the case where the measure $\rho$ is the atom supported by $\mathbb{Z}^d$.
\end{rem}

It is an important open question (see \cite{AlLyons07}) whether all unimodular random rooted graphs on $\mathcal{X}$ can be obtained as limits of finite graphs in the following sense introduced by Benjamini and Schramm in \cite{RandWeakLimit}. Given a sequence $\{\Gamma_n\}_{n\geq 1}$ of finite unrooted graphs, $\rho$ is \textbf{the random weak limit} of $\{\Gamma_n\}_{n\geq 1}$ if the sequence $\{\rho_n\}_{n\geq 1}$ converges weakly to $\rho$ where, for every $n$, $\rho_n$ is the probability distribution on $\mathcal{X}$ induced by choosing a root in $\Gamma_n$ uniformly at random. It is an easy observation that any random weak limit of finite graphs is unimodular.\\
\indent All examples of unimodular random rooted graphs that we consider in this paper are constructed as random weak limits of sequences of finite graphs.

\begin{defi} \label{defCritRWL}
Given a sequence $\{\Gamma_n\}_{n\geq 1}$ of finite unrooted graphs with random weak limit $\rho$, we will say that the ASM on the sequence $\{\Gamma_n\}_{n\geq 1}$ is critical in the random weak limit (with critical exponent $\delta$) if it is $\rho$-critical (with critical exponent $\delta$).
\end{defi}

\begin{rem} \label{remsubgraphsH_n} \rm
In concrete situations, provided with a sequence $\{(\Gamma_n,v_n)\}_{n\geq 1}$ of finite rooted graphs converging in $\mathcal{X}$ to an infinite rooted graph $(\Gamma,v)$, it is sometimes convenient to think of the sequence $\{H_n\}_{n\geq 1}$ approximating $(\Gamma,v)$ (see Definition \ref{DefCrit}), as a sequence of subgraphs of the finite graphs $(\Gamma_n,v_n)$ rather than subgraphs of the limit graph $(\Gamma,v)$. By definition of convergence in $\mathcal{X}$, one can always choose the exhaustion $\{H_n\}_{n\geq 1}$ of $(\Gamma,v)$ so that, for each $n$, $\Gamma_n$ contains a subgraph isomorphic to $H_n$ and containing the root $v_n$. In such a case, we may write $\lim_{n\to\infty}\mathbb{P}_{\mu_n}(Mav_{H_n}(\cdot,v_n)=M)$ instead of $\lim_{n\to\infty}\mathbb{P}_{\mu_n}(Mav_{H_n}(\cdot,v)=M)$.
\end{rem}

\subsection{ASM on separable graphs} \label{CT}

\indent For $k\in \mathbb{N}^\ast$, a graph $\Gamma=(V,E)$ is \textbf{k-connected} if $|V|>k$ and $\Gamma\backslash X$ is connected for every subset $X\subset V$ with $|X|<k$. A connected graph $\Gamma$ is \textbf{separable} if it can be disconnected by removing a single vertex. Such a vertex is called a \textbf{cut vertex}. Note that non-separability of a connected graph is the same as $2$-connectedness. The largest $2$-connected components of a separable graph are called \textbf{blocks}. Any cut vertex belongs to at least two different blocks.\\
\indent Separable graphs belong to a wider class of tree-like graphs. Computations of certain critical values for percolation and Ising model for such graphs can be found in the Ph.D. thesis of Spakulova \cite{Kozakova}. The study of the ASM on separable graphs is also simplified thanks to its tree-like structure, and in particular by the fact that the critical group of such a graph is a direct product of the critical groups of its blocks \cite{BachHarpNagn97}. In this paper, we will need more precise information about recurrent configurations (see Lemma \ref{ConfigIndlemma} below).

\begin{rem}\rm
From now until the end of Section \ref{ASM}, we will assume that $|P|=1$. Indeed, the results of the two forthcoming subsections will be applied in Sections \ref{AvOnBasilica}, \ref{SectionIAM} and \ref{LastSection} to graphs for which we will be able to choose one-element dissipative sets satisfying our Convention \ref{convDissipative}. The choice of the unique dissipative vertex will be explained in Convention \ref{RemOnCPinf} below.
\end{rem}

\indent Consider a finite separable graph $\Gamma$ with blocks $C_1,\dots,C_s$. Fixing one of the vertices (denote it $p$ and think it to be the dissipative vertex), induces the following \textbf{partial order on the vertices} of $\Gamma$. For $w,w'\in V$, we put $w'\succeq w$ if and only if $w$ lies on any path in $\Gamma$ joining $w'$ to $p$. For any $1\leq i\leq s$, let $p_i$ be the smallest element of $V(C_i)$ in this order. Then the following holds:

\begin{lem} \label{ConfigIndlemma}
Given $\Gamma$ a finite separable graph with blocks $C_1,\dots,C_s$ and a dissipative vertex $p$, a configuration $c$ on $\Gamma$ is recurrent if and only if for all $1\leq i\leq s$, the subconfiguration $c^i:V_0(C_i)\longrightarrow\mathbb{N}$ defined by $c^i(v):=c(v)-$\emph{outdeg}$_{C_i}(v)$ is recurrent on the subgraph $C_i$ with $p_i$ considered as the dissipative vertex. (Here, for a subgraph $H$ of $\Gamma$ and a vertex $v$ of $H$, outdeg$_{H}(v)$ stands for the number of edges connecting $v$ to the complement of $H$ in $\Gamma$.)
\end{lem}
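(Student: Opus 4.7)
The cleanest way to attack both directions is through the burning-sequence characterisation of recurrence (Theorem \ref{BA}), exploiting the algebraic identity $\deg_\Gamma(v)-c(v)=\deg_{C_i}(v)-c^i(v)$ for $v\in V_0(C_i)$, which is immediate from $\deg_{C_i}(v)=\deg_\Gamma(v)-\mathrm{outdeg}_{C_i}(v)$. Its content is that the firing threshold of $v$ read inside the block $C_i$ agrees with the firing threshold of $v$ read inside $\Gamma$, provided no chips have yet been delivered to $v$ from vertices lying outside $C_i$.

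For the ``if'' direction, the plan is constructive. Pick a burning sequence $\sigma_i$ for $c^i$ on $C_i$ for each $i$, and splice the $\sigma_i$'s together by traversing the block-cut tree of $\Gamma$ (rooted at the block(s) containing $p$) in breadth-first order: fire $p$, then run the $\sigma_i$'s of the blocks containing $p$ (skipping the $p_i=p$ at their start), then the sequences attached to their child blocks, and so on. At the moment we start processing a child block $C_j$, its top $p_j$ has already been fired while processing the unique parent block of $C_j$. Validity of the spliced sequence in $\Gamma$ then reduces to checking that when $v\in V_0(C_i)$ fires, no neighbour of $v$ outside $C_i$ has been fired yet; this is immediate since such neighbours necessarily lie in descendant blocks of $C_i$ through $v$, which are processed strictly later in BFS order. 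The threshold identity promotes the firing condition of $\sigma_i$ at $v$ to the firing condition for $v$ in $\Gamma$.

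The converse is the delicate direction. Given any burning sequence $\sigma$ of $c$ on $\Gamma$, the goal is to show that the subsequence it induces on $V(C_i)$ is a burning sequence of $c^i$ on $C_i$. The main obstacle is the structural claim that in every burning sequence of $c$, the top vertex $p_i$ of each block $C_i$ fires strictly before any other vertex of $V(C_i)$. I would prove this by contradiction, minimising over violating blocks: among all $C_j$ for which some vertex of $V_0(C_j)$ fires before $p_j$, pick one whose first $V_0$-vertex $v$ fires earliest in $\sigma$. Stability of $c$ forces $v$ to have at least one previously fired neighbour at its firing time; by the choice of $v$ no such neighbour lies in $V(C_j)$; since $C_j$ shares only $p_j\neq v$ with its parent block, that neighbour must sit in a descendant block $C_k$ of $C_j$ through $v=p_k$. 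But then a vertex of $V_0(C_k)$ has fired before $p_k$, so $C_k$ is itself violating and has a first-$V_0$-firing time strictly smaller than that of $C_j$, contradicting minimality.

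Once the ``$p_i$-first'' claim is in hand, the restriction of $\sigma$ to $V(C_i)$ begins with $p_i$ and then enumerates $V_0(C_i)$; applying the claim to every descendant block of $C_i$ shows that when $v\in V_0(C_i)$ fires in $\sigma$, none of its neighbours in descendant blocks have been fired yet. Consequently the chips received by $v$ in $\Gamma$ coincide with those it would receive in the restricted sequence on $C_i$, and the threshold identity converts the firing condition for $v$ in $\Gamma$ into the firing condition for $v$ in $C_i$. Apart from the minimality argument for the ``$p_i$-first'' claim, the proof is just bookkeeping on the block-cut tree via the threshold identity.
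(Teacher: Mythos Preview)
Your proof is correct and follows essentially the same route as the paper's: both directions go through the burning-sequence characterisation (Theorem~\ref{BA}) and the block--cut tree, hinging on the claim that $p_i$ fires before every other vertex of $C_i$ in any burning sequence for $c$ on $\Gamma$. The paper dispatches this claim in one line---every path from $v\in V_0(C_i)$ to $p$ passes through the cut vertex $p_i$, and the burnt set is always connected and contains $p$, so $p_i$ burns before $v$---whereas your minimality argument over violating blocks rederives the same fact by a longer path; conversely, your write-up is more explicit than the paper's (via the threshold identity and the absence of chips from descendant blocks) in verifying that the restriction of a $\Gamma$-burning sequence to $V(C_i)$ really is a valid burning sequence for $c^i$.
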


\begin{proof}
Let $c$ be a configuration on $\Gamma$. Suppose that $c$ is recurrent, take a block $C_i$ and let $v\in V(C_i)$. By Theorem \ref{BA}, there exists a burning sequence for $c$ on $\Gamma$ $pv_1\dots v_{|V|-1}$. Since $c$ is stable, any vertex $v\in V$ must have some of its neighbours fired before being fired itself. Since every path joining $v$ to $p$ contains the vertex $p_i$, $v$ cannot be fired before $p_i$ does. On the other hand, once $p_i$ is fired, then every vertex of $C_i$ can be fired in the order provided by the sequence $pv_1\dots v_{|V|-1}$. In particular, there is a subsequence of $pv_1\dots v_{|V|-1}$ which is a burning sequence for $c^i$ on $C_i$ with $p_i$ set as the unique dissipative vertex.\\
\indent Conversely, if for each block $C_i$ the subconfiguration $c^i$ is recurrent, then one can fire vertices of $\Gamma$ as follows: after firing the vertex $p$, fire vertices belonging to the blocks containing $p$ according to the burning sequences provided by the Burning Algorithm applied consecutively to each of these blocks. Then, repeat the previous operation with the blocks sharing a vertex with the already fired blocks. Since there is a burning sequence for each block of $\Gamma$, all vertices of $\Gamma$ are eventually fired.
\end{proof}

The following definition and observation will be crucial in our study of avalanches further on.

\begin{defi}
A block-path of length $k$ in a separable graph $\Gamma$ is a sequence of $k$ distinct blocks of $\Gamma$ such that two consecutive blocks intersect.
\end{defi}

Given $w,w'\in V$, there is a unique block-path $\mathcal{C}_1\dots\mathcal{C}_r$ of minimal length such that $w\in \mathcal{C}_1$ and $w'\in \mathcal{C}_r$ (where possibly $\mathcal{C}_1\equiv\mathcal{C}_r$). We say then that $\mathcal{C}_1\dots\mathcal{C}_r$ \textbf{joins} $w$ to $w'$. Similarly, given $w\in V$ and $C$ a block of $\Gamma$, there is a unique block-path $\mathcal{C}_1\dots\mathcal{C}_r=C$ of minimal length such that $w\in \mathcal{C}_1$. We say then that $\mathcal{C}_1\dots\mathcal{C}_r$ joins $w$ to $C$.

\begin{prop} \label{RemOnAv}
Given a finite separable graph $\Gamma$ with a dissipative vertex $p$, and given a vertex $v\in V_0$, let $\mathcal{CP}_v:=\mathcal{C}_1\dots\mathcal{C}_r$ be the block-path joining $v$ to $p$. Then, the avalanche triggered by adding an extra chip on $v$ to some recurrent configuration $c$ depends only on the subconfigurations of $c$ on the blocks constituting $\mathcal{CP}_v$.
\end{prop}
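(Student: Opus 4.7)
The plan is to leverage the abelian property of the ASM together with Lemma~\ref{ConfigIndlemma} to reduce the analysis to the block-path $\mathcal{CP}_v$. Set $V^*:=\bigcup_{i=1}^{r}V(\mathcal{C}_i)$, and, for each cut vertex $u\in V^*$ attaching blocks of $\Gamma$ that do not lie on $\mathcal{CP}_v$, let $S_u$ denote the maximal connected subgraph of $\Gamma$ containing $u$ whose only intersection with $V^*$ is $\{u\}$. The complement $V_0\setminus V^*$ decomposes as the disjoint union of the sets $V(S_u)\setminus\{u\}$. I will show that the odometer $n:V_0\to\mathbb{N}$ of the avalanche is entirely determined by its restriction $n|_{V^*}$, which in turn depends only on $c|_{V^*}$.

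The key step is the following claim: if $u$ fires exactly $k$ times during the avalanche, then every vertex of $V(S_u)\setminus\{u\}$ fires exactly $k$ times, and $u$ receives back $k\cdot d_u$ chips from $S_u$, where $d_u$ is the number of edges from $u$ into $S_u\setminus\{u\}$. To prove it, I would invoke Lemma~\ref{ConfigIndlemma}, applied first to $\Gamma$ and then in the reverse direction to $S_u$ (all blocks of $S_u$ are blocks of $\Gamma$, and for each such block the minimal vertex is the same for the partial orders with respect to $p$ and with respect to $u$), to conclude that the subconfiguration $c^{S_u}$ is recurrent on $S_u$ with $u$ as its unique dissipative vertex. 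By the abelian property, firing $u$ exactly $k$ times in $\Gamma$ produces on $S_u$ the same effect as $k$ successive rounds, each consisting of firing $u$ once in the dissipative sense on $S_u$ (that is, adding one chip to every neighbour of $u$ in $S_u$) and then stabilizing. Since $c^{S_u}$ is recurrent, Theorem~\ref{BA} ensures that each such round fires every vertex of $V_0(S_u)$ exactly once and restores $c^{S_u}$; summing over the $k$ rounds yields the claimed counts.

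Once this is established, the chip-balance equation defining $n$ at any $w\in V^*\cap V_0$ depends only on $c|_{V^*}$ and on $n|_{V^*}$: the $n(u)\cdot d_u$ chips sent by a cut vertex $u\in V^*$ into $S_u$ are exactly the ones returned to $u$ by the induced cascade, and no vertex of $V^*$ other than $u$ is adjacent to $V_0\setminus V^*$. The resulting reduced system on $V^*\cap V_0$, together with the minimality of $n$, determines $n|_{V^*}$; the key claim then gives $n(w)=n(u_w)$ for every $w\in V(S_{u_w})\setminus\{u_w\}$, so the whole odometer is a function of $c|_{V^*}$, as desired.

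The main subtlety I anticipate is in the reduction step: for a cut vertex $u\in V^*$ attaching an off-path subtree, the stability threshold in $\Gamma$ remains $\deg_\Gamma(u)$ while the effective chip-loss per firing of $u$ is only $\deg_\Gamma(u)-d_u$, so the reduced system on $V^*$ is not itself the balance system of a standard ASM on the subgraph spanned by $V^*$. Nevertheless, since $n|_{V_0\setminus V^*}$ is completely determined by $n|_{V^*}$ via the key claim, the minimality of the odometer on $V$ translates into a well-posed minimality condition on $V^*$, giving uniqueness of the reduced solution and the desired dependence on $c|_{V^*}$ alone.
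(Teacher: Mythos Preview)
Your proposal is correct and follows essentially the same approach as the paper: the core observation in both is that for each cut vertex $u\in V^*$ attaching off-path blocks, the recurrent subconfiguration on $S_u$ (the paper's $D(w)$) ensures, via Theorem~\ref{BA} and Lemma~\ref{ConfigIndlemma}, that every vertex of $S_u\setminus\{u\}$ fires exactly once per firing of $u$ and the subconfiguration on $S_u$ is restored, independently of what that subconfiguration is. Your write-up is more explicit---phrasing things in terms of the odometer, spelling out the chip-balance reduction on $V^*$, and flagging that the reduced system is not a standard ASM---whereas the paper compresses all of this into two sentences; but the argument is the same.
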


\begin{proof}
If $c(v)<\deg(v)-1$, then the avalanche is trivial. If $c(v)=\deg(v)-1$, then $v$ becomes unstable after adding an extra chip, and a non-trivial avalanche is initiated. Consider the block-path $\mathcal{CP}_v$ joining $v$ to $p$, let $w$ be a separating vertex belonging to some block of $\mathcal{CP}_v$, and consider the subgraph $D(w)$ of $\Gamma$ induced by the set $\{v\in V_0|v\succeq w\}$ of all descendants of $w$. Since $c$ is recurrent, we can conclude by Theorem \ref{BA} and the proof of Lemma \ref{ConfigIndlemma}, that each time $w$ is fired, every successor $v\succ w$ is fired exactly once, and as a result the subconfiguration on $D(w)$ remains unchanged. This happens independently of the recurrent subconfiguration on $D(w)$. The statement follows.
\end{proof}

\subsection{ASM on cacti} \label{SubsectAvOnCT}

In this paper, we will be interested in a particular class of separable graphs called \lq\lq cacti\rq\rq.

\begin{defi} \label{DefCactus}
A separable graph $\Gamma$, possibly with loops, is a cactus if its blocks are either cycles (possibly of length $2$), or single edges.
\end{defi}

\indent The ASM on cacti is addressed in \cite{Mat09} where the identity of the critical group as well as inverses are explicitly realized in terms of configurations. Here we will be rather interested in finding the asymptotic of avalanches on finite approximations of infinite cacti; see Theorem \ref{ThmCP} below. In particular, we will be interested in the behaviour of avalanches in the random weak limit for a sequence of finite cacti. (Note that the limit of a sequence of finite rooted cacti in local convergence is again a cactus.) Our results indicate that the answer depends on such invariant of the infinite graph as the number of ends. More results in this direction are to be found in the forthcoming paper \cite{Mat09}.\\

\subsubsection{ASM on cycles} \label{Cycles}

As the building blocks of a cactus graph are cycles, we will start by recalling and stating some easy facts about the ASM on cycles, \cite{RuSen}, \cite{Mat09}, which will be useful later.\\
\indent Let $C$ be the cycle of length $|C|$ and let $V(C)=\{p,v_{1},v_{2},\dots,v_{|C|-1}\}$, where $p$ is the unique dissipative site and other vertices are numbered in the counterclockwise direction.

\begin{prop} \label{PropRecOnCycle}
1) There are exactly $|C|$ recurrent configurations  $c_0,\dots,c_{|C|-1}$ on $C$. They are given by

\begin{displaymath}
c_{j}(v_{i}) = \left\{ \begin{array}{ll}
0 & \textrm{if $i=j$,}\\
1 & \textrm{otherwise,}
\end{array} \right.
\end{displaymath}

\noindent and $c_{0}(v_{i})=1$ for $i,j=1,\dots,|C|-1$.

2) \emph{\cite{Mat09}} Let $\eta$ be a configuration on $C$ and let $c_j$ be a recurrent configuration. Then,

\begin{displaymath}
\lcr c_j+\eta\rcr=c_{\left[j-\sum_{k=1}^{|C|-1}\eta(v_{k})k\right]_{\mod |C|}},
\end{displaymath}

\noindent where $\lcr \cdot +\cdot\rcr$ denotes the result of adding configurations coordinatewise and then stabilizing.
\end{prop}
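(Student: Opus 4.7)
For part (1), I would apply Dhar's forbidden-subconfiguration criterion from Subsection \ref{Chip-firing}. Since each non-dissipative vertex has degree $2$, any stable configuration on $C$ takes values in $\{0,1\}$, and the connected subgraphs of $C$ that avoid $p$ are precisely the subpaths of $v_1v_2\cdots v_{|C|-1}$. In such a subpath, the interior vertices have degree $2$ in $H$, so the strict inequality $c(v)<\deg_H(v)$ required of a forbidden sub-configuration is automatic there for any stable $c$; the two endpoints have degree $1$ in $H$, so $H$ supports a forbidden sub-configuration exactly when both endpoints carry the value $0$. Hence $c$ is recurrent if and only if at most one vertex has value $0$, and enumerating these configurations yields exactly the $|C|$ elements listed.

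For part (2), the plan is to exhibit a $\mathbb{Z}/|C|\mathbb{Z}$-valued invariant of the toppling dynamics and read off the formula from it. On arbitrary integer-valued configurations define
\[
\phi(c) := \sum_{k=1}^{|C|-1} k\, c(v_k) \pmod{|C|}.
\]
A direct computation shows that $\phi$ is preserved by every legal toppling: firing an interior vertex $v_k$ with $2\le k\le |C|-2$ changes $\phi$ by $-2k+(k-1)+(k+1)=0$; firing $v_1$ changes it by $-2+2=0$, the chip delivered to $p$ being lost; and firing $v_{|C|-1}$ changes it by $-2(|C|-1)+(|C|-2)=-|C|\equiv 0\pmod{|C|}$. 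Since every stabilization is a finite sequence of such legal topplings, we get $\phi(\lcr c_j+\eta\rcr)\equiv\phi(c_j+\eta)\pmod{|C|}$.

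To conclude, I combine this with the linearity of $\phi$ and the direct evaluation $\phi(c_m)\equiv\tfrac{|C|(|C|-1)}{2}-m\pmod{|C|}$; writing $\lcr c_j+\eta\rcr=c_m$, which is legitimate because the stabilization is again recurrent and is therefore one of $c_0,\ldots,c_{|C|-1}$ by part (1), equating $\phi$-values yields $-m\equiv -j+\sum_k k\,\eta(v_k)\pmod{|C|}$, which is the announced formula. The only delicate point, really a matter of bookkeeping, is ensuring invariance of $\phi$ at the boundary vertices $v_1$ and $v_{|C|-1}$, where a chip is absorbed at the dissipative site $p$; the linear weight $k$ is designed precisely so that these two boundary contributions vanish modulo $|C|$.
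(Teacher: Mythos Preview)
Your argument is correct for both parts. Note, however, that the paper does not actually supply a proof of this proposition: part (1) is stated as a known elementary fact, and part (2) carries only the citation \cite{Mat09}. So there is no ``paper's own proof'' to compare against here.

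That said, your approach is clean and self-contained. For part (1), the forbidden-subconfiguration analysis is exactly right; you might add one clarifying word that a single-vertex subgraph $H=\{v_i\}$ has $\deg_H(v_i)=0$ and so can never be forbidden, which is why only subpaths of length $\geq 2$ matter and hence why a lone zero is harmless. For part (2), the linear functional $\phi(c)=\sum_k k\,c(v_k)\bmod |C|$ is precisely the isomorphism $K(C)\cong\mathbb{Z}/|C|\mathbb{Z}$ realized concretely, and your verification of toppling-invariance at the three vertex types is complete. The one step you invoke without justification is that $\lcr c_j+\eta\rcr$ is again recurrent; this is standard (e.g.\ because recurrent configurations are exactly those of the form $\lcr c_{\max}+\eta'\rcr$, and the abelian property gives $\lcr c_j+\eta\rcr=\lcr c_{\max}+\eta'+\eta\rcr$), but since the paper does not state it explicitly you may want to add a one-line reference to the critical-group discussion at the end of Subsection~\ref{Chip-firing}.
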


\begin{cor} \label{CorAddRecOnCycles}
If $\eta=t\cdot\delta_{v_k}$ for some $1\leq k\leq |C|-1$ and $t\geq 1$ (i.e. $\eta(v_k)=t$ and $\eta(v_i)=0$ for $i\neq k$), then

\begin{equation}
\lcr c_j+t\cdot\delta_{v_k}\rcr=c_{[j-tk]_{\mod |C|}}.
\end{equation}
\end{cor}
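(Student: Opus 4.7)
The plan is to derive this as an immediate specialization of Proposition \ref{PropRecOnCycle}, part 2). I would simply substitute the given configuration $\eta = t\cdot\delta_{v_k}$ into the formula
\[
\lcr c_j+\eta\rcr = c_{\left[j-\sum_{l=1}^{|C|-1}\eta(v_{l})\,l\right]_{\mod |C|}}.
\]
Since $\eta(v_l)=t$ if $l=k$ and $\eta(v_l)=0$ otherwise, the sum $\sum_{l=1}^{|C|-1}\eta(v_l)\,l$ collapses to the single term $t\cdot k$. Plugging this into the exponent of $c_{(\cdot)}$ yields $c_{[j-tk]_{\mod |C|}}$, which is exactly the claimed identity.

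There is essentially no obstacle here: the corollary is a direct evaluation of the general formula on a one-point mass. The only thing worth flagging explicitly is that $1\leq k\leq |C|-1$, so the index $v_k$ is a non-dissipative vertex and the sum indexing convention of Proposition \ref{PropRecOnCycle} applies without modification; also that $t$ may be arbitrarily large, so the added configuration $t\cdot\delta_{v_k}$ is in general not stable, but the bracket notation $\lcr\cdot+\cdot\rcr$ precisely absorbs the stabilization, and the formula in Proposition \ref{PropRecOnCycle} was stated for arbitrary (not necessarily stable) $\eta$. Hence a two-line proof suffices.
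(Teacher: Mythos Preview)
Your proposal is correct and matches the paper's treatment: the corollary is stated without proof, as it is an immediate specialization of Proposition~\ref{PropRecOnCycle}, part 2), obtained exactly by the substitution you describe.
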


We now turn to avalanches on $C$. Note that the mass of any avalanche on $C$ is trivially bounded from above by $|C|-1$. Fix a vertex $v_{i_0}\in V(C)$ on which an extra chip is added. By symmetry, we can suppose without loss of generality that $2i_0\leq |C|$. As above, let $\mu$ denote the uniform distribution over the set of recurrent configurations.

\begin{prop} \emph{\cite{Mat09}} \label{MassAvCycle}
In the notations above,

\begin{displaymath}\mathbb{P}_\mu(Mav_{C}(\cdot,v_{i_0})=M)=
\left\{\begin{array}{cc}
 0 & \textrm{if $0<M< i_0$},\\
 \frac{1}{|C|} & \textrm{if $i_0\leq M\leq |C|-1-i_0$},\\
 \frac{2}{|C|} & \textrm{if $|C|-i_0\leq M<|C|-1$}.
\end{array}\right.
\end{displaymath}
\noindent Moreover, $\mathbb{P}_\mu(Mav_{C}(\cdot,v_{i_0})=0)=\mathbb{P}_\mu(Mav_{C}(\cdot,v_{i_0})=|C|-1)=\frac{1}{|C|}$.
\end{prop}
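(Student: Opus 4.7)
The plan is to enumerate the $|C|$ recurrent configurations produced by Proposition \ref{PropRecOnCycle}(1), compute the mass of the avalanche triggered at $v_{i_0}$ from each of them, and then count the number of configurations realizing each possible mass.

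The underlying geometric picture is as follows. Since $\deg(v_{i_0})=2$, adding an extra chip either leaves $v_{i_0}$ stable (precisely when $c_j(v_{i_0})=0$, i.e.\ $j=i_0$) or makes $v_{i_0}$ have exactly $2$ chips, forcing a firing. Once $v_{i_0}$ fires, the set $S$ of fired vertices grows as two wavefronts propagating along the cycle in opposite directions; a wavefront advances through any vertex $v_k$ with $c_j(v_k)=1$ (which becomes unstable upon receiving a chip from an already-fired neighbour) and halts only when it reaches the unique ``hole'' $v_j$ (which absorbs the chip and remains stable) or the dissipative vertex $p$ (where the chip escapes the graph). In particular $S$ is an arc of the cycle containing $v_{i_0}$, and each of its two boundary-neighbours in $C\setminus S$ must be either $p$ or $v_j$.

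I would then run the case analysis. For $j=i_0$ no firing occurs and $Mav=0$. For $j=0$ (the all-ones configuration, which has no hole) both fronts reach $p$, so $S=\{v_1,\dots,v_{|C|-1}\}$ and $Mav=|C|-1$. For $0<j<i_0$ the hole lies on the clockwise side of $v_{i_0}$, forcing $S=\{v_{j+1},\dots,v_{|C|-1}\}$ and thus $Mav=|C|-1-j$; symmetrically, for $i_0<j\leq|C|-1$ one obtains $S=\{v_1,\dots,v_{j-1}\}$ and $Mav=j-1$. As $j$ varies over $\{1,\dots,i_0-1\}$ the masses $|C|-1-j$ hit each value in $\{|C|-i_0,\dots,|C|-2\}$ exactly once; as $j$ varies over $\{i_0+1,\dots,|C|-1\}$ the masses $j-1$ hit each value in $\{i_0,\dots,|C|-2\}$ exactly once. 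The assumption $2i_0\leq|C|$ ensures that the first interval is contained in the second, so a given $M$ is realised: once if $i_0\leq M\leq|C|-1-i_0$, twice if $|C|-i_0\leq M\leq|C|-2$, and not at all if $0<M<i_0$. Together with the two extremes $M=0$ and $M=|C|-1$ (each attained by a single configuration), dividing by the total count $|C|$ of recurrent configurations yields exactly the probabilities claimed.

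The only delicate point is the bookkeeping at the edges of the mass ranges: checking that $M=|C|-1$ and $M=0$ are each produced by a single $j$ (and are \emph{not} accidentally double-counted from the generic-case intervals), and using $2i_0\leq|C|$ precisely to guarantee that the two case-ranges nest instead of overlapping in the low-mass regime. Beyond this combinatorial bookkeeping the argument is entirely direct, because the wavefront description makes the fired arc $S$, and hence $Mav$, a fully deterministic function of $j$.
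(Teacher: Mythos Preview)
Your proof is correct and follows essentially the same approach as the paper: both split on the position of the unique ``hole'' $v_j$ relative to $v_{i_0}$, derive $Mav=|C|-1-j$ when $j<i_0$ and $Mav=j-1$ when $j>i_0$, and then count the resulting multiplicities. Your wavefront description adds a little geometric intuition that the paper leaves implicit, but the case analysis and the counting are identical.
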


\begin{proof}
Since there are $|C|$ different recurrent configurations on $C$, there are at most $|C|$ distinct avalanches. The mass of an avalanche is zero (respectively $|C|-1$) if and only if the configuration on which we add the extra chip is $c_{i_0}$ (respectively $c_0$). We thus have $\mathbb{P}_\mu(Mav_{C}(\cdot,v_{i_0})=0)=\mathbb{P}_\mu(Mav_{C}(\cdot,v_{i_0})=|C|-1)=\frac{1}{|C|}$.\\
\indent Let $c_j$ be a recurrent configuration. If $i_0>j$, then the mass of the avalanche is given by $Mav_{C}(c_j,v_{i_0})=|C|-1-j,$ whereas
if $i_0<j$, it is given by $Mav_{C}(c_j,v_{i_0})=j-1$. Thus, if we fix $0<M<|C|-1$, there are at most two avalanches of mass $M$, more precisely:

\begin{itemize}
\item if $0< M< i_0$, then there is no recurrent configuration providing an avalanche of mass $M$;
\item if $i_0\leq M\leq |C|-1-i_0$, then there is one configuration providing an avalanche of mass $M$, which is $c_{M+1}$;
\item if $|C|-i_0\leq M<|C|-1$, then there are two configurations providing an avalanche of mass $M$, which are $c_{M+1}$ and $c_{|C|-1-M}$.
\end{itemize}
\end{proof}

\subsubsection{Avalanches on cacti} \label{SubsecCP}

\indent Our analysis of the dynamics of avalanches on infinite graphs associates with self-similar groups will be based on Theorem \ref{ThmCP} below, a general result about avalanches on finite approximations of infinite one-ended cacti.\\
\indent Let $(\Gamma,v)$ be an infinite one-ended cactus rooted at $v$. Note that there exists a unique block-path $\mathcal{CP}_{v}=\mathcal{C}_1\mathcal{C}_2\dots$ of infinite length in $\Gamma$ starting at $v$ (i.e., $v\in\mathcal{C}_1$ but $v\notin\mathcal{C}_2$). Using the notations from Subsection \ref{CT}, for each $i\geq 1$, $p_i$ denotes the cut vertex between $\mathcal{C}_i$ and $\mathcal{C}_{i+1}$. Removing $p_i$ disconnects $\Gamma$ into several connected components (one of which is infinite). Let $D(p_i)$ denotes the subgraph of $\Gamma$ consisting of the union of all finite components, together with $p_i$. Denote by $d_i$ the number of vertices in $D(p_i)$; we thus have an increasing sequence of positive integers $\{d_i\}_{i\geq 1}$.\\
\indent We can choose an exhaustion $\{H_n\}_{n\geq 1}$ of $(\Gamma,v)$ so that, for any $n\geq 1$, the internal boundary of $H_n$ consists of a unique vertex $p^{(n)}$ (see Convention \ref{convDissipative}); this vertex is a cut vertex in $\Gamma$ between two consecutive blocks of $\mathcal{CP}_{v}$.

\begin{conv} \label{RemOnCPinf} \rm \emph{(Choice of dissipative vertex in one-ended cacti)}\\
Given an infinite one-ended cactus $(\Gamma,v)$, let $\{H_n\}_{n\geq 1}$ be an exhaustion of $(\Gamma,v)$ such that, for each $n$, $v\in V(H_n)$ and the internal boundary of $H_n$ consists of a unique vertex $p^{(n)}$; set $p^{(n)}$ to be the unique dissipative vertex in $H_n$.
\end{conv}

\begin{thm} \label{ThmCP}
Let $(\Gamma,v)$ be an infinite one-ended cactus rooted at $v$. Let $\{H_n\}_{n\geq 1}$ be an exhaustion of $(\Gamma,v)$ as in Convention \ref{RemOnCPinf} and, for any $n\geq 1$, let $p^{(n)}$ be the dissipative vertex in $H_n$. Denote by $\mathcal{CP}^n_{v}=\mathcal{C}_1\dots \mathcal{C}_{r_n}\subset\mathcal{CP}_{v}$ the finite block-path in $H_n$ joining vertex $v$ to $p^{(n)}$. Suppose that $\sum_{j=1,|\mathcal{C}_j|>2}^{r_n}\frac{1}{|\mathcal{C}_j|}$ converges as $r_n\to\infty$. Then, for any integer $M$ large enough that occurs as the mass of an avalanche, we have

\begin{displaymath}
\frac{L}{2\cdot|\mathcal{C}_{i_M}|\cdot |\mathcal{C}_{i_M+1}|}\leq\lim_{n\to\infty}\mathbb{P}_{\mu_n}(Mav_{H_n}(\cdot,v)=M)\leq \frac{2}{|\mathcal{C}_{i_M}|\cdot |\mathcal{C}_{i_M+1}|}
\end{displaymath}

\noindent where $0<L\leq 1$, and the index $i_M$ is uniquely determined by the condition $d_{i_M-1}\leq M<d_{i_M}$.
\end{thm}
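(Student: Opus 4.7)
\quad The plan is to decompose the avalanche along the block-path $\mathcal{CP}_v^n$, reduce the probability to a product of independent block-level contributions, and use the summability hypothesis to keep the ``propagation through the first $i_M-1$ blocks'' probability bounded below by a positive constant.

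First, by Proposition \ref{RemOnAv} the avalanche on $(H_n,v)$ depends only on the subconfigurations on $\mathcal{C}_1,\ldots,\mathcal{C}_{r_n}$; the proof of Lemma \ref{ConfigIndlemma} further shows that whenever a cut vertex $p_j$ fires, all finite branches attached to $p_j$ and to already-fired vertices of $\mathcal{C}_j$ are completely burnt, and the quantities $d_j$ count exactly the vertices of $D(p_j)$ fired in this way. By Lemma \ref{ConfigIndlemma} and Proposition \ref{PropRecOnCycle}, the uniform measure $\mu_n$ factors as a product of uniform measures over $c^j=c_{s_j}$, with $s_j$ uniform on $\{0,1,\ldots,|\mathcal{C}_j|-1\}$ and the $s_j$'s independent across blocks.

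Next I would analyze the sub-avalanche inside each block. For $j\geq 2$, let $a_j$ be the position of $p_{j-1}$ in $\mathcal{C}_j$ (with $p_j=v_0$ viewed as dissipative); the abelian property identifies the sub-avalanche in $\mathcal{C}_j$ with the stabilization of $c^j+\eta$, where $\eta=\delta_{v_{a_j-1}}+\delta_{v_{a_j+1}}$ (for $j=1$, replace $\eta$ by $\delta_v$, with only minor changes to what follows). Proposition \ref{PropRecOnCycle}(2) identifies the stable configuration as $c_{[s_j-2a_j]\bmod|\mathcal{C}_j|}$, and comparing initial and final chip counts yields the total dissipation $X_j\in\{1,2,3\}$ delivered to $p_j$. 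A direct case analysis shows $X_j=1$ iff $s_j\equiv 2a_j\pmod{|\mathcal{C}_j|}$ with $s_j\neq 0$ (a unique such value exists iff $2a_j\not\equiv 0\pmod{|\mathcal{C}_j|}$). Because $c(p_j)=c^{j+1}(p_j)+\deg(p_j)-2$ by Lemma \ref{ConfigIndlemma}, $p_j$ fires in $H_n$ iff $X_j+c^{j+1}(p_j)\geq 2$; writing $Y_j=1$ for this event, one concludes $Y_j=0$ iff $s_j\equiv 2a_j$ (with $s_j\neq 0$) and $s_{j+1}=a_{j+1}$.

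The event $\{Mav_{H_n}(\cdot,v)=M\}$ with $d_{i_M-1}\leq M<d_{i_M}$ decomposes as $\{Y_1=\cdots=Y_{i_M-1}=1\}\cap\{Y_{i_M}=0\text{ and sub-avalanche mass in }\mathcal{C}_{i_M}=M-d_{i_M-1}\}$. The events $\{Y_j=0\}$ and $\{Y_{j+1}=0\}$ share the variable $s_{j+1}$, but a prefix-tail argument (choose $J$ so that $\sum_{j\geq J}\mathbb{P}(Y_j=0)<1/2$, possible by the hypothesis $\sum_{|\mathcal{C}_j|>2}1/|\mathcal{C}_j|<\infty$, and bound the head probability from below by $\prod_{j\leq J}1/|\mathcal{C}_j|>0$ via the configuration $s_1=\cdots=s_J=0$) yields a uniform lower bound $\mathbb{P}_{\mu_n}(Y_1=\cdots=Y_{i_M-1}=1)\geq L>0$, with trivial upper bound $1$. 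For the second factor, independence of $s_{i_M}$ and $s_{i_M+1}$ gives
\begin{equation*}
\mathbb{P}_{\mu_n}(Y_{i_M}=0)=\mathbb{P}(s_{i_M}\equiv 2a_{i_M}\bmod|\mathcal{C}_{i_M}|)\cdot\mathbb{P}(s_{i_M+1}=a_{i_M+1})=\frac{1}{|\mathcal{C}_{i_M}|\,|\mathcal{C}_{i_M+1}|},
\end{equation*}
and for the unique stopping $s_{i_M}$ the sub-avalanche mass in $\mathcal{C}_{i_M}$ takes a specific value, so ``$M$ occurring as the mass'' means $M-d_{i_M-1}$ matches this value. Multiplying the two factors yields $\lim_{n\to\infty}\mathbb{P}_{\mu_n}(Mav_{H_n}(\cdot,v)=M)\in\bigl[L/(|\mathcal{C}_{i_M}||\mathcal{C}_{i_M+1}|),\,1/(|\mathcal{C}_{i_M}||\mathcal{C}_{i_M+1}|)\bigr]$ (cylindricity of the event ensures the probability stabilizes as $n\to\infty$), which implies the theorem's bounds with room to spare.

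The main technical obstacle lies in the sub-avalanche bookkeeping on each block: although Proposition \ref{PropRecOnCycle}(2) provides the final stable configuration, the dissipation $X_j$ must be read off from chip counts rather than from a firing-order analysis, because repeated firings of the entry vertex $p_{j-1}$ may occur during the cascade, invalidating any naive left/right cascade argument. A secondary difficulty is obtaining the uniform lower bound $L>0$ on the propagation probability in the presence of the shared variables $s_{j+1}$, which is handled by the prefix-tail argument described above rather than by a direct product factorization.
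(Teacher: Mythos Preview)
There is a genuine gap in the block-level analysis. Your claim that the sub-avalanche in $\mathcal{C}_j$ is the stabilization of $c^j+\eta$ with a \emph{fixed} $\eta=\delta_{v_{a_j-1}}+\delta_{v_{a_j+1}}$ is not correct: the input to block $\mathcal{C}_j$ is not a constant, but $t_{j-1}$ chips deposited at $p_{j-1}$, where $t_{j-1}$ is the total number of chips delivered to $p_{j-1}$ from the $\mathcal{C}_{j-1}$ side and depends on \emph{all} of $s_1,\ldots,s_{j-1}$. With the correct input one finds (via Corollary \ref{CorAddRecOnCycles}) $\lcr c_{s_j}+t_{j-1}\delta_{v_{a_j}}\rcr=c_{[s_j-t_{j-1}a_j]\bmod|\mathcal{C}_j|}$, so the output $t_j\in\{t_{j-1}-1,t_{j-1},t_{j-1}+1\}$ and the ``stopping'' condition $t_{i_M}=1$ is a constraint on the whole history, not on $s_{i_M}$ alone. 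Your criterion ``$Y_j=0\iff s_j\equiv 2a_j$ and $s_{j+1}=a_{j+1}$'' therefore fails, and with it the product formula $\mathbb{P}(Y_{i_M}=0)=1/(|\mathcal{C}_{i_M}|\,|\mathcal{C}_{i_M+1}|)$ and the prefix--tail estimate. (Your $\eta$ also fails to debit $p_{j-1}$ for its own firing, so even for $t_{j-1}=1$ the two models disagree.)

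The paper handles exactly this dependency by tracking the sequence $\{t_j\}$ directly through the trichotomy (S1)--(S3). For the upper bound it conditions on $c^1,\ldots,c^{i_M-1}$: whatever $t_{i_M-1}$ turns out to be, at most two values of $s_{i_M}$ give the required sub-mass on $\mathcal{C}_{i_M}$ (Proposition \ref{MassAvCycle}), and exactly one value of $s_{i_M+1}$ is compatible with stopping, yielding $2/(|\mathcal{C}_{i_M}|\,|\mathcal{C}_{i_M+1}|)$. For the lower bound it does \emph{not} try to bound $\mathbb{P}(Y_1=\cdots=Y_{i_M-1}=1)$ as you do; instead it exhibits one explicit trajectory (essentially $t_j\equiv 1$, or $t_j$ jumping to $2$ at the first $2$-cycle) and counts the configurations realizing it: on each block $\mathcal{C}_j$ with $|\mathcal{C}_j|>2$ there are at least $|\mathcal{C}_j|-2$ admissible values of $s_j$, giving the product $\prod_{|\mathcal{C}_j|>2}(1-2/|\mathcal{C}_j|)$, which converges to $L>0$ precisely under the summability hypothesis. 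If you replace your fixed $\eta$ by the history-dependent input $t_{j-1}\delta_{p_{j-1}}$, much of your outline survives, but the clean two-variable factorization of the stopping event does not.
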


\begin{proof}
Consider the subgraph $H_n$ of $\Gamma$ for some fixed $n\geq 1$ and let $c$ be a recurrent configuration on $H_n$. If $s$ is the number of blocks constituting $H_n$, $c$ can be decomposed into $s$ subconfigurations $c^1,\dots,c^s$ where $c^i$ is a recurrent configuration on the block $C_i$ (see Lemma \ref{ConfigIndlemma}). If $c(v)=\deg(v)-1$, then upon adding an extra chip on $v$, an avalanche starts on $\mathcal{C}_1$ which possibly extends to further blocks of $\mathcal{CP}^n_{v}$. Since the order of firings does not matter, we can suppose that one starts stabilizing the subconfiguration on $\mathcal{C}_{j+1}$ only when the subconfiguration on $\mathcal{C}_j$ is already stable. Recall that, by Proposition \ref{RemOnAv}, it is enough to keep track of the subconfigurations of $c$ on the blocks of $\mathcal{CP}^n_{v}$. For any $1\leq j\leq r_n$, we say that the avalanche reaches the block $\mathcal{C}_j$ if $p_{j-1}$ is fired during the avalanche. Note that if $\mathcal{C}_j$ is a single edge, then $c^j(p_{j-1})=\deg(p_{j-1})-1$. Once an avalanche has reached $\mathcal{C}_j$ and if $\mathcal{C}_j$ is not a single edge, then the subavalanche on $\mathcal{C}_j$ has two \lq\lq branches\rq\rq , each of them propagating in direction of $p_j$ along a path joining $p_{j-1}$ to $p_j$. Since the subconfiguration $c^j$ on $\mathcal{C}_j$ is recurrent, there is at most one vertex $w\in V(\mathcal{C}_j)\backslash\{p_j\}$ such that $c^j(w)=\deg(w)-2$ (see Proposition \ref{PropRecOnCycle}). Hence, at least one of the branches of the subavalanche extends to $p_j$ so that at least one chip reaches $p_j$. Then, if $p_j$ is not fired, we say that the avalanche stops on $\mathcal{C}_j$.\\
\indent With every recurrent configuration $c$ on $H_n$, one associates a sequence of positive integers $\{t_j(c)\}_{j=0}^{r_n-1}$, where $t_j(c)$ is the number of chips that have reached $p_j$ during the avalanche triggered by adding an extra chip to $c$. By convention, fix $t_0(c)\equiv 1$. Recall that, on a cycle $C_j$, there are $|C_j|$ recurrent configurations  which are $c^j_0,\dots,c^j_{|C_j|-1}$. For $1\leq j\leq r_n-1$, three following situations may occur:

\begin{enumerate}[(S1)]
\item $t_j(c)-t_{j-1}(c)=1$: this occurs if and only if $c^j=c_0^j$ and $\lcr c_0^j+t_{j-1}(c) \cdot\delta_{p_{j-1}}\rcr\neq c_0^j$;
\item $t_j(c)-t_{j-1}(c)=0$: this occurs if and only if either $c^j=c^j_0$ and $\lcr c_0^j+t_{j-1}(c) \cdot\delta_{p_{j-1}}\rcr= c_0^j$, or $c^j=c_k^j$ for some $0<k<|\mathcal{C}_j|$ and $\lcr c_k^j+t_{j-1}(c)\cdot\delta_{p_{j-1}}\rcr\neq c_0^j$;
\item $t_j(c)-t_{j-1}(c)=-1$: this occurs if and only if $c^j=c_k^j$ for some $0<k<|\mathcal{C}_j|$ and $\lcr c_k^j+t_{j-1}(c)\cdot\delta_{p_{j-1}}\rcr= c_0^j$.
\end{enumerate}

\noindent The difference $|t_j(c)-t_{j-1}(c)|$ cannot be greater than one, since the total amount of chips in a recurrent configuration on a cycle $C_j$ is either $|C_j|-1$ or $|C_j|-2$ (see Proposition \ref{PropRecOnCycle} and Lemma \ref{ConfigIndlemma}). Finally, note that if the block $\mathcal{C}_j$ is a single edge, then $t_j(c)=t_{j-1}(c)$. \\
\indent We consider now avalanches of some fixed mass $M$. Since we are interested in the asymptotic behaviour of avalanches as $n$ tends to infinity and since we have supposed that $r_n$ tends to infinity (as $n\to\infty$), we can suppose without loss of generality that $M<d_{r_n-1}$; all these avalanches reach some block $\mathcal{C}_{i_M}$, $1\leq i_M<r_n$ and stop on it (i.e. vertex $p_{i_M-1}$ is fired but not vertex $p_{i_M}$). Note that an avalanche cannot stop on a cycle of length two.\\
\indent Let us now find bounds on the number of recurrent configurations on $H_n$ producing avalanches of mass $M$. Let $c$ be such that the avalanche triggered by adding an extra chip to $c$ on $v$ is of mass $M$. Then, its corresponding sequence $\{t_j(c)\}_{j=0}^{r_n-1}$ satisfies

\begin{itemize}
 \item $t_j(c)\geq 1$ for all $0\leq j\leq i_M-1$;
 \item $t_{i_M}(c)= 1$;
 \item $t_j(c)=0$ for all $i_M< j\leq r_n-1$.
\end{itemize}

We have to distinguish two cases. Suppose that there exists $1\leq j_0<i_M$ such that $\mathcal{C}_{j_0}$ is a cycle of length two, and suppose that $j_0$ is the smallest such index. Consider the sequence $\{t_j\}_{j=0}^{r_n-1}$ defined by $t_j=1$ if $j<j_0$, $t_j=2$ if $j_0\leq j< i_M$, $t_{i_M}=1$ and $t_j=0$ if $j> i_M$. If there is no index $j_0$ such that $\mathcal{C}_{j_0}$ is a cycle of length two, then define $\{t_j\}_{j=0}^{r_n-1}$ by $t_j=1$ if $j\leq i_M$ and $t_j=0$ if $j> i_M$. We count the number of recurrent configurations $c$ whose associated sequence $\{t_j(c)\}_{j=0}^{r_n-1}$ coincides with $\{t_j\}_{j=0}^{r_n-1}$. In the former case, it follows from Corollary \ref{CorAddRecOnCycles} that, for each $j<i_M$ such that $\mathcal{C}_j$ is not a single edge nor a cycle of length two, there are at least $|\mathcal{C}_j|-2$ recurrent subconfigurations on $\mathcal{C}_j$ satisfying the right-hand side of (S2). If $\mathcal{C}_j$ is a cycle of length two, and $j_0<j<i_M$, then both recurrent subconfigurations on $\mathcal{C}_j$ satisfy the right-hand side of (S2). The subconfiguration on $\mathcal{C}_{j_0}$ must be $c^{j_0}=c_0^{j_0}$ (see (S1)) whereas the subconfigurations on $\mathcal{C}_{i_M}$, $\mathcal{C}_{i_M+1}$ are uniquely determined by (S3). In the latter case, for each $j<i_M$ such that $\mathcal{C}_j$ is not a single edge, there are at least $|\mathcal{C}_j|-2$ recurrent subconfigurations on $\mathcal{C}_j$ satisfying the right-hand side of (S2). Consider now the subavalanche on the cycle $\mathcal{C}_{i_M}$, denoting its mass by $m$ (so that $d(p_{i_M-1},p_{i_M})\leq m<|\mathcal{C}_{i_M}|$). By Proposition \ref{MassAvCycle} and its proof, at least one but at most two subconfigurations on $\mathcal{C}_{i_M}$ provoke subavalanches of such mass. The subconfiguration on $\mathcal{C}_{i_M+1}$ is uniquely determined by (S3). Finally, in both cases, the configurations on the remaining blocks of $H_n$ can be chosen freely since they do not influence the avalanche (see Proposition \ref{RemOnAv}). Thus, the number $N$ of recurrent configurations on $H_n$ producing an avalanche of mass $M$ is at least
\begin{displaymath}
N\geq 2^R\cdot\prod_{\underset{|\mathcal{C}_j|>2}{j=1}}^{i_M-1}(|\mathcal{C}_j|-2)\cdot\prod_{\underset{ C_j\neq \mathcal{C}_1,\dots,\mathcal{C}_{i_M+1}}{C_j\subset\Gamma_n}} |C_j|,
\end{displaymath}
\noindent where $R={|\{\mathcal{C}_j;\hspace{1mm}\text{$\mathcal{C}_j$ is a cycle, $|\mathcal{C}_j|=2$, $j_0<j<i_M$}\}|}$ and the latter product runs over blocks of $H_n$ which are not single edges. Since the total number of recurrent configurations on $H_n$ (which is the number of spanning trees of $H_n$) is equal to the product of the lengths of the cycles in $H_n$, the probability of observing an avalanche of mass $M$ on $H_n$ upon adding an extra chip on $v$ is bounded from below by
\begin{equation} \label{eqLowerBound}
\mathbb{P}_{\mu_n}(Mav_{H_n}(\cdot,v)=M)\geq N\prod_{C_j\subset\Gamma_n} |C_j|^{-1}
\geq\frac{1}{2\cdot|\mathcal{C}_{i_M}|\cdot |\mathcal{C}_{i_M+1}|}\prod_{\underset{|\mathcal{C}_j|>2}{j=1}}^{i_M-1}\left(1-\frac{2}{|\mathcal{C}_j|}\right),
\end{equation}
\noindent where the former product runs over blocks of $H_n$ which are not single edges. The upper-bound
\begin{equation} \label{eqUpperBound}
\mathbb{P}_{\mu_n}(Mav_{H_n}(\cdot,v)=M)\leq\frac{2}{|\mathcal{C}_{i_M}|\cdot|\mathcal{C}_{i_M+1}|}
\end{equation}
\noindent follows from the fact that in the former case, the subconfigurations on $\mathcal{C}_{i_M}$ and $\mathcal{C}_{i_M+1}$ are uniquely determined by (S3) whereas in the latter case, there are at most two subconfigurations on $\mathcal{C}_{i_M}$ producing a subavalanche on $\mathcal{C}_{i_M}$ of mass $m$ and the subconfiguration on $\mathcal{C}_{i_M+1}$ is uniquely determined.\\
\indent The product $\prod_{j=1,|\mathcal{C}_j|>2}^{i_M-1}\left(1-2/|\mathcal{C}_j|\right)$ converges as $i_M\to\infty$ to a limit $L>0$ if and only if, the series $\sum_{j\geq 1,|\mathcal{C}_j|>2}\frac{2}{|\mathcal{C}_j|}$ converges (see for instance \cite{Tannery}). In such a case, it is bounded by $L$ from below for every $i_M\geq 2$. This completes the proof.

\end{proof}

\section{Actions on Rooted Trees and their Schreier Graphs} \label{SelfSimilarGroups}

\setcounter{defi}{0}

Let $\{q_n\}_{n\geq 0}$ be a sequence of positive integers and let $T$ be a rooted tree such that all vertices of the $n$-th level of $T$ (i.e. vertices situated at distance $n$ from the root) have $q_n$ children; $T$ is called \textbf{spherically homogenous} and $\{q_n\}_{n\geq 0}$ is the spherical index of $T$. For any $n\geq 1$, let $X_n$ be a $q_n$-letters alphabet. Then, any vertex of the $n$-th level of $T$ can be regarded as an element of $\prod_{i=1}^nX_i=:L_n$ (the root is viewed as the empty word.) Also, write $X^{\omega}:=\prod_{n\geq1}X_n$, which is the set of infinite words $\xi$ such that, for any $n\geq 1$, the $n$-th letter of $\xi$ belongs to $X_n$. The set $X^{\omega}$ can be identified with the boundary $\partial T$ of the tree, which is defined as the set of infinite geodesic rays starting at the root of $T$. The cylindrical sets $\bigcup_{n\geq 0}\{w\prod_{i>n}X_i|w\in L_n\}$ generate the $\sigma$-algebra of Borel subsets of the space $X^{\omega}$. We shall denote by $\lambda$ the uniform measure on $X^\omega$.\\
\indent Consider the group $Aut(T)$ of all automorphisms of $T$, i.e., the group of all bijections of the set of vertices of $T$ preserving the root and the incidence relation; the levels of the tree are thus preserved by any automorphism of $T$. A group $G\leq Aut(T)$ is said to be \textbf{spherically transitive} if it acts transitively on each level of the tree.\\
\indent For $G<Aut(T)$ we define the following subgroups of $G$: the \textbf{stabilizer} of a vertex $v\in T$ in $G$ by $Stab_G(v)=\{g\in G| g(v)=v\}$; the \textbf{stabilizer of the $n$-th level} of the tree in $G$ by
$Stab_G(L_n)=\bigcap_{v\in L_n}Stab_G(v)$; finally, the \textbf{stabilizer of a boundary point} $\xi\in X^{\omega}$ in $G$ by $Stab_G(\xi)=\{g\in G| g(\xi)=\xi\}$. Suppose that $G$ is spherically transitive; then, the following properties hold:
\begin{itemize}
\item The subgroups $Stab_G(v)$, for $|v|=n$, are all conjugate and of index $\prod_{i=1}^nq_i$.
\item $\bigcap_{\xi \in \partial T}Stab_G(\xi)$ is trivial.
\item Denote by $\xi_n$ the prefix of $\xi$ of length $n$. Then $Stab_G(\xi) = \bigcap_{n\in \mathbb{N}} Stab_G(\xi_n)$.
\item $Stab_G(\xi)$ has infinite index in $G$. 
\end{itemize}

\indent Consider a finitely generated group $G$ with a set $S$ of generators such that $id \not \in S$ and $S = S^{-1}$, and suppose that $G$ acts on a set $M$. Then, one can consider a graph $\Gamma(G,S,M)$ with the set of vertices $M$, and two vertices $m,m'$ joined by an oriented edge labeled by $s$ if there exists $s\in S$ such that $s(m)=m'$. If the action of $G$ on $M$ is transitive, then $\Gamma(G,S,M)$ is the \textbf{Schreier graph} $\Gamma(G,S,Stab_G(m))$ of the group $G$ with respect to the subgroup $Stab_G(m)$ for some (any) $m\in M$. If the action of $G$ on $M$ is not transitive, and $m\in M$, then we denote by $\Gamma(G,S,m)$ the Schreier graph of the action on the $G$-orbit of $m$, and we call such a graph an \textbf{orbital Schreier graph}. In what follows, we will often forget about labels. Also, since $S = S^{-1}$, our graphs are graphs in the sense of Serre \cite{Serre}.\\
\indent Suppose now that $G$ acts spherically transitively on a spherically homogeneous rooted tree $T$ with spherical index $\{q_n\}_{n\geq 0}$. Then, the \textbf{$n$-th Schreier graph} of $G$ is by definition
$\Gamma_n:=\Gamma(G,S,L_n)=\Gamma(G,S,P_n)$ where $P_n$ denotes the subgroup stabilizing some word $w\in L_n$. For each $n\geq 1$, let $\pi_{n+1}:\Gamma(G,S,L_{n+1})\longrightarrow \Gamma(G,S,L_n)$
be the map defined on the vertex set of $\Gamma(G,S,L_{n+1})$ by $\pi_{n+1}(x_1\ldots x_nx_{n+1}) = x_1\ldots x_n$. Since $P_{n+1}\leq P_n$, $\pi_{n+1}$ induces a surjective morphism between $\Gamma(G,S,L_{n+1})$ and
$\Gamma(G,S,L_n)$. This morphism is a graph covering of degree $q_n$.\\
\indent We also consider the action of $G$ on $\partial T\equiv X^\omega$ and the orbital Schreier graphs $\Gamma_{\xi}:=\Gamma(G,S,G\cdot\xi)=\Gamma(G,S,P_\xi)$ where $P_\xi$ denotes the stabilizer of $\xi$ for the action of $G$ on $X^\omega$. Recall that, given a ray $\xi$, we denote by $\xi_n$ the prefix of $\xi$ of length $n$, and that $P_{\xi} = \bigcap_n P_n$. It follows that the infinite Schreier graph $(\Gamma_{\xi},\xi)$ rooted at $\xi$ is the limit of  finite Schreier graphs $(\Gamma_n,\xi_n)$ rooted at $\xi_n$, as $n\rightarrow\infty$, in the compact metric space $(\mathcal{X},Dist)$ (of rooted isomorphism classes) of rooted connected graphs with uniformly bounded degrees (see Subsection \ref{ASMonGamma_n}). Orbital Schreier graphs are interesting infinite graphs that contain information about the group and its action on the tree. The random weak limit of the sequence $\{\Gamma_n\}_{n\geq 1}$, concentrated on the classes of rooted-isomorphism of the orbital Schreier graphs $\{(\Gamma_{\xi},\xi)\}_{\xi\in\partial T}$, is often a continuous measure, see e.g. \cite{DanDonMat09}.

\begin{prop} \label{PropRegGraphsSchreier}
Let $\{\Gamma_n\}_{n\geq 1}$ be a covering sequence of finite $2k$-regular graphs ($k\in\mathbb{N}^\ast$). Then, there exists a rooted tree $T$, and a group $G$ of automorphisms of $T$ such that the $\Gamma_n$'s can be realized as Schreier graphs (with respect to an appropriate set of generators) of the action of $G$ on $T$.
\end{prop}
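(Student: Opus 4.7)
The plan is to realise each $\Gamma_n$ as an edge-disjoint union of $k$ oriented $2$-factors chosen compatibly along the covering sequence, and then to read the $k$ generators directly off as automorphisms of the tree $T$ whose $n$-th level is identified with $V(\Gamma_n)$.

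First, I would decompose the base graph $\Gamma_1$. Since $\Gamma_1$ is $2k$-regular, an Eulerian orientation turns it into a $k$-in, $k$-out directed graph; applying K\"onig's marriage theorem to the associated bipartite double cover produces $k$ edge-disjoint perfect matchings, which translate back to $k$ permutations $\sigma_{1,1},\dots,\sigma_{1,k}$ of $V(\Gamma_1)$ whose ``permutation edges'' $\bigl\{\{v,\sigma_{1,i}(v)\}:v\in V(\Gamma_1)\bigr\}_{i=1}^{k}$ partition $E(\Gamma_1)$.

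Next I would lift the decomposition inductively. Assume $\sigma_{n,1},\dots,\sigma_{n,k}$ have been constructed and denote by $F_{n,i}$ the corresponding $2$-factor of $\Gamma_n$. For a vertex $v\in V(\Gamma_{n+1})$ lying over $u:=\pi_{n+1}(v)$, among the two edges of $v$ in the lifted $2$-factor $\pi_{n+1}^{-1}(F_{n,i})$, one projects to $\{u,\sigma_{n,i}(u)\}$ and the other to $\{u,\sigma_{n,i}^{-1}(u)\}$; I set $\sigma_{n+1,i}(v)$ to be the other endpoint of the former edge. When these two target vertices coincide, i.e.\ when $u$ lies on a cycle of $\sigma_{n,i}$ of length $\leq 2$, I instead orient each cycle of $\pi_{n+1}^{-1}(F_{n,i})$ lying over this short cycle arbitrarily. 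The resulting map is a permutation of $V(\Gamma_{n+1})$ satisfying $\pi_{n+1}\circ\sigma_{n+1,i}=\sigma_{n,i}\circ\pi_{n+1}$, whose permutation edges form a $2$-factor of $\Gamma_{n+1}$; taking the union over $i$ recovers $E(\Gamma_{n+1})$.

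Finally I would bundle these level-permutations into a tree action. Let $T$ be the spherically homogeneous rooted tree with spherical index $q_n:=|V(\Gamma_{n+1})|/|V(\Gamma_n)|$, and identify $L_n$ with $V(\Gamma_n)$ so that the parent map of $T$ coincides with $\pi_{n+1}$. The intertwining identity of the previous step is exactly what is needed for $\{\sigma_{n,i}\}_{n\geq 1}$ to glue, for each fixed $i$, into an automorphism $\sigma_i\in Aut(T)$. Setting $G:=\langle\sigma_1,\dots,\sigma_k\rangle$ and $S:=\{\sigma_i^{\pm 1}:1\leq i\leq k\}$, the Schreier graph $\Gamma(G,S,L_n)$ has at each $v\in L_n$ one oriented edge $v\to\sigma_{n,i}(v)$ and its reverse for each index $i$; as a Serre graph this is precisely the union of the $k$ chosen oriented $2$-factors, hence equal to $\Gamma_n$.

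The delicate point is the self-consistency of the lifting step. For cycles of $\sigma_{n,i}$ of length $\geq 3$, the fact that $\pi_{n+1}$ is a local isomorphism forces the orientation on each of their lifts unambiguously, so the prescription is canonical. For loops and $2$-cycles the two candidate neighbours at $v$ project to the same base vertex and a genuine choice of orientation on each lifted cycle must be made; any such choice remains compatible with the intertwining identity and yields a bijection on the fibre, and making this countable family of choices along the whole covering sequence causes no further trouble.
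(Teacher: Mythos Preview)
Your proof is correct and follows essentially the same approach as the paper's: decompose $\Gamma_1$ into $k$ oriented $2$-factors (the paper invokes Petersen's theorem directly, while your Eulerian-orientation-plus-K\"onig argument is precisely the standard proof of that theorem), then lift the orientations inductively through the coverings to obtain compatible level permutations, and finally assemble these into automorphisms of the tree of preimages. Your pointwise prescription ``follow the edge projecting to the forward edge'' is exactly the paper's ``the orientation of $C^i_s$ induces an orientation on each cycle of the fibre''; you are also slightly more explicit than the paper about the need for an arbitrary choice on lifts of cycles of length $\leq 2$, which is a genuine subtlety the paper glosses over.
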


\begin{proof}
For any $n\geq 1$, let $q_n$ be the degree of the covering $\pi_{n+1}:\Gamma_{n+1}\longrightarrow \Gamma_n$. One associates a \emph{tree of preimages} $T$ with the covering sequence $\{\Gamma_n\}_{n\geq 1}$ as follows: $T$ is an infinite rooted tree with vertex set $\bigcup_{n\geq 1}V(\Gamma_n)\cup \{\ast\}$ such that the $n$-th level of $T$ is $V(\Gamma_n)$ and every vertex $v$ of the $n$-th level has $q_n$ children corresponding to the fibre of $v$ in $\Gamma_{n+1}$ (by convention, the root $\ast$ of $T$ has $|V(\Gamma_1)|$ children.) For any $n\geq 1$, we denote by $T_{[n]}$ the rooted subtree of $T$ of height $n$.\\
\indent We proceed by induction on $n$. Consider the graph $\Gamma_1$. By a theorem of Petersen (see for instance \cite{Diestel}), every $2k$-regular graph has a $2$-factor, that is, a $2$-regular spanning subgraph. Denote by $F^1_1,\dots,F^1_k$ the decomposition of $\Gamma_1$ into $2$-factors. Any $F^1_i$ is a collection $C^i_{1},\dots,C^i_{t_i}$ of disjoint cycles. Assign an arbitrary orientation to each of them, so that each $2$-factor $F^1_i$ determines a unique permutation $\sigma^1_i$ of the vertex set of $\Gamma_1$. For $1\leq i\leq k$, label the edges of the cycles $C^i_{1},\dots,C^i_{t_i}$ in $\Gamma_1$ by $\sigma^1_i$. Consider the subgroup $G_1$ of automorphisms of $T_{[1]}$ generated by the set of permutations $\{\sigma^1_1,\dots,\sigma^1_k\}$. The Schreier graph $\Gamma(G_1,\{\sigma^1_1,\dots,\sigma^1_k\},V(\Gamma_1))$ coincides with $\Gamma_1$ labeled as above.\\
\indent Suppose that there is a subgroup $G_n=<\sigma^n_1,\dots,\sigma^n_k>$ of automorphisms of $T_{[n]}$ such that for every $1\leq m\leq n$, $\Gamma(G_n,\{\sigma^n_1,\dots,\sigma^n_k\},V(\Gamma_m))$ coincides with $\Gamma_m$. Every automorphism $\sigma^n_i$ corresponds to a $2$-factor $F_i^n$ of $\Gamma_n$ (i.e. $F_i^n$ with its edges labeled by $\sigma^n_i$ coincides with $\Gamma(<\sigma^n_i>,\{\sigma^n_i\},V(\Gamma_n))$.) We construct the group $G_{n+1}$ by extending every automorphism $\sigma^n_i$ to an automorphism $\sigma^{n+1}_i$ of $T_{[n+1]}$. Consider the $2$-factor $F^n_i=\bigcup_{s=1}^{t_i} C^i_s$. For any $v\in V(\Gamma_n)$, number its children in $T$ by $v_1,\dots,v_{q_n}$; consider the (unique) cycle $C^i_s$ containing $v$ together with its fibre $C^i_{s1},\dots,C^i_{sr_s}$ ($1\leq r_s\leq q_n$) in $\Gamma_{n+1}$. The orientation of $C^i_s$ induces an orientation on each cycle of the fibre. For $1\leq l \leq r_s$, consider $C^i_{sl}$ and a child $v_j\in V(C^i_{sl})$ of $v$. If the neighbour (with respect to the induced orientation) of $v_j$ in $C^i_{sl}$ is a child $w_{j'}$ of $w\in V(\Gamma_n)$, then let the automorphism $\sigma^{n+1}_i$ transpose vertices $v_j$ and $v_{j'}$ in $T_{[n+1]}$. Then, consider $w_{j'}$ together with its next neighbour $u_{j''}$ in $C^i_{sl}$ and let $\sigma^{n+1}_i$ transpose vertices $w_{j'}$ and $w_{j''}$. Continue like this along $C^i_{sl}$ until $v_j$ is reached again. We thus obtain a set $\{\sigma^{n+1}_1,\dots,\sigma^{n+1}_k\}$ of automorphisms of $T_{[n+1]}$ such that the restriction of every $\sigma^{n+1}_i$ to $T_{[n]}$ is $\sigma^n_i$. For every $1\leq i\leq k$, label the edges of $\Gamma_{n+1}$ belonging to the fibre of $F^n_i$ by $\sigma^{n+1}_i$. By construction, the subgroup $G_{n+1}$ of $Aut(T_{[n+1]})$ generated by these automorphisms is such that $\Gamma(G_{n+1},\{\sigma^{n+1}_1,\dots,\sigma^{n+1}_k\},V(\Gamma_{n+1}))$ coincides with $\Gamma_{n+1}$ labeled as above.\\
\indent For $i=1,\dots,k$, consider the automorphisms of $T$ defined by $\sigma_i:=\lim_{n\to\infty}\sigma^n_i$, and let $G=<\sigma_1,\dots,\sigma_k>$ be the subgroup of $Aut(T)$ generated by these elements. As, for any $n\geq 1$, $\sigma^n_i$ is the restriction of $\sigma_i$ to $T_{[n]}$, replace in each $\Gamma_n$ the labels $\sigma^n_i$ by $\sigma_i$ for $1\leq i\leq k$. Then, for any $n\geq 1$, the Schreier graph $\Gamma(G,\{\sigma_1,\dots,\sigma_k\},V(\Gamma_n))$ of the action of $G$ on the $n$-th level of $T$ coincides with $\Gamma_n$ newly labeled.
\end{proof}

It follows from a result of Nekrashevych (see Theorem \ref{ThmNekrashevych} below) that if $\{\Gamma_n\}_{n\geq 1}$ is a covering sequence of finite $2k$-regular cacti, and only then, the corresponding group of automorphisms of the tree of preimages (see Proposition \ref{PropRegGraphsSchreier}) is an \emph{iterated monodromy group} of a \emph{post-critically finite backward iteration of topological polynomials}. A post-critically finite backward iteration is a sequence $f_1,f_2,\dots$ of complex polynomials
(or orientation preserving branched coverings of planes) such that there exists a
finite set $\mathcal{P}$ with all critical values of $f_1\circ f_2\circ\dots\circ f_n$ belonging to $\mathcal{P}$ for every $n$. The iterated monodromy group of such a sequence is the automorphism group of
the tree of preimages $T_t=\bigsqcup_{n\geq 0}(f_1\circ f_2\circ\dots\circ f_n)^{-1}(t)$ induced by the monodromy action of the fundamental group $\pi_1(\mathbb{C}\backslash \mathcal{P},t)$, where $t$
is an arbitrary basepoint.

\begin{thm}[Nekrashevych \cite{Nekr09}] \label{ThmNekrashevych}
An automorphism group $G$ of a rooted tree $T$ is an iterated monodromy
group of a post-critically finite backward iteration of polynomials if and only if there exists a generating set of $G$ with respect to which the Schreier graphs of the action of $G$ on $T$ are cacti.
\end{thm}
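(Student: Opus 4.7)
The plan is to prove each implication via an explicit geometric dictionary between the combinatorial data of cactus Schreier graphs and the ramification data of branched self-coverings of the sphere. Throughout, I view a post-critically finite backward iteration $f_1, f_2, \dots$ as a sequence of orientation-preserving branched coverings of $\mathbb{S}^2$ whose critical values all lie in a fixed finite set $\mathcal{P}$, and I view the IMG through the monodromy action of $\pi_1(\mathbb{S}^2 \setminus \mathcal{P}, t)$ on the tower of preimages $T_t = \bigsqcup_{n \ge 0}(f_1 \circ \dots \circ f_n)^{-1}(t)$.

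For the direction IMG implies cactus, I take as generators of $G$ the monodromy permutations associated with small simple loops $\gamma_p$, one around each post-critical point $p \in \mathcal{P}$. At level $n$, the edges labelled by $\gamma_p$ form a $2$-factor whose connected components (including loops on single vertices) are in bijection with preimages of $p$ under $f_1 \circ \dots \circ f_n$, the length of each cycle being the local degree at the corresponding preimage. Each such cycle can be realised as a small Jordan loop in the plane encircling a single branch point, and loops around different branch points can be chosen disjoint. The cactus property then follows: two distinct blocks would have to come from cycles encircling different branch points (or from distinct lifts of the same $\gamma_p$), and disjointness of the Jordan loops in $\mathbb{S}^2$ forces them to intersect in at most one point of $V_n = (f_1 \circ \dots \circ f_n)^{-1}(t)$.

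For the converse, I start from the covering sequence $\{\Gamma_n\}_{n \geq 1}$ of finite $2k$-regular cacti produced by the given action, together with a generating set $S$ realising the cactus structure. Decomposing each $\Gamma_n$ into $k$ labelled $2$-factors (one per generator) and using cactus-ness yields a planar embedding of $\Gamma_n$ in $\mathbb{S}^2$ in which every cycle block bounds a disk and distinct blocks meet only at cut vertices. From this embedding I read off a Hurwitz datum: a finite post-critical set $\mathcal{P}$ in bijection with $S$, a basepoint $t$, and a transitive permutation representation of $\pi_1(\mathbb{S}^2 \setminus \mathcal{P}, t)$ equivalent to the Schreier action, the ramification profile over each $p \in \mathcal{P}$ being given by the cycle structure of the corresponding $2$-factor. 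Realise this datum as an orientation-preserving branched covering $g_n: \mathbb{S}^2 \to \mathbb{S}^2$ ramified over $\mathcal{P}$, and factor it as $g_n = f_1 \circ \dots \circ f_n$ compatibly with the covering maps $\pi_{n+1}: \Gamma_{n+1} \to \Gamma_n$. The main obstacle is ensuring consistent realisability across all levels, namely that the factor $f_{n+1}$ extracted from the step $n \to n+1$ has critical values in the fixed set $\mathcal{P}$; this is the post-critical finiteness condition, and it is exactly what is guaranteed by the compatibility of cactus labellings along the tower (the cycle data at level $n+1$ refines that at level $n$ via $\pi_{n+1}$). The cactus hypothesis is sharp here, because if two cycle blocks shared two or more vertices, the local planar pictures around distinct branch points would be forced to overlap, which cannot occur for any branched self-cover of $\mathbb{S}^2$.
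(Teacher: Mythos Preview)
The paper does not prove this theorem; it is quoted from Nekrashevych \cite{Nekr09} and used as a black box. So there is no ``paper's proof'' to compare against, and your sketch has to be judged on its own.

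Your outline has the right flavour, but it never uses the one hypothesis that makes the statement true: that the $f_n$ are \emph{polynomials}, i.e.\ branched self-covers of $\mathbb{S}^2$ with a totally ramified fixed point at $\infty$. For a general post-critically finite rational map the Schreier graphs on the standard generating loops are planar but not cacti (already $z\mapsto z^2$ viewed as a rational map with $\mathcal{P}=\{0,\infty\}$ gives two generators whose cycles coincide on all vertices). In the forward direction, the sentence ``disjointness of the Jordan loops in $\mathbb{S}^2$ forces them to intersect in at most one point of $V_n$'' is not a proof: the loops $\gamma_p$ are disjoint in the base, and their lifts are disjoint in the cover, but the \emph{vertices} of the Schreier graph are the preimages of the basepoint $t$, not points on the lifted loops, and a single preimage of $t$ belongs to one cycle of \emph{every} generator. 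What you need is that the preimage of a spider (arcs from the finite post-critical points to $\infty$) is a forest because $\infty$ is totally ramified; the Schreier cycles then sit in the complementary disks, and two of them can share at most the unique preimage of $t$ in the region between them. This is exactly where ``polynomial'' enters.

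The converse has the symmetric gap. Realising a Hurwitz datum as a branched cover of $\mathbb{S}^2$ is fine, but you must show that the cover is a topological \emph{polynomial}, i.e.\ that there is a point over which the cover is totally ramified. This comes from the cactus hypothesis: the block-cut tree of a finite cactus gives a planar embedding with a single unbounded face, and the boundary walk of that face corresponds to the product $\gamma_{p_1}\cdots\gamma_{p_k}$ acting as a single full cycle---precisely the total ramification at $\infty$. Your sketch builds $g_n$ but never checks this, and without it the factorisation $g_n=f_1\circ\cdots\circ f_n$ into polynomials (rather than rational maps) is not available. Nekrashevych's argument in \cite{Nekr09} goes through the notion of \emph{dendroid} (tree-like) sets of permutations, which is the algebraic encoding of exactly this total-ramification condition; that is the missing ingredient on both sides of your sketch.
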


Suppose now that the rooted tree $T$ is $q$-regular (i.e. $q_n=q$ for any $n\geq 0$.) Then, given a finite alphabet $X=\{0,1,\ldots,q-1\}$, any vertex of the $n$-th level of $T$ can be regarded as an element of $X^n$, the set of words of length $n$ in the alphabet $X$ ($X^0$ consists of the empty word), whereas the boundary $\partial T$ of $T$ is identified with $X^\omega$, the set of infinite words in $X$; write $X^{\ast}=\bigcup_{n\geq 0}X^n$.\\
\indent Given $g\in Aut(T)$ and $v\in X^*$, define $g|_v\in Aut(T)$, called  the \textbf{restriction of the action of $g$ to the subtree rooted at $v$}, by $g(vw)=g(v)g|_v(w)$ for all $w\in X^*$. For any vertex $v$ of the tree, the subtree of $T$ rooted at $v$ is isomorphic to $T$. Therefore, every automorphism $g\in Aut(T)$ induces a permutation of the vertices of the first level of the tree and $q$ restrictions, $g|_0,...,g|_{q-1},$ to the subtrees rooted at the vertices of the first level. It can be written as $g=\tau_g(g|_0,\ldots,g|_{q-1})$, where $\tau_g\in S_q$ describes the action of $g$ on the first level of the tree. In fact, $Aut(T)$ is isomorphic to the wreath product $S_q\wr Aut(T)$ where $S_q$ denotes the symmetric group on $q$ letters, and thus $Aut(T)\cong \wr_{i=1}^{\infty} S_q$.\\
\indent For a subgroup $G<Aut(T)$, the natural question whether restricting the action to a subtree isomorphic to $T$ preserves $G$, motivates the following definition. It was forged around 2000, see e.g. \cite{GNS}, though self-similar groups were known before -- this class of groups contains many exotic examples of groups, including groups of intermediate growth, non-elementary amenable groups, amenable but not subexponentially amenable groups.

\begin{defi}
The action of a group $G$ by automorphisms on a $q$-regular rooted tree $T$ is self-similar if $g|_v\in G$, $\forall v\in X^*, \forall g\in G$.
\end{defi}

Consequently, if $G<Aut(T)$ is self-similar, an automorphism $g\in G$ can be represented as $g=\tau_g(g|_0,\ldots,g|_{q-1})$, where $\tau_g\in S_q$ describes the action of $g$ on the first level of the tree, and $g|_i\in G$ is the restriction of the action of $g$ to the subtree $T_i$ rooted at the $i$-th vertex of the first level. So, if $x\in X$ and $w$ is a finite word in $X$, we have $g(xw)=\tau_g(x)g|_x(w)$.\\
\indent Self-similar groups can be also characterized as \textbf{automata groups}, i.e., groups generated by states of an invertible automaton (see e.g. \cite{GNS}). An \textbf{automaton} over the alphabet $X$ with the set of states $\mathcal{S}$ is defined by the transition map $\mu: \mathcal{S}\times X \rightarrow \mathcal{S}$ and the output map $\nu: \mathcal{S}\times X \rightarrow X$. It is \textbf{invertible} if, for all $s\in\mathcal{S}$, the transformation $\nu(s, \cdot):X\rightarrow X$ is a permutation of $X$. It can be represented by its \emph{Moore diagram} where vertices correspond to states and for every state $s\in \mathcal{S} $ and every letter $x\in X$, an oriented edge connects $s$ with $\mu(s,x)$ labeled by $x|\nu(s,x)$. A natural action on the words over $X$ is induced, so that the maps $\mu$ and $\nu$ can be extended to $\mathcal{S}\times X^{\ast}$: $\mu(s,xw) =\mu(\mu(s,x),w)$, $\nu(s,xw) = \nu(s,x)\nu(\mu(s,x),w)$, where we set $\mu(s,\emptyset) = s$ and $\nu(s,\emptyset) = \emptyset$. If we fix an initial state $s$ in an automaton $\mathcal{A}$, then the transformation $\nu(s,\cdot)$ on the set $X^{\ast}$ is thus defined; it is denoted by $\mathcal{A}_s$. The image of a word $x_1x_2\ldots$ under $\mathcal{A}_s$ can be easily found using the Moore diagram: consider the directed path starting at the state $s$ with consecutive labels $x_1|y_1$, $x_2|y_2,...$; the image of the word $x_1x_2\ldots$ under the transformation $\mathcal{A}_s$ is then $y_1y_2\ldots$. More generally, given an invertible automaton $\mathcal{A}=(\mathcal{S},X,\mu,\nu)$, one can consider the group generated by the transformations $\mathcal{A}_s$, for $s\in \mathcal{S}$; this group is called the \textbf{automaton group} generated by $\mathcal{A}$ and is denoted by $G(\mathcal{A})$.\\
\indent To a group with a self-similar action that is \textbf{contracting}, (which means the existence of a finite set $\mathcal{N}\subset G$ such that for every $g\in G$ there exists $k\in \mathbb{N}$ such that $g|_v\in \mathcal{N}$, for all words $v$ of length greater or equal to $k$), Nekrashevych associates its \emph{limit space} $\mathcal J(G)$, often a fractal. Rescaled finite Schreier graphs form a sequence of finite approximations to the compact $\mathcal J(G)$. Orbital Schreier graphs $\Gamma_{\xi}$ on the other hand describe the local structure of the limit space.\\
\indent An important class of self-similar groups is formed by iterated monodromy groups of partial self-coverings of path connected and locally path connected topological spaces (e.g. of complex rational functions.) If the covering is expanding, its Julia set is homeomorphic to the limit space of its iterated monodromy group. Details about this very interesting subject can be found in \cite{NekBook}.

\section{Invariance property of avalanches of the ASM on cacti} \label{SubsecErgodicityAval}

\setcounter{defi}{0}

In this section, we return to studying avalanches on cacti. Our aim here is to show that Theorem \ref{ThmCP} can be applied not only to individual limits in the space $\mathcal{X}$ of rooted graphs but also in the random weak limit. More precisely, we show:

\begin{prop}\label{PropErgodicity}
Let $\{\Gamma_n\}_{n\geq 1}$ be a covering sequence of finite $2k$-regular cacti ($k\in\mathbb{N}^\ast$) such that the conditions of Theorem \ref{ThmCP} are satisfied in the random weak limit $\rho$. Then, asymptotically in M, the probability distribution $\lim_{n\to\infty}\mathbb{P}_{\mu_n}(Mav_{H_n}(\cdot,v)=M)$ (where $\{H_n\}_{n\geq 1}$ is an exhaustion of $(\Gamma,v)$ satisfying Convention \ref{RemOnCPinf}) is $\rho$-almost everywhere the same. In particular, the critical exponent is almost surely constant.
\end{prop}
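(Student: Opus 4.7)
The plan is to combine a rerooting-invariance consequence of Theorem \ref{ThmCP} with ergodicity of the boundary action underlying the covering sequence.

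The first step is to show that the asymptotic behaviour of $L_v(M):=\lim_{n\to\infty}\mathbb{P}_{\mu_n}(Mav_{H_n}(\cdot,v)=M)$ as $M\to\infty$ depends only on the underlying unrooted cactus. In an infinite one-ended cactus $\Gamma$ the block-path $\mathcal{CP}_v$ converges to the unique end of $\Gamma$, so for any two vertices $v,v'\in V(\Gamma)$ the block-paths $\mathcal{CP}_v$ and $\mathcal{CP}_{v'}$ share a common infinite tail. Since the upper and lower bounds of Theorem \ref{ThmCP} involve only the blocks $\mathcal{C}_{i_M}$ and $\mathcal{C}_{i_M+1}$ with $i_M\to\infty$, the asymptotic behaviour of $L_v(M)$ is determined solely by this common tail and is therefore rerooting-invariant; in particular the critical exponent $\delta$ is a function of the unrooted graph alone.

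Second, by Proposition \ref{PropRegGraphsSchreier} realize $\{\Gamma_n\}_{n\ge 1}$ as Schreier graphs of a finitely generated group $G$ acting by automorphisms on a spherically homogeneous rooted tree $T$. Connectedness of each $\Gamma_n$ forces $G$ to be transitive on every level $L_n$, hence spherically transitive on $T$. Under this identification the random weak limit $\rho$ coincides with the pushforward of the uniform measure $\lambda$ on $\partial T$ under $\xi\mapsto(\Gamma_\xi,\xi)$: for each $n$ the measure $\rho_n$ is the pushforward of $\lambda$ by $\xi\mapsto(\Gamma_n,\xi_n)$ (the uniform measure on $L_n$ is the projection of $\lambda$), and the local convergence $(\Gamma_n,\xi_n)\to(\Gamma_\xi,\xi)$ passes this identification to the limit. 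Since $\Gamma_{g\xi}=\Gamma_\xi$ for every $g\in G$, with $g\xi$ merely playing the role of another root of the same graph, the first step implies that $\xi\mapsto(\text{asymptotic of }L_\xi)$ is a $G$-invariant function on $\partial T$.

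It remains to invoke the ergodicity of the $G$-action on $(\partial T,\lambda)$, which is immediate from spherical transitivity: for any cylinder $wX^{\omega}$ with $w\in L_n$, transitivity of $G$ on $L_n$ gives $G\cdot wX^{\omega}=\partial T$, so every $G$-invariant Borel subset of $\partial T$ has $\lambda$-measure $0$ or $1$. Hence the $G$-invariant function above is $\lambda$-a.s.\ constant, which yields $\rho$-a.s.\ constancy of the asymptotic of $L_v$ and, in particular, of the critical exponent. I expect the main delicate point to be the clean identification of $\rho$ with the pushforward of $\lambda$ under $\xi\mapsto(\Gamma_\xi,\xi)$, together with the verification that the asymptotic of $L_\xi$ depends measurably on $\xi$, so that the ergodicity statement can indeed be applied to it.
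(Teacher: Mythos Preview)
Your proposal is correct and follows essentially the same strategy as the paper: realize the covering sequence as Schreier graphs via Proposition~\ref{PropRegGraphsSchreier}, use that for a one-ended cactus any two rootings give block-paths with a common infinite tail so that the Theorem~\ref{ThmCP} bounds are rerooting-invariant, and conclude by ergodicity of the boundary action. The only cosmetic difference is that you run the ergodicity argument directly on $(\partial T,\lambda)$ with the $G$-action, whereas the paper first pushes everything forward to $\mathcal{X}$ and uses ergodicity of $\rho$ for the change-of-root equivalence relation; the two delicate points you flag (identifying $\rho$ with the pushforward of $\lambda$ and measurability of the relevant functions) are exactly what the paper isolates and proves as the auxiliary Lemma~\ref{LemPhiMeasurable}.
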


The following lemma was explained to us by G. Elek:

\begin{lem} \label{LemPhiMeasurable}
Let $G\leq Aut(T)$ be a finitely generated spherically transitive group of automorphisms of a rooted tree $T$. Recall that $\lambda$ denotes the uniform measure on the boundary $\partial T$ of $T$ and consider the application $\phi:\partial T\longrightarrow \mathcal{X}$, $\phi(\xi):=(\Gamma_\xi,\xi)$, mapping a point $\xi\in\partial T$ to the (rooted isomorphism class of the) orbital Schreier graph $\Gamma_\xi$ rooted at $\xi$. Then $\phi$ is measurable and the image of $\lambda$ under $\phi$ is the random weak limit of the sequence $\{\Gamma_n\}_{n\geq 1}$ of finite Schreier graphs of the action of $G$ on the levels of $T$.
\end{lem}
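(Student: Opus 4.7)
The plan is to verify measurability and then identify $\phi_\ast\lambda$ by exhibiting it as the limit of a sequence of pushforwards naturally equal to the measures $\rho_n$ defining the random weak limit.

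For measurability, I would check the preimage of a basic Borel set in $\mathcal{X}$, that is, the set $\mathcal{B}_{r,B}$ of rooted graphs whose $r$-ball around the root is isomorphic to a prescribed finite rooted graph $B$. Let $S$ be a finite symmetric generating set of $G$, and let $W_r\subset G$ be the word-ball of radius $r$ (finite by finite generation). The rooted ball $B_{\Gamma_\xi}(\xi,r)$ is completely determined by the equivalence relation on $W_r$ given by $g\sim_\xi g'$ iff $g(\xi)=g'(\xi)$, together with the combinatorial structure inherited from $S$. Since $g(\xi)=g'(\xi)$ iff $g'^{-1}g\in \mathrm{Stab}_G(\xi)=\bigcap_n\mathrm{Stab}_G(\xi_n)$, each such condition cuts out a closed (cylinder-intersection) subset of $\partial T$. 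There are only finitely many conditions to check, so $\phi^{-1}(\mathcal{B}_{r,B})$ is a Borel subset of $\partial T$, and $\phi$ is measurable.

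For the identification of $\phi_\ast\lambda$, I would introduce auxiliary maps $\Phi_n\colon\partial T\to\mathcal{X}$ defined by $\Phi_n(\xi):=(\Gamma_n,\xi_n)$. By definition of the uniform measure $\lambda$ on $\partial T$, the prefix map $\xi\mapsto\xi_n$ pushes $\lambda$ forward to the uniform probability measure on $L_n$. Hence $(\Phi_n)_\ast\lambda$ is exactly the measure $\rho_n=\frac{1}{|L_n|}\sum_{w\in L_n}\delta_{(\Gamma_n,w)}$ obtained by rooting $\Gamma_n$ uniformly at random, which is the defining sequence for the random weak limit.

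Next I would show pointwise convergence $\Phi_n(\xi)\to\phi(\xi)$ in $(\mathcal{X},Dist)$ for every $\xi\in\partial T$. The key elementary observation is that any automorphism of $T$ preserves levels, so $g(\xi)_n=g(\xi_n)$ for every $g\in G$ and every $n$. Consequently, for fixed radius $r$, if $g(\xi)=g'(\xi)$ in $\partial T$ then $g(\xi_n)=g'(\xi_n)$ for all $n$; and if $g(\xi)\neq g'(\xi)$, they disagree past some level $N=N(g,g',\xi)$, so that $g(\xi_n)\neq g'(\xi_n)$ for $n\geq N$. Taking the maximum of these $N$ over the finitely many pairs in $W_r\times W_r$, the rooted $r$-ball of $(\Gamma_n,\xi_n)$ agrees with that of $(\Gamma_\xi,\xi)$ for all large $n$, which is precisely convergence in the metric \eqref{EqDist}.

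Finally, combining these two ingredients via the bounded convergence theorem on the compact space $(\mathcal{X},Dist)$ (test against any continuous bounded function), we obtain $(\Phi_n)_\ast\lambda\to\phi_\ast\lambda$ weakly. Since $(\Phi_n)_\ast\lambda=\rho_n$, this shows that $\phi_\ast\lambda$ is the random weak limit of $\{\Gamma_n\}_{n\geq 1}$. The main obstacle, and really the only nontrivial point, is the pointwise convergence $\Phi_n(\xi)\to\phi(\xi)$; but the identity $g(\xi)_n=g(\xi_n)$ together with finiteness of $W_r$ makes the stabilization of $r$-balls transparent.
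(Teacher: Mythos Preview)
Your proof is correct and in fact cleaner than the one given in the paper. One small technical point: to recover the full induced edge structure of the $r$-ball (including edges incident to vertices at distance exactly $r$), you need the equivalence relation $\sim_\xi$ on $W_{r+1}$ rather than $W_r$; but this does not affect the argument, since $W_{r+1}$ is still finite. Also, you invoke compactness of $(\mathcal{X},Dist)$, but you never actually use it: bounded convergence works because $\lambda$ is a probability measure and the test function $f$ is bounded.

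The paper argues differently in both parts. For measurability, instead of your word-ball/stabilizer description, it introduces the function $n(\xi)$ (the first level at which the $r$-ball around $\xi_n$ has stabilized) and writes $\phi^{-1}(C_{(H,w)})$ explicitly as a countable Boolean combination of cylinders indexed by vertices of the finite Schreier graphs. For the identification $\phi_\ast\lambda=\lim\rho_n$, the paper does not pass through your auxiliary maps $\Phi_n$ and dominated convergence; instead it compares $\lambda(\phi^{-1}(C_{(H,w)}))$ directly with the proportion of vertices of $\Gamma_n$ having $r$-type $(H,w)$, via an $\varepsilon$-argument counting ``$\varepsilon$-bad'' vertices (those whose $r$-type differs from that of a positive-measure set of boundary extensions). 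Your route is more conceptual and avoids this bookkeeping: once pointwise convergence $\Phi_n\to\phi$ is established --- which the paper in fact states earlier in Section~\ref{SelfSimilarGroups} --- weak convergence of the pushforwards is immediate. The paper's approach, on the other hand, yields a slightly more quantitative statement about how the measures of cylinder sets are approximated at each finite level.
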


\begin{proof}
The $\sigma$-algebra on $\mathcal{X}$ is generated by cylindrical sets of the form $C_{(H,w)}:=\{(\Gamma,v)|B_{\Gamma}(v,r)\simeq(H,w)\}$ where $r\in\mathbb{N}$ and $(H,w)$ is a finite rooted graph. We say that a vertex $v$ of $\Gamma$ has $r$-type $(H,w)$ if the ball of radius $r$ centered in $v$ is isomorphic to $(H,w)$.\\
\indent Fix $r\in\mathbb{N}$; for any $\xi\in\partial T$, there exists a smallest integer $n(\xi)$ such that the balls $B_{\Gamma_\xi}(\xi,r)$ and $B_{\Gamma_n}(\xi_n,r)$ are isomorphic for all $n\geq n(\xi)$. For any $n\geq 1$, given a finite rooted graph $(H,w)$, define the set $A_n:=\{v\in V(\Gamma_n)|\textrm{$v=\xi_{n(\xi)}$ for some $\xi\in\partial T$ and $v$ has $r$-type $(H,w)$}\}$. Also, define $B_n:=\bigcup_{m>n}\{w\in V(\Gamma_m)|\textrm{$w=\xi_{n(\xi)}$ for some $\xi\in\partial T$}\}$. Then,

\begin{displaymath}
\phi^{-1}(C_{(H,w)})=\bigcup_{n\geq 1}\left(\bigcup_{A_n}vX^\omega\backslash\left(\bigcup_{B_n}wX^\omega\cap\bigcup_{A_n}vX^\omega\right)\right),
\end{displaymath}

\noindent so that $\phi^{-1}(C_{(H,w)})$ is a Borel set, and thus $\phi$ is measurable.\\
\indent Note that the integer-valued function $\xi\mapsto n(\xi)$ is measurable; hence, for any $\epsilon>0$, there exists $n_\epsilon$ such that $\lambda(\{\xi\in\partial T|n(\xi)>n_\epsilon\})<\epsilon$. We claim that $\lambda(\phi^{-1}(C_{(H,w)}))=\lim_{n\to\infty}\frac{1}{|V(\Gamma_n)|}|\{v\in V(\Gamma_n)|\textrm{$v$ has $r$-type $(H,w)$}\}|$. Indeed, given $\epsilon>0$, we say that a vertex $v\in V(\Gamma_n)$ is $\epsilon$-bad if $\lambda(\{\textrm{$\xi\in vX^\omega|$ $\xi$ and $v$ have different $r$-types}\})>\epsilon/n$. We have

\begin{displaymath}
\lambda(\{\xi\in\partial T|n(\xi)>n_\epsilon\})=\sum_{v\in V(\Gamma_{n_\epsilon})}\lambda(\{\xi\in vX^\omega| n(\xi)>n_\epsilon\})<\epsilon.
\end{displaymath}

\noindent It is easy to check that the proportion of terms in the previous sum which are greater than $\sqrt{\epsilon}/n_\epsilon$ must be less than $\sqrt{\epsilon}$. Since, for any $\epsilon>0$ and $v\in V(\Gamma_{n_\epsilon})$, $\{\textrm{$\xi\in vX^\omega|$ $\xi$ and $v$ have different $r$-types}\}\subset\{\xi\in vX^\omega| n(\xi)>n_\epsilon\}$, it follows that the proportion of vertices in $\Gamma_{n_\epsilon}$ which are $\sqrt{\epsilon}$-bad is smaller than $\sqrt{\epsilon}$. This shows that the difference $\lambda(\phi^{-1}(C_{(H,w)}))-\frac{1}{|V(\Gamma_n)|}|\{v\in V(\Gamma_n)|\textrm{$v$ has $r$-type $(H,w)$}\}|$ can be made arbitrarily small by taking $n$ large enough.
\end{proof}

\begin{proof}[Proof of Proposition \ref{PropErgodicity}]
Observe that the conditions of Theorem \ref{ThmCP} are all measurable; in particular, the subset $\mathcal{C}\subset\mathcal{X}$ constituted by one-ended cacti is measurable. Using notations from Subsection \ref{SubsecCP}, for any $M\in\mathbb{N}$, let $X_M:\mathcal{C}\longrightarrow\mathbb{R}_+$ be the function mapping a one-ended cactus $(\Gamma,v)$ to $1/(|\mathcal{C}_{i_M}|\cdot|\mathcal{C}_{i_M+1}|)$ if the integer $M$ occurs as the mass of an avalanche on $(\Gamma,v)$, and to $0$ otherwise. The function $X_M$ is measurable as, for any fixed $M\in\mathbb{N}$ and $a,b\in\mathbb{R}$, the event $\{(\Gamma,v)| a\leq1/(|\mathcal{C}_{i_M}|\cdot|\mathcal{C}_{i_M+1}|)< b\}$ is a cylinder event. For any function $g:\mathbb{N}\longrightarrow\mathbb{R}_+$, consider the event

\begin{displaymath}
E_g:=\bigcup_{M_0\geq 1}\bigcap_{M\geq M_0}\{(\Gamma,v)| \textrm{$X_M(\Gamma,v)=g(M)$ or $X_M(\Gamma,v)=0$}\}.
\end{displaymath}

\noindent Our aim is to show that $E_g$ is of $\rho$-measure $0$ or $1$, and this will be done by using an ergodicity argument.\\
\indent It follows from Proposition \ref{PropRegGraphsSchreier}, that the sequence $\{\Gamma_n\}_{n\geq 1}$ can be realized as Schreier graphs (with respect to an appropriate set of generators) of an action of a group $G$ of automorphisms of a rooted tree $T$. Since the graphs we consider are connected, the action of $G$ on $T$ is spherically transitive, and hence the action of $G$ on the boundary $\partial T$ of $T$ is ergodic with respect to the uniform measure $\lambda$ (see for instance Proposition 6.5. in \cite{GNS}.)\\
\indent By Lemma \ref{LemPhiMeasurable}, the application $\phi: \partial T\longrightarrow \mathcal{X}$, $\phi(\xi):=(\Gamma_\xi,\xi)$ is measurable, and the random weak limit $\rho$ of the sequence $\{\Gamma_n\}_{n\geq 1}$ is the image under $\phi$ of the uniform measure $\lambda$. Recall that a measure $\mu$ on a standard Borel space $(X,\mathcal{B})$ with an equivalence relation $\mathcal{R}$ is $\mathcal{R}$-ergodic, if every Borel $\mathcal{R}$-invariant subset of $X$ is of $\mu$-measure $0$ or $1$. Consider the equivalence relation $\mathcal{R}$ on $\mathcal{X}$, the \emph{change of root}, that identifies different rootings of a graph. 
One easily checks that the random weak limit $\rho=\phi(\lambda)$ is $\mathcal{R}$-ergodic.\\
\indent We verify that the event $E_g$ is $\mathcal{R}$-invariant: let $(\Gamma,v)$ and $(\Gamma',v')$ be one-ended cacti and suppose that $(\Gamma,v)$ and $(\Gamma',v')$ are $\mathcal{R}$-equivalent. Let $\mathcal{CP}_v=\mathcal{C}_1\mathcal{C}_2\dots$ be the unique block-path of infinite length in $(\Gamma,v)$ starting at $v$ (respectively $\mathcal{CP}_{v'}=\mathcal{C}_1'\mathcal{C}_2'\dots$ in $(\Gamma',v')$ starting at $v'$) and recall that $p_i$ (respectively $p_i'$) denotes the cut vertex between $\mathcal{C}_i$ and $\mathcal{C}_{i+1}$ (respectively between $\mathcal{C}_i'$ and $\mathcal{C}_{i+1}'$) (see Subsection \ref{SubsecCP}). Since $(\Gamma,v)$ and $(\Gamma',v')$ are one-ended and isomorphic as unrooted graphs then, up to some initial segment, $\mathcal{CP}_v$ and $\mathcal{CP}_{v'}$ are isomorphic (i.e. there exist $k,l\geq 1$ such that $\mathcal{C}_k\mathcal{C}_{k+1}\dots$ and $\mathcal{C}_l'\mathcal{C}_{l+1}'\dots$ are isomorphic.) Moreover, the subgraphs $D(p_{k+i})\subset(\Gamma,v)$ and $D(p_{l+i}')\subset(\Gamma',v')$ are isomorphic for any $i\geq 0$. It follows that $X_M(\Gamma,v)=X_M(\Gamma',v')$ for any $M$ sufficiently large.\\
\indent Thus, by ergodicity of $\rho$, the event $E_g$ has probability $0$ or $1$. It follows then from Theorem \ref{ThmCP}, that the asymptotical behaviour (in $M$) of the distribution $\lim_{n\to\infty}\mathbb{P}_{\mu_n}(Mav_{H_n}(\cdot,v)=M)$ is $\rho$-almost everywhere the same.
\end{proof}

\section{The Basilica Group and its Schreier Graphs} \label{Basilica}

\setcounter{defi}{0}

The Basilica group $\mathcal{B}$ is an automorphism group of the rooted binary tree which is generated by two automorphisms $a$ and $b$ having the following self-similar structure:

\begin{equation}
a=e(b,id) \ \ \ \ \ \ \ b=(0 \ 1)(a,id),
\end{equation}

\noindent where $id$ denotes the trivial automorphism of the tree, whereas $e$ is the identity permutation in $S_2$. In other words, $a$ fixes the first level, then acts as $b$ on the subtree rooted at $0$ and as the identity on the subtree rooted at $1$, whereas $b$  permutes the vertices of the first level, then acts as $a$ on the subtree rooted at $0$ and as the identity on the subtree rooted at $1$. It can be easily checked that the action of $\mathcal{B}$ on the binary tree is spherically transitive.\\
\indent The group $\mathcal{B}$ was introduced by Grigorchuk and \.{Z}uk \cite{grizuk} as the group generated by the three-state automaton represented in Figure \ref{AUTOMATBASILICA}. It can also be described as the iterated monodromy group $IMG(z^2-1)$ of the complex polynomial $z^2-1$ \cite{NekBook} (see Figure \ref{JULIABASILICA}).


\begin{figure}[H]
\begin{center}
\begin{picture}(300,130)
\thicklines \setvertexdiam{20} \setprofcurve{15}\setloopdiam{20}
\letstate A=(75,110)
\letstate C=(200,70)
\letstate B=(75,20)
\drawstate(A){$a$} \drawstate(B){$b$} \drawstate(C){$id$}
\drawcurvededge(A,C){$1|1$}
\setprofcurve{-20}\drawcurvededge[r](B,C){$1|0$}
\drawloop[r](C){$0|0,1|1$} \drawcurvededge[r](A,B){$0|0$}
\drawcurvededge[r](B,A){$0|1$}
\end{picture}
\end{center}
\caption{The automaton generating the Basilica group.}
\label{AUTOMATBASILICA}
\end{figure}
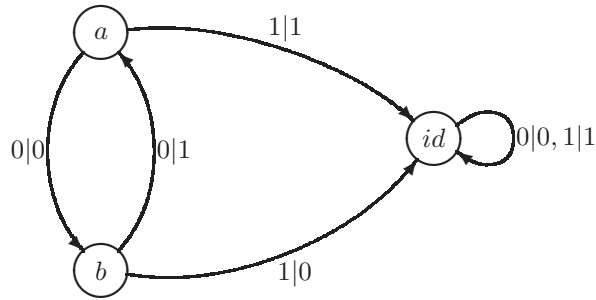


\begin{figure}[H] 
\begin{center}
\includegraphics[scale=0.3]{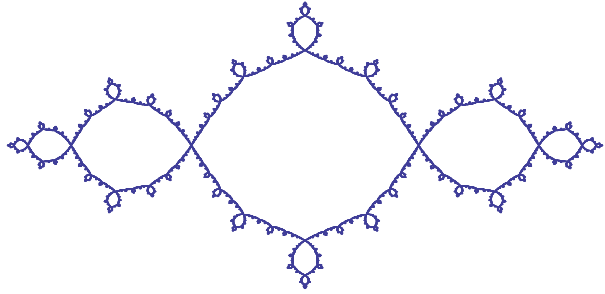}
\end{center}
\caption{The Julia set $\mathcal{J}(z^2-1)$.}
\label{JULIABASILICA}
\end{figure}

For each $n\geq 1$, we denote by $\Gamma_n\equiv \Gamma(\mathcal{B},\{a,b\},\{0,1\}^n)$ the Schreier graph of the action of the Basilica group $\mathcal{B}$ on the $n$-th level of the binary tree. These graphs, appropriately rescaled, form an approximating sequence of the Basilica Julia set $\mathcal J(z^2-1)$ (this is used for example by Rogers and Teplyaev in \cite{RogTep09} for defining laplacians on the Julia set). The graphs $\{\Gamma_n\}_{n\geq 1}$ can be constructed recursively as follows:

\begin{prop}\label{rulesBas}\emph{\cite{DanDonMat09}}
The Schreier graph $\Gamma_{n+1}$ is obtained from $\Gamma_n$ by applying to all subgraphs of $\Gamma_n$ given by single edges the rules represented in Figure \ref{SUBSTRULES}.
\end{prop}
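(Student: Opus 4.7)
My plan is to prove Proposition \ref{rulesBas} by a direct calculation exploiting the self-similar presentations of the generators, then matching the resulting local picture to the rules in Figure \ref{SUBSTRULES}. The first step is to unwind the wreath-product formulas $a = e(b,\mathrm{id})$ and $b = (0\ 1)(a,\mathrm{id})$ into the four elementary relations that govern the action on any word $xw$ with $x\in\{0,1\}$:
\[
a(0w) = 0\cdot b(w), \qquad a(1w) = 1w, \qquad b(0w) = 1\cdot a(w), \qquad b(1w) = 0w.
\]
These four identities say that the action of the generators on level $n+1$ is entirely determined by the action on level $n$, and they are the only information needed for the substitution rules.

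Next I would fix a single edge of $\Gamma_n$ between vertices $u$ and $v$ (including $u=v$, i.e.\ a loop, as a degenerate case), look at the four preimages $0u,\,1u,\,0v,\,1v$ in $\Gamma_{n+1}$, and read off the $a$- and $b$-adjacencies between them from the four relations above. There are essentially two cases. If the edge is labeled $a$ (so $v=a(u)$), the relations immediately yield $a$-loops at $1u$ and $1v$, the radial $b$-edges $(1u,0u)$ and $(1v,0v)$, and the $b$-edges $(0u,1v)$ and $(0v,1\cdot a(v))$, while the $a$-neighbours of $0u$ and $0v$ are $0\cdot b(u)$ and $0\cdot b(v)$, which lie in the preimages of $b$-neighbours of $u,v$ in $\Gamma_n$. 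If the edge is labeled $b$ (so $v=b(u)$), the relations similarly produce $a$-loops at $1u$ and $1v$, the radial $b$-edges $(1u,0u)$ and $(1v,0v)$, and the $a$-edge $(0u,0v)$ coming from $a(0u)=0\cdot b(u)=0v$. In either case the output of the calculation has the shape of the corresponding picture in Figure \ref{SUBSTRULES}.

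Finally, I would invoke the fact that $\pi_{n+1}$ is a covering of degree $2$: every vertex of $\Gamma_{n+1}$ has a unique parent in $\Gamma_n$, and every edge of $\Gamma_{n+1}$ projects to a (possibly loop) edge of $\Gamma_n$. Consequently, summing the contributions of the substitution rule over all edges of $\Gamma_n$ produces every edge of $\Gamma_{n+1}$ exactly once, so the rules describe $\Gamma_{n+1}$ completely.

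The main bookkeeping obstacle is to check that when two edges of $\Gamma_n$ share a vertex $u$, the sub-pictures produced by the rules are mutually consistent on the shared preimages $0u,1u$, and that no $b$-edge of type $(0u, 1\cdot a(u))$ or $a$-edge of type $(0u, 0\cdot b(u))$ is double-counted or lost. This requires only pairing the ``outgoing'' $a$- and $b$-adjacencies of one rule with the ``incoming'' ones of the adjacent rule, but it is the step where care is needed to handle loops, $2$-cycles of $b$, and longer cycles of the generators on level $n$ uniformly.
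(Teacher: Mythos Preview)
The paper does not give its own proof of this proposition; it is quoted from \cite{DanDonMat09}. Your four relations and the degree-$2$ covering observation are exactly the right ingredients, and the overall strategy is sound.

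There is, however, a genuine mismatch between what you compute and what the rules in Figure~\ref{SUBSTRULES} actually assert. You are computing, for each edge of $\Gamma_n$, the induced subgraph on its four preimages; but the substitution rules form a strictly finer \emph{partition} of $E(\Gamma_{n+1})$. For a $b$-edge $u\to v$, rule SR2 outputs \emph{only} the single $a$-edge $0u\to 0v$; the $a$-loops at $1u,1v$ and the radial $b$-edges $(1u,0u),(1v,0v)$ that you also list are not part of SR2 --- they are produced instead by SR1 or SR3 applied to the $a$-loop or $a$-edge of $\Gamma_n$ incident to $u$ (resp.\ $v$). Likewise, for a non-loop $a$-edge $0u\to 0v$, rule SR3 outputs only the $b$-path $00u\to 10v\to 00v$ together with the $a$-loop at $10v$; the $a$-loop at $10u$ and the $b$-edge $(10u,00u)$ that also appear in your picture come from SR3 applied to the \emph{other} $a$-edge at $0u$. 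So the sentence ``the output of the calculation has the shape of the corresponding picture'' is not correct as stated, and the consistency check you postpone to the final paragraph is where the real content of the proof lies.

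The clean fix is to reverse the direction of the argument: first verify directly from your four relations that each rule's output edges lie in $\Gamma_{n+1}$ (one line per rule), and then show that every edge of $\Gamma_{n+1}$ arises from exactly one rule by a short case split on the first one or two letters of its source vertex (e.g.\ an $a$-loop at $1w$ comes from SR1 if $w=1u$ and from SR3 if $w=0v$; a non-loop $a$-edge $0w\to 0b(w)$ comes from SR2; a $b$-edge $1w\to 0w$ comes from SR1 or SR3 according to the first letter of $w$; and so on). This makes the no-double-counting issue automatic and matches the figures on the nose.
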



\begin{figure}[H]
\begin{center}
\begin{picture}(400,125)
\put(112,117){SR1}\put(192,117){SR2}\put(292,117){SR3}
\letvertex A=(120,100)\letvertex B=(100,20)\letvertex C=(140,20)
\letvertex D=(180,100)\letvertex E=(220,100)\letvertex F=(180,20)\letvertex G=(220,20)
\letvertex H=(280,100)\letvertex I=(320,100)\letvertex L=(260,10)\letvertex M=(300,20)\letvertex N=(340,10)
\put(117,60){$\Downarrow$}\put(197,60){$\Downarrow$}\put(297,60){$\Downarrow$}
\put(117,92){$1u$}\put(97,11){$11u$}\put(137,11){$01u$}
\put(177,92){$u$}\put(217,92){$v$}\put(177,11){$0u$}\put(217,11){$0v$}
\put(277,92){$0u$}\put(317,92){$0v$}
\put(257,1){$00u$}\put(296,10){$10v$}\put(337,1){$00v$} \put(327,97)

\drawvertex(A){$\bullet$}\drawvertex(B){$\bullet$}
\drawvertex(C){$\bullet$}\drawvertex(D){$\bullet$}
\drawvertex(E){$\bullet$}\drawvertex(F){$\bullet$}
\drawvertex(G){$\bullet$}\drawvertex(H){$\bullet$}
\drawvertex(I){$\bullet$}\drawvertex(L){$\bullet$}
\drawvertex(M){$\bullet$}\drawvertex(N){$\bullet$}
\drawundirectedloop(A){$a$}\drawundirectedloop[l](B){$a$}
\drawundirectedcurvededge(B,C){$b$}\drawundirectedcurvededge(C,B){$b$}
\drawundirectededge(D,E){$b$} \drawundirectededge(F,G){$a$}
\drawundirectededge(H,I){$a$}
\drawundirectedcurvededge(L,M){$b$}\drawundirectedloop(M){$a$}
\drawundirectedcurvededge(M,N){$b$}
\end{picture}
\end{center}
\end{figure}
with
\begin{figure}[H]
\begin{center}
\begin{picture}(200,40)
\letvertex A=(70,25)\letvertex B=(130,25)
\put(67,16){$0$}\put(127,16){$1$}\put(30,21){$\Gamma_1$}
\drawvertex(A){$\bullet$}\drawvertex(B){$\bullet$}
\drawundirectedloop[l](A){$a$}\drawundirectedloop[r](B){$a$}
\drawundirectedcurvededge(A,B){$b$}\drawundirectedcurvededge(B,A){$b$}
\end{picture}
\end{center}
\caption{Rewriting rules for construction of the Basilica Schreier graphs and the Schreier graph $\Gamma_1$.}
\label{SUBSTRULES}
\end{figure}


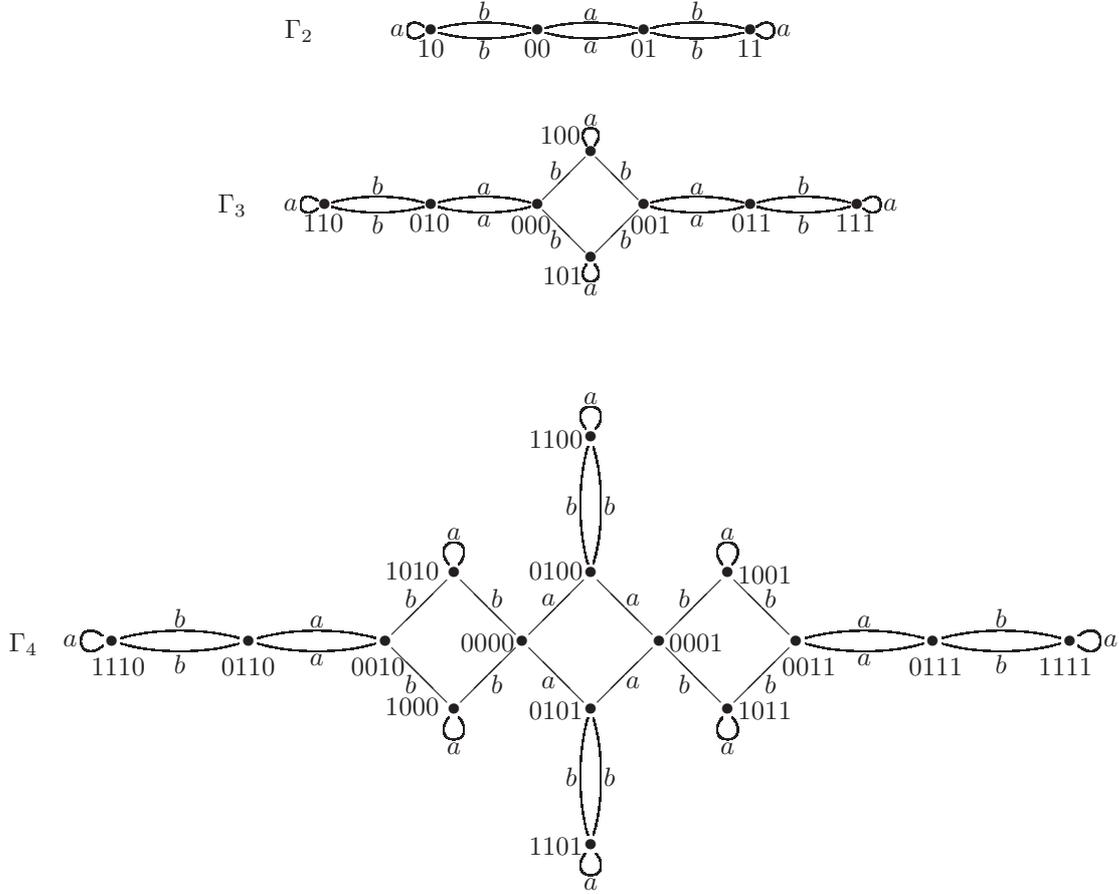
\begin{figure}[H]
\begin{center}
\begin{picture}(300,30)
\letvertex C=(90,15)\letvertex D=(130,15)
\letvertex E=(170,15)\letvertex F=(210,15)
\put(85,5){10}\put(125,5){00}\put(165,5){01}\put(205,5){11}
\drawvertex(C){$\bullet$}\drawvertex(D){$\bullet$}
\drawvertex(E){$\bullet$}\drawvertex(F){$\bullet$}
\drawundirectedcurvededge(C,D){$b$}
\drawundirectedcurvededge(D,C){$b$}
\drawundirectedcurvededge(D,E){$a$}
\drawundirectedcurvededge(E,D){$a$}
\drawundirectedcurvededge(E,F){$b$}
\drawundirectedcurvededge(F,E){$b$}
\put(35,12){$\Gamma_2$}
\drawundirectedloop[l](C){$a$} \drawundirectedloop[r](F){$a$}
\end{picture}
\begin{picture}(300,60)
\letvertex A=(50,10)\letvertex B=(90,10)\letvertex C=(130,10)\letvertex D=(150,30)
\letvertex E=(150,-10)\letvertex F=(170,10)\letvertex G=(210,10)\letvertex H=(250,10)
\put(42,0){110} \put(82,0){010}\put(120,-1){000}
\put(131,33){100}\put(132,-20){101}\put(165,-1){001}
\put(203,0){011}\put(242,0){111}
\drawvertex(A){$\bullet$}\drawvertex(B){$\bullet$}
\drawvertex(C){$\bullet$}\drawvertex(D){$\bullet$}
\drawvertex(E){$\bullet$}\drawvertex(F){$\bullet$}
\drawvertex(G){$\bullet$}\drawvertex(H){$\bullet$}
\drawundirectededge(C,D){$b$} \drawundirectededge(E,C){$b$}
\drawundirectededge(F,E){$b$} \drawundirectededge(D,F){$b$}
\drawundirectedcurvededge(A,B){$b$}
\drawundirectedcurvededge(B,A){$b$}
\drawundirectedcurvededge(B,C){$a$}
\drawundirectedcurvededge(C,B){$a$}
\drawundirectedcurvededge(F,G){$a$}
\drawundirectedcurvededge(G,F){$a$}
\drawundirectedcurvededge(G,H){$b$}
\drawundirectedcurvededge(H,G){$b$} \put(10,7){$\Gamma_3$}
\drawundirectedloop[l](A){$a$}\drawundirectedloop(D){$a$}\drawundirectedloop[b](E){$a$}
\drawundirectedloop[r](H){$a$}
\end{picture}
\unitlength=0,45mm
\begin{picture}(300,200)
\put(-20,77){$\Gamma_4$}
\letvertex A=(10,80)
\letvertex B=(50,80)\letvertex C=(90,80)\letvertex D=(110,100)\letvertex E=(110,60)
\letvertex F=(130,80)\letvertex G=(150,100)\letvertex H=(150,60)\letvertex I=(170,80)
\letvertex J=(150,140)\letvertex K=(150,20)
\letvertex L=(190,100)
\letvertex M=(190,60)\letvertex N=(210,80)
\letvertex O=(250,80)\letvertex P=(290,80)
\put(4,70){1110} \put(42,70){0110}\put(80,70){0010}
\put(90,98){1010}\put(90,58){1000}\put(112,77){0000}
\put(132,98){0100}\put(132,57){0101} \put(173,77){0001}
\put(132,137){1100}\put(132,17){1101}
\put(193,97){1001}\put(193,57){1011}\put(206,70){0011}
\put(243,70){0111}\put(281,70){1111}
\drawvertex(A){$\bullet$}\drawvertex(B){$\bullet$}
\drawvertex(C){$\bullet$}\drawvertex(D){$\bullet$}
\drawvertex(E){$\bullet$}\drawvertex(F){$\bullet$}
\drawvertex(G){$\bullet$}\drawvertex(H){$\bullet$}
\drawvertex(I){$\bullet$}\drawvertex(L){$\bullet$}
\drawvertex(M){$\bullet$}\drawvertex(N){$\bullet$}
\drawvertex(O){$\bullet$}\drawvertex(P){$\bullet$}
\drawvertex(J){$\bullet$}\drawvertex(K){$\bullet$}
\drawundirectedcurvededge(A,B){$b$}\drawundirectedcurvededge(B,A){$b$}
\drawundirectedcurvededge(B,C){$a$}\drawundirectedcurvededge(C,B){$a$}
\drawundirectededge(C,D){$b$} \drawundirectededge(D,F){$b$}
\drawundirectededge(F,E){$b$} \drawundirectededge(E,C){$b$}
\drawundirectededge(F,G){$a$} \drawundirectededge(G,I){$a$}
\drawundirectededge(I,H){$a$} \drawundirectededge(H,F){$a$}
\drawundirectededge(I,L){$b$} \drawundirectededge(L,N){$b$}
\drawundirectededge(N,M){$b$} \drawundirectededge(M,I){$b$}
\drawundirectedcurvededge(G,J){$b$}\drawundirectedcurvededge(J,G){$b$}
\drawundirectedcurvededge(H,K){$b$}\drawundirectedcurvededge(K,H){$b$}
\drawundirectedcurvededge(N,O){$a$}\drawundirectedcurvededge(O,N){$a$}
\drawundirectedcurvededge(O,P){$b$}\drawundirectedcurvededge(P,O){$b$}
\drawundirectedloop[l](A){$a$}\drawundirectedloop(D){$a$}\drawundirectedloop[b](E){$a$}
\drawundirectedloop(J){$a$}\drawundirectedloop[b](K){$a$}\drawundirectedloop(L){$a$}\drawundirectedloop[b](M){$a$}
\drawundirectedloop[r](P){$a$}
\end{picture}
\end{center}
\caption{Basilica Schreier graphs $\Gamma_n$, $2\leq n\leq 4$.}
\label{FINITESCHREIER}
\end{figure}

It follows that, for each $n\geq 1$, $\Gamma_n$ is a $4$-regular cactus such that removing any cut vertex disconnects $\Gamma_n$ into exactly two components. Let us call the unique cycle of $\Gamma_n$ containing vertices $0^n$ and $0^{n-1}1$ the \textbf{central cycle} of $\Gamma_n$. Given any vertex $v\in V(\Gamma_n)$, there is a unique block-path (see Subsection \ref{CT}) $\mathcal{CP}_{v}=\mathcal{C}_1\dots \mathcal{C}_{r}$ joining $v$ to the central cycle of $\Gamma_n$.

\begin{defi}\label{DefDeco}{\bf{\lq\lq Decoration of a vertex\rq\rq .}}
1) Let $v\in V(\Gamma_n)\backslash\{0^n\}$ be a cut vertex. Denote by $U_1$ and $U_2$ the two connected components obtained by removing $v$, so that moreover $0^n\in U_1$. The decoration $\mathcal{D}(v)$ of $v$ is the subgraph induced by the vertex set $V(U_2)\cup \{v\}$.\\
2) Let $v$ be a vertex with a loop. Then $\mathcal{D}(v)$ is the subgraph induced by $\{v\}$.\\
3) If $v=0^n$, then $\mathcal{D}(0^n)$ is the subgraph induced by $V(U_i)\cup \{0^n\}$ where $0^{n-1}1\notin U_i$.\\
\indent A decoration of a given vertex $v\in V(\Gamma_n)$ is called a k-decoration (or a decoration of height k) if it is isomorphic to the decoration of the vertex $0^k$ for some $1\leq k\leq n$.
\end{defi}

The following proposition collects some of the properties of the graph $\Gamma_n$.

\begin{prop}\label{PropertiesGamma_n}
For any $n\geq 1$, consider the Schreier graph $\Gamma_n$. Then, the following hold:

\begin{enumerate}
\item \emph{\cite{DanDonMat09}} Every decoration in $\Gamma_n$ is a $k$-decoration for some $1\leq k\leq n$.
\item
\begin{displaymath}
|\mathcal{D}(0^n)|=\left\{\begin{array}{cc}
\frac{1}{3}(2^n+2) & \textrm{if $n$ is even,}\\
\frac{1}{3}(2^n+1) & \textrm{if $n$ is odd.}
\end{array}\right.
\end{displaymath}

\item The lengths of the cycles constituting $\Gamma_n$ are all powers of two; the number $\nu_k$ of cycles of length $2^k$ ($k\geq 1$) is

    \begin{displaymath}
    \nu_k=\left\{\begin{array}{ccc}
    3\cdot 2^{n-2k-1} & \textrm{for} & 1\leq k\leq \frac{n}{2}-1,\\
    3 & \textrm{for} & k=\frac{n}{2},
    \end{array}\right.
    \end{displaymath}

\noindent if $n$ is even, and

    \begin{displaymath}
    \nu_k=\left\{\begin{array}{ccc}
    3\cdot 2^{n-2k-1} & \textrm{for} & 1\leq k\leq \lfloor\frac{n}{2}\rfloor-1,\\
    4 & \textrm{for} &  k=\lfloor\frac{n}{2}\rfloor,\\
    1 & \textrm{for} & k=\lceil\frac{n}{2}\rceil,
    \end{array}\right.
    \end{displaymath}

\noindent if $n$ is odd.
\end{enumerate}
\end{prop}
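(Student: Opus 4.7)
Both parts are proved by induction on $n$ using the substitution rules SR1--SR3 of Proposition \ref{rulesBas}. Part 1 is cited directly from \cite{DanDonMat09} and used freely.

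For part 3, the plan is to refine the count $\nu_k$ by the edge label of the cycle. Let $\alpha_k^{(n)}$ (resp.\ $\beta_k^{(n)}$) denote the number of cycles of length $2^k$ in $\Gamma_n$ whose edges are labeled by $a$ (resp.\ by $b$), and let $L_n$ denote the total number of $a$-loops. The substitution rules yield directly: SR2 sends each $b$-labeled cycle of length $2^k$ to an $a$-labeled cycle of the same length, so $\alpha_k^{(n+1)} = \beta_k^{(n)}$; SR3 sends each $a$-labeled cycle of length $2^k$ to a $b$-labeled cycle of length $2^{k+1}$ together with $2^k$ new $a$-loops at the midpoints, contributing $\alpha_k^{(n)}$ to $\beta_{k+1}^{(n+1)}$ and $2^k\alpha_k^{(n)}$ to $L_{n+1}$; and SR1 preserves each $a$-loop and creates an additional $b$-labeled 2-cycle, contributing $L_n$ to $\beta_1^{(n+1)}$. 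Starting from the initial data for $\Gamma_1$ ($L_1 = 2$, $\beta_1^{(1)} = 1$, all other counts zero) and iterating the resulting shifts, one obtains $\alpha_k^{(n)} = L_{n-2k}$ and $\beta_k^{(n)} = L_{n-2k+1}$ with the convention $L_0 := 1$ and $L_m := 0$ for $m < 0$. A simple induction then gives $L_n = 2^{n-1}$ for $n \geq 2$, whence $\nu_k = L_{n-2k} + L_{n-2k+1}$ equals $2^{n-2k-1} + 2^{n-2k} = 3 \cdot 2^{n-2k-1}$ in the generic range $1 \leq k \leq \lfloor n/2 \rfloor - 1$, while the boundary values at $k = \lfloor n/2 \rfloor$ and $k = \lceil n/2 \rceil$ arise from the small-index values of $L$.

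For part 2, set $d_n := |\mathcal{D}(0^n)|$. The plan is to prove the recurrence $d_{n+1} = 2 d_n$ if $n$ is odd and $d_{n+1} = 2 d_n - 1$ if $n$ is even, with $d_1 = 1$. Each vertex $v \in \mathcal{D}(0^n)$ gives rise to two lifts in $\Gamma_{n+1}$, both of which generically remain in $\mathcal{D}(0^{n+1})$; a vertex is lost exactly when the central cycle grows. The central cycle of $\Gamma_n$ has length $2^{\lceil n/2 \rceil}$ and is $a$-labeled if and only if $n$ is even, so by SR3 it doubles at the step from even $n$ to odd $n+1$, while by SR2 it keeps the same length at the step from odd $n$ to even $n+1$. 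When $n$ is even, SR3 applied to the central cycle absorbs one of the two lifts of the head $0^n$ into the enlarged central cycle of $\Gamma_{n+1}$, which accounts for the $-1$; when $n$ is odd, no such absorption occurs, the extra lift lands inside the decoration (as a vertex on one of the new $b$-cycles created by SR3 from the old $a$-decoration edges), and both lifts of every vertex of $\mathcal{D}(0^n)$ belong to $\mathcal{D}(0^{n+1})$. Solving the recurrence gives $d_n = (2^n+2)/3$ for $n$ even and $d_n = (2^n+1)/3$ for $n$ odd.

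The principal technical point in both parts is the bookkeeping at the boundary between $\mathcal{D}(0^n)$ and the central cycle of $\Gamma_n$ under one step of the substitution. Because the label of the central cycle and its length-doubling status both alternate with the parity of $n$, one must verify case by case which lift of each boundary vertex remains in the decoration and which is absorbed by the enlarged central cycle. This verification is a finite case analysis, guided by part 1 and by the explicit small cases depicted in Figure \ref{FINITESCHREIER}.
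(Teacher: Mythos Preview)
Your proof is correct and follows exactly the approach the paper indicates (substitution rules of Proposition~\ref{rulesBas} plus induction on $n$); the paper itself gives no further detail beyond that one sentence, so your explicit bookkeeping---the labeled cycle counts $\alpha_k^{(n)},\beta_k^{(n)},L_n$ for part~3 and the parity-dependent recurrence $d_{n+1}=2d_n-[\text{$n$ even}]$ for part~2---is a legitimate fleshing-out rather than a different method. One small remark: your use of the word ``lifts'' in part~2 refers to the prepended-letter vertices $0v,1v$ arising from the substitution rules, not to preimages under the covering map $\pi_{n+1}$ (which appends a letter); since these are different maps it would be cleaner to say so explicitly, but the argument itself is sound.
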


The proofs of statements \emph{2.} and \emph{3.} are straightforward when using the substitutional rules described in Proposition \ref{rulesBas} and induction on $n$.\\
\indent The structure of the critical group $K(\Gamma_n)$ follows now immediately (see Subsection \ref{CT}).

\begin{prop} \label{Isomorphic}
If $n$ is even, then $K(\Gamma_n)$ is isomorphic to

\begin{displaymath}
\prod_{k=1}^{\frac{n}{2}-1}\left(\mathbb{Z}/2^k\mathbb{Z}\right)^{3\cdot 2^{n-2k-1}}
\times \left(\mathbb{Z}/2^{\frac{n}{2}}\mathbb{Z}\right)^3,
\end{displaymath}

\noindent and if $n$ is odd, then $K(\Gamma_n)$ is isomorphic to

\begin{displaymath}
\prod_{k=1}^{\lfloor\frac{n}{2}\rfloor-1}\left(\mathbb{Z}/2^k\mathbb{Z}\right)^{3\cdot 2^{n-2k-1}}
\times \left(\mathbb{Z}/2^{\lfloor\frac{n}{2}\rfloor}\mathbb{Z}\right)^4
\times \left(\mathbb{Z}/2^{\lceil\frac{n}{2}\rceil}\mathbb{Z}\right).
\end{displaymath}
\end{prop}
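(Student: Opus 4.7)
The plan is to reduce the computation of $K(\Gamma_n)$ to the computation of the critical groups of the blocks of $\Gamma_n$ via the block decomposition theorem of \cite{BachHarpNagn97} cited in Subsection \ref{CT}, and then to enumerate the blocks explicitly using Proposition \ref{PropertiesGamma_n}(3).

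First I would invoke the fact, already recalled in Subsection \ref{CT}, that for a finite separable graph the critical group is canonically isomorphic to the direct product of the critical groups of its blocks, where in each block the cut vertex closest to the global sink serves as the dissipative vertex. Since $\Gamma_n$ is a cactus (see Figure \ref{FINITESCHREIER} and Proposition \ref{rulesBas}), each block is either a single edge or a cycle. A single edge with its boundary vertex declared dissipative has no non-dissipative vertex of non-trivial content, hence trivial critical group, and so contributes nothing to the product.

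Next I would show that for a cycle $C$ of length $\ell$ the critical group satisfies $K(C) \cong \mathbb{Z}/\ell\mathbb{Z}$. Proposition \ref{PropRecOnCycle}(1) exhibits exactly $\ell$ recurrent configurations $c_0, \ldots, c_{\ell-1}$, so $|K(C)| = \ell$. Corollary \ref{CorAddRecOnCycles} shows that the sandpile action on $K(C)$ satisfies
\begin{displaymath}
\lcr c_j + t\cdot\delta_{v_k}\rcr = c_{[j-tk]_{\mod \ell}},
\end{displaymath}
so the element corresponding to $\delta_{v_1}$ has additive order $\ell$ and therefore generates $K(C)$; thus $K(C)$ is cyclic of order $\ell$.

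Finally I would combine these two ingredients with the enumeration of cycles in Proposition \ref{PropertiesGamma_n}(3). The lengths of all cycles in $\Gamma_n$ are powers of two, and the multiplicities $\nu_k$ for each length $2^k$ are given explicitly, distinguishing the parity of $n$. Substituting $K(C) \cong \mathbb{Z}/2^k\mathbb{Z}$ with multiplicity $\nu_k$ into the block decomposition yields
\begin{displaymath}
K(\Gamma_n) \;\cong\; \prod_{k\geq 1}\bigl(\mathbb{Z}/2^k\mathbb{Z}\bigr)^{\nu_k},
\end{displaymath}
which is precisely the asserted decomposition in each of the two parity cases.

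There is no genuine obstacle here; the proof is an assembly of results already stated in the excerpt. The only point requiring a brief verification is that $K(C)$ is not merely of order $\ell$ but actually cyclic, and this is exactly the content of Corollary \ref{CorAddRecOnCycles}.
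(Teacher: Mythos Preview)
Your proposal is correct and follows exactly the approach the paper intends: the text immediately preceding the proposition says ``The structure of the critical group $K(\Gamma_n)$ follows now immediately (see Subsection \ref{CT}),'' meaning precisely the block decomposition from \cite{BachHarpNagn97} combined with the cycle count of Proposition \ref{PropertiesGamma_n}(3). Your only addition is the explicit verification via Corollary \ref{CorAddRecOnCycles} that $K(C)$ is cyclic rather than merely of order $|C|$, which the paper leaves implicit.
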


Note that since the lengths of the cycles in $\Gamma_n$ are all powers of two, the latter decomposition corresponds to the decomposition of $K(\Gamma_n)$ into invariant factors.\\
\indent Given a ray $\xi\in \{0,1\}^\omega$, the sequence $\{(\Gamma_n,\xi_n)\}_{n\geq 1}$  of finite Schreier graphs, rooted at the $n$-th prefix $\xi_n$ of $\xi$, converges in $(\mathcal{X},Dist)$ to the infinite orbital Schreier graph $(\Gamma_\xi,\xi)\equiv (\Gamma(\mathcal{B},\{a,b\},\mathcal{B}\cdot\xi),\xi)$.  The following results classify all rays $\xi\in\{0,1\}^\omega$ with respect to the number of ends of the corresponding limit graph (can be equal to $4$, $2$ or, almost surely, to $1$), as well as gives information about  different types of isomorphisms of infinite orbital Schreier graphs.

\begin{thm}\emph{~\cite{DanDonMat09}} \label{ClassifBas}
Set $E_i = \{\xi \in \{0,1\}^{\omega} \ | \ \textrm{the infinite Schreier graph}\ \Gamma_{\xi} \
\mbox{has }i \mbox{ ends}\}$. Then,
\begin{enumerate}
\item  $E_4 = \{w0^{\omega}, w(01)^{\omega}\ |\ \ w\in
\{0,1\}^{\ast}\}$;
\item $E_1 = \{\alpha_1\beta_1\alpha_2\beta_2\ldots, \
\alpha_i,\beta_j\in \{0,1\}\ | \  \textrm{$\{\alpha_i\}_{i\geq 1}$
and $\{\beta_j\}_{j\geq 1}$ both contain infinitely many }1's\}$;
\item $E_2 = \{0,1\}^{\omega} \setminus \left(E_1\sqcup
E_4\right)$.
\end{enumerate}
\end{thm}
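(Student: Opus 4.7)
The plan is to analyze $\Gamma_\xi$ through the pointed local limit $(\Gamma_\xi,\xi)=\lim_n(\Gamma_n,\xi_n)$ in $(\mathcal{X},Dist)$, using the substitutional rules of Proposition \ref{rulesBas}, which describe exactly how $\Gamma_{n+1}$ is built from $\Gamma_n$. Since every $\Gamma_n$ is a cactus (Proposition \ref{PropertiesGamma_n}) and the pointed local limit of cacti is a cactus, $\Gamma_\xi$ itself is a cactus; its number of ends therefore equals the number of infinite block-paths leaving the root $\xi$ in the block-tree of $\Gamma_\xi$ obtained by collapsing each block to a vertex.

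A first reduction is that the number of ends depends only on the tail of $\xi$: prefixing $\xi$ by a finite word replaces the root by another vertex in the same orbital graph and changes only finitely much of the block-tree, so one can treat the three tail classes $0^\omega$, $(01)^\omega$ and ``generic'' separately. For $\xi=w 0^\omega$ or $\xi=w(01)^\omega$, I would iterate the substitution rules starting from $\Gamma_1$ and track the neighborhood of $\xi_n=0^n$ (respectively $\xi_n=(01)^{n/2}$) in $\Gamma_n$. Combined with the cycle statistics of Proposition \ref{PropertiesGamma_n}, this shows that both vertices sit at the intersection of two central blocks whose lengths tend to infinity, with two further decorations of unbounded size attached on either side; in the limit one therefore reads off four distinct infinite block-paths, giving the inclusion $E_4\subseteq\{\xi:\Gamma_\xi\text{ has 4 ends}\}$.

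For a generic ray $\xi=\alpha_1\beta_1\alpha_2\beta_2\ldots$, the key observation is that the substitution rules interact differently with odd- and even-indexed letters: appending a letter $1$ at an odd (respectively even) level activates rule SR1 through a specific parity of prefixes, while a letter $0$ triggers SR2 or SR3. Consequently, the sizes of the decorations attached along the escape block-path from $\xi_n$ can be controlled from the parity distribution of $1$'s in $\xi$. A level-by-level bookkeeping of these attachments would show that a second infinite block-path leaving $\xi$ survives in the limit if and only if one of the parity subsequences $\{\alpha_i\}$ or $\{\beta_j\}$ contains only finitely many $1$'s without placing $\xi$ in $E_4$, yielding $2$ ends; and that if both subsequences contain infinitely many $1$'s, no such second arm survives, so the graph has a single end. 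Together with the previous step, the three cases partition $\{0,1\}^\omega$ as claimed.

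The hardest step is the generic case: making precise the bookkeeping that relates the parity distribution of $1$'s in $\xi$ to the number of surviving infinite block-paths. The cleanest route I foresee is to define, level by level, a ``growth profile'' recording the sizes of the decorations attached to the block-path from $\xi_n$ to the boundary of $\Gamma_n$, and to prove by induction on $n$, using Definition \ref{DefDeco} and the explicit form of the substitution rules, that a second infinite arm at $\xi$ persists in the limit if and only if one of the parity subsequences is eventually zero.
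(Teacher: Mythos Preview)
This theorem is not proved in the present paper: it is quoted from \cite{DanDonMat09} (note the citation attached to the theorem header), so there is no ``paper's own proof'' to compare your attempt against. The paper merely imports the classification and then uses the auxiliary Lemmas \ref{PropAlg} and \ref{remonai}, also cited from \cite{DanDonMat09}, to carry out the sandpile analysis in Section \ref{AvOnBasilica}.

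As for your sketch on its own terms: the strategy---reduce the count of ends to the count of infinite block-paths in the block-tree of the limiting cactus, and read that count off by tracking how the substitution rules of Proposition \ref{rulesBas} interact with the parity subsequences of $\xi$---is the natural one, and it is consistent with the shape of the lemmas the paper imports from \cite{DanDonMat09}. Two caveats. First, your description of the $4$-ended case is slightly off: the four ends of $\Gamma_{0^\omega}$ do not arise from ``two central blocks \ldots\ with two further decorations of unbounded size''; rather (see the paper's description of $\Gamma_{(4)}$ just after Proposition \ref{unifmeasure}) the two cycles through $0^n$ in $\Gamma_n$ both grow without bound and open up in the limit into two bi-infinite rays crossing at $0^\omega$, yielding four single-edge block-paths to infinity with \emph{finite} decorations hung along them. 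Second, and more importantly, you openly leave the generic case as a plan (``a level-by-level bookkeeping \ldots\ would show''); the actual argument requires exactly the combinatorial decomposition of $\xi$ recorded in Lemma \ref{PropAlg} together with the block-size formula of Lemma \ref{remonai}, and carrying that bookkeeping out is where all the work lies. So your outline points in the right direction, but as written it is a programme rather than a proof.
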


\begin{cor}\emph{~\cite{DanDonMat09}} \label{CorClassifBas}
\begin{enumerate}
\item There exists only one class of isomorphism of 4-ended (unrooted) infinite Schreier graphs. It contains a single orbit.
\item There exist uncountably many classes of isomorphism of 2-ended (unrooted) infinite Schreier graphs. Each of these classes contains exactly two orbits.
\item There exist uncountably many classes of isomorphism of 1-ended (unrooted) infinite Schreier graphs. The isomorphism class of $\Gamma_{1^\omega}$ is a single orbit, and every other class contains uncountably many orbits.
\end{enumerate}
\end{cor}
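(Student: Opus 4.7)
The plan is to leverage Theorem \ref{ClassifBas} together with the substitution rules of Proposition \ref{rulesBas} to read off, from each ray $\xi$, two combinatorial invariants of the cactus $\Gamma_{\xi}$: the sequence of cycle lengths along the infinite block-paths escaping to infinity (these come from the repeated application of rule SR3, which doubles cycles), and the combinatorial types of the decorations (Definition \ref{DefDeco}) hanging off the cut-vertices of these paths. These invariants together will determine the unrooted isomorphism class, while the $\mathcal{B}$-orbit of $\xi$ keeps track of the choice of root.

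For the $4$-ended case, I would first show that $\mathcal{B}$ acts transitively on $E_4 = \{w0^{\omega}, w(01)^{\omega} : w \in \{0,1\}^{\ast}\}$. This is a direct check by induction on $|w|$, using the self-similar formulas $a = e(b, id)$ and $b = (0\,1)(a, id)$ to move any finite prefix, together with a small computation verifying that $(01)^{\omega}$ lies in the $\mathcal{B}$-orbit of $0^{\omega}$. Transitivity on $E_4$ then yields, simultaneously, that there is a single orbit and a single unrooted isomorphism class.

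For the $2$-ended case, $\Gamma_{\xi}$ contains a unique bi-infinite axis of blocks joining its two ends, and the unrooted isomorphism class is encoded by the bi-infinite sequence of cycle lengths along this axis together with the decorations attached at each cut vertex, taken modulo reversal of orientation. From the substitutional rules one reads off that this encoded sequence is precisely the tail of $\xi$ modulo reversal, and that uncountably many distinct tails give pairwise non-isomorphic unrooted graphs. Orientation reversal corresponds to swapping the two half-rays, i.e., to interchanging the two $\mathcal{B}$-orbits of points in $E_2$ that describe travel towards each of the two ends; this yields the claimed count of exactly two orbits per class.

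For the $1$-ended case, from any vertex of $\Gamma_{\xi}$ there is a unique infinite block-path, along which one reads an analogous one-sided sequence of cycle lengths and decorations. The ray $1^{\omega}$ is exceptional because in $\Gamma_{1^{\omega}}$ the infinite block-path carries cycles of length doubling at every step with the simplest possible decorations, giving the graph a rigid combinatorial structure that lets one recover the rooted isomorphism type from the unrooted graph; this forces its class to consist of a single orbit. For any other $\xi \in E_1$, the decorations along the block-path have unbounded but non-maximal heights, and one can construct a continuum of pairwise $\mathcal{B}$-inequivalent rays producing the same sequence of cycle lengths and decorations, hence isomorphic unrooted graphs. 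The main obstacle is exactly this last step: carrying out a diagonal construction, analogous to the \lq\lq end-shuffling\rq\rq\ argument familiar from automorphism-group computations on trees, to produce uncountably many inequivalent rays within one isomorphism class while keeping tight control of the $\mathcal{B}$-action using the automaton of Figure \ref{AUTOMATBASILICA}.
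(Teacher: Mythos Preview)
The paper does not actually contain a proof of this corollary: it is stated with the citation \cite{DanDonMat09} and no argument is given here. The surrounding text only records descriptions of the $4$-ended graph $\Gamma_{(4)}$ and of the $2$-ended graphs $\Gamma(\xi)$, each time with the explicit remark that proofs are to be found in \cite{DanDonMat09}. So there is nothing in this paper to compare your proposal against; the result is imported wholesale.

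As to your sketch on its own merits: the strategy is the natural one and matches what the cited paper does---use the substitution rules to extract, from the tail of $\xi$, the sequence of cycle lengths along $\mathcal{CP}_\xi$ and the decoration heights, and show these data determine the unrooted isomorphism type. Your treatment of part~1 is fine (indeed $a(0^\omega)=(01)^\omega$, and spherical transitivity plus self-similarity handles the prefixes). For part~2, be careful with the claim ``exactly two orbits per class'': this is not just orientation reversal of a single bi-infinite sequence, since the two ends of a $2$-ended $\Gamma_\xi$ are combinatorially different (one carries $(2k{+}1)$-decorations, the other $(2k{+}2)$-decorations), so the two orbits arise from a genuinely different pairing of rays, not a symmetry of one. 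For part~3 you correctly identify the hard step and leave it open; the actual argument in \cite{DanDonMat09} (their Theorem~5.4) gives an explicit parametrisation of rays in a fixed isomorphism class via the triples $(l,\{m_k\},\{t_k\})$ of Lemma~\ref{PropAlg}, from which the uncountability of orbits is read off directly rather than by a diagonal construction.
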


Recall that $\phi:\{0,1\}^\omega\longrightarrow \mathcal{X}$, $\phi(\xi):=(\Gamma_\xi,\xi)$, is the application mapping an infinite binary sequence $\xi$ to the (rooted isomorphism class of the) orbital Schreier graph $\Gamma_\xi$ rooted at $\xi$, and that the random weak limit of the sequence of finite Schreier graphs $\{\Gamma_n\}_{n\geq 1}$ is the image under $\phi$ of $\lambda$, the uniform measure on $\{0,1\}^\omega$ (see Lemma \ref{LemPhiMeasurable}).

\begin{prop}\emph{~\cite{DanDonMat09}}\label{unifmeasure}
The random weak limit of the sequence of finite Schreier graphs $\{\Gamma_n\}_{n\geq 1}$ is concentrated on $1$-ended graphs.
\end{prop}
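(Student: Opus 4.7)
The plan is to show that $\lambda(E_1)=1$, where $E_1\subset\{0,1\}^\omega$ is the set of rays whose orbital Schreier graph is one-ended, as characterized in Theorem \ref{ClassifBas}. Since the random weak limit of $\{\Gamma_n\}_{n\geq 1}$ equals $\phi_*\lambda$ by Lemma \ref{LemPhiMeasurable}, this will immediately give the claim.

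First I would handle $E_4$. By Theorem \ref{ClassifBas}, $E_4=\{w0^\omega,\ w(01)^\omega\mid w\in\{0,1\}^*\}$, which is a countable union of singletons. Under the uniform Bernoulli measure $\lambda$ on $\{0,1\}^\omega$ each singleton has measure zero, so $\lambda(E_4)=0$. Next, by Theorem \ref{ClassifBas}, membership in $E_1$ is the condition that, writing $\xi=\alpha_1\beta_1\alpha_2\beta_2\ldots$, both the odd-indexed sequence $\{\alpha_i\}$ and the even-indexed sequence $\{\beta_j\}$ contain infinitely many $1$'s. Under $\lambda$ the coordinates are i.i.d.\ Bernoulli$(1/2)$, so each of these subsequences is itself an i.i.d.\ Bernoulli$(1/2)$ sequence, and by the second Borel--Cantelli lemma each contains infinitely many $1$'s almost surely. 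Intersecting the two full-measure events gives $\lambda(E_1)=1$, and hence $\lambda(E_2)=0$ as well.

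Since $\lambda(E_1)=1$, the pushforward measure $\phi_*\lambda$ is concentrated on $\phi(E_1)$, which by definition consists of (rooted isomorphism classes of) one-ended graphs. This gives the result.

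I do not anticipate a serious obstacle here: the statement reduces, via the already-established classification of the ends of $\Gamma_\xi$ in terms of the combinatorics of $\xi$ (Theorem \ref{ClassifBas}), to a routine Borel--Cantelli computation for a product Bernoulli measure, plus the observation that the set of rays producing more than one end is a countable (for $E_4$) or countable union of tail events with vanishing probability (for $E_2=\{0,1\}^\omega\setminus(E_1\sqcup E_4)$, which is contained in the complement of the full-measure event used above).
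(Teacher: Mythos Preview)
Your argument is correct: using Lemma \ref{LemPhiMeasurable} to identify the random weak limit with $\phi_*\lambda$, and then showing $\lambda(E_1)=1$ via the characterization of Theorem \ref{ClassifBas} together with a straightforward Borel--Cantelli argument on the odd- and even-indexed coordinates, is exactly the natural route and all steps are valid. Note that the paper does not give its own proof of this proposition but imports it from \cite{DanDonMat09}; your argument is the standard one and presumably coincides with what is done there.
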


We first describe the limit graphs with four and two ends (proofs can be found in \cite{DanDonMat09}).
Given $\xi\in E_4$, any orbital Schreier graph $\Gamma_\xi$ is isomorphic to the four-ended graph $\Gamma_{(4)}$ constructed as follows (see Figure \ref{FOURENDEDGRAPH}): take two copies $\mathcal{R}_1$ and $\mathcal{R}_2$ of the double ray whose vertices are naturally identified with the integers. Let these two double rays intersect at vertex $0$. For every $k\geq 0$, define the subset of $\mathbb{Z}$

\begin{displaymath}
A_k:=\left\{n\in\mathbb{Z}|n\equiv 2^k\mod 2^{k+1}\right\}.
\end{displaymath}

\noindent Attach to each vertex of $A_k$ in $\mathcal{R}_1$ (respectively in $\mathcal{R}_2$) a $(2k+1)$-decoration (respectively a $(2k+2)$-decoration) by its unique vertex of degree $2$.\\
\indent For any $\xi\in E_2$, $\xi$ can be written as $\xi=\alpha_1\beta_1\alpha_2\beta_2\dots$ where exactly one of the sequences $\{\alpha_i\}_{i\geq 1}$ or $\{\beta_i\}_{i\geq 1}$ has finitely many $1$'s. If $\{\alpha_i\}_{i\geq 1}$ has finitely many $1$'s, the graph $\Gamma_\xi$ is isomorphic to the following graph $\Gamma(\xi)$: consider the subsets of $\mathbb{Z}$

\begin{displaymath}
A'_0:= 2\mathbb{Z} \ \ \mbox{ and } \ \ \ A'_k:=\{n\in \mathbb{Z} \
| \ n\equiv 2^k-1-\sum_{i=1}^k2^i\beta_{i+1} \mod 2^{k+1}\} \
\mbox{ for each }k\geq 1.
\end{displaymath}

\noindent Construct $\Gamma(\xi)$ as a double ray with integer vertices with, for each $k\geq 0$, a $(2k+2)$-decoration attached by its unique vertex of degree 2 to every vertex corresponding to an integer in $A'_k$.\\
\indent In the case where $\{\beta_i\}_{i\geq 1}$ has finitely many $1$'s, the graph $\Gamma(\xi)$ is defined similarly, replacing $\beta$ by $\alpha$ in the definition of $A_k'$ and by attaching $(2k+1)$-decorations instead of $(2k+2)$-decorations (see Figure \ref{TWOENDEDGRAPH}).

\begin{cor}
The two-ended orbital Schreier graphs $\Gamma_\xi$, $\xi\in E_2$, form an uncountable family of non-isomorphic graphs which are not quasi-isometric to the one-dimensional lattice.
\end{cor}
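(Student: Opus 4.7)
The statement has two parts. The first claim, that the family $\{\Gamma_\xi\}_{\xi\in E_2}$ contains uncountably many pairwise non-isomorphic graphs, is immediate from Corollary \ref{CorClassifBas}(2), which already asserts uncountably many isomorphism classes of two-ended orbital Schreier graphs of the Basilica group; there is nothing new to prove here. So the real content of the corollary is the second claim: none of these graphs is quasi-isometric to $\mathbb{Z}$.

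To prove this, the plan is to use the volume-growth function as a quasi-isometry invariant. Since $\mathbb{Z}$ has linear growth, it suffices to exhibit a superlinear lower bound on $|B_{\Gamma_\xi}(v,R)|$ for some (hence, by quasi-isometry, for any) vertex $v$. I will work with the explicit model graph $\Gamma(\xi)$ described right before the statement of the corollary, which is isomorphic to $\Gamma_\xi$ when $\xi\in E_2$: a double ray with integer vertices, to which $(2k+2)$- (or $(2k+1)$-) decorations are attached at vertices of the sets $A'_k$, where $A'_k$ has density $2^{-(k+1)}$ in $\mathbb{Z}$.

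The key quantitative input is Proposition \ref{PropertiesGamma_n}: an $m$-decoration contains $\frac{1}{3}(2^m\pm 1)$ vertices and, because the cycles making up $\Gamma_m$ are all of length a power of $2$ bounded by $2^{\lceil m/2\rceil}$, its combinatorial diameter is $O(2^{m/2})$. Fix a root $v_0$ on the double ray and let $R$ be large. Choose $k$ so that $2^{k+1}\le R/4$ (i.e., $k\sim \log_2 R$); then the density of $A'_k$ in $\mathbb{Z}$ guarantees a vertex $n\in A'_k$ at distance at most $2^{k+1}\le R/4$ from $v_0$ on the double ray. The $(2k+2)$-decoration attached at $n$ has diameter $O(2^{k+1})=O(R)$, and by choosing the constants a little more carefully (e.g. $2^{k+1}\le R/(4C)$ where $C$ is the implicit constant in the diameter bound), this entire decoration lies in $B_{\Gamma(\xi)}(v_0,R)$. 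Hence
\begin{equation*}
|B_{\Gamma(\xi)}(v_0,R)|\ \ge\ \tfrac{1}{3}\bigl(2^{2k+2}-1\bigr)\ \gtrsim\ R^2 .
\end{equation*}

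This shows that $\Gamma_\xi$ has at least quadratic growth, so in particular superlinear growth, and therefore cannot be quasi-isometric to the integer line. I expect the main (very mild) obstacle to be bookkeeping the diameter constant of decorations carefully enough that the chosen decoration really does fit inside $B(v_0,R)$; this is purely a matter of tracking the constant in the $O(2^{m/2})$ diameter bound, which follows immediately from iterating the substitution rules in Proposition \ref{rulesBas} (each application at most doubles diameters and at most quadruples the number of vertices). The argument for the $(2k+1)$-decorations case (when $\{\beta_i\}$ has finitely many $1$'s) is identical.
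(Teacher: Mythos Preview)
Your proposal is correct. The paper states this corollary without proof, as an immediate consequence of the explicit structural description of the two-ended graphs given just above together with Corollary \ref{CorClassifBas}(2); your volume-growth argument is exactly the natural justification for the quasi-isometry claim and is consistent with the paper's repeated observation (see, e.g., the Proposition citing \cite{Bond07}) that the Basilica Schreier graphs have quadratic growth.
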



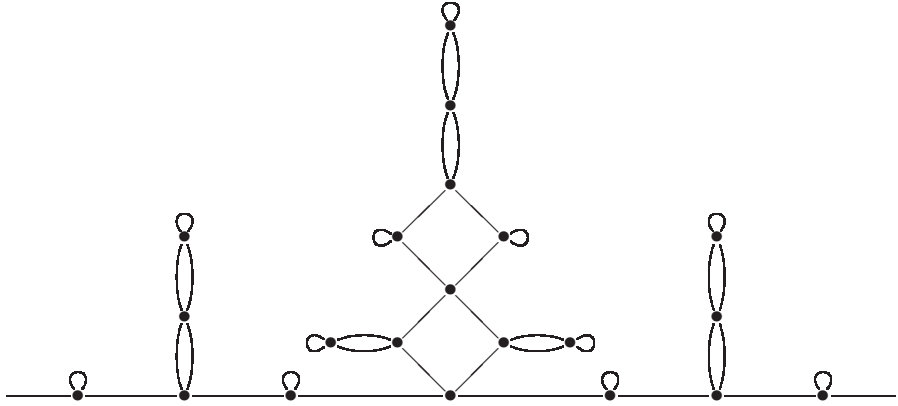
\begin{figure}[H]
\begin{center}
\begin{picture}(350,195)
\letvertex A=(5,40)\letvertex B=(35,40)\letvertex C=(75,40)\letvertex D=(75,70)
\letvertex E=(75,100)\letvertex F=(115,40)\letvertex G=(175,40)\letvertex H=(155,60)
\letvertex I=(130,60)\letvertex L=(175,80)\letvertex M=(195,60)\letvertex N=(220,60)
\letvertex O=(155,100)\letvertex R=(175,150)\letvertex S=(175,180)\letvertex T=(235,40)
\letvertex U=(275,40)\letvertex V=(275,70)\letvertex P=(195,100)\letvertex Q=(175,120)
\letvertex X=(315,40)\letvertex W=(275,100)\letvertex Y=(345,40)
\drawvertex(B){$\bullet$}\drawvertex(C){$\bullet$}
\drawvertex(D){$\bullet$}
\drawvertex(E){$\bullet$}\drawvertex(F){$\bullet$}
\drawvertex(G){$\bullet$}\drawvertex(H){$\bullet$}
\drawvertex(I){$\bullet$}\drawvertex(L){$\bullet$}
\drawvertex(M){$\bullet$}\drawvertex(N){$\bullet$}
\drawvertex(O){$\bullet$}\drawvertex(P){$\bullet$}
\drawvertex(Q){$\bullet$}\drawvertex(R){$\bullet$}
\drawvertex(S){$\bullet$}\drawvertex(T){$\bullet$}
\drawvertex(U){$\bullet$}\drawvertex(V){$\bullet$}
\drawvertex(W){$\bullet$}\drawvertex(X){$\bullet$}
\drawundirectededge(A,B){}\drawundirectededge(B,C){}
\drawundirectedcurvededge(C,D){}\drawundirectedcurvededge(D,C){}
\drawundirectedcurvededge(E,D){}\drawundirectedcurvededge(D,E){}
\drawundirectededge(C,F){} \drawundirectededge(F,G){}
\drawundirectededge(G,H){} \drawundirectededge(H,L){}
\drawundirectededge(L,M){} \drawundirectededge(M,G){}
\drawundirectededge(L,O){} \drawundirectededge(O,Q){}
\drawundirectededge(Q,P){} \drawundirectededge(P,L){}
\drawundirectededge(G,T){} \drawundirectededge(T,U){}
\drawundirectededge(U,X){} \drawundirectededge(X,Y){}
\drawundirectedcurvededge(I,H){}\drawundirectedcurvededge(H,I){}
\drawundirectedcurvededge(M,N){}\drawundirectedcurvededge(N,M){}
\drawundirectedcurvededge(Q,R){}\drawundirectedcurvededge(R,Q){}
\drawundirectedcurvededge(R,S){}\drawundirectedcurvededge(S,R){}
\drawundirectedcurvededge(U,V){}\drawundirectedcurvededge(V,U){}
\drawundirectedcurvededge(V,W){}\drawundirectedcurvededge(W,V){}
\drawundirectedloop(B){}\drawundirectedloop(E){}\drawundirectedloop(S){}
\drawundirectedloop[l](O){}\drawundirectedloop[r](P){}\drawundirectedloop[l](I){}\drawundirectedloop[r](N){}
\drawundirectedloop(T){}\drawundirectedloop(W){}\drawundirectedloop(X){}\drawundirectedloop(F){}
\end{picture}
\end{center}
\caption{A finite part of $\Gamma_{\xi}$, $\xi\in E_2$.}
\label{TWOENDEDGRAPH}
\end{figure}

\indent We now proceed to the case of one-ended limit graphs. Let $\xi\in\{0,1\}^\omega$. Recall that for a finite Schreier graph $\Gamma_n$ with a root $\xi_n$, $\mathcal{CP}_{\xi_n}$ denotes the unique block-path joining $\xi_n$ to the central cycle in $\Gamma_n$. In the case of one-ended limit graphs, we have:

\begin{lem}\emph{\cite{DanDonMat09}} \label{LemOnCPinfBasilica}
Let $\xi\in E_1$. Then, the limit

\begin{equation}
(\mathcal{CP}_\xi,\xi):=\lim_{n\to\infty}(\mathcal{CP}_{\xi_n},\xi_n)
\end{equation}

\noindent is well-defined and the graph $\mathcal{CP}_\xi$ is isomorphic to the unique block-path of infinite length in $(\Gamma_\xi,\xi)$ starting at $\xi$.
\end{lem}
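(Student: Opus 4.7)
The plan has two components: first, to verify that $\{(\mathcal{CP}_{\xi_n}, \xi_n)\}_{n\geq 1}$ forms a Cauchy sequence in $(\mathcal{X}, Dist)$; second, to identify its limit with the unique infinite block-path in $\Gamma_\xi$ starting at $\xi$.

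For convergence, fix a radius $r \geq 1$. Since $(\Gamma_n,\xi_n)\to(\Gamma_\xi,\xi)$ in $(\mathcal X, Dist)$ by the construction of the orbital Schreier graph recalled at the end of Section \ref{SelfSimilarGroups}, there exists $N_r$ such that for every $n \geq N_r$, the balls $B_{\Gamma_n}(\xi_n,r)$ are all isomorphic (as rooted graphs) to $B_{\Gamma_\xi}(\xi,r)$. Because block decomposition is an intrinsic graph-theoretic notion (determined by cut vertices), the intersection $\mathcal{CP}_{\xi_n}\cap B_{\Gamma_n}(\xi_n,r)$ is determined by the local block structure around $\xi_n$, \emph{provided} the central cycle of $\Gamma_n$ (the terminus of $\mathcal{CP}_{\xi_n}$) lies outside this ball. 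The key step is therefore to show that for $\xi\in E_1$ the graph distance from $\xi_n$ to the central cycle of $\Gamma_n$ tends to infinity with $n$. Granted this, for all $n$ large enough the initial portion of $\mathcal{CP}_{\xi_n}$ contained in $B_{\Gamma_n}(\xi_n,r)$ is the same, yielding the Cauchy property and hence convergence to a limit $(\mathcal{CP}_\xi,\xi)$.

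For the identification of the limit, observe that $\Gamma_\xi$ is a cactus (as a local limit of cacti) and inherits from the finite graphs $\Gamma_n$ the property that each cut vertex lies in exactly two blocks. Since $\xi\in E_1$, the graph $\Gamma_\xi$ is one-ended by Theorem \ref{ClassifBas}, so at every cut vertex exactly one of the two sides of the disconnection contains the unique end. Starting from $\xi$ and selecting at each cut vertex the unique block that leads to the end produces an infinite block-path, which is unique by this very choice. A direct comparison on balls of radius $r$ shows that this canonical block-path agrees, block by block, with the limit $\mathcal{CP}_\xi$ constructed above.

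The principal technical obstacle is establishing the metric divergence $d_{\Gamma_n}(\xi_n,0^n)\to\infty$ for $\xi\in E_1$. I would proceed by induction on $n$, exploiting the substitutional rules of Proposition \ref{rulesBas}: each rule enlarges cycles and creates decorations of increasing height, and the characterization of $E_1$ in Theorem \ref{ClassifBas} (both the $\alpha$-subsequence and the $\beta$-subsequence contain infinitely many $1$'s) prevents $\xi_n$ from being swallowed by the central cycle or trapped at bounded distance from it, which is precisely what happens for $\xi\in E_4\cup E_2$. More concretely, every occurrence of a $1$ in an even-indexed (resp.\ odd-indexed) position of $\xi$ forces a substitution step that pushes $\xi_n$ across an additional cut vertex of $\mathcal{CP}_{\xi_n}$, thereby augmenting its block-distance to the central cycle. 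Once this divergence is in hand, the rest of the argument reduces to direct bookkeeping in the definition of $Dist$.
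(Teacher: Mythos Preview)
The paper does not give its own proof of this lemma; it is quoted verbatim from \cite{DanDonMat09}, so there is no in-paper argument to compare against. Your outline is a sound reconstruction of why the statement holds, and it is in the spirit of the structural analysis carried out in \cite{DanDonMat09}.

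A couple of points where your sketch could be tightened. First, the passage from ``$B_{\Gamma_n}(\xi_n,r)$ stabilizes'' to ``$\mathcal{CP}_{\xi_n}\cap B_{\Gamma_n}(\xi_n,r)$ stabilizes'' needs one more observation: knowing the ball alone does not tell you which side of each cut vertex contains the central cycle. What makes this work is that, for $\xi\in E_1$, the decorations hanging off the cut vertices of $\mathcal{CP}_\xi$ are finite, hence each of them is eventually entirely contained in some larger ball $B_{\Gamma_\xi}(\xi,R)$; once $n$ is large enough that $B_{\Gamma_n}(\xi_n,R)\simeq B_{\Gamma_\xi}(\xi,R)$, those same subgraphs are finite in $\Gamma_n$ as well and therefore cannot contain the central cycle, pinning down the direction of $\mathcal{CP}_{\xi_n}$ through the ball of radius $r$. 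Second, rather than arguing the divergence $d_{\Gamma_n}(\xi_n,\text{central cycle})\to\infty$ directly from the substitution rules, it is cleaner to invoke the combinatorial description recorded in Lemmas~\ref{PropAlg} and~\ref{remonai} (also from \cite{DanDonMat09}): for $\xi\in E_1$ the associated sequence $\{a_i\}_{i\ge 1}$ is infinite and strictly increasing, and the $i$-th block of $\mathcal{CP}_{\xi_n}$ has size $2^{\lceil a_i/2\rceil}$, so the number of blocks $r_n$ and the distance to the central cycle both tend to infinity. With these two refinements your argument goes through.
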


\begin{rem}\rm
It follows from Theorem \ref{ThmCP} that for understanding the asymptotic of avalanches it is enough to keep track of the sizes of blocks $\mathcal{C}_1,\mathcal{C}_2,\dots$ constituting the block-path $\mathcal{CP}_\xi$.
\end{rem}

We will need the following technical lemmas:

\begin{lem} \emph{\cite{DanDonMat09}} \label{PropAlg}
An element $\xi\in\{0,1\}^\omega$, $\xi\neq w1^\omega$ for any $w\in \{0,1\}^\ast$, belongs to $E_1$ if and only if there exists a unique triple $(l,\{m_k\}_{k\geq 0},\{t_k\}_{k\geq 0})$ where $l\geq 1$ and $m_0\geq 0$ are integers and $m_0$ is even; $t_0=0$; and $\{m_k\}_{k\geq 1}$, $\{t_k\}_{k\geq 1}$ are sequences of strictly positive integers and the $m_k$'s are even, such that $\xi$ can be written as
\begin{equation} \label{CanFormOfXi}
\xi=0^{l-1}1(0x^0_{1}0x^0_{2}\dots 0x^0_{\frac{m_0}{2}})1^{t_1}(0x^1_{1}0x^1_{2}\dots 0x^1_{\frac{m_1}{2}})1^{t_2}\dots.
\end{equation}
with $x_i^j\in\{0,1\}$ for all $i,j$.\\
\indent If $\xi=w1^\omega$ for some $w\in \{0,1\}^\ast$, then there exists a unique triple $(l,\{m_k\}_{k=0}^{k_0},\{t_k\}_{k=0}^{k_0})$ where $l\geq 1$ and $m_0\geq 0$ are integers and $m_0$ is even;
$t_0=0$; and $\{m_k\}_{k=1}^{k_0}$, $\{t_k\}_{k=1}^{k_0}$ are finite sequences of strictly positive integers and the $m_k$'s are even, such that $\xi$ can be written as
$$\xi=0^{l-1}1(0x^0_{1}0x^0_{2}\dots 0x^0_{\frac{m_0}{2}})1^{t_1}(0x^1_{1}0x^1_{2}\dots 0x^1_{\frac{m_1}{2}})1^{t_2}\dots
1^{t_{k_0}}(0x^{k_0}_{1}0x^{k_0}_{2}\dots 0x^{k_0}_{\frac{m_{k_0}}{2}})1^\omega.$$
\end{lem}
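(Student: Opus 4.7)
I would prove the lemma by constructing a deterministic left-to-right parser that extracts the canonical form directly from $\xi$, then showing the parser succeeds precisely when $\xi\in E_1$.

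Given $\xi$ with at least one $1$, let $l$ be the position of the first $1$ (so $\xi$ begins with $0^{l-1}1$) and set $q_0:=l+1$. Inductively, given the position $q_k$ at which the $m_k$-block should start, define $m_k:=2(j^*_k-1)$ where $j^*_k:=\min\{j\geq 1:\xi_{q_k+2(j-1)}=1\}$, read off $x_i^k:=\xi_{q_k+2i-1}$ for $i=1,\dots,m_k/2$, and set $p_{k+1}:=q_k+m_k$. Then let $t_{k+1}$ be the length of the maximal $1$-run of $\xi$ starting at $p_{k+1}$; if finite, put $q_{k+1}:=p_{k+1}+t_{k+1}$ and iterate; otherwise, terminate. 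The required constraints follow directly: $l\geq 1$ is automatic; $m_k$ is even by construction; $t_{k+1}\geq 1$ since by the choice of $j^*_k$ we have $\xi_{p_{k+1}}=1$; and for $k\geq 1$ we get $m_k\geq 2$ because $\xi_{q_k}=0$ by maximality of the preceding $1$-run, forcing $j^*_k\geq 2$.

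The parser can fail only if some $t_{k+1}=\infty$ or some $m_k=\infty$. The first occurs iff $\xi$ is eventually all ones, i.e.\ $\xi=w1^\omega$ for a finite word $w$; this produces the finite canonical form of the second part of the lemma, with $k_0$ being the last finite index. The second possibility would mean $\xi_{q_k+2(j-1)}=0$ for every $j\geq 1$, so at positions $q_k,q_k+2,q_k+4,\dots$ the sequence $\xi$ is eventually zero. Since these positions all share one parity, one of the subsequences $\xi_1\xi_3\xi_5\dots$ or $\xi_2\xi_4\xi_6\dots$ would contain only finitely many $1$'s, contradicting $\xi\in E_1$ via Theorem \ref{ClassifBas}. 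Hence for $\xi\in E_1\setminus\{w1^\omega\}$ the parser runs forever and yields the infinite canonical form.

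Uniqueness is immediate from the determinism of the parser: any other admissible triple $(l,\{m_k\},\{t_k\})$ producing the same $\xi$ would have to disagree with the parser's output at some first index, which is impossible since each parameter is uniquely prescribed by the rule above. Conversely, any tuple satisfying the stated constraints reassembles into a well-defined $\xi$ with infinitely many $1$'s at both parities, hence $\xi\in E_1$. The main obstacle is precisely the equivalence between $\xi\in E_1$ and finiteness of every $m_k$-block: the crucial observation is that the forced-$0$ positions $q_k,q_k+2,q_k+4,\dots$ inside an $m_k$-block all lie at a single parity in $\xi$, so that the parser's blocks correspond exactly to the alternation between the odd- and even-indexed subsequences being non-eventually-zero, which is Theorem \ref{ClassifBas}'s characterization of $E_1$.
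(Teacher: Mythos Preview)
The paper does not prove this lemma; it is quoted from \cite{DanDonMat09} without argument. So there is no proof in the paper to compare against, and I can only assess your argument on its own merits.

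Your parsing construction is correct and yields both existence and uniqueness cleanly: the value of $l$ is forced by the first $1$; each $m_k$ is forced as the least even $m$ with $\xi_{q_k+m}=1$ (since the forced $0$'s of the block sit at $q_k,q_k+2,\dots,q_k+m_k-2$ and the block must be followed by a $1$); and each $t_{k+1}$ is forced as the maximal $1$-run. Your argument that $m_k=\infty$ forces one parity class of $\xi$ to be eventually zero, hence $\xi\notin E_1$, is exactly right.

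There is one step you assert without justification: the converse, that an infinite canonical form forces $\xi\in E_1$. You write that such a $\xi$ has ``infinitely many $1$'s at both parities'' but do not say why. The point is this: the start $p_{k+1}=q_k+m_k$ of the run $1^{t_{k+1}}$ has the same parity as $q_k$ (since $m_k$ is even), and $p_{k+2}\equiv p_{k+1}+t_{k+1}\pmod 2$. Hence whenever $t_{k+1}\ge 2$ the run $1^{t_{k+1}}$ already hits both parities, and whenever $t_{k+1}=1$ the parity of the next run-start flips. In either case one cannot have all $1$'s past some point confined to a single parity, so both odd- and even-indexed subsequences contain infinitely many $1$'s and $\xi\in E_1$ by Theorem~\ref{ClassifBas}. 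With this paragraph added, your proof is complete.
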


\begin{lem}\emph{\cite{DanDonMat09}}\label{remonai}
Let $\xi\in E_1$ and define a sequence of integers $a_i=a_i^\xi$, $i\geq 1$, as follows: if $\xi=1^\omega$, then $a_i:=i$ for all $i\geq 1$. If $\xi\neq 1^\omega$, then Lemma \ref{PropAlg} provides a triple $(l,\{m_k\},\{t_k\})$ associated with $\xi$. For all $j\geq 1$, $0\leq s< t_j$, let $a_{T_{j-1}+s+1}:=l+M_{j-1}+T_{j-1}+s$, where $M_j:=\sum_{k=0}^jm_k$ and $T_j:=\sum_{k=0}^jt_k$. Then,

\begin{itemize}
\item The sequence $\{a_i\}_{i\geq 1}$ is increasing. More precisely,
\begin{equation} \label{eqDiffAi}
a_{i+1}-a_i=\left\{\begin{array}{cc}
m_j+1 & \textrm{if there exists $j>0$ such that $i=T_j$,}\\
1 & \textrm{otherwise.}
\end{array}\right.
\end{equation}
\item For all $i\geq 1$, the size of $\mathcal{C}_i$ in $\mathcal{CP}_\xi\subset \Gamma_\xi$ is equal to $2^{\lceil a_i/2\rceil}$.
\end{itemize}
\end{lem}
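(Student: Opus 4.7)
The proof splits into an arithmetic claim about $\{a_i\}$ and a geometric claim identifying the block sizes $|\mathcal{C}_i|$. I would handle the arithmetic claim by direct computation. Within a ``group'' of indices $T_{j-1} < i \leq T_j$, the defining formula $a_{T_{j-1}+s+1} = l + M_{j-1} + T_{j-1} + s$ is linear in $s$ with slope $1$, so consecutive terms differ by $1$. At a group boundary $i = T_j \to T_j+1$, comparing $a_{T_j} = l + M_{j-1} + T_j - 1$ with $a_{T_j+1} = l + M_j + T_j$ gives the jump $a_{T_j+1} - a_{T_j} = m_j + 1$. Monotonicity then follows immediately since all $m_j \geq 0$. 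The case $\xi = 1^\omega$ (or $\xi = w 1^\omega$) is recovered by interpreting the trailing infinite run of $1$'s as $t_{k_0+1} = \infty$ in the same formula.

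For the cycle size formula, my plan is to work in the finite approximations $\Gamma_n$ and pass to the limit via Lemma \ref{LemOnCPinfBasilica}: it suffices to show that, for each fixed $i$, the $i$-th block of $\mathcal{CP}_{\xi_n}$ stabilizes at a cycle of size $2^{\lceil a_i/2\rceil}$ for $n$ sufficiently large. I would induct on $n$, tracking the block-path's evolution under the substitution rules of Proposition \ref{rulesBas}. The key observation is a doubling pattern generated by the rules: SR1 creates a fresh $2$-cycle of $b$-edges from each $a$-loop, SR2 turns a $2$-cycle of $b$-edges into a $2$-cycle of $a$-edges at the next level, and SR3 turns that $2$-cycle of $a$-edges into a $4$-cycle of $b$-edges one level further. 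Iterating, each cycle along $\mathcal{CP}_{\xi_n}$ doubles in length every two levels of the recursion, and by Proposition \ref{PropertiesGamma_n} the block sizes remain powers of $2$ bounded above by $2^{\lceil n/2\rceil}$, so the bookkeeping never falls outside the expected range.

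The main obstacle, and the only non-routine step, is matching the indices $i$ along $\mathcal{CP}_\xi$ with the correct ``level'' in the recursive construction; here the canonical form from Lemma \ref{PropAlg} is essential. Unpacking $\xi = 0^{l-1}1 (0x_1^0 \cdots 0x_{m_0/2}^0) 1^{t_1} (0x_1^1 \cdots 0x_{m_1/2}^1) 1^{t_2} \cdots$, the prefix $0^{l-1}1$ together with the following $m_0$ letters places $\xi_n$ sufficiently deep in the decoration structure so that the block $\mathcal{C}_1$ containing $\xi$ has already undergone $l + m_0$ levels of the doubling process, giving $|\mathcal{C}_1| = 2^{\lceil (l+m_0)/2\rceil}$. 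Each $1$ in a run of length $t_j$ corresponds to $\xi_n$ climbing one block further along $\mathcal{CP}_\xi$, producing $t_j$ consecutive indices (the $j$-th group) whose $a$-values are spaced by one. The $m_j$ letters forming the subsequent $0x$-pairs, on the other hand, do not introduce new blocks along $\mathcal{CP}_\xi$ (by Proposition \ref{RemOnAv} those letters determine only the content of a decoration), but they add $m_j$ further levels of growth before the next climb, producing exactly the extra $+m_j$ in the jump $a_{T_j+1} - a_{T_j} = m_j + 1$. Once this correspondence between canonical-form segments and block births is established inductively on $j$, the identity $|\mathcal{C}_i| = 2^{\lceil a_i/2\rceil}$ drops out of the doubling pattern observed in the second paragraph.
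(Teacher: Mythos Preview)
The paper does not give its own proof of this lemma: it is stated with a citation to \cite{DanDonMat09} and no argument is supplied. So there is nothing to compare against directly; I can only assess your proposal on its own terms.

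Your arithmetic verification of \eqref{eqDiffAi} is correct and complete. Writing $i=T_{j-1}+s+1$ gives $a_i=l+M_{j-1}+i-1$ for $T_{j-1}<i\le T_j$, from which both the unit increments inside a group and the jump $m_j+1$ at $i=T_j$ follow exactly as you say.

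The geometric half is where your sketch is thin, and it contains one slip worth flagging. You invoke Proposition \ref{RemOnAv} to justify that the $0x$-pairs in the canonical form ``determine only the content of a decoration''; but that proposition is a statement about how avalanches on separable graphs depend only on the block-path subconfigurations, not about the combinatorics of Basilica Schreier graphs. What you actually need here is the structural analysis from \cite{DanDonMat09} (of which Lemma \ref{PropAlg} and the present lemma are quotations): the letters of $\xi$ in odd versus even positions govern, respectively, the $a$- and $b$-cycle structure along $\mathcal{CP}_\xi$, and a pair $0x$ appended to the prefix pushes the root one level deeper without crossing into a new block of $\mathcal{CP}_\xi$, whereas a $1$ in the appropriate position does cross a cut vertex. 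Your doubling heuristic from the substitution rules (SR1--SR3) is the right mechanism, and the conclusion $|\mathcal{C}_i|=2^{\lceil a_i/2\rceil}$ is correct, but the inductive bookkeeping you allude to is precisely the content of the cited reference and is more delicate than your paragraph suggests---one has to track which position (odd or even) in $\xi$ the current letter occupies and how that interacts with the label ($a$ or $b$) of the block being entered. If you want a self-contained proof, that induction needs to be written out explicitly rather than asserted.
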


The description from Lemma \ref{PropAlg} allows to classify  the words $\xi\in E_1$ giving rise to isomorphic orbital Schreier graphs $\Gamma_\xi$ (see Theorem 5.4 in \cite{DanDonMat09}).

\begin{prop}
The orbital one-ended Schreier graphs $\Gamma_\xi$, $\xi\in E_1$, form an uncountable family of $4$-regular graphs of quadratic growth (for a proof of this fact, see \cite{Bond07}).
\end{prop}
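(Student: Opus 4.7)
The uncountability of this family was already established in Corollary~\ref{CorClassifBas}, and $4$-regularity is immediate since $\mathcal{B}$ is generated by $\{a,a^{-1},b,b^{-1}\}$ and each of these four labels produces a distinct incident edge at a generic boundary point $\xi\in E_1$. The substantive content of the proposition is therefore the quadratic growth estimate $|B_{\Gamma_\xi}(\xi,R)|\asymp R^{2}$.

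The plan is to reduce this to a ball-counting along the infinite block-path $\mathcal{CP}_\xi$. By Lemma~\ref{remonai}, the consecutive cycles $\mathcal{C}_1,\mathcal{C}_2,\ldots$ along $\mathcal{CP}_\xi$ satisfy $|\mathcal{C}_i|=2^{\lceil a_i/2\rceil}$, with $\{a_i\}$ strictly increasing and $a_k$ growing linearly in $k$ up to the bounded contributions from the runs of $1$'s in the canonical form of $\xi$ given by Lemma~\ref{PropAlg}. Since traversing $\mathcal{C}_i$ between two consecutive cut-vertices costs at most $|\mathcal{C}_i|/2$ edges, the bound
$$d_{\Gamma_\xi}(\xi,p_k)\le\sum_{i=1}^{k}|\mathcal{C}_i|/2$$
is a geometric sum of order its last term, giving $d_{\Gamma_\xi}(\xi,p_k)=\Theta(2^{\lceil a_k/2\rceil})$. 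Hence, for a given radius $R$, the largest index $k=k(R)$ with $d_{\Gamma_\xi}(\xi,p_{k})\le R$ satisfies $|\mathcal{C}_{k(R)}|\asymp R$.

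Next one accounts for the decorations. By Proposition~\ref{PropertiesGamma_n}, each decoration attached to a non-cut vertex of $\mathcal{C}_i$ is a $j$-decoration containing $\Theta(2^{j})$ vertices for some $j$, and a direct count based on the substitution rules of Proposition~\ref{rulesBas} shows that the decorations hanging off $\mathcal{C}_i$ have total size $\Theta(|\mathcal{C}_i|^{2})$, consistent with the identity $|V(\Gamma_n)|=2^{n}$ for $n$ of order $2\log_{2}|\mathcal{C}_i|$. Summing over $i\le k(R)$ the contributions of the cycles and of those decorations of height at most $\log_{2}R$ attached to them gives the upper bound $|B_{\Gamma_\xi}(\xi,R)|=O(R^{2})$. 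For the matching lower bound it suffices to observe that the cycle $\mathcal{C}_{k(R)}$ alone contributes $\Theta(R)$ block-path vertices, each supporting a decoration of size $O(R)$ entirely contained in the ball, amounting to $\Theta(R^{2})$ vertices.

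The main technical obstacle is the bookkeeping for decorations on $\mathcal{C}_{k(R)}$ whose height is close to $a_{k(R)}$: such a decoration need not be entirely contained in the ball, and one must carefully estimate the volume it contributes. A convenient way around this is to work through the finite approximations $(\Gamma_n,\xi_n)$: Proposition~\ref{PropertiesGamma_n} yields $|V(\Gamma_n)|=2^{n}$ while the diameter of $\Gamma_n$ is of order $2^{n/2}$, and the local convergence $(\Gamma_n,\xi_n)\to(\Gamma_\xi,\xi)$ in $(\mathcal{X},Dist)$ transports the quadratic ball-size estimate from the finite graphs to the limit graph for all $R$ small compared to $2^{n/2}$, thereby confirming the $R^{2}$-growth.
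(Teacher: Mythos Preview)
The paper does not actually prove this proposition: the parenthetical ``(for a proof of this fact, see \cite{Bond07})'' is the entire argument offered, so there is nothing to compare your sketch against beyond noting that the authors defer to Bondarenko's thesis, where quadratic growth is obtained from a general machinery for Schreier graphs of bounded automata groups rather than from the explicit block-path combinatorics you are using.

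Your sketch is in the right spirit but has real gaps. First, the claim that ``$a_k$ grows linearly in $k$'' is false for arbitrary $\xi\in E_1$: by Lemma~\ref{remonai} the jumps $a_{i+1}-a_i$ equal $m_j+1$ at the indices $i=T_j$, and nothing prevents the $m_j$ from being unbounded for a particular $\xi$ (Lemma~\ref{LemExpDecay} only controls this almost surely, not for every $\xi$). Fortunately your distance estimate $d(\xi,p_k)=\Theta(2^{\lceil a_k/2\rceil})$ does not actually need linearity of $a_k$; it follows just from the strict monotonicity of $\{a_i\}$, so you should drop the linear-growth claim and argue the geometric-sum bound directly. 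Second, your lower bound ``each supporting a decoration of size $O(R)$ \ldots\ amounting to $\Theta(R^2)$'' does not parse: an $O$-bound on each summand cannot produce a $\Theta$-bound on the total. You need to invoke the precise distribution of decoration heights along a cycle (essentially the sets $A_k$ from the description of $\Gamma_{(4)}$) to see that the total decoration mass on $\mathcal{C}_{k(R)}$ within distance $R$ is $\Theta(R^2)$. Third, the final paragraph's shortcut does not work as stated: knowing only $|V(\Gamma_n)|=2^n$ and $\mathrm{diam}(\Gamma_n)\asymp 2^{n/2}$ (the latter, incidentally, is not in Proposition~\ref{PropertiesGamma_n}) gives no information about the size of a ball around a \emph{specific} vertex $\xi_n$; you would still need a uniform ball-growth estimate in $\Gamma_n$, which is essentially the statement you are trying to prove.
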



\begin{figure}[H]
\begin{center}
\begin{picture}(400,290)
\letvertex I=(160,150)\letvertex N=(170,180)
\letvertex O=(200,190)\letvertex R=(230,180)\letvertex S=(240,150)\letvertex T=(230,120)
\letvertex U=(200,110)\letvertex V=(170,120)\letvertex P=(200,220)\letvertex Q=(200,250)
\letvertex Z=(200,80)\letvertex J=(200,50)\letvertex K=(260,170)\letvertex X=(280,150)
\letvertex W=(260,130)\letvertex g=(260,200)\letvertex h=(260,100)\letvertex c=(300,170)
\letvertex Y=(300,130)\letvertex d=(320,150)\letvertex e=(360,150)\letvertex f=(400,150)
\put(396,138){$1^{\omega}$}
\letvertex II=(60,200)\letvertex NN=(60,100)
\letvertex A=(120,260)\letvertex B=(120,240)
\letvertex C=(120,220)\letvertex D=(105,205)\letvertex E=(135,205)\letvertex F=(120,190)
\letvertex G=(90,180)\letvertex H=(77,193)\letvertex i=(80,150)\letvertex l=(90,120)
\letvertex m=(77,107)\letvertex n=(105,95)\letvertex o=(120,110)\letvertex p=(120,80)
\letvertex q=(120,60)\letvertex r=(120,40)\letvertex s=(135,95)\letvertex t=(150,120)
\letvertex u=(163,107)\letvertex v=(150,180)\letvertex z=(163,193)
\drawvertex(I){$\bullet$}\drawvertex(N){$\bullet$}\drawvertex(O){$\bullet$}\drawvertex(P){$\bullet$}
\drawvertex(J){$\bullet$}\drawvertex(K){$\bullet$}\drawvertex(Q){$\bullet$}\drawvertex(R){$\bullet$}
\drawvertex(S){$\bullet$}\drawvertex(T){$\bullet$}\drawvertex(U){$\bullet$}\drawvertex(V){$\bullet$}
\drawvertex(W){$\bullet$}\drawvertex(X){$\bullet$}\drawvertex(Y){$\bullet$}\drawvertex(Z){$\bullet$}
\drawvertex(g){$\bullet$}\drawvertex(h){$\bullet$}\drawvertex(c){$\bullet$}\drawvertex(f){$\bullet$}
\drawvertex(d){$\bullet$}\drawvertex(e){$\bullet$}\drawvertex(A){$\bullet$}\drawvertex(n){$\bullet$}
\drawvertex(B){$\bullet$}\drawvertex(o){$\bullet$}\drawvertex(C){$\bullet$}\drawvertex(p){$\bullet$}
\drawvertex(D){$\bullet$}\drawvertex(q){$\bullet$}\drawvertex(E){$\bullet$}\drawvertex(r){$\bullet$}
\drawvertex(F){$\bullet$}\drawvertex(s){$\bullet$}\drawvertex(G){$\bullet$}\drawvertex(t){$\bullet$}
\drawvertex(H){$\bullet$}\drawvertex(u){$\bullet$}\drawvertex(i){$\bullet$}\drawvertex(v){$\bullet$}
\drawvertex(l){$\bullet$}\drawvertex(z){$\bullet$}\drawvertex(m){$\bullet$}
\drawundirectededge(II,i){} \drawundirectededge(NN,i){}
\drawundirectededge(I,N){} \drawundirectededge(N,O){}
\drawundirectededge(O,R){} \drawundirectededge(R,S){}
\drawundirectededge(S,T){} \drawundirectededge(T,U){}
\drawundirectededge(U,V){} \drawundirectededge(V,I){}
\drawundirectedcurvededge(O,P){}\drawundirectedcurvededge(P,O){}
\drawundirectedcurvededge(Q,P){}\drawundirectedcurvededge(P,Q){}
\drawundirectedcurvededge(U,Z){}\drawundirectedcurvededge(Z,U){}
\drawundirectedcurvededge(Z,J){}\drawundirectedcurvededge(J,Z){}
\drawundirectededge(S,K){} \drawundirectededge(K,X){}
\drawundirectededge(X,W){} \drawundirectededge(W,S){}
\drawundirectededge(X,c){} \drawundirectededge(c,d){}
\drawundirectededge(d,Y){} \drawundirectededge(Y,X){}
\drawundirectedcurvededge(d,e){}\drawundirectedcurvededge(e,d){}
\drawundirectedcurvededge(e,f){}\drawundirectedcurvededge(f,e){}
\drawundirectedcurvededge(K,g){}\drawundirectedcurvededge(g,K){}
\drawundirectedcurvededge(W,h){}\drawundirectedcurvededge(h,W){}
\drawundirectedloop(Q){}\drawundirectedloop[b](J){}
\drawundirectedloop(g){}\drawundirectedloop[b](h){}\drawundirectedloop(c){}\drawundirectedloop[b](Y){}
\drawundirectedloop[r](f){}\drawundirectedloop[b](V){}\drawundirectedloop(N){}\drawundirectedloop[b](T){}
\drawundirectedloop(R){}
\drawundirectededge(C,D){}\drawundirectededge(D,F){}\drawundirectededge(F,E){}\drawundirectededge(E,C){}\drawundirectededge(F,G){}
\drawundirectededge(G,i){}\drawundirectededge(i,l){}\drawundirectededge(l,o){}\drawundirectededge(o,t){}
\drawundirectededge(t,I){}\drawundirectededge(I,v){}\drawundirectededge(v,F){}\drawundirectededge(o,n){}
\drawundirectededge(n,p){}\drawundirectededge(p,s){}\drawundirectededge(s,o){}
\drawundirectedcurvededge(A,B){}\drawundirectedcurvededge(B,A){}\drawundirectedcurvededge(B,C){}\drawundirectedcurvededge(C,B){}
\drawundirectedcurvededge(G,H){}\drawundirectedcurvededge(H,G){}\drawundirectedcurvededge(l,m){}\drawundirectedcurvededge(m,l){}
\drawundirectedcurvededge(t,u){}\drawundirectedcurvededge(u,t){}\drawundirectedcurvededge(v,z){}\drawundirectedcurvededge(z,v){}
\drawundirectedcurvededge(p,q){}\drawundirectedcurvededge(q,p){}\drawundirectedcurvededge(q,r){}\drawundirectedcurvededge(r,q){}
\drawundirectedloop(A){}\drawundirectedloop[l](D){}\drawundirectedloop(H){}
\drawundirectedloop[r](E){}\drawundirectedloop[l](n){}\drawundirectedloop[r](s){}\drawundirectedloop[b](r){}\drawundirectedloop[b](m){}\drawundirectedloop(z){}\drawundirectedloop[b](u){}
\end{picture}
\end{center}
\caption{A finite part of $\Gamma_{1^{\omega}}$.}
\label{ONEENDEDGRAPH}
\end{figure}
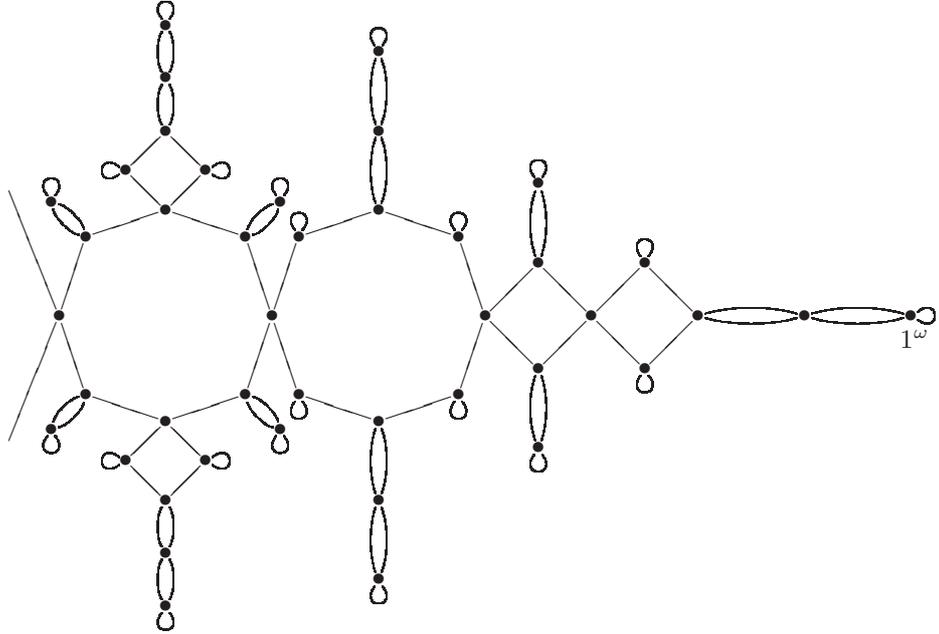

\begin{rem}\rm
It follows also from Theorem 5.4 in \cite{DanDonMat09}, that any two non-isomorphic Schreier graphs $\Gamma_\xi$ and $\Gamma_\eta$ for $\xi,\eta\in E_1$ are not quasi-isometric. Also, none of them is
quasi-isometric to $\mathbb{Z}^2$.\\
\indent Indeed, let $\Gamma_\xi\not\simeq\Gamma_\eta$ and suppose that the sequences $\{a_i^\xi\}$ and $\{a_i^\eta\}$ do not coincide eventually (i.e. there do not exist $i_0,j_0$ such that $a^\xi_{i_0+k}=a^\eta_{j_0+k}$ for all $k\geq 0$.) Since, under a quasi-isometry, $\mathcal{CP}_\xi$ must be mapped to $\mathcal{CP}_\eta$ and since the length of the $i$-th cycle of $\mathcal{CP}_\xi$ (respectively $\mathcal{CP}_\eta$) is $2^{\lceil a^\xi_i/2\rceil}$ (respectively $2^{\lceil a^\eta_i/2\rceil}$), we get a contradiction. On the other hand, if we suppose that the sequences $\{a_i^\xi\}$ and $\{a_i^\eta\}$ do eventually coincide, then condition (c) in Theorem 5.4 of \cite{DanDonMat09} is not satisfied which means that the difference of the distances between successive cut vertices of $\mathcal{CP}_\xi$, respectively $\mathcal{CP}_\eta$, diverges.\\
\indent To see that $\mathbb{Z}^2$ is not quasi-isometric to any orbital Schreier graph $\Gamma_\xi$ for $\xi\in E_1$, note that any quasi-isometry between infinite graphs maps a bi-infinite self-avoiding path to a bi-infinite self-avoiding path. However, there is no bi-infinite self-avoiding path in $\Gamma_\xi$, for any $\xi\in E_1$.
\end{rem}

\section{Avalanches on Basilica Schreier Graphs} \label{AvOnBasilica}

In this section, we study avalanches of the ASM on finite approximations of the infinite orbital Schreier graphs $(\Gamma_\xi,\xi)$, $\xi\in \{0,1\}^\omega$, of the Basilica group.\\
\indent Given $\xi\in\{0,1\}^\omega$ and given an exhaustion $\{H_n\}_{n\geq 1}$ of $(\Gamma_\xi,\xi)$ (see Convention \ref{convDissipative}), we look at the probability distribution, as $n\rightarrow\infty$, of the random variable $Mav_{H_n}(\cdot,\xi)$ giving the mass of an avalanche triggered by adding a chip on the root $\xi$ to a recurrent configuration on $H_n$ chosen uniformly at random. Recall from Section \ref{Basilica} that, for almost every infinite binary sequence $\xi$, the orbital Schreier graph $\Gamma_\xi$ has $1$ end (these boundary points are partitioned into uncountably many uncountable classes of isomorphic $\Gamma_\xi$'s); that there also exist an uncountable infinity of $\xi$'s that give rise to orbital Schreier graphs with $2$ ends (partitioned into countable isomorphism classes); and a countable number of $\xi$'s with a $4$-ended $\Gamma_\xi$ (all isomorphic as unrooted graphs). We examine separately the asymptotic distribution of the mass of avalanches depending on the
number of ends in the orbital infinite graph $\Gamma_\xi$. The four-ended and two-ended graphs
are shown to be non-critical (Theorems \ref{Main4Ends} and \ref{Main2ends}). However, almost every one-ended graph, and therefore also almost every orbital Schreier graph of the Basilica group is critical with the critical exponent equal to $1$ (Theorem \ref{Main}).

\subsection{Limit graph with four ends} \label{4ends}

Recall that all orbital Schreier graphs $\Gamma_\xi$'s that have $4$ ends are isomorphic to the graph $\Gamma_{(4)}$ described in Section \ref{Basilica}. Therefore it is enough to examine one such $\Gamma_\xi$, and we will consider $\xi=0^{\omega}$.\\
\indent Let $\{(\Gamma_n,0^n)\}_{n\geq 1}$ be the sequence of finite rooted Schreier graphs converging in $\mathcal{X}$ to $(\Gamma_{0^\omega},0^\omega)$. For any $n\geq 1$, we fix in $\Gamma_n$ four dissipative vertices as follows: consider the vertices $0^{n-1}1$ and $0^{n-2}10$; for each of them, its neighbours which are situated on a path from it to $0^n$ are dissipative. The infinite graph $(\Gamma_{0^\omega},0^\omega)$ is exhausted by the subgraphs $H_n$ that are isomorphic, for each $n$, to the connected component of $0^n$ in $\Gamma_n$ remaining when removing the above four vertices, together with these four dissipative vertices (see Remark \ref{remsubgraphsH_n}). As $n$ tends to infinity, both cycles in $\Gamma_n$ containing $0^n$ grow and split in the limit, sending vertices $0^{n-1}1$ and $0^{n-2}10$ to infinity and giving in the limit the four infinite paths in $\Gamma_{0^\omega}$ intersecting at $0^\omega$ (see Figure \ref{FOURENDEDGRAPH} and \cite{DanDonMat09}). Consequently, our choice of subgraphs $H_n$ and of dissipative vertices corresponds to our Convention \ref{convDissipative}.\\
\indent It is further convenient to merge in $H_n$ all four dissipative vertices into a single dissipative vertex $p$. The graph $\bar{H}_n$ obtained in this way is still separable but is not a cactus anymore. More precisely, all blocks of $\bar{H}_n$ but one are cycles denoted by $C_1,\dots,C_{s}$. Denote the exceptional block by $B$; it consists of vertices $0^n$ and $p$, and of four disjoint paths, $\mathcal{P}_1$ to $\mathcal{P}_4$, where $|\mathcal{P}_1|=|\mathcal{P}_2|=2^{\lceil\frac{n}{2}\rceil-1}-1$ whereas  $|\mathcal{P}_3|=|\mathcal{P}_4|=2^{\lceil\frac{n-1}{2}\rceil-1}-1$ (see Figure \ref{FOURENDEDGRAPH}). Note that considering the ASM on the graph $H_n$ is equivalent to consider it on $\bar{H}_n$. Indeed, merging all dissipative vertices into a single dissipative vertex does not affect neither the structure of chip configurations (as they are defined on non-dissipative vertices only) nor the firing rules (as dissipative vertices are never fired during the stabilization process) and hence avalanches. Also, $\mathcal{R}_{H_n}\equiv\mathcal{R}_{\bar{H}_n}$ since performing the Burning Algorithm on $H_n$ is equivalent to perform it on the graph $\bar{H}_n$ (as the graphs spanned by the sets of vertices $V_0(H_n)$ and $V_0(\bar{H}_n)$ are isomorphic.)


\begin{figure}[H]
\begin{center}
\begin{picture}(320,320)
\setloopdiam{3} \put(153,153){$0^n$}
\letvertex A=(150,150)\letvertex B=(175,150)\letvertex C=(200,150)\letvertex D=(225,150)\letvertex E=(250,150)\letvertex F=(275,150)\letvertex G=(295,150)
\put(297,150){\dots} \letvertex Gg=(307,150) \letvertex Ggg=(330,150)
\drawundirectededge(Gg,Ggg){} \put(330,155){$p_2\equiv p$} \drawvertex(Ggg){$\bullet$} \put(310,135){$\mathcal{P}_2$}
\put(149,295){$\vdots$} \letvertex Hh=(150,305) \letvertex Hhh=(150,325)
\drawundirectededge(Hh,Hhh){} \put(153,325){$p_1\equiv p$} \drawvertex(Hhh){$\bullet$} \put(130,300){$\mathcal{P}_1$}
\put(45,149){\dots} \letvertex Ii=(42,150) \letvertex Iii=(20,150)
\drawundirectededge(Ii,Iii){} \put(20,155){$p_4\equiv p$} \drawvertex(Iii){$\bullet$} \put(40,135){$\mathcal{P}_4$}
\put(149,45){$\vdots$} \letvertex Ii=(150,43) \letvertex Iii=(150,23)
\drawundirectededge(Ii,Iii){} \put(153,22){$p_3\equiv p$} \drawvertex(Iii){$\bullet$} \put(130,40){$\mathcal{P}_3$}
\letvertex H=(150,125)\letvertex I=(150,100)\letvertex L=(150,75)\letvertex M=(150,55)\letvertex N=(125,150)\letvertex O=(100,150)
\letvertex P=(75,150)\letvertex Q=(55,150)\letvertex R=(150,175)\letvertex S=(150,200)\letvertex T=(150,225)\letvertex U=(150,250)\letvertex V=(150,275)
\letvertex Z=(150,295)\letvertex J=(110,160)\letvertex K=(100,170)\letvertex X=(100,185)\letvertex W=(75,165)\letvertex Y=(100,200)
\letvertex a=(165,125)\letvertex b=(160,110)\letvertex c=(160,90)\letvertex d=(170,100)\letvertex e=(185,100)\letvertex f=(200,100)\letvertex g=(165,75)
\letvertex h=(200,135)\letvertex i=(200,120)\letvertex l=(230,140)\letvertex m=(240,140)\letvertex n=(260,140)\letvertex o=(270,140)\letvertex p=(250,130)
\letvertex q=(240,120)\letvertex r=(260,120)\letvertex s=(250,110)\letvertex t=(250,95)\letvertex u=(250,80)\letvertex v=(125,165)
\letvertex z=(90,160)\letvertex j=(135,200)\letvertex k=(120,200)\letvertex x=(140,270)\letvertex w=(140,240)\letvertex y=(140,260)
\letvertex aa=(140,230)\letvertex bb=(130,250)\letvertex cc=(120,260)\letvertex dd=(120,240)\letvertex ee=(110,250)
\letvertex ff=(95,250)\letvertex gg=(80,250)
\drawvertex(A){$\bullet$}\drawvertex(B){$\bullet$}\drawvertex(C){$\bullet$}\drawvertex(D){$\bullet$}\drawvertex(E){$\bullet$}\drawvertex(F){$\bullet$}
\drawvertex(H){$\bullet$}\drawvertex(I){$\bullet$}\drawvertex(L){$\bullet$}\drawvertex(N){$\bullet$}
\drawvertex(O){$\bullet$}\drawvertex(P){$\bullet$}\drawvertex(R){$\bullet$}\drawvertex(S){$\bullet$}\drawvertex(T){$\bullet$}
\drawvertex(U){$\bullet$}\drawvertex(V){$\bullet$}\drawvertex(X){$\bullet$}\drawvertex(Y){$\bullet$}\drawvertex(W){$\bullet$}\drawvertex(J){$\bullet$}
\drawvertex(K){$\bullet$}\drawvertex(a){$\bullet$}\drawvertex(b){$\bullet$}\drawvertex(c){$\bullet$}\drawvertex(d){$\bullet$}\drawvertex(e){$\bullet$}
\drawvertex(f){$\bullet$}\drawvertex(g){$\bullet$}\drawvertex(h){$\bullet$}\drawvertex(i){$\bullet$}\drawvertex(l){$\bullet$}\drawvertex(m){$\bullet$}
\drawvertex(n){$\bullet$}\drawvertex(o){$\bullet$}\drawvertex(p){$\bullet$}\drawvertex(q){$\bullet$}\drawvertex(r){$\bullet$}\drawvertex(s){$\bullet$}
\drawvertex(t){$\bullet$}\drawvertex(u){$\bullet$}\drawvertex(v){$\bullet$}\drawvertex(z){$\bullet$}\drawvertex(x){$\bullet$}\drawvertex(y){$\bullet$}
\drawvertex(w){$\bullet$}\drawvertex(j){$\bullet$}\drawvertex(k){$\bullet$}\drawvertex(aa){$\bullet$}\drawvertex(bb){$\bullet$}\drawvertex(cc){$\bullet$}
\drawvertex(dd){$\bullet$}\drawvertex(ee){$\bullet$}\drawvertex(ff){$\bullet$}\drawvertex(gg){$\bullet$}
\drawundirectededge(A,B){}\drawundirectededge(B,C){}\drawundirectededge(C,D){}\drawundirectededge(D,E){}\drawundirectededge(E,F){}
\drawundirectededge(F,G){}\drawundirectededge(A,H){}\drawundirectededge(H,I){}\drawundirectededge(I,L){}\drawundirectededge(L,M){}
\drawundirectededge(A,N){}\drawundirectededge(N,O){}\drawundirectededge(O,P){}\drawundirectededge(P,Q){}\drawundirectededge(A,R){}
\drawundirectededge(R,S){}\drawundirectededge(S,T){}\drawundirectededge(T,U){}\drawundirectededge(U,V){}\drawundirectededge(V,Z){}
\drawundirectededge(I,b){}\drawundirectededge(b,d){}\drawundirectededge(d,c){}\drawundirectededge(c,I){}\drawundirectededge(E,n){}
\drawundirectededge(n,p){}\drawundirectededge(p,m){}\drawundirectededge(m,E){}\drawundirectededge(p,r){}\drawundirectededge(r,s){}
\drawundirectededge(s,q){}\drawundirectededge(q,p){}\drawundirectededge(O,z){}\drawundirectededge(z,K){}\drawundirectededge(K,J){}
\drawundirectededge(J,O){}\drawundirectededge(U,w){}\drawundirectededge(w,bb){}\drawundirectededge(bb,y){}\drawundirectededge(y,U){}
\drawundirectededge(bb,dd){}\drawundirectededge(dd,ee){}\drawundirectededge(ee,cc){}\drawundirectededge(cc,bb){}
\drawundirectedcurvededge(a,H){}\drawundirectedcurvededge(H,a){}\drawundirectedcurvededge(d,e){}\drawundirectedcurvededge(e,d){}
\drawundirectedcurvededge(e,f){}\drawundirectedcurvededge(f,e){}\drawundirectedcurvededge(L,g){}\drawundirectedcurvededge(g,L){}
\drawundirectedcurvededge(C,h){}\drawundirectedcurvededge(h,C){}\drawundirectedcurvededge(h,i){}\drawundirectedcurvededge(i,h){}
\drawundirectedcurvededge(l,m){}\drawundirectedcurvededge(m,l){}\drawundirectedcurvededge(n,o){}\drawundirectedcurvededge(o,n){}
\drawundirectedcurvededge(s,t){}\drawundirectedcurvededge(t,s){}\drawundirectedcurvededge(t,u){}\drawundirectedcurvededge(u,t){}
\drawundirectedcurvededge(N,v){}\drawundirectedcurvededge(v,N){}\drawundirectedcurvededge(K,X){}\drawundirectedcurvededge(X,K){}
\drawundirectedcurvededge(X,Y){}\drawundirectedcurvededge(Y,X){}\drawundirectedcurvededge(P,W){}\drawundirectedcurvededge(W,P){}
\drawundirectedcurvededge(S,j){}\drawundirectedcurvededge(j,S){}\drawundirectedcurvededge(j,k){}\drawundirectedcurvededge(k,j){}
\drawundirectedcurvededge(gg,ff){}\drawundirectedcurvededge(ff,gg){}\drawundirectedcurvededge(ff,ee){}\drawundirectedcurvededge(ee,ff){}
\drawundirectedcurvededge(x,y){}\drawundirectedcurvededge(y,x){}\drawundirectedcurvededge(w,aa){}\drawundirectedcurvededge(aa,w){}
\drawundirectedloop[r](a){}\drawundirectedloop(b){}\drawundirectedloop[b](c){}\drawundirectedloop[r](f){}\drawundirectedloop[r](g){}\drawundirectedloop[b](B){}\drawundirectedloop[b](i){}
\drawundirectedloop[b](D){}\drawundirectedloop[l](l){}\drawundirectedloop[r](o){}\drawundirectedloop[l](q){}\drawundirectedloop[r](r){}\drawundirectedloop[b](u){}\drawundirectedloop[b](F){}
\drawundirectedloop[l](V){}\drawundirectedloop[l](gg){}\drawundirectedloop(cc){}\drawundirectedloop[b](dd){}\drawundirectedloop(x){}\drawundirectedloop[b](aa){}\drawundirectedloop[l](T){}
\drawundirectedloop[l](k){}\drawundirectedloop[l](R){}\drawundirectedloop(W){}\drawundirectedloop(Y){}\drawundirectedloop[l](V){}
\drawundirectedloop[r](J){}\drawundirectedloop(v){}\drawundirectedloop[l](z){}
\end{picture}
\end{center}
\caption{The graph $H_n$ as a subgraph of $\Gamma_n$.}
\label{FOURENDEDGRAPH}
\end{figure}
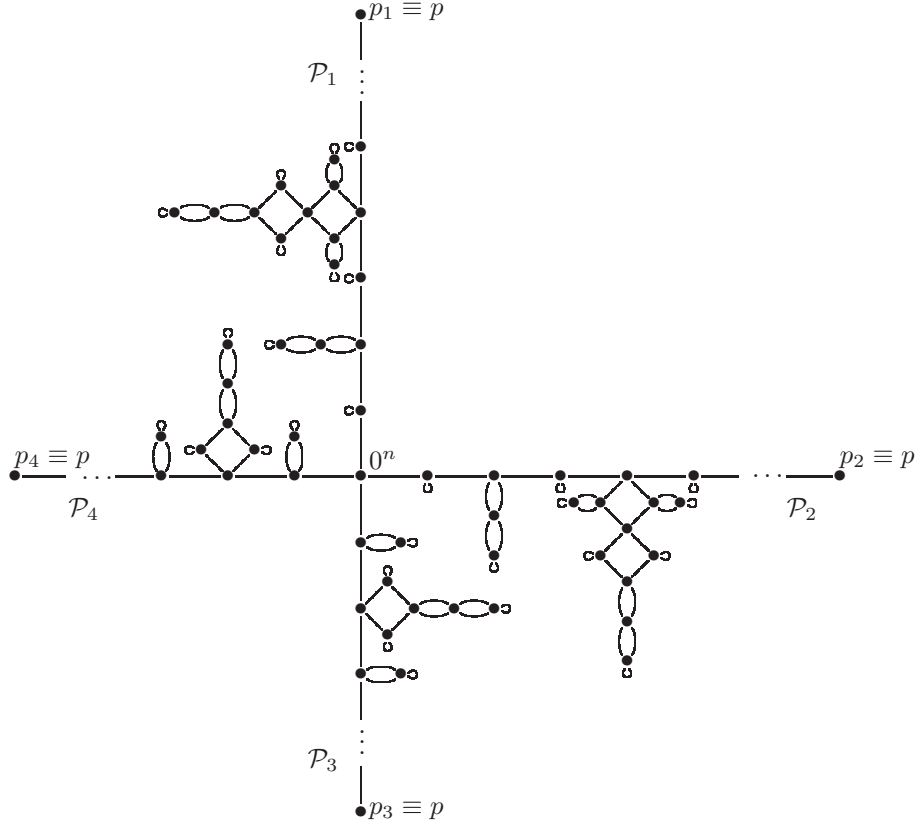

The description of recurrent configurations on $\bar{H}_n$ (and hence on $H_n$) follows now directly from Lemma \ref{ConfigIndlemma}, Proposition \ref{PropRecOnCycle} and from Theorem \ref{BA}. Given a block $C_i$ of $\bar{H}_n$, denote its vertices by $p_i,v^i_1,\dots,v^i_{|C_i|-1}$ (recall from Subsection \ref{CT} that $p_i$ denotes the smallest element of $V(C_i)$ in the order $\succeq$.)

\begin{prop} \label{recconf4ends}
A chip configuration $c:V_0(\bar{H}_n)\longrightarrow\mathbb{N}$ on $\bar{H}_n$ is recurrent if and only if it has the form

\begin{displaymath}
c=c^1_{j_1}+c^2_{j_2}+\dots+c^{s}_{j_{s}}+c^B
\end{displaymath}

\noindent where for every $1\leq i\leq s$, $j_i\in\{0,1,\dots,|C_i|-1\}$. If $j_i\neq 0$, then $c^i_{j_i}:V_0(\bar{H}_n)\longrightarrow \mathbb{N}$ is given by
\begin{displaymath}
c^i_{j_i}(w) = \left\{ \begin{array}{ll}
2 & \textrm{if $w=v^i_{j_i}$},\\
3 & \textrm{if $w=v^i_k$ for $k=1,\dots,|C_i|-1$, $k\neq j_i$},\\
0 & \textrm{otherwise,}
\end{array} \right.
\end{displaymath}
\noindent  whereas if $j_i=0$,
\begin{displaymath}
c^i_{0}(w) = \left\{ \begin{array}{ll}
3 & \textrm{if $w=v^i_k$ for $k=1,\dots,|C_i|-1$,}\\
0 & \textrm{otherwise.}
\end{array} \right.
\end{displaymath}
The subconfiguration $c^B:V_0(\bar{H}_n)\longrightarrow \mathbb{N}$ satisfies

\begin{enumerate}
\item $2\leq c^B(v)\leq 3$ for every $v\in V_0(B)\backslash\{0^n\}$;
\item for $1\leq i\leq 4$, $c^B(v)=2$ for at most one vertex $v\in V(\mathcal{P}_i)\backslash\{0^n\}$ with the additional condition that at least one path $\mathcal{P}_i$ is such that $c(v)=3$ for every $v\in V(\mathcal{P}_i)\backslash\{0^n\}$;
\item $\big|\{1\leq i\leq 4|\exists v\in V(\mathcal{P}_i)\backslash\{0^n\} \hspace{1mm}|\hspace{1mm}c^B(v)=2\}\big|\leq c^B(0^n)\leq 3$;
\item $c^B(w)=0$ for all $w\notin V_0(B)$.
\end{enumerate}
\end{prop}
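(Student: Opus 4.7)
The plan is to apply Lemma \ref{ConfigIndlemma} to reduce the recurrence of $c$ on $\bar{H}_n$ to a block-wise analysis of $s+1$ sub-configurations, one on each cycle block $C_i$ and one on the exceptional block $B$. For each cycle $C_i$, every non-$p_i$ vertex $v$ has degree $2$ in $C_i$ and degree $4$ in $\bar{H}_n$ (being non-dissipative in the $4$-regular $\Gamma_n$), hence $\mathrm{outdeg}_{C_i}(v) = 2$. Proposition \ref{PropRecOnCycle} then characterises the recurrent Lemma sub-configurations $c^i_{\mathrm{Lem}}(v) = c(v) - 2$ on $C_i$ as exactly the $|C_i|$ functions taking values in $\{0,1\}$ (non-negativity of $c^i_{\mathrm{Lem}}$ being built into the Lemma), and adding back the outdeg $2$ recovers precisely the $c^i_{j_i}$ of the statement. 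Each $c^i_{j_i}$ is supported on $V(C_i) \setminus \{p_i\}$ and $c^B$ on $V_0(B)$; these sets partition $V_0(\bar{H}_n)$, so the decomposition reads $c = c^1_{j_1} + \cdots + c^s_{j_s} + c^B$ and condition~4 is automatic.

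For the exceptional block $B$ I would run the Burning Algorithm (Theorem \ref{BA}) directly on the Lemma sub-configuration $c^B_{\mathrm{Lem}}(v) := c^B(v) - \mathrm{outdeg}_B(v)$. Every internal path vertex has degree $2$ in $B$ and degree $4$ in $\bar{H}_n$, hence outdeg $2$; the vertex $0^n$ has all four of its edges inside $B$, hence outdeg $0$ (so $c^B_{\mathrm{Lem}}(0^n) = c^B(0^n)$). Stability in $B$ together with non-negativity of $c^B_{\mathrm{Lem}}$ already yields condition~1 and the upper bound $c^B(0^n) \leq 3$ in condition~3. Starting the burning at $p$, the wave propagates along each path $\mathcal{P}_i$ as long as consecutive values satisfy $c^B_{\mathrm{Lem}} \geq 1$, equivalently $c^B = 3$. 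Calling a path \emph{defective} if it contains a vertex with $c^B(v) = 2$, and letting $d$ denote the number of defective paths, the remaining conditions come out in turn: two holes on a single path force a forbidden sub-configuration on the segment between them (holes included), so at most one hole per path; after forward burning halts one has $\deg_{\Gamma_t}(0^n) = d$, so $0^n$ burns iff $c^B(0^n) \geq d$, which is the lower bound in condition~3 and also forces $d \leq 3$ (i.e.\ at least one non-defective path); once $0^n$ burns, a backward wave propagates from $0^n$ along each defective path, and the hole itself burns last since both its neighbours are by then burnt.

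The main obstacle, I expect, will be the careful bookkeeping inside the Burning Algorithm on $B$: since $B$ is not a cycle but a ``theta-like'' graph with four parallel paths joining $p$ to $0^n$, one must simultaneously track which vertices are reached by the forward wave from $p$ and which only by the backward wave from $0^n$, and verify both the necessity and sufficiency of the hole-counting bound $d \leq c^B(0^n)$. The edge cases (hole at the endpoint of a path adjacent to $p$ or to $0^n$) must also be checked separately. Everything else is a routine consequence of the results already established.
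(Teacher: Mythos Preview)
Your approach is correct and is exactly the one the paper indicates: the paper does not spell out a proof but states that the description ``follows now directly from Lemma~\ref{ConfigIndlemma}, Proposition~\ref{PropRecOnCycle} and from Theorem~\ref{BA}'', which is precisely the three-step reduction you carry out (block decomposition, cycle classification with the outdegree shift by~$2$, and Burning Algorithm on the exceptional block~$B$). Your bookkeeping on~$B$---forward wave from~$p$, the identity $\deg_{\Gamma_t}(0^n)=d$ at the moment $0^n$ is tested, and the backward wave once $0^n$ burns---is the intended argument and yields conditions 1--4 as you describe.
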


The following result shows that the ASM on the sequence $\{H_n\}_{n\geq 1}$ approximating the infinite orbital Schreier graph $(\Gamma_{0^\omega},0^\omega)$ is non-critical in the sense of Definition \ref{DefCrit}:

\begin{thm} \label{Main4Ends}
Consider the infinite orbital Schreier graph $(\Gamma_{0^\omega},0^\omega)$. Then, there exist constants $C_1,C_2>0$ such that

\begin{equation}\label{Bounds4ends}
C_1\cdot2^{-\frac{3n}{2}}\leq\mathbb{P}_{\mu_n}(Mav_{H_n}(\cdot,0^n)=M)\leq C_2\cdot 2^{-n}.
\end{equation}
\end{thm}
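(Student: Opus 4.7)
The approach is to reduce the probability to a combinatorial enumeration on the exceptional theta-block $B$ of $\bar{H}_n$, using the block decomposition of Section \ref{CT}.

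First, by Proposition \ref{recconf4ends}, every recurrent configuration on $\bar{H}_n$ decomposes uniquely as $c=c^B+\sum_i c^i_{j_i}$, with the cycle components $c^i_{j_i}$ independent and together contributing a factor $\prod_i |C_i|$ to the count of recurrent configurations. Since the block-path $\mathcal{CP}_{0^n}$ from $0^n$ to the merged dissipative vertex $p$ reduces to the single block $B$ (both endpoints lie in $B$), Proposition \ref{RemOnAv} ensures that the mass of the avalanche triggered at $0^n$ depends only on $c^B$. Hence
\begin{displaymath}
\mathbb{P}_{\mu_n}(Mav_{H_n}(\cdot,0^n)=M)=\frac{|\{c^B \textrm{ valid}:\operatorname{mass}(c^B)=M\}|}{|\{c^B \textrm{ valid}\}|},
\end{displaymath}
and the denominator coincides with the number of spanning trees $\tau(B)$ of the theta-graph $B$. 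Using the path lengths from Figure \ref{FOURENDEDGRAPH} one computes $\tau(B)=\sum_{k=1}^4\prod_{j\neq k}(|\mathcal{P}_j|+1)\asymp 2^{3n/2}$, the dominant term coming from tuples in which exactly three of the four paths carry a unique vertex of $c^B$-value $2$.

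Second, I would trace the avalanche dynamics explicitly. A nontrivial avalanche requires $c^B(0^n)=3$ so that $0^n$ fires. Once it fires, on each path $\mathcal{P}_i$ the avalanche propagates through consecutive vertices with $c^B$-value $3$ until it either meets the (at most one) vertex with $c^B$-value $2$ (which absorbs the chip and halts the branch) or reaches the dissipative vertex $p$. Denoting by $k_i\in\{0,1,\dots,|\mathcal{P}_i|\}$ the number of path vertices that fire on $\mathcal{P}_i$, and noting that each firing of a cut vertex on a path triggers the complete firing of its decoration (again by Proposition \ref{RemOnAv}), one obtains
\begin{displaymath}
M=1+\sum_{i=1}^4 N_i(k_i), \qquad N_i(k):=\sum_{j=1}^{k}|V_0(\mathcal{D}(v^i_j))|,
\end{displaymath}
where the decoration sizes $|V_0(\mathcal{D}(v^i_j))|$ are dyadic powers of $2$ specified by Proposition \ref{PropertiesGamma_n} and grow geometrically along each path.

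Third, the two bounds of \eqref{Bounds4ends} will follow from estimating the preimage count under the mass map. For the lower bound: every $M$ realized as an avalanche mass comes from at least one admissible tuple $(k_1,k_2,k_3,k_4)$, so $\mathbb{P}\geq 1/\tau(B)\geq C_1\cdot 2^{-3n/2}$. For the upper bound: the $\asymp 2^{3n/2}$ admissible $c^B$ with $c^B(0^n)=3$ map into a range of $\asymp 2^n$ possible masses, and the dyadic structure of the $N_i$ forces the preimage of any given $M$ to have size at most $\asymp 2^{n/2}$, yielding $\mathbb{P}\leq C_2\cdot 2^{-n}$.

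The main obstacle is the uniform upper bound on the multiplicity of the mass function $(k_1,k_2,k_3,k_4)\mapsto 1+\sum_i N_i(k_i)$. One must show that $\sum_i N_i(k_i)=M-1$ has $O(2^{n/2})$ solutions uniformly in $M$. I would approach this by stratifying tuples according to the index $i^*$ realizing $\max_i N_i(k_i)$: because the decoration sizes on each $\mathcal{P}_i$ are dyadic, $N_{i^*}(k_{i^*})$ dominates $\sum_{i\neq i^*}N_i(k_i)$ up to a constant, which pins down $k_{i^*}$ to $O(1)$ values once $M$ is fixed; the remaining three coordinates then vary in a set whose size is controlled by the geometric decay of the $N_j$'s below the maximum, giving the required $O(2^{n/2})$ bound.
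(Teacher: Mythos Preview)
Your reduction to the block $B$, the spanning-tree count $\tau(B)\asymp 2^{3n/2}$, and the lower bound all match the paper's proof exactly.

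The gap is in your upper bound. Your stratification ``pin down $k_{i^*}$ to $O(1)$ values because the dominant term controls the sum'' fails precisely in the regime that carries most of the mass, namely when several of the $k_i$ are of comparable size $\asymp 2^{n/2}$. In that regime no single $N_{i^*}(k_{i^*})$ dominates, and your argument gives no nontrivial bound on the fibre of the mass map. A naive count (three free parameters of size $\asymp 2^{n/2}$, range of masses $\asymp 2^n$) only yields an average fibre of size $\asymp 2^{n/2}$; you need the maximum fibre to be of that order, and for this the dyadic heuristic is not enough.

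What the paper uses instead is a symmetry you have not exploited: the decoration sequences along $\mathcal{P}_1$ and $\mathcal{P}_2$ are identical (odd-height decorations), and likewise along $\mathcal{P}_3$ and $\mathcal{P}_4$ (even-height decorations). Writing $l_i$ for the position of the unique $2$-vertex on $\mathcal{P}_i$ (or $|\mathcal{P}_i|$ if none), the mass therefore depends only on the pair $(l_1+l_2,\,l_3+l_4)$, and distinct pairs give distinct masses. This immediately bounds the fibre over a given $M$ by the number of admissible $(l_1,l_2,l_3,l_4)$ with both sums fixed; combined with the recurrence constraint that at least one $l_i=|\mathcal{P}_i|$, the paper obtains the required $O(2^{n/2})$ bound and hence $\mathbb{P}\leq C_2\cdot 2^{-n}$. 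To repair your argument you should replace the max-stratification by this pairing observation.
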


\begin{proof}
Consider the graph $\bar{H}_n$ for $n>5$. Given any recurrent configuration $c$, it follows from Proposition \ref{RemOnAv} that the avalanche triggered by adding an extra chip on $0^n$ to $c$ does only depend on the subconfiguration $c^B$ of $c$ on the block $B$ of $\bar{H}_n$.\\
\indent Given an integer $M>0$, we count the number of recurrent configurations on $\bar{H}_n$ producing an avalanche of mass $M$. We first compute the total number of recurrent configurations on the block $B$, which is the number $\kappa(B)$ of spanning trees of $B$. Recall that  $|\mathcal{P}_1|=|\mathcal{P}_2|=2^{\lceil\frac{n}{2}\rceil-1}-1$ and $|\mathcal{P}_3|=|\mathcal{P}_4|=2^{\lceil\frac{n-1}{2}\rceil-1}-1$. As it does not influence the final result, we omit the additive constant for technical convenience, and we get

\begin{displaymath}
\kappa(B)=2|\mathcal{P}_1|^2|\mathcal{P}_3|+2|\mathcal{P}_3|^2|\mathcal{P}_1|=
\left\{\begin{array}{cc}
2^{3n/2-1} & \textrm{if $n$ is even,}\\
3\cdot2^{(3n-5)/2} & \textrm{if $n$ is odd.}
\end{array}\right.
\end{displaymath}

Given a recurrent configuration $c^B$ on the block $B$, denote by $l_i(c^B)$ the distance between $0^n$ and the vertex situated on $\mathcal{P}_i$ with only $2$ chips on it (see \emph{2.} in Proposition \ref{recconf4ends}). If there is no such vertex on some of the paths $\mathcal{P}_i$, then set $l_i(c^B)=|\mathcal{P}_i|$.
We look now at how the mass of avalanches triggered by adding an extra chip on $0^n$ depends on the $l_i$'s: if at least one of the $l_i$'s grows (respectively decreases), then the mass grows (respectively decreases). Thus, in order to keep the mass $M$ of the avalanche unchanged while modifying the values of the $l_i$'s, we must let some of them grow as well as some of them decrease. Suppose without loss of generality that decorations of odd heights are attached to the paths $\mathcal{P}_1$ and $\mathcal{P}_2$, whereas decorations of even heights are attached to the paths $\mathcal{P}_3$ and $\mathcal{P}_4$. It follows (see proof of Proposition \ref{RemOnAv}) that an increase of $l_1$ (respectively $l_3$) must be thus compensated by a decrease of $l_2$ (respectively $l_4$) whereas an increase of $l_2$ (respectively $l_4$) must be compensated by a decrease of $l_1$ (respectively $l_3$).\\
\indent Observe now that if $c_1$ and $c_2$ are two recurrent configurations on $\bar{H}_n$ such that $l_1(c_1^B)+l_2(c_1^B)\neq l_1(c_2^B)+l_2(c_2^B)$ (or similarly $l_3(c_1^B)+l_4(c_1^B)\neq l_3(c_2^B)+l_4(c_2^B)$), then the masses of the avalanches triggered respectively by $c_1$ and $c_2$ are different.\\
\indent It follows from the previous observation, that avalanches which are less likely to occur are those of small mass. We derive the lower-bound in (\ref{Bounds4ends}) by counting the number of recurrent configurations on $B$ leading to avalanches on $\bar{H}_n$ of minimal mass. There are exactly two such recurrent configurations $c_{min}^B$ and $d_{min}^B$; $c_{min}^B$ satisfies $l_1(c_{min}^B)=l_2(c_{min}^B)=l_3(c_{min}^B)=1$ whereas $d_{min}^B$ satisfies $l_1(d_{min}^B)=l_2(d_{min}^B)=l_4(d_{min}^B)=1$. Normalizing by $\kappa(B)$ yields the lower bound in (\ref{Bounds4ends}).\\
\indent On the other hand, the most likely avalanches arise from recurrent configurations $c$ with $c^B$ on $B$ satisfying $l_1(c^B)+l_2(c^B)=|\mathcal{P}_1|+1$. There are not more than $2|\mathcal{P}_2|+2(|\mathcal{P}_1|-2)\leq 2^{\lceil\frac{n}{2}\rceil+1}$ such recurrent configurations on $B$. Normalizing by $\kappa(B)$ yields the upper-bound in (\ref{Bounds4ends}).
\end{proof}

\begin{rem}\rm
A careful computation yields approximate values for the constants $C_1\approx 3.77$ and $C_2\approx 5.65$.
\end{rem}

\subsection{Limit graphs with two ends}

The Basilica group provides us with an uncountable family of two-ended graphs not quasi-isometric to $\mathbb{Z}$ (see Section \ref{Basilica}). We prove in this subsection that the ASM on sequences of finite graphs approximating these infinite graphs does not exhibit a critical behaviour with respect to the mass of avalanches, in the sense of Definition \ref{DefCrit}.\\
\indent Some particular cases of sequences of cacti approximating a $2$-ended graph were already studied by Ali and Dhar in \cite{AliDhar95} where they considered graphs obtained from $\mathbb{Z}$ by replacing even edges by cycles of fixed length $L$. (Note that if $L=2$, the corresponding graphs are essentially the Schreier graphs associated with the self-similar action on the binary rooted tree of the so-called Grigorchuk group, the first example of a group of intermediate growth). Ali and Dhar have found that the ASM on these sequences of \emph{decorated chains} is not critical; in particular, they have shown that $\mathbb{P}_{\mu_n}(Mav_{\Gamma_n}(\cdot,v_n)=M)=f(\frac{M}{n})n^{-1}$ where $f$ denotes some scaling function. The behaviour of avalanches with respect to their mass is thus similar to what one obtains on a sequence of growing cycles $C_n$ of length $n$ approximating the lattice $\mathbb{Z}$ where $\mathbb{P}_{\mu_n}(Mav_{C_n}(\cdot,v_n)=M)\sim 1/n$ (see Subsection \ref{Cycles}).\\
\indent Let $\xi\in E_2$, let $\Gamma_\xi$ be the corresponding two-ended orbital Schreier graph and let $\{(\Gamma_n,\xi_n)\}_{n\geq 1}$ be the sequence of finite rooted Schreier graphs converging in $\mathcal{X}$ to $(\Gamma_\xi,\xi)$. Recall that $\mathcal{CP}_{\xi_n}$ denotes the block-path in $\Gamma_n$ joining the vertex $\xi_n$ to the central cycle of $\Gamma_n$.\\
\indent For any $n\geq 1$, we fix in $\Gamma_n$ two dissipative vertices $p_1$ and $p_2$; these are the two neighbours of $0^n$ such that any path joining $\xi_n$ to $0^n$ contains one of them. The infinite graph $(\Gamma_\xi,\xi)$ is exhausted by the subgraphs $H_n$ that are isomorphic, for each $n$, to the connected component of $\xi_n$ in $\Gamma_n$ remaining when removing both above vertices, together with them (see Remark \ref{remsubgraphsH_n}). As $n$ tends to infinity, the length of the central cycle in $\Gamma_n$ grows and splits in the limit, sending vertex $0^n$ to infinity and giving the bi-infinite path in $\Gamma_\xi$ (see Figure \ref{TWOENDEDGRAPH} and \cite{DanDonMat09}). Consequently, our choice of subgraphs $H_n$ and of dissipative vertices corresponds to our Convention \ref{convDissipative}.\\
\indent The recurrent configurations on $H_n$ are given by Lemma \ref{ConfigIndlemma} and Proposition \ref{PropRecOnCycle} (as in Subsection \ref{4ends}, we may merge both dissipative vertices $p_1$ and $p_2$ into a single one, $p$; the resulting graph $\bar{H}_n$ is still separable and $\mathcal{R}_{H_n}\equiv\mathcal{R}_{\bar{H}_n}$.)\\
\indent As in the case of $\Gamma_{0^\omega}$, the ASM on the sequence $\{H_n\}_{n\geq 1}$ approximating the infinite orbital Schreier graph $(\Gamma_\xi,\xi)$ has non-critical behaviour if $\xi\in E_2$:

\begin{thm} \label{Main2ends}
Let $\xi\in E_2$ and consider the two-ended orbital rooted Schreier graph $(\Gamma_\xi,\xi)$.
Then, the probability distribution of the mass of an avalanche on $H_n$ satisfies

\begin{displaymath}
\mathbb{P}_{\mu_n}(Mav_{H_n}(\cdot, \xi_n)=M)\sim 2^{-\frac{n}{2}}.
\end{displaymath}
\end{thm}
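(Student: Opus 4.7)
\indent The plan is to work in the finite cactus $\bar H_n$ obtained from $H_n$ by identifying the two dissipative vertices $p_1,p_2$ into a single dissipative vertex $p$, exactly as in Subsection \ref{4ends}; the identification preserves $\mathcal R_{H_n}$ and the avalanches initiated at $\xi_n$, and makes $\bar H_n$ into a cactus. The key structural observation is that the central cycle of $\Gamma_n$ (of length $2^{\lceil n/2\rceil}$), after removal of $0^n$ and identification of its two neighbors $p_1,p_2$ into $p$, collapses into a single cycle $\mathcal C_\ast\subset\bar H_n$ of length $|\mathcal C_\ast|=2^{\lceil n/2\rceil}-2\sim 2^{n/2}$ containing the unique dissipative vertex $p$.

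\indent The first step is combinatorial: for each fixed $\xi\in E_2$, I would show that the block-path $\mathcal{CP}^n_{\xi_n}=\mathcal C_1\dots\mathcal C_{r_n-1}\mathcal C_\ast$ joining $\xi_n$ to $p$ in $\bar H_n$ has bounded intermediate portion, that is, $r_n$ and the sizes $|\mathcal C_1|,\dots,|\mathcal C_{r_n-1}|$ remain bounded independently of $n$. This is because $\xi$ sits at bounded distance from the bi-infinite backbone of $\Gamma_\xi$, and this backbone is exactly the local limit of the central cycle of $\Gamma_n$ as $0^n$ is sent to infinity; hence the block-path from $\xi$ to the backbone in $\Gamma_\xi$ is finite, and transfers via local convergence to a block-path of bounded length in $\Gamma_n$. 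The substitution rules of Proposition \ref{rulesBas}, together with the characterization of $E_2$ via the sequences $\{\alpha_i\},\{\beta_i\}$ from Theorem \ref{ClassifBas}, let one carry out this verification inductively.

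\indent The second step is a transcription of the proof of Theorem \ref{ThmCP} to $\bar H_n$. Fix $M$ large enough that any avalanche of mass $M$ reaches $\mathcal C_\ast$ and stops there (this range is non-degenerate by step one). Following the proof of Theorem \ref{ThmCP}, Proposition \ref{RemOnAv} restricts the dependence of the avalanche to the subconfigurations on the blocks of $\mathcal{CP}^n_{\xi_n}$, Lemma \ref{ConfigIndlemma} frees the subconfigurations on blocks off the block-path, and the case analysis (S1)--(S3) shows that each intermediate block $\mathcal C_j$ admits a fraction $\ge 1-2/|\mathcal C_j|$ of recurrent subconfigurations that propagate the avalanche. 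By Proposition \ref{MassAvCycle} applied to $\mathcal C_\ast$, the number of subconfigurations on $\mathcal C_\ast$ producing any specified feasible subavalanche on $\mathcal C_\ast$ is $1$ or $2$. Putting everything together and normalizing by the total number of recurrent configurations on $\bar H_n$ yields
\begin{displaymath}
\frac{C_1}{|\mathcal C_\ast|}\;\le\;\mathbb{P}_{\mu_n}\bigl(Mav_{H_n}(\cdot,\xi_n)=M\bigr)\;\le\;\frac{C_2}{|\mathcal C_\ast|},
\end{displaymath}
where $C_1,C_2>0$ are constants depending on $\xi$ but not on $n$; this gives the claimed $\mathbb{P}\sim 2^{-n/2}$.

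\indent The hard part is the combinatorial step one: establishing, for every $\xi\in E_2$, the uniform-in-$n$ boundedness of the length and the block sizes of the portion of the block-path from $\xi_n$ to the central cycle of $\Gamma_n$. Once this is in place, the estimate reduces to a single-cycle computation on $\mathcal C_\ast$ (as in Subsection \ref{Cycles}), with all remaining factors bounded above and below by constants independent of $n$.
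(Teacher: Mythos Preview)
Your proposal is correct and follows essentially the same approach as the paper: merge the two dissipative vertices into one so that $\bar H_n$ is a cactus whose last block is the collapsed central cycle $\mathcal C_\ast$ of length $\sim 2^{n/2}$, observe that the intermediate portion of the block-path is bounded independently of $n$, and reduce via Proposition~\ref{MassAvCycle} to a single-cycle estimate on $\mathcal C_\ast$. The paper packages the bounded intermediate portion slightly more directly---it simply notes that $\xi_n$ lies in a fixed $k$-decoration (with $k$ eventually constant) and absorbs everything happening inside it into two $n$-independent probabilities $P,\tilde P$---so your ``hard part'' (step one) is in fact the easy observation that the decoration containing $\xi_n$ stabilizes, immediate from the local convergence $(\Gamma_n,\xi_n)\to(\Gamma_\xi,\xi)$ and the description of two-ended $\Gamma_\xi$ in Section~\ref{Basilica}.
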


\begin{proof}
Let $\xi\in E_2$. Observe that there exists a subsequence $\{n_i\}_{i\geq 1}$ of $\mathbb{N}$ such that, for every $i\geq1$, the vertex separating the penultimate cycle of the block-path $\mathcal{CP}_{\xi_{n_i}}$ from the last cycle of $\mathcal{CP}_{\xi_{n_i}}$ is different from $0^n$. Let $n\geq 1$ belong to such a subsequence and consider the graph $\Gamma_n$. The root $\xi_n$ belongs to some $k$-decoration attached to the central cycle of $\Gamma_n$ by some vertex $v\neq p_i$, $i=1,2$. Note that if we choose $n$ large enough, $k$ does not depend on $n$.\\
\indent Let $c$ be a (randomly chosen) recurrent configuration on $H_n\subset\Gamma_n$. By Proposition \ref{RemOnAv}, the mass of the avalanche triggered by adding to $c$ an extra chip on $\xi_n$ depends only on the subconfigurations of $c$ on $\mathcal{CP}_{\xi_n}$.\\
\indent As the avalanche propagates along the $k$-decoration attached at $v$, a certain amount of chips migrates in the direction of the central cycle of $\Gamma_n$ and finally reaches $v$. If the amount of chips eventually reaching $v$ is greater than one, then necessarily, the avalanche will propagate in both directions on the whole central cycle and the mass of the avalanche will be maximal (denote this mass by $M_{max}$). The same happens if only one chip reaches $v$ but every vertex on the central cycle has three chips on it. On the other hand, if only one chip reaches $v$ and if there is a vertex on the central cycle with only two chips on it, then the avalanche will propagate along the central cycle in such a way that in one direction it will reach one of the dissipative vertices but in the other direction, it will be stopped at the vertex with only two chips. Denote by $P$ the probability that at least two chips reach $v$ during an avalanche. Similarly, denote by $\tilde{P}$ the probability that the mass $M$ of the avalanche is greater than the cardinality of the decoration attached to $v$ (which, by Proposition \ref{PropertiesGamma_n}, is equal to $1/3(2^k+1)$ or $1/3(2^k+2)$ depending on the parity of $k$). Note that neither $P$ nor $\tilde{P}$ depend on $n$.\\
\indent Observe that, by Proposition \ref{MassAvCycle} and its proof, there are at most two subconfigurations on the central cycle producing avalanches of the same mass. Collecting together all previous observations, we have, for $M$ sufficiently large,

\begin{displaymath}
\mathbb{P}_{\mu_n}(Mav_{H_n}(\cdot, \xi_n)=M)=\left\{\begin{array}{cc}
\tilde{P}\cdot\frac{\alpha}{|C|-2} & \textrm{if $M<M_{\max}$,}\\
P+\frac{1-P}{|C|-2} & \textrm{if $M=M_{max}$.}
\end{array}\right.
\end{displaymath}

\noindent where $\alpha\in\{1,2\}$ and $|C|$ denotes the length of the central cycle. Since $|C|\sim 2^{\frac{n}{2}}$, the result follows.
\end{proof}

\subsection{Limit graphs with one end} \label{SubSectOneEnd}

Recall from Section \ref{Basilica}, that $E_1\subset\{0,1\}^\omega$ denotes the subset of full measure consisting of such rays $\xi$ that the infinite orbital Schreier graph $\Gamma_\xi$ has one end. For $\xi\in E_1$, consider the sequence $\{\xi_n\}_{n\geq 1}$ of vertices of the ray belonging to the consecutive levels of the tree, and the rooted finite Schreier graphs $\{(\Gamma_n,\xi_n)\}_{n\geq 1}$ converging to $(\Gamma_\xi,\xi)$. Let $\mathcal{CP}_{\xi_n}=\mathcal{C}_1\dots \mathcal{C}_{r_n}$ be the unique block-path in $\Gamma_n$ joining $\xi_n$ to the central cycle of $\Gamma_n$. By Lemma \ref{LemOnCPinfBasilica}, $(\mathcal{CP}_\xi,\xi)=\lim_{n\to\infty}(\mathcal{CP}_{\xi_n},\xi_n)$ is a well-defined block-path isomorphic to the unique block-path of infinite length in $(\Gamma_\xi,\xi)$ starting at $\xi$. Recall that there exists a subsequence $\{n_i\}_{i\geq 1}$ of $\mathbb{N}$ such that for every $i\geq1$, the vertex separating the penultimate cycle of the block-path $\mathcal{CP}_{\xi_{n_i}}$ from the last cycle of $\mathcal{CP}_{\xi_{n_i}}$ is different from $0^n$. Let $n\geq 1$ belong to such a subsequence. For any $n\geq 1$, we set $p^{(n)}:=0^n$ in $\Gamma_n$ to be dissipative. The infinite graph $(\Gamma_\xi,\xi)$ is exhausted by the subgraphs $H_n$ that are isomorphic, for each $n$, to the connected component of $\xi_n$ in $\Gamma_n$ remaining when removing vertex $0^n$, together with $0^n$ (see Remark \ref{remsubgraphsH_n}). Our choice of subgraphs $H_n$ corresponds to Convention \ref{RemOnCPinf}. The following statement is the main result of this section:

\begin{thm} \label{Main}
For almost every $\xi\in E_1$ (with respect to the uniform measure $\lambda$ on  $\{0,1\}^\omega$), we have

\begin{equation}
\lim_{n\to\infty}\mathbb{P}_{\mu_n}(Mav_{H_n}(\cdot,\xi_n)=M)\sim M^{-1}.
\end{equation}
\end{thm}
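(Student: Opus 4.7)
The plan is to apply Theorem~\ref{ThmCP} to the one-ended rooted cactus $(\Gamma_\xi,\xi)$ and then estimate the resulting sandwich in terms of $M$ alone. For $\xi\in E_1$ the hypothesis is immediate: by Lemma~\ref{remonai} the cycles along the block-path $\mathcal{CP}_\xi$ satisfy $|\mathcal{C}_j|=2^{\lceil a_j/2\rceil}$ with $\{a_j\}$ strictly increasing and $a_1\geq 1$, so $|\mathcal{C}_j|\geq 2^{\lceil j/2\rceil}$ and $\sum_j 1/|\mathcal{C}_j|$ converges. Theorem~\ref{ThmCP} then gives, for $M$ large enough,
$$\frac{L}{2\cdot|\mathcal{C}_{i_M}|\cdot|\mathcal{C}_{i_M+1}|}\;\leq\;\lim_{n\to\infty}\mathbb{P}_{\mu_n}(Mav_{H_n}(\cdot,\xi_n)=M)\;\leq\;\frac{2}{|\mathcal{C}_{i_M}|\cdot|\mathcal{C}_{i_M+1}|},$$
and the task reduces to proving $|\mathcal{C}_{i_M}|\cdot|\mathcal{C}_{i_M+1}|=M^{1\pm o(1)}$ almost surely.

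The first ingredient is a two-sided comparison $d_i\asymp|\mathcal{C}_i|^2$, based on the quadratic growth of $\Gamma_\xi$ (Bondarenko's theorem, cited at the end of Section~\ref{Basilica}). Since $|\mathcal{C}_j|$ at least doubles every two steps, the distance from $\xi$ to $p_i$ in $\Gamma_\xi$ is bounded by $\sum_{j\leq i}|\mathcal{C}_j|\leq C|\mathcal{C}_i|$, so $D(p_i)$ sits inside the ball $B_{\Gamma_\xi}(\xi,O(|\mathcal{C}_i|))$ and quadratic growth yields $d_i\leq C_1|\mathcal{C}_i|^2$. Conversely, the ball of radius $|\mathcal{C}_i|/4$ around the midpoint of the cycle $\mathcal{C}_i$ lies entirely in $D(p_i)$ -- in the cactus, the unique shortest path from any such vertex to $p_i$ must traverse at least a quarter of $\mathcal{C}_i$ -- and has size $\asymp|\mathcal{C}_i|^2$ by quadratic growth, so $d_i\geq C_2|\mathcal{C}_i|^2$. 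It follows that $|\mathcal{C}_{i_M}|\asymp\sqrt{M}$ and $i_M=\Theta(\log M)$. The second ingredient is to control the ratio $|\mathcal{C}_{i_M+1}|/|\mathcal{C}_{i_M}|=2^{\lceil a_{i_M+1}/2\rceil-\lceil a_{i_M}/2\rceil}$: by Lemma~\ref{remonai} this is $1$ or $2$ unless $i_M$ is a jump index $T_j$, in which case it is at most $2^{m_j/2+1}$. Under the uniform measure $\lambda$, the parsing in Lemma~\ref{PropAlg} makes $(m_j,t_j)_{j\geq 1}$ an i.i.d.\ sequence with geometrically distributed, and hence exponentially tailed, marginals, so a standard maximum-of-geometrics estimate gives $\max_{j\leq K}m_j=O(\log K)$ almost surely; with $K=\Theta(i_M)=\Theta(\log M)$ this forces $|\mathcal{C}_{i_M+1}|/|\mathcal{C}_{i_M}|\leq(\log M)^{O(1)}=M^{o(1)}$ a.s.

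Combining the two ingredients yields $|\mathcal{C}_{i_M}|\cdot|\mathcal{C}_{i_M+1}|=M\cdot M^{o(1)}$ almost surely, so the sandwich from Theorem~\ref{ThmCP} collapses to $\lim_n\mathbb{P}_{\mu_n}(Mav_{H_n}(\cdot,\xi_n)=M)=M^{-1\pm o(1)}$, and the critical exponent equals $1$ for $\lambda$-almost every $\xi\in E_1$. I expect the main obstacle to be the lower bound $d_i\geq C_2|\mathcal{C}_i|^2$: it is not automatic from abstract quadratic growth alone, and requires the cactus structure (to localise the ball inside $D(p_i)$) together with some uniformity in Bondarenko's growth constants across the random $\Gamma_\xi$. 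By contrast, the stochastic control of the jumps $m_j$, while involving some bookkeeping with the code of Lemma~\ref{PropAlg}, is routine.
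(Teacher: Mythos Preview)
Your overall strategy matches the paper's: apply Theorem~\ref{ThmCP} to reduce to showing $|\mathcal{C}_{i_M}|\cdot|\mathcal{C}_{i_M+1}|=M^{1\pm o(1)}$, and then establish this by combining (i) a two-sided comparison between $d_i$ and $|\mathcal{C}_i|$, and (ii) almost-sure control of the increment $a_{i+1}-a_i$ via Borel--Cantelli on the geometric tails of the $m_j$. Step~(ii) is essentially the paper's Lemma~\ref{LemExpDecay} together with the observation that $j(i)/i\to0$ a.s.; the paper packages the conclusion as $a(i_M+1)/a(i_M)\to1$, which is equivalent to your $|\mathcal{C}_{i_M+1}|/|\mathcal{C}_{i_M}|=M^{o(1)}$.

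Where you diverge is in step~(i). The paper never invokes quadratic growth. It uses the explicit count $|\mathcal{D}(0^k)|=\tfrac{1}{3}(2^k+O(1))$ from Proposition~\ref{PropertiesGamma_n}, together with the fact that in the Basilica cactus $D(p_i)$ \emph{is} a decoration, to obtain $d_i\approx\tfrac{1}{3}2^{a_i+1}$ directly. Since $|\mathcal{C}_i|=2^{\lceil a_i/2\rceil}$, this gives $a(i_M)=\log_2(3M)+O(1)$ and $|\mathcal{C}_{i_M}|\cdot|\mathcal{C}_{i_M+1}|=2^{(a(i_M)+a(i_M+1))/2+O(1)}$ with no geometric input beyond the block-path combinatorics. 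Your route via Bondarenko's theorem is more conceptual and is really a special case of the later Theorem~\ref{Growth}, but the obstacle you flag is genuine: the lower bound $d_i\geq C_2|\mathcal{C}_i|^2$ needs quadratic growth \emph{centred at the antipode of $p_i$ on $\mathcal{C}_i$}, with constants uniform in $i$, and this does not follow from ``$\Gamma_\xi$ has quadratic growth'' at the fixed basepoint $\xi$. Extracting that uniformity would push you back into the self-similar structure of the decorations---at which point Proposition~\ref{PropertiesGamma_n} hands you $d_i$ exactly, and the detour through growth becomes unnecessary.
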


\noindent As an immediate consequence of Theorem \ref{Main} and Proposition \ref{unifmeasure}, we have:

\begin{cor}
The ASM on the sequence $\{\Gamma_n\}_{n\geq 1}$ of Schreier graphs of the Basilica group is critical in the random weak limit, with critical exponent equal to $1$.
\end{cor}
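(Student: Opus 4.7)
The strategy is to combine the two-sided quantitative bound of Theorem \ref{ThmCP} with the explicit description of block sizes along $\mathcal{CP}_\xi$ provided by Lemma \ref{remonai}, and then upgrade a generic pointwise estimate to an almost-sure statement via the ergodicity result of Proposition \ref{PropErgodicity}.

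First I would verify the hypotheses of Theorem \ref{ThmCP} for $\lambda$-a.e.\ $\xi\in E_1$. Lemma \ref{remonai} gives $|\mathcal{C}_j|=2^{\lceil a_j/2\rceil}$ with $\{a_j\}$ strictly increasing, so $a_j\geq j$ and the series $\sum_{j\,:\,|\mathcal{C}_j|>2} 1/|\mathcal{C}_j|$ is dominated by a convergent geometric series (uniformly in $\xi$). Consequently Theorem \ref{ThmCP} applies and delivers
\begin{displaymath}
\frac{L}{2\cdot|\mathcal{C}_{i_M}|\cdot|\mathcal{C}_{i_M+1}|}\;\leq\;\lim_{n\to\infty}\mathbb{P}_{\mu_n}(Mav_{H_n}(\cdot,\xi_n)=M)\;\leq\;\frac{2}{|\mathcal{C}_{i_M}|\cdot|\mathcal{C}_{i_M+1}|},
\end{displaymath}
where $i_M$ is characterised by $d_{i_M-1}\leq M<d_{i_M}$.

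The heart of the proof is to show that $|\mathcal{C}_{i_M}|\cdot|\mathcal{C}_{i_M+1}|\asymp M$ for $\lambda$-typical $\xi$. The subgraph $D(p_i)$ of $\Gamma_\xi$ consists of the first $i$ blocks of $\mathcal{CP}_\xi$ together with all finite decorations hanging off them. Using the substitution rules of Proposition \ref{rulesBas} and the size count in Proposition \ref{PropertiesGamma_n} one can show inductively that the decorations hanging off $\mathcal{C}_j$ contribute $\Theta(|\mathcal{C}_j|)$ vertices per unit of arc length, so that $d_i=\Theta\bigl(\sum_{j\leq i}|\mathcal{C}_j|^2\bigr)$. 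Since $|\mathcal{C}_j|$ is a nondecreasing sequence of powers of $2$, this last sum is of order $|\mathcal{C}_i|^2$, reflecting the quadratic growth of $\Gamma_\xi$. Consecutive block sizes satisfy $|\mathcal{C}_{i+1}|/|\mathcal{C}_i|=2^{\lceil a_{i+1}/2\rceil-\lceil a_i/2\rceil}$, and from the expression of $a_{i+1}-a_i$ in Lemma \ref{remonai} and a Borel--Cantelli argument applied to the sequence $\{t_j\}$ arising from the canonical form of $\xi$ in Lemma \ref{PropAlg}, one sees that these ratios stay bounded for $\lambda$-a.e.\ $\xi$. Hence $|\mathcal{C}_{i_M+1}|\asymp|\mathcal{C}_{i_M}|$, and combining with $d_{i_M}\asymp|\mathcal{C}_{i_M}|^2\asymp M$ gives $|\mathcal{C}_{i_M}|\cdot|\mathcal{C}_{i_M+1}|\asymp M$.

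Substituting this into the bounds from Theorem \ref{ThmCP} yields $\lim_{n\to\infty}\mathbb{P}_{\mu_n}(Mav_{H_n}(\cdot,\xi_n)=M)\asymp M^{-1}$ on a $\lambda$-full subset of $E_1$. Proposition \ref{PropErgodicity} applies to the covering sequence $\{\Gamma_n\}_{n\geq 1}$ of $4$-regular cacti with random weak limit $\rho$ supported on one-ended graphs (Proposition \ref{unifmeasure}), and guarantees that the asymptotic behaviour in $M$ of the limiting distribution is $\rho$-almost surely the same. Since it has been identified as $M^{-1}$ on a positive-measure subset, it must coincide with this expression $\rho$-almost surely, yielding $\delta=1$. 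The main technical obstacle is the estimate $d_i=\Theta(|\mathcal{C}_i|^2)$: one must carefully account for the hierarchy of decorations attached at every vertex of every block along $\mathcal{CP}_\xi$, and control the exceptional $\xi$'s for which large $t_j$'s create atypically big jumps in the sequence of block sizes; the Borel--Cantelli step is what ultimately confines those pathological rays to a $\lambda$-null set.
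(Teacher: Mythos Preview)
Your overall strategy---apply Theorem \ref{ThmCP}, identify $|\mathcal{C}_{i_M}|\cdot|\mathcal{C}_{i_M+1}|$ in terms of $M$, and invoke Proposition \ref{unifmeasure}---is the same as the paper's. In the paper this Corollary is stated as an immediate consequence of Theorem \ref{Main} and Proposition \ref{unifmeasure}; what you have written is essentially a sketch of Theorem \ref{Main} itself.

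However, there is a genuine error in your control of consecutive block sizes. You assert that $|\mathcal{C}_{i+1}|/|\mathcal{C}_i|$ \emph{stays bounded} for $\lambda$-a.e.\ $\xi$, and you attribute the jumps to the sequence $\{t_j\}$. Both claims are wrong. By equation \eqref{eqDiffAi} in Lemma \ref{remonai}, the increments $a_{i+1}-a_i$ are governed by the $m_j$'s, not the $t_j$'s, and they equal $m_j+1$ whenever $i=T_j$. Lemma \ref{LemExpDecay} only yields $m_j<2j$ eventually, not that $\{m_j\}$ is bounded; in fact $\{m_j\}$ is almost surely unbounded (for any fixed $K$, $\mathbb{P}(m_j\geq K)$ is bounded below uniformly in $j$). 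Consequently $|\mathcal{C}_{i+1}|/|\mathcal{C}_i|=2^{\lceil a_{i+1}/2\rceil-\lceil a_i/2\rceil}$ is almost surely unbounded, and your claimed relation $|\mathcal{C}_{i_M}|\cdot|\mathcal{C}_{i_M+1}|\asymp M$ (with fixed multiplicative constants) is false.

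This is exactly why the criticality notion in Definition \ref{DefCrit} carries an $\epsilon$ in the exponent rather than demanding a genuine two-sided $\asymp M^{-\delta}$. The paper's proof of Theorem \ref{Main} does not show boundedness of the ratios; it shows the weaker statement that $a(i_M+1)/a(i_M)\to 1$ almost surely (using $m_j<2j$ eventually and $j(i)/i\to 0$), which after exponentiation gives, for every $\epsilon>0$, eventually $M^{-1-\epsilon}\lesssim(|\mathcal{C}_{i_M}|\cdot|\mathcal{C}_{i_M+1}|)^{-1}\lesssim M^{-1+\epsilon}$. To repair your argument you must replace the false ``ratios stay bounded'' step by this $\epsilon$-version, apply Borel--Cantelli to $\{m_j\}$ rather than $\{t_j\}$, and conclude criticality in the sense of Definition \ref{DefCrit} rather than a strict power law.
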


\indent Given $\xi\in E_1$, let $(l,\{m_k\},\{t_k\})$ be the triple provided by Lemma \ref{PropAlg} and let $\{a_i\}_{i\geq 1}$ be the sequence associated with $\xi$ as defined in Lemma \ref{remonai}, so that the size of the $i$-th block of $\mathcal{CP}_\xi$ is $2^{\lceil\frac{a_i}{2}\rceil}$.\\
\indent In order to prove Theorem \ref{Main}, we will need the following lemma:

\begin{lem} \label{LemExpDecay}
Choose $\xi\in E_1$ uniformly at random. Then, there is almost surely only a finite number of indices $j$ such that the corresponding terms of the sequence $\{m_k\}_{k\geq 0}$ associated with $\xi$ satisfy $m_j\geq 2j$.
\end{lem}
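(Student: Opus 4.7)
The plan is to identify the distribution of the sequence $\{m_k\}_{k\ge 1}$ under the uniform measure $\lambda$ and then invoke the first Borel--Cantelli lemma applied to the events $\{m_j\ge 2j\}$. Since $\lambda(E_1)=1$ and the set $\{w1^\omega:w\in\{0,1\}^\ast\}$ is $\lambda$-null, I would work on the generic part of $E_1$ on which the first form of the canonical decomposition of Lemma \ref{PropAlg} applies and yields a measurable bijection between $\xi$ and its parameters $(l,\{m_k\}_{k\ge 0},\{t_k\}_{k\ge 1},\{x^k_i\})$.

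The key step is to verify that under $\lambda$ the parameters $l-1$, $m_0/2$, $(t_k)_{k\ge 1}$, $(m_k/2-1)_{k\ge 1}$ are mutually independent, each distributed as $\mathrm{Geom}(1/2)$ on $\{0,1,2,\ldots\}$, with the free bits $x^k_i$ independent Bernoulli$(1/2)$ and independent of the parameters. I expect this to be the main (though still essentially routine) technical point. It can be established either by a direct cylinder-set computation -- the $\lambda$-probability of a cylinder specifying a given finite prefix of the parameters factors into the product of the corresponding geometric marginals, each $2$-adic factor matching one of the forced \emph{structural} bits of $\xi$ (the $L-1$ initial zeros, the first one, the $M_k/2$ fixed zeros at odd positions of each pair block, the $T_k$ ones in each $1$-run, and the $0$ terminating the last $1$-run) -- or, equivalently, via the strong Markov property of the i.i.d.\ Bernoulli$(1/2)$ sequence $\xi$ applied at the stopping times $N_k:=l+\sum_{j<k}m_j+\sum_{j\le k}t_j$ that mark the ends of successive $1$-runs.

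Once this distributional identification is in place, the rest is immediate: for every $j\ge 1$,
\begin{equation*}
\mathbb{P}(m_j\ge 2j)=\mathbb{P}(m_j/2\ge j)=\sum_{i\ge j}2^{-i}=2^{1-j},
\end{equation*}
so $\sum_{j\ge 1}\mathbb{P}(m_j\ge 2j)=2<\infty$, and the first Borel--Cantelli lemma gives $\lambda\bigl(\limsup_j\{m_j\ge 2j\}\bigr)=0$, which is exactly the statement.
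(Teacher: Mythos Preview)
Your argument is correct and follows exactly the paper's route: bound $\mathbb{P}(m_j\ge 2j)$ by a summable geometric tail and apply the first Borel--Cantelli lemma. The paper merely asserts $\mathbb{P}(m_j\ge 2r)\le 2^{-r}$ without the distributional identification you spell out; one harmless slip in your write-up is that it is $t_k-1$, not $t_k$, that is $\mathrm{Geom}(1/2)$ on $\{0,1,2,\ldots\}$ (Lemma~\ref{PropAlg} forces $t_k\ge 1$), but since only the law of $m_j$ enters the estimate this does not affect anything.
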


\begin{proof}
With any $\xi\in E_1$ is associated a triple $(l,\{m_k\},\{t_k\})$ given by Lemma \ref{PropAlg}. For any $j\geq 1$, define the event $A_j:=\{\xi\in E_1| m_j\geq 2j\}$. By definition of the sequence $\{m_k\}$ (see (\ref{CanFormOfXi}) in Lemma \ref{PropAlg}), for all $r>0$, we have $\mathbb{P}(m_j\geq 2r)\leq 2^{-r}$. Thus, $\mathbb{P}(A_j)\leq 2^{-j}$ and, by Borel-Cantelli Lemma, $\mathbb{P}(\limsup_{j\to\infty}A_j)=0$.
\end{proof}
We turn now to the proof of Theorem \ref{Main}:

\begin{proof}
Choose $\xi\in E_1$ uniformly at random. For any $n\geq 1$, consider the finite Schreier graph $(\Gamma_n,\xi_n)$, the block-path $\mathcal{CP}_{\xi_n}$ and the sequence $\{a_i\}_{i\geq 1}$ associated with $\xi$ (see Lemma \ref{remonai}). For further convenience, we interpolate the sequence $\{a_i\}_{i\geq 1}$ by an increasing continuous function $a:[0,+\infty)\longrightarrow [0,+\infty)$ such that $a(0)=0$.\\
\indent Recalling that $|\mathcal{C}_{i}|=2^{\lceil a(i)/2\rceil}$ for every $i\geq 1$, the series

\begin{displaymath}
\sum_{i=1}^\infty\frac{1}{|\mathcal{C}_{i}|}=\sum_{i=1}^\infty 2^{-\lceil\frac{a(i)}{2}\rceil}
\end{displaymath}

\noindent converges, and it follows from Theorem \ref{ThmCP} that

\begin{equation} \label{eq1MainProof}
\frac{L}{2\cdot|\mathcal{C}_{i_M}|\cdot|\mathcal{C}_{i_M+1}|} \leq\mathbb{P}_{\mu_n}(Mav_{H_n}(\cdot,\xi_n)=M)\leq \frac{2}{|\mathcal{C}_{i_M}|\cdot|\mathcal{C}_{i_M+1}|}
\end{equation}

\noindent where $\mathcal{C}_{i_M}$ denotes the block on which each avalanche of mass $M$ stops, and $0<L\leq 1$ is a constant depending on the sequence $\{a_i\}_{i\geq 1}\equiv \{a(i)\}_{i\geq 1}$. From (\ref{eq1MainProof}), we get

\begin{equation} \label{eq2MainProof}
\frac{L}{4}\cdot 2^{-\frac{a(i_M)+a(i_M+1)}{2}} \leq\mathbb{P}_{\mu_n}(Mav_{H_n}(\cdot,\xi_n)=M)\leq 2\cdot 2^{-\frac{a(i_M)+a(i_M+1)}{2}}.
\end{equation}

On the other hand, the mass of an avalanche which stops on $\mathcal{C}_{i_M}$ is bounded by

\begin{equation} \label{ineqCard}
|\mathcal{D}(0^{a(i_M-1)+1})|<M<|\mathcal{D}(0^{a(i_M)+1})|,
\end{equation}

\noindent where $|\mathcal{D}(0^{a(i_M)+1})|$ is the number of vertices in the decoration of vertex $0^{a(i_M)+1}$ in $\Gamma_{a(i_M)+1}$. By Proposition \ref{PropertiesGamma_n}, (\ref{ineqCard}) implies

\begin{displaymath}
\frac{1}{3}\big(2^{a(i_M-1)+1}+1\big)<M<\frac{1}{3}\big(2^{a(i_M)+1}+2\big).
\end{displaymath}

\noindent These inequalities can be rewritten as

\begin{displaymath} \left\{\begin{array}{c}
a(i_M-1)<\log(3M-1)-1,\\
a(i_M)>\log(3M-2)-1\\
\end{array}\right.
\end{displaymath}

\noindent where $\log(\cdot)\equiv\log_2(\cdot)$. Since $a$ is increasing, one may write

\begin{displaymath} \left\{\begin{array}{c}
i_M<a^{-1}(\log(3M-1)-1)+1,\\
i_M>a^{-1}(\log(3M-2)-1).\\
\end{array}\right.
\end{displaymath}

\noindent The difference $a^{-1}(\log(3M-1)-1)+1-a^{-1}(\log(3M-2)-1)$ tends to $1$ as $M\to\infty$. We can then assume that $i_M=\lfloor a^{-1}(\log(3M))\rfloor$ for $M$ sufficiently large.\\
\indent We show that, almost surely, $a(i_M+1)/a(i_M)$ tends to $1$ as $M\to\infty$. Recall that (see Lemma \ref{remonai}), for all $j\geq 1$, $0\leq s< t_j$, $a(T_{j-1}+s+1)=l+M_{j-1}+T_{j-1}+s$, where $M_j:=\sum_{k=0}^jm_k$ and $T_j:=\sum_{k=0}^jt_k$. Writing $i:=T_{j-1}+s+1$, $a(i)=l+M_{j-1}+i-1$; we consider $j\equiv j(i)$ as a (non-decreasing) function of $i$ (corresponding to the number of terms in the sum $M_{j-1}$.) Note that $j(i)\leq i$. By Lemma \ref{remonai},

\begin{displaymath}
a(i+1)-a(i)=\left\{\begin{array}{cc}
m_{j(i)}+1 & \textrm{if $i$ is such that $i=T_{j(i)}$,}\\
1 & \textrm{otherwise.}
\end{array}\right.
\end{displaymath}

\noindent  On the other hand, it follows from Lemma \ref{LemExpDecay} that, almost surely, there exists $j_0\geq 1$ such that $m_j\leq 2j$ for all $j>j_0$. We thus have

\begin{displaymath}
1\leq\frac{a(i+1)}{a(i)}\leq \frac{a(i)+m_{j(i)}+1}{a(i)}\leq 1+\frac{2j(i)}{a(i)}+\frac{1}{a(i)}
\end{displaymath}

\noindent where the last inequality holds almost surely for any $i$ sufficiently large. Clearly, $2j(i)/a(i)\leq 2j(i)/i$. We check that $j(i)/i$, which is non-increasing, tends to $0$ as $i\to\infty$. For the sake of contradiction, suppose that $j(i)/i$ tends (from above) to $C>0$ as $i\to\infty$. 
It is easy to check that, given any finite word $w\in\{0,1\}^\ast$, $w$ appears almost surely as a subword in $\xi\in\{0,1\}^\omega$ situated as far as we want in $\xi$, i.e., given $n_0\geq 1$, $\mathbb{P}\left(\textrm{$\xi=\xi_nw\xi'$, $n\geq n_0$, $\xi'\in \{0,1\}^\omega$}\}\right)=1$. It follows that, almost surely, the sequence $\{t_k\}_{k\geq 0}$ (see Lemma \ref{PropAlg}) is not bounded. Thus, we can find $t_{k_0}$ large enough such that $j(i_0)/i_0<C$ where $i_0=T_{k_0-1}+t_{k_0}$ and $j(i_0)=k_0$, and we get a contradiction. As $i_M$ tends to infinity as $M\to\infty$, we conclude that, almost surely, $a(i_M+1)/a(i_M)$ tends to $1$ as $M\to\infty$.\\
\indent Write $x_M:=a^{-1}(\log(3M))$ so that $i_M=\lfloor x_M\rfloor$. For any $\epsilon>0$, there exists $M_0$ such that for all $M>M_0$, $a(x_M)<a(i_M+1)\leq(1+\epsilon)a(i_M)$. It follows then from (\ref{eq2MainProof}) that for $n$ and $M$ sufficiently large,

\begin{displaymath}
\frac{L}{4}\cdot 2^{-\frac{a(x_M)(2+\epsilon)}{2}} \leq\mathbb{P}_{\mu_n}(Mav_{H_n}(\cdot,\xi_n)=M)\leq 2\cdot 2^{-\frac{a(x_M)}{1+\epsilon}},
\end{displaymath}

\noindent and hence,

\begin{displaymath}
\frac{L}{4}\cdot (3M)^{-\frac{2+\epsilon}{2}}\leq\mathbb{P}_{\mu_n}(Mav_{H_n}(\cdot,\xi_n)=M)
\leq 2\cdot (3M)^{-\frac{1}{1+\epsilon}}.
\end{displaymath}

\noindent We thus conclude that, almost surely, $\lim_{n\to\infty}\mathbb{P}_{\mu_n}(Mav_{H_n}(\cdot,\xi_n)=M)\sim M^{-1}$.
\end{proof}

\section{Schreier Graphs of $IMG(-z^3/2+3z/2)$ -- Examples with the Critical Exponent $>1$} \label{SectionIAM}

In this section, we examine the ASM on Schreier graphs of still another (though similar to
the Basilica) self-similar group, and compute the critical exponent for the mass of avalanches in the random weak limit to be $2\log 2/\log 3 > 1$.

\subsection{Interlaced adding machines}

The adding machine $\mathcal{A}$ is a group of automorphisms of the binary rooted tree generated by an automorphism $a$ defined self-similarly by $a=(0 \ 1)(id,a)$. Thus, the action of $a$ on the $n$-th level of the tree corresponds to adding one to the binary representation of integers modulo $2^n$ (recall that vertices of the $n$-th level are identified with binary words of length $n$.) It follows that, for any $n\geq 1$, the Schreier graph $\Gamma(\mathcal{A},\{a\},\{0,1\}^n)$ is a cycle of length $2^n$. The action of the automorphism $a$ on the boundary of the tree is free and the group generated by $a$ is $\mathbb{Z}$. It follows that the orbital Schreier graphs $\Gamma(\mathcal{A},\{a\},\mathcal{A}\cdot\xi)$, for $\xi\in\{0,1\}^\omega$, are all isomorphic (as unlabeled graphs) to the bi-infinite path. In other words, the random weak limit of the sequence $\{\Gamma(\mathcal{A},\{a\},\{0,1\}^n)\}_{n\geq 1}$ is atomic and supported by a single graph, which is $\mathbb{Z}$. As mentioned in the introduction, it is easy to see that the ASM is not critical in this case.\\
\indent The interlaced adding machines group $\mathcal{I}$ is a spherically transitive group of automorphisms of the ternary rooted tree $T$ generated by two automorphisms $a$ and $b$ with the following self-similar structure:

\begin{displaymath}
\begin{array}{ccc}
a=(0 \ 1)(id,a,id), & & b=(0 \ 2)(id,id,b).
\end{array}
\end{displaymath}

\noindent The group $\mathcal{I}$ is the iterated monodromy group of the complex polynomial $-z^3/2+3z/2$ (see \cite{Nek}), whose Julia set is represented in Figure \ref{JULIAIAM}.

\begin{figure}[H]
\begin{center}
\includegraphics[scale=0.3]{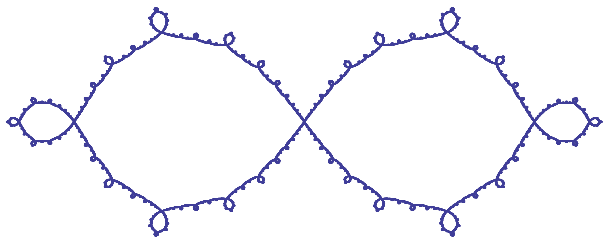}
\end{center}
\begin{center}
\end{center}
\caption{The Julia set $\mathcal{J}(-z^3/2+3z/2)$.}
\label{JULIAIAM}
\end{figure}

One notices that this Julia set looks very much like the Basilica Julia set (see Figure \ref{JULIABASILICA}). The Basilica Schreier graphs and the Schreier graphs $\widetilde{\Gamma}_n:=\Gamma(\mathcal{I},\{a,b\},\{0,1,2\}^n)$ are also very similar.\\
\indent It follows directly from the definition of the group $\mathcal{I}$, that for any $n\geq 1$, the Schreier graph $\widetilde{\Gamma}_n$ is a $4$-regular cactus and has all its edges labeled either by $a$ or by $b$. The number of vertices of $\widetilde{\Gamma}_n$ is $3^n$, so that the covering map $\pi_{n+1}:\widetilde{\Gamma}_{n+1}\longrightarrow\widetilde{\Gamma}_n$ is of degree $3$.\\
\indent By \cite{BondDanNag11}, the Schreier graphs $\widetilde{\Gamma}_\xi:=\Gamma(\mathcal{I},\{a,b\},\mathcal{I}\cdot\xi)$ have either $1$, $2$ or $4$ ends, and the number of ends is one for almost all $\xi$ with respect to the uniform measure on the boundary $\partial T$ of the tree. More precisely, we have a classification in terms of ternary sequences of the orbital Schreier graphs with respect to their number of ends, in the spirit of the Basilica case treated in \cite{DanDonMat09}. Given a word $w\in\{0,1,2\}^\ast$, we say that $w$ is of type $A$ (respectively $B$) if it does not contain the letter $2$ (respectively $1$). Any word (finite or infinite) in $\{0,1,2\}$ can be decomposed into an alternative succession of blocks of type $A$ and $B$.

\begin{thm} \label{ThmSchreierInterlaced}
\begin{enumerate}
\item The orbital Schreier graph $\widetilde{\Gamma}_\xi$ has one end if and only if the number of blocks in the decomposition of $\xi$ into blocks of type $A$ and $B$ is infinite;
\item the orbital Schreier graph $\widetilde{\Gamma}_\xi$ has four ends if and only if $\xi\in\{w0^\omega,w1^\omega,w2^\omega|w\in \{0,1,2\}^\ast\}$;
\item in all other cases, the orbital Schreier graph $\widetilde{\Gamma}_\xi$ has two ends.
\end{enumerate}
\end{thm}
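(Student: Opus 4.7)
The proof will parallel the treatment of the Basilica group in Theorem \ref{ClassifBas} of \cite{DanDonMat09}. The first step is to establish substitution rules analogous to Proposition \ref{rulesBas}, describing how each block of $\widetilde{\Gamma}_n$ lifts under the covering $\pi_{n+1}$ to a subgraph of $\widetilde{\Gamma}_{n+1}$. A direct computation from $a=(0\ 1)(id,a,id)$ shows that an $a$-cycle of length $k$ through vertices $v_1,\ldots,v_k$ in $\widetilde{\Gamma}_n$ lifts to an $a$-cycle of length $2k$ through the vertices $0v_i,1v_i$, together with $k$ $a$-loops at the vertices $2v_i$; symmetrically, $b=(0\ 2)(id,id,b)$ sends a $b$-cycle of length $k$ through $v_1,\ldots,v_k$ to a $b$-cycle of length $2k$ through the $0v_i$ and $2v_i$, together with $b$-loops at the $1v_i$. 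Starting from $\widetilde{\Gamma}_1$, which is a three-vertex cactus with an $a$-loop at $2$, a $b$-loop at $1$, and an $a$- and a $b$-edge meeting at $0$, an induction shows that every $\widetilde{\Gamma}_n$ is a $4$-regular cactus whose blocks are cycles of length a power of $2$, as is also predicted abstractly by Theorem \ref{ThmNekrashevych}.

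With the substitution rules in hand, the second step is to trace, for a fixed $\xi\in\{0,1,2\}^\omega$, the structure of $\widetilde{\Gamma}_\xi=\lim_{n\to\infty}(\widetilde{\Gamma}_n,\xi_n)$ directly from the letters of $\xi$. The block-path from $\xi_n$ to the central part of $\widetilde{\Gamma}_n$, and the decorations attached along it, are read off the prefix $\xi_n$: at each level, a letter $0$ simply extends the current block, while a letter $1$ forces a split of the current $a$-cycle and a letter $2$ a split of the current $b$-cycle. The three cases of the theorem then correspond to the three possible asymptotic patterns for this block-path.

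If $\xi=wt^\omega$ with $t\in\{0,1,2\}$, then past the prefix $w$ the vertex $\xi_n$ sits at the intersection of one $a$-cycle and one $b$-cycle, both of which have their length doubled by each application of the substitution rule; in the limit, each cycle yields two disjoint infinite geodesic rays emanating from $\xi$, for a total of four ends. If instead only one of the letters $1,2$ occurs infinitely often in $\xi$ (say $\xi=w\eta$ with $\eta\in\{0,1\}^\omega$ not eventually constant), then past $w$ every prefix of $\eta$ is fixed by $b$, so the $b$-blocks remain finite decorations while the $a$-cycle through $\xi_n$ grows without bound in both directions; this produces a bi-infinite $a$-backbone through $\xi$ with finite decorations hanging off on one side, hence two ends, exactly as in the corresponding Basilica case treated in \cite{DanDonMat09}. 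Finally, if both $1$ and $2$ occur infinitely often in $\xi$, then each transition from an $A$-block to a $B$-block (or vice versa) in the decomposition of $\xi$ extends the block-path by a strictly longer cycle, and the resulting infinite block-path is accompanied on both sides by decorations that glue together the complements of finite balls around $\xi$ into a single infinite connected component, giving one end, in perfect analogy with Lemma \ref{LemOnCPinfBasilica}.

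The main obstacle will be the one-ended case. Making the argument above rigorous requires an encoding of $\xi\in E_1$ by a triple of sequences recording the sizes of the consecutive $A$- and $B$-blocks in its decomposition, in the spirit of Lemma \ref{PropAlg}, and a careful verification that at every stage of the substitution, the newly produced cycles close up on both sides of the block-path. The symmetry between $a$ and $b$ in $\mathcal{I}$, as opposed to the asymmetric roles the two generators play in $\mathcal{B}$, forces one to track two families of cycles in parallel rather than one, but the combinatorial backbone of the proof remains the same as in the Basilica case.
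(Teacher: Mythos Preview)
The paper does not prove this theorem; it is attributed to \cite{BondDanNag11} and described only as being ``in the spirit of the Basilica case treated in \cite{DanDonMat09}''. Your plan---derive substitution rules for $\widetilde{\Gamma}_n\to\widetilde{\Gamma}_{n+1}$ and then read the end-structure of $\widetilde{\Gamma}_\xi$ off the ternary expansion of $\xi$---is exactly that spirit, and your computation of how $a$- and $b$-cycles lift is correct.

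There is, however, a concrete slip in your four-ends argument. For $\xi=w1^\omega$ the $b$-cycle through $\xi_n$ does \emph{not} double: since $b(1v)=1v$, the vertex $1^n$ carries a $b$-loop for every $n$, and more generally the $b$-cycle through $w1^{n-|w|}$ has length bounded independently of $n$. Symmetrically, the $a$-cycle through $\xi_n$ stays bounded when $\xi=w2^\omega$. Your claim that ``both cycles have their length doubled'' is therefore valid only for $\xi=w0^\omega$. The fix is to argue via the block-path rather than via the cycles through $\xi_n$ itself: by Lemma~\ref{PropSchreierInterlaced} the block-path $\mathcal{CP}_{\xi_n}$ from $\xi_n$ to $0^n$ has length equal to the number of $A/B$-blocks in $\xi_n$, so for $\xi=wt^\omega$ this length is eventually constant and $\xi_n$ remains at bounded distance from the two growing central cycles through $0^n$; those are the cycles that produce four ends. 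Equivalently, one checks $a(1^\omega)=0^\omega$ and $b(2^\omega)=0^\omega$, so all $wt^\omega$ lie in a single orbit and one may simply take the root at $0^\omega$. The same issue recurs in your two-ends paragraph: if the prefix $w$ contains a $2$, the $a$-cycle through $\xi_n$ itself does not grow; what grows is the terminal block of $\mathcal{CP}_{\xi_n}$, reached after a bounded number of steps along the block-path.
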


\noindent For $i=1,2,4$, denote by $\widetilde{E}_i:=\{\xi\in\{0,1,2\}^\omega| \ \textrm{the orbital Schreier graph $\widetilde{\Gamma}_\xi$ has $i$ ends}\}$. Moreover, we also have:

\begin{prop} \label{PropUncount1endedIAM}
There exist uncountably many non-isomorphic orbital Schreier graphs with one end.
\end{prop}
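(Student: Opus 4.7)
The plan is to mimic the strategy used for the Basilica group in Corollary \ref{CorClassifBas}: show that the isomorphism type of a one-ended cactus $\widetilde{\Gamma}_\xi$ determines (essentially) the sequence of block sizes along its unique infinite block-path, and then exhibit uncountably many possible such sequences arising from words $\xi \in \widetilde{E}_1$.

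First, I would develop an analogue of Lemma \ref{remonai} for the interlaced adding machines group. Using the self-similar relations $a=(0\ 1)(id,a,id)$ and $b=(0\ 2)(id,id,b)$, one derives a substitutional rule producing $\widetilde{\Gamma}_{n+1}$ from $\widetilde{\Gamma}_n$ (completely analogous to Proposition \ref{rulesBas}), under which every $a$-edge (respectively $b$-edge) of $\widetilde{\Gamma}_n$ at certain levels expands into a cycle of $a$- or $b$-edges in $\widetilde{\Gamma}_{n+1}$ whose length is controlled by the alternation between letters in $\{0,1\}$ (type $A$) and $\{0,2\}$ (type $B$). Given $\xi \in \widetilde{E}_1$, write its canonical A/B decomposition $\xi = W_1 W_2 W_3 \dots$ where $W_j$ is of type $A$ if $j$ is odd and of type $B$ if $j$ is even (or vice versa), with $|W_j|\geq 1$ and infinitely many blocks by Theorem \ref{ThmSchreierInterlaced}(1). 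I would then prove that the $i$-th block $\mathcal{C}_i$ of the infinite block-path $\mathcal{CP}_\xi$ in $\widetilde{\Gamma}_\xi$ has size of the form $|\mathcal{C}_i|=3^{\lceil \widetilde{a}_i/2\rceil}$ (or a similar closed form), where the sequence $\{\widetilde{a}_i\}_{i\geq 1}$ is explicitly computable from the block lengths $\{|W_j|\}_{j\geq 1}$ by a recursive formula parallel to equation \eqref{eqDiffAi}: increments by $1$ inside a single A- or B-block, and a jump by $|W_j|+1$ whenever one crosses from block $W_j$ to $W_{j+1}$.

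Second, I would observe that the unordered sequence of block sizes along the infinite block-path is, up to deletion of a finite initial segment, an invariant of the unrooted isomorphism class of a one-ended cactus, because removing a cut vertex always yields exactly one infinite component and the block-path structure is intrinsic. Hence, if $\widetilde{\Gamma}_\xi$ and $\widetilde{\Gamma}_\eta$ are isomorphic as unrooted graphs for $\xi,\eta\in \widetilde{E}_1$, then the sequences $\{\widetilde{a}_i^\xi\}$ and $\{\widetilde{a}_i^\eta\}$ must eventually coincide (up to shifts of indices).

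Third, I would exhibit uncountably many inequivalent tails. Consider words of the form
\begin{displaymath}
\xi^{(\sigma)} = 0\,2^{\sigma(1)}\,1^{n_1}\,2^{\sigma(2)}\,1^{n_2}\,2^{\sigma(3)}\,1^{n_3}\,\dots
\end{displaymath}
where $\sigma:\mathbb{N}\to\{1,2\}$ is an arbitrary infinite binary sequence and $\{n_j\}_{j\geq 1}$ is a fixed rapidly growing sequence (say $n_j=2^j$). Each such $\xi^{(\sigma)}$ lies in $\widetilde{E}_1$, and by the formula developed in the first step the sequence of block-path sizes of $\widetilde{\Gamma}_{\xi^{(\sigma)}}$ encodes $\sigma$ in its increment pattern (since each jump of height $\sigma(j)+1$ is distinguishable from jumps of height $n_j+1$ once $n_j \gg j$). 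Two different sequences $\sigma\neq \sigma'$ then produce $\widetilde{a}$-sequences that do not eventually coincide, hence by the second step the graphs $\widetilde{\Gamma}_{\xi^{(\sigma)}}$ and $\widetilde{\Gamma}_{\xi^{(\sigma')}}$ are non-isomorphic. Since there are $2^{\aleph_0}$ choices of $\sigma$, the proposition follows.

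The main obstacle will be the first step: establishing precisely how the substitutional rules for $\widetilde{\Gamma}_n$ propagate along the block-path towards the root and checking that the resulting formula for $|\mathcal{C}_i|$ genuinely distinguishes distinct A/B block-length patterns. This is a book-keeping exercise analogous to the one carried out for the Basilica group in \cite{DanDonMat09}, but it requires care because the ternary alphabet forces three possible expansion patterns at each step rather than two.
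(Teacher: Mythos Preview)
Your overall strategy is exactly the paper's: the proposition is deduced from Theorem \ref{ThmSchreierInterlaced} together with a lemma (Lemma \ref{PropSchreierInterlaced} in the paper) computing the block sizes of $\mathcal{CP}_\xi$ from the $A/B$-decomposition of $\xi$, after which distinguishing uncountably many tails is routine.

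However, your first step contains a concrete error and is more complicated than necessary. The block sizes are powers of $2$, not of $3$: each generator $a=(0\ 1)(id,a,id)$ and $b=(0\ 2)(id,id,b)$ is a binary adding machine embedded in the ternary tree, so $a$-cycles and $b$-cycles have lengths $2^k$. The paper's formula is simply $|\mathcal{C}_i|=2^{\nu_i}$ where $\nu_i$ is the length of the prefix of $\xi$ consisting of the first $i$ blocks of the $A/B$-decomposition --- there is no $\lceil\cdot/2\rceil$ and no separate bookkeeping of jumps versus unit increments as in the Basilica case. This cleaner formula makes your third step unnecessary in its elaborate form: any two $\xi,\eta\in\widetilde{E}_1$ whose sequences of $A/B$-block lengths have different tails already give non-isomorphic graphs, and there are obviously uncountably many such tails.
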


\noindent Proposition \ref{PropUncount1endedIAM} follows from Theorem \ref{ThmSchreierInterlaced} together with the following lemma, proved similarly to Proposition 5.6 in \cite{DanDonMat09}.

\begin{lem} \label{PropSchreierInterlaced}
Let $w\in\{0,1,2\}^n$. Then,

\begin{enumerate}
\item the total number of blocks in the decomposition of $w$ into blocks of type $A$ and $B$ equals the number of blocks in the block-path $\mathcal{CP}_{w}$ in $\widetilde{\Gamma}_n$ joining $w$ to $0^n$;
\item the size of the $i$-th block in the block-path $\mathcal{CP}_{w}$ is equal to $2^{\nu_i}$, where $\nu_i$ denotes the length of the prefix of $w$ containing the $i$ first blocks.
\end{enumerate}
\end{lem}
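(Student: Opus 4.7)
The plan is to prove the lemma by induction on the number of blocks in the decomposition of $w$, after first establishing the adding-machine behaviour of the generators. Unfolding the recursion $a=(0\ 1)(id,a,id)$, with $a|_0=a|_2=id$ and $a|_1=a$, shows that for every $w=w_1\dots w_n \in \{0,1,2\}^n$ the automorphism $a$ increments the maximal \emph{type A prefix} $w_1\dots w_k\in\{0,1\}^k$ of $w$ modulo $2^k$ while leaving the tail $w_{k+1}\dots w_n$ untouched; thus the $a$-orbit of $w$ is exactly $\{x_1\dots x_k w_{k+1}\dots w_n : x\in\{0,1\}^k\}$, an $a$-cycle of length $2^k$. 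The symmetric assertion holds for $b$ on the maximal prefix in $\{0,2\}$. Combined with the observations that $a$-loops occur only at words beginning with $2$ and $b$-loops only at words beginning with $1$, this identifies the blocks of $\widetilde{\Gamma}_n$ of size $\geq 2$ with the non-trivial $a$- and $b$-cycles, and shows that the cut vertices are exactly the words starting with $0$.

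The base case of a single-block word is immediate: if $w\in\{0,1\}^n$ is of type A, its $a$-orbit equals $\{0,1\}^n$ and is an $a$-cycle of size $2^n=2^{\nu_1}$ containing $0^n$, so $\mathcal{CP}_w$ consists of this one block; the type B case is symmetric. For the inductive step, assume the statement for words with $k$ blocks and let $w$ have $k+1$ blocks with first block of type A of length $\nu_1$ (the type B case is handled symmetrically). By the adding-machine fact, the $a$-cycle through $w$ has the form $\{x_1\dots x_{\nu_1} w_{\nu_1+1}\dots w_n : x\in\{0,1\}^{\nu_1}\}$, of size $2^{\nu_1}$, and I take it as the first block of $\mathcal{CP}_w$; the alternative $b$-cycle through $w$ (nontrivial only if $w$ starts with some zeros) has length $2^j$ with $j<\nu_1$ and forms a dead-end branch of the block-cut tree, not leading to $0^n$. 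I then set $u := 0^{\nu_1} w_{\nu_1+1}\dots w_n$, which lies in the initial $a$-cycle. The decomposition of $u$ has $k$ blocks with $\nu_i(u)=\nu_{i+1}(w)$ for $i=1,\dots,k$, since prepending $\nu_1$ zeros to a word whose first letter is a $2$ merely extends the originally second (type B) block of $w$ leftwards without altering later blocks. The inductive hypothesis then provides $\mathcal{CP}_u$ with $k$ blocks of sizes $2^{\nu_2(w)},\dots,2^{\nu_{k+1}(w)}$ ending at $0^n$, whose first block is the $b$-cycle through $u$. Since the $a$-cycle through $w$ and the $b$-cycle through $u$ meet exactly at $u$, their concatenation gives $\mathcal{CP}_w$, with $k+1$ blocks of the claimed sizes.

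The main obstacle I anticipate is the rigorous verification of the adding-machine property for $a$ and $b$. This requires a secondary induction on $n$ unfolding the wreath-product recursion, using that a leading $1$ transfers the carry to its suffix via the restriction $a|_1=a$, while a leading $0$ or $2$ absorbs it via the trivial restrictions $a|_0=a|_2=id$. Once this is in place, identifying $u$ as the correct cut vertex and checking that it is the unique intersection of the two consecutive cycles in $\mathcal{CP}_w$ reduce to direct combinatorial verifications.
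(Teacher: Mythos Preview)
Your argument is correct. The paper does not give its own proof of this lemma; it only says that it is ``proved similarly to Proposition 5.6 in \cite{DanDonMat09}''. Your approach---unfolding the recursions to exhibit $a$ and $b$ as binary adding machines on the maximal type-$A$ (resp.\ type-$B$) prefix, thereby identifying each cycle of $\widetilde{\Gamma}_n$ explicitly, and then inducting on the number of blocks in the $A/B$ decomposition---is exactly the natural direct argument and is in the same spirit as the Basilica analysis in \cite{DanDonMat09}.

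One point worth making explicit, since you flag it as the main obstacle but then treat it somewhat briefly: your ``dead-end'' claim for the smaller cycle at $w$ is really a monotonicity statement. From any cycle of size $2^k$, the unique cut vertex whose other cycle is strictly larger is $0^k(\text{tail})$, and every other cut vertex leads to a strictly smaller cycle; since $0^n$ lies only in cycles of size $2^n$, the block-path to $0^n$ must be the unique size-increasing path in the block-cut tree. This is implicit in your size comparison $j<\nu_1$ and in your identification of $u=0^{\nu_1}w_{\nu_1+1}\dots w_n$ as the exit vertex; your verification that the $a$-cycle through $w$ and the $b$-cycle through $u$ meet exactly at $u$ then follows from $\{0,1\}\cap\{0,2\}=\{0\}$ on the first $\nu_1$ coordinates, which is the computation you describe. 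With that said, nothing is missing.
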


\noindent We will also need the following result obtained by following the method developed in \cite{Bond07}.

\begin{prop}
For almost every $\xi\in \{0,1,2\}^\omega$ (with respect to the uniform measure $\lambda$ on $\xi\in \{0,1,2\}^\omega$), the degree of polynomial growth of $\widetilde{\Gamma}_\xi$ is $\log 3/\log 2$.
\end{prop}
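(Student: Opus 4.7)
The plan is to adapt the method of Bondarenko \cite{Bond07} used for the Basilica Schreier graphs: the asymptotics of $|B_{\widetilde{\Gamma}_\xi}(\xi,r)|$ are governed by the tension between the vertex count $|V(\widetilde{\Gamma}_n)|=3^n$ and the natural geometric scale $\asymp 2^n$ arising from the sizes of the cycles along the block-path, and the claimed exponent $\log 3/\log 2$ is precisely this tradeoff.

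First I would set up a strong law of large numbers for the block lengths. Since $\xi$ is drawn uniformly from $\{0,1,2\}^\omega$, the successive block lengths $m_1,m_2,\ldots$ of the $A/B$-decomposition are i.i.d.\ geometric random variables with parameter $1/3$: inside a type-$A$ block each subsequent letter terminates the block (by being a $2$) independently with probability $1/3$, and symmetrically for type $B$. Hence $\mathbb{E}[m_j]=3$, and by the strong law of large numbers
\[
\frac{\nu_k(\xi)}{k}\xrightarrow[k\to\infty]{}3,\qquad \frac{k_n(\xi)}{n}\xrightarrow[n\to\infty]{}\frac{1}{3}\qquad\lambda\text{-almost surely},
\]
where $k_n(\xi)$ denotes the number of blocks of the prefix $\xi_n$. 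Since the probability that $\xi$ is eventually of constant type is zero (by Borel--Cantelli), almost every $\xi$ has infinitely many blocks of each type and therefore lies in $\widetilde{E}_1$; in particular $\widetilde{\Gamma}_\xi$ is almost surely one-ended.

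Second, I would combine this with the block-path geometry given by Lemma \ref{PropSchreierInterlaced}: in $\widetilde{\Gamma}_n$ the block-path $\mathcal{CP}_{\xi_n}=\mathcal{C}_1\cdots\mathcal{C}_{k_n}$ consists of cycles with $|\mathcal{C}_i|=2^{\nu_i}$ and central cycle $|\mathcal{C}_{k_n}|=2^n$. Strict monotonicity of the $\nu_i$'s yields the geometric-sum estimate $\sum_{i=1}^{k_n}2^{\nu_i}\leq 2\cdot 2^n$, hence the distance from $\xi_n$ to $0^n$ in $\widetilde{\Gamma}_n$ is at most $2^n$; a routine induction on $n$ using the recursive construction of $\widetilde{\Gamma}_n$ then gives the eccentricity bound $\max_{w\in V(\widetilde{\Gamma}_n)}d_{\widetilde{\Gamma}_n}(\xi_n,w)\leq C_0\cdot 2^n$ with an absolute constant $C_0$. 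Since $(\widetilde{\Gamma}_n,\xi_n)\to(\widetilde{\Gamma}_\xi,\xi)$ in $(\mathcal{X},Dist)$, the ball $B_{\widetilde{\Gamma}_\xi}(\xi,r)$ is isomorphic to $B_{\widetilde{\Gamma}_n}(\xi_n,r)$ for $n$ large, which gives the deterministic upper bound
\[
|B_{\widetilde{\Gamma}_\xi}(\xi,r)|\leq |V(\widetilde{\Gamma}_n)|=3^n\leq C_1\cdot r^{\log 3/\log 2}\qquad\text{as soon as }r\leq C_0\cdot 2^n,
\]
valid for every $\xi\in\widetilde{E}_1$ with no probabilistic input needed.

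The main obstacle is the matching lower bound $|B_{\widetilde{\Gamma}_\xi}(\xi,r)|\geq c_1\cdot r^{\log 3/\log 2}$, which is where the strong law really intervenes. The task is to show that for almost every $\xi$ a uniformly positive fraction of the $3^n$ vertices of $\widetilde{\Gamma}_n$ actually lies within distance $c_0\cdot 2^n$ of $\xi_n$, so the root is \emph{well-centered} in each finite approximation. I would prove this by induction on $n$, exploiting the recursive description of $\widetilde{\Gamma}_n$ as copies of $\widetilde{\Gamma}_{n-1}$ glued along the cycles of the block-path, and using the strong law $\nu_k/k\to 3$ to control the irregularity of the block sizes (a deterministic worst case would allow $\xi$ to be trapped near the central cycle $\mathcal{C}_{k_n}$, with almost all the mass of $\widetilde{\Gamma}_n$ at distance $\gg 2^n$). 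Combining both bounds and taking logarithms yields
\[
\lim_{r\to\infty}\frac{\log|B_{\widetilde{\Gamma}_\xi}(\xi,r)|}{\log r}=\frac{\log 3}{\log 2}\qquad\lambda\text{-almost surely},
\]
which is the claim. The upper bound is essentially combinatorial; the genuine work is in the almost-sure lower bound, where the self-similar recursion and the Bernoulli randomness of $\xi$ must be woven together.
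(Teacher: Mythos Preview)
The paper does not prove this proposition: it merely states that the result ``is obtained by following the method developed in \cite{Bond07}'', i.e.\ Bondarenko's general machinery for the growth of Schreier graphs of bounded-automaton (contracting self-similar) groups. Your route---via the explicit block-path combinatorics of $\widetilde{\Gamma}_\xi$ and a law of large numbers for the $A/B$-block lengths---is genuinely different and in the spirit of the other computations in Section~\ref{SectionIAM}; so a comparison with the paper's own argument is moot.

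That said, your upper-bound step contains a real gap. You assert
\[
|B_{\widetilde{\Gamma}_\xi}(\xi,r)|\le |V(\widetilde{\Gamma}_n)|=3^n\le C_1\, r^{\log 3/\log 2}\qquad\text{as soon as }r\le C_0\cdot 2^n,
\]
but the second inequality points the wrong way: $r\le C_0\cdot 2^n$ gives $3^n\ge c\,r^{\log 3/\log 2}$, not $\le$. Presumably you mean to take the \emph{smallest} $n$ with $r\le C_0\cdot 2^n$, so that also $r>C_0\cdot 2^{n-1}$ and hence $3^n\le C_1 r^{\log 3/\log 2}$. But for that specific $n\approx\log_2 r$ you have given no reason why $B_{\widetilde{\Gamma}_\xi}(\xi,r)\cong B_{\widetilde{\Gamma}_n}(\xi_n,r)$: convergence in $(\mathcal{X},Dist)$ guarantees this only for $n$ large \emph{depending on $r$}, and when $r$ is comparable to $\mathrm{diam}(\widetilde{\Gamma}_n)\approx C_0\cdot 2^n$ the finite ball $B_{\widetilde{\Gamma}_n}(\xi_n,r)$ is essentially all of $\widetilde{\Gamma}_n$, which certainly does not coincide with the ball in the infinite one-ended graph. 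What is actually needed is that $B_{\widetilde{\Gamma}_\xi}(\xi,r)$ sits inside a $\nu$-decoration with $2^\nu$ not much larger than $r$---and this is precisely the content of the upper bound, not a free consequence of the diameter estimate.

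The fix, consistent with the rest of the paper, is to argue directly with the decorations $D(p_i)$: one has $|D(p_i)|=\tfrac12(3^{\nu_i}+1)$ and $\mathrm{diam}(D(p_i))\asymp 2^{\nu_i}$ (deterministically), and then both the upper and lower bounds on $\log|B(\xi,r)|/\log r$ reduce to showing $\nu_{i+1}/\nu_i\to 1$ almost surely---exactly Lemma~\ref{lemnu}. In particular the probabilistic input is needed for \emph{both} inequalities on the growth exponent, not only the lower one as you suggest; and the relevant almost-sure statement is $m_i=o(\nu_i)$ (Borel--Cantelli), which is weaker than the SLLN $\nu_k/k\to 3$ you set up. Your lower-bound sketch, once rephrased in these terms, is on the right track but remains a sketch.
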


\subsection{Criticality of the ASM on the Schreier graphs of $IMG(-z^3/2+3z/2)$}

In this subsection, we consider avalanches of the ASM on finite approximations of the infinite orbital rooted Schreier graphs $(\widetilde{\Gamma}_\xi,\xi)$, where $\xi\in\widetilde{E}_1$. For any $n\geq 1$, we set the vertex $p^{(n)}:=0^n$ in $\widetilde{\Gamma}_n$ to be dissipative. As in the case of Basilica Schreier graphs,
the infinite graph $(\widetilde{\Gamma}_\xi,\xi)$ is exhausted by the subgraphs $H_n$ that are isomorphic, for each $n$, to the connected component of $\xi_n$ in $\Gamma_n$ remaining when removing vertex $0^n$, together with $0^n$ (see Remark \ref{remsubgraphsH_n}). It follows from Lemma \ref{PropSchreierInterlaced} and Theorem \ref{ThmSchreierInterlaced}, that the number of blocks in the block-path $\mathcal{CP}_{\xi_n}$ joining $\xi_n$ to $0^n$ in $(\widetilde{\Gamma}_n,\xi_n)$ tends to infinity as $n\to\infty$. Consequently, our choice of subgraphs $H_n$ corresponds to Convention \ref{RemOnCPinf}. We will prove the following:

\begin{thm}\label{IAM}
For almost every $\xi\in\widetilde{E}_1$ (with respect to the uniform measure $\lambda$ on $\{0,1,2\}^\omega$), we have

\begin{displaymath}
\lim_{n\to\infty}\mathbb{P}_{\mu_n}(Mav_{H_n}(\cdot,\xi_n)=M)\sim M^{-\frac{2\log 2}{\log 3}}.
\end{displaymath}
\end{thm}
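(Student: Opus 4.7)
The plan is to mimic closely the argument used for Theorem \ref{Main}, replacing the Basilica combinatorics by the combinatorics of the $A/B$ block decomposition of ternary words provided by Lemma \ref{PropSchreierInterlaced}. By Proposition \ref{PropErgodicity}, the critical exponent is $\rho$-almost surely constant (where $\rho$ is the random weak limit of $\{\widetilde{\Gamma}_n\}_{n\geq 1}$), so it suffices to identify the exponent on a set of full $\lambda$-measure in $\widetilde{E}_1$. Given $\xi\in\widetilde{E}_1$, decompose $\xi=w_1w_2w_3\dots$ into its alternating $A/B$-blocks and set $\nu_i:=|w_1|+\dots+|w_i|$, so that by Lemma \ref{PropSchreierInterlaced} the $i$-th block of $\mathcal{CP}_\xi$ has size $|\mathcal{C}_i|=2^{\nu_i}$. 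Since $2^{\nu_i}\geq 2^i$, the series $\sum_i 1/|\mathcal{C}_i|$ converges, so Theorem \ref{ThmCP} applies and yields, for $M$ large enough occurring as a mass,
\begin{equation*}
\tfrac{L}{4}\cdot 2^{-(\nu_{i_M}+\nu_{i_M+1})} \;\leq\; \lim_{n\to\infty}\mathbb{P}_{\mu_n}(Mav_{H_n}(\cdot,\xi_n)=M) \;\leq\; 2\cdot 2^{-(\nu_{i_M}+\nu_{i_M+1})},
\end{equation*}
where $i_M$ is determined by $d_{i_M-1}\leq M<d_{i_M}$.

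Next I would compute $d_i=|D(p_i)|$ in terms of $\nu_i$. Because the whole graph $\widetilde{\Gamma}_{\nu_i}$ has $3^{\nu_i}$ vertices and $D(p_i)$ is (up to a bounded factor) isomorphic to a decoration on the $\nu_i$-th level of $T$, an induction on the substitutional construction (analogous to the one that gives $|\mathcal{D}(0^n)|\sim 2^n/3$ in the Basilica case, see Proposition \ref{PropertiesGamma_n}) gives $d_i=\Theta(3^{\nu_i})$. Consequently the condition $d_{i_M-1}\leq M<d_{i_M}$ translates into $\nu_{i_M}=\frac{\log M}{\log 3}+O(1)$. Plugging this into the bounds above, the critical exponent will be $\frac{2\log 2}{\log 3}$ provided that $\nu_{i_M+1}/\nu_{i_M}\to 1$ almost surely as $M\to\infty$.

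The main probabilistic step, which is the analog of Lemma \ref{LemExpDecay}, is to establish this ratio convergence. Under $\lambda$, the lengths $|w_k|$ of successive $A/B$-blocks are i.i.d.\ geometric random variables (a block of type $A$ continues with probability $2/3$ at each step, giving $\mathbb{P}(|w_k|\geq r)=(2/3)^{r-1}$), so by Borel--Cantelli the event $\{|w_k|\geq c\log k\}$ occurs only finitely often for $c$ large enough, while by the strong law of large numbers $\nu_i/i\to 3$ almost surely. Therefore
\begin{equation*}
0\;\leq\;\frac{\nu_{i+1}-\nu_i}{\nu_i}\;=\;\frac{|w_{i+1}|}{\nu_i}\;=\;O\!\left(\frac{\log i}{i}\right)\quad\text{a.s.},
\end{equation*}
which gives $\nu_{i_M+1}/\nu_{i_M}\to 1$ almost surely since $i_M\to\infty$.

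Combining these ingredients as in the end of the proof of Theorem \ref{Main}: for any $\epsilon>0$ one has almost surely, for $M$ sufficiently large, $\nu_{i_M}\leq\nu_{i_M+1}\leq (1+\epsilon)\nu_{i_M}$, whence
\begin{equation*}
\tfrac{L}{4}\,M^{-(1+\epsilon)\frac{2\log 2}{\log 3}}\;\leq\;\lim_{n\to\infty}\mathbb{P}_{\mu_n}(Mav_{H_n}(\cdot,\xi_n)=M)\;\leq\; 2\,M^{-\frac{2\log 2}{(1+\epsilon)\log 3}},
\end{equation*}
which gives the announced critical exponent. The main technical obstacle I expect is the estimate $d_i=\Theta(3^{\nu_i})$: it relies on an inductive description of decorations in the ternary substitutional scheme that is implicit in the recursive construction of the $\widetilde{\Gamma}_n$ (parallel to Proposition \ref{rulesBas} and \ref{PropertiesGamma_n} for the Basilica case) and will require a short separate verification; the Borel--Cantelli/LLN step is then routine.
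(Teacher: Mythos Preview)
Your approach is essentially the paper's: apply Theorem \ref{ThmCP}, convert the bounds $d_{i_M-1}\le M<d_{i_M}$ into an estimate $\nu_{i_M}\approx\log_3 M$, and then show $\nu_{i+1}/\nu_i\to 1$ almost surely so that $2^{-(\nu_{i_M}+\nu_{i_M+1})}\sim M^{-2\log 2/\log 3}$. Two remarks on the points where you diverge.

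First, the ``technical obstacle'' you flag is not one: the paper simply records the exact formula $|\mathcal{D}(0^k)|=\tfrac12(3^k+1)$ for $k$-decorations in $\widetilde{\Gamma}_n$ (an easy induction on the ternary substitution rules), which immediately gives your $d_i=\Theta(3^{\nu_i})$.

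Second, your proof of $\nu_{i+1}/\nu_i\to 1$ is in fact sharper than the paper's. The paper only invokes Borel--Cantelli on the events $\{|A_k|\ge k\}$, obtaining $|w_{i+1}|\le i/2+O(1)$ eventually, and then asserts that $i/\nu(i)\to 0$. But since the block lengths are (after the first) i.i.d.\ geometric with mean $3$, one has $\nu(i)/i\to 3$ a.s., so $i/\nu(i)\to 1/3$, not $0$; the paper's bound alone does not close the argument. Your use of the stronger event $\{|w_k|\ge c\log k\}$ (summable for $c>1/\log(3/2)$) gives $|w_{i+1}|=O(\log i)$ a.s., and together with the trivial $\nu_i\ge i-1$ this yields $|w_{i+1}|/\nu_i=O((\log i)/i)\to 0$, which is exactly what is needed. (The LLN you invoke is correct but not required; $\nu_i\ge i-1$ suffices.) A small caveat: the $|w_k|$ are not literally i.i.d.\ because $A_1$ may be empty while later blocks have length $\ge 1$, but the tail bound $\mathbb{P}(|w_k|\ge r)\le(2/3)^{r-1}$ holds uniformly, so your Borel--Cantelli step is unaffected.
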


\begin{cor}
We thus exhibit an uncountable family of non-isomorphic $4$-regular, one-ended graphs of superlinear but subquadratic growth, such that the ASM on the sequences of finite graphs approximating them is critical with critical exponent equal to $2\log 2/\log 3>1$.
\end{cor}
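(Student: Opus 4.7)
The plan is to follow the strategy of the proof of Theorem \ref{Main}, replacing the dyadic structure of the Basilica Schreier graphs by the ternary structure of the interlaced adding machines group. Since $\xi \in \widetilde{E}_1$, Theorem \ref{ThmSchreierInterlaced} and Lemma \ref{PropSchreierInterlaced} ensure that the block-path $\mathcal{CP}_\xi$ has infinitely many cycles $\mathcal{C}_1, \mathcal{C}_2, \dots$ of sizes $|\mathcal{C}_i| = 2^{\nu_i}$, where $\nu_i$ denotes the length of the prefix of $\xi$ containing its first $i$ blocks of alternating type $A$ and $B$. The $\nu_i$ being strictly increasing positive integers, $\nu_i \geq i$, hence $\sum_i |\mathcal{C}_i|^{-1} \leq \sum_i 2^{-i} < \infty$ and the hypothesis of Theorem \ref{ThmCP} is satisfied. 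Applying it yields
\begin{equation}
\frac{L}{4} \cdot 2^{-(\nu_{i_M} + \nu_{i_M+1})} \;\leq\; \lim_{n\to\infty}\mathbb{P}_{\mu_n}(Mav_{H_n}(\cdot,\xi_n) = M) \;\leq\; 2 \cdot 2^{-(\nu_{i_M} + \nu_{i_M+1})}
\end{equation}
for some $L \in (0,1]$ and every sufficiently large $M$ arising as the mass of an avalanche on $(\widetilde{\Gamma}_\xi, \xi)$.

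Next, I would bound $M$ in terms of $\nu_{i_M}$. An avalanche stopping on $\mathcal{C}_{i_M}$ has mass caught between $|D(p_{i_M - 1})|$ and $|D(p_{i_M})|$, subgraphs isomorphic to decorations sitting inside $\widetilde{\Gamma}_{\nu_{i_M}}$ and $\widetilde{\Gamma}_{\nu_{i_M + 1}}$ respectively. Since the covering $\widetilde{\Gamma}_{n+1} \to \widetilde{\Gamma}_n$ has degree $3$ and $|V(\widetilde{\Gamma}_n)| = 3^n$, an induction analogous to that underlying Proposition \ref{PropertiesGamma_n}(2) shows that $|D(p_i)|$ is of order $3^{\nu_i}$. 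Thus $\nu_{i_M} = \log_3 M + O(\nu_{i_M + 1} - \nu_{i_M})$, and the proof reduces to controlling the ratio $\nu_{i_M + 1}/\nu_{i_M}$.

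The crux is to prove that for $\lambda$-almost every $\xi \in \widetilde{E}_1$, $\nu_{i+1}/\nu_i \to 1$ as $i \to \infty$. Writing $\ell_i := \nu_i - \nu_{i-1}$ for the length of the $i$-th alternating block, I observe that conditional on its type (say $A$), each further letter independently terminates the block with probability $1/3$ (it being the forbidden letter $2$), and the analogous statement holds for type $B$. Consequently the $\ell_i$ are essentially i.i.d.\ geometric random variables with parameter $1/3$ (up to harmless boundary effects for $\ell_1$), giving $\mathbb{P}(\ell_i \geq 2i) \leq (2/3)^{2i - 1}$. By Borel--Cantelli, almost surely $\ell_i < 2i$ for all but finitely many $i$, so $\nu_{i+1} - \nu_i \leq 2(i+1)$ eventually, and combining with $\nu_i \geq i$ yields $\nu_{i+1}/\nu_i \to 1$. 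This step is the main obstacle: the subtlety lies in verifying the correct joint distribution of the $\ell_i$, since the letter $0$ appears in blocks of both types, so the decomposition of $\xi$ into alternating blocks is not a straightforward i.i.d.\ partition of its letters; a careful conditioning argument nevertheless yields the geometric tails.

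Finally, combining the three steps exactly as at the end of the proof of Theorem \ref{Main}: for any $\varepsilon > 0$, almost surely there exists $M_0$ such that $\nu_{i_M + 1} \leq (1+\varepsilon)\nu_{i_M}$ for all $M > M_0$, and $\nu_{i_M} = (1 + o(1)) \log_3 M$. Substitution into the bound displayed above gives
\begin{equation}
\frac{L}{4} \cdot C_1 M^{-(2+\varepsilon)\log 2/\log 3} \;\leq\; \lim_{n \to \infty} \mathbb{P}_{\mu_n}(Mav_{H_n}(\cdot,\xi_n) = M) \;\leq\; 2 \cdot C_2 M^{-(2/(1+\varepsilon))\log 2/\log 3},
\end{equation}
for constants $C_1, C_2 > 0$, establishing the critical behaviour with exponent $2 \log 2 / \log 3$.
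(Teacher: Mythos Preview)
Your proposal is correct and follows essentially the same route as the paper. The paper's proof (which, despite its placement, is really the proof of Theorem~\ref{IAM}, from which the corollary follows via Proposition~\ref{PropUncount1endedIAM} and the stated growth degree $\log 3/\log 2$) proceeds identically: apply Theorem~\ref{ThmCP} using $|\mathcal{C}_i|=2^{\nu_i}$, bound $M$ via the decoration cardinality $\tfrac12(3^{\nu}+1)$, and establish $\nu(i+1)/\nu(i)\to 1$ almost surely by a Borel--Cantelli argument on the block lengths $|A_k|,|B_k|$ with tail bound $(2/3)^k$. Your treatment of the geometric-tail subtlety is slightly more explicit than the paper's, and like the paper you do not separately argue the uncountability and growth claims, which are covered by the propositions cited just before the theorem.
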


\begin{proof}
Let $\xi\in\widetilde{E}_1$ and let $\xi=A_1B_1A_2B_2\dots$ be its decomposition in blocks of type $A$ and $B$ ($A_1$ may be empty). For any $n\geq 1$, consider the Schreier graph $(\widetilde{\Gamma}_n,\xi_n)$, the block-path $\mathcal{CP}_{\xi_n}$ in $\widetilde{\Gamma}_n$ and denote its blocks by $\mathcal{C}_1\mathcal{C}_2\dots\mathcal{C}_{r_n}$, so that $r_n$ is the number of blocks in the above decomposition of the prefix $\xi_n$ of $\xi$. By Lemma \ref{PropSchreierInterlaced}, for any $i\geq 1$, the size of $\mathcal{C}_i$ is given by

\begin{equation}\label{eqnu}
\log_2(|\mathcal{C}_i|)=\nu_i=\left\{\begin{array}{cc}
\sum_{k=1}^{i/2}(|A_k|+|B_k|) & \textrm{if $i$ is even,}\\
\sum_{k=1}^{(i-1)/2}(|A_k|+|B_k|)+|A_{(i+1)/2}| & \textrm{if $i$ is odd,}
\end{array}\right.
\end{equation}

where $|A_k|$ (respectively $|B_k|$) denotes the length of the block $A_k$ (respectively $B_k$). For further convenience, we interpolate the sequence $\{\nu_i\}_{i\geq 1}$ by a continuous, increasing function $\nu:[0,+\infty)\longrightarrow [0,+\infty)$ such that $\nu(0)=0$. As the series $\sum_{i\geq 1}\frac{1}{|\mathcal{C}_i|}$ converges, it follows from Theorem \ref{ThmCP}, that

\begin{equation} \label{eq1Interlaced}
\frac{L}{2\cdot|\mathcal{C}_{i_M}|\cdot|\mathcal{C}_{i_M+1}|} \leq\mathbb{P}_{\mu_n}(Mav_{H_n}(\cdot,\xi_n)=M)\leq \frac{2}{|\mathcal{C}_{i_M}|\cdot|\mathcal{C}_{i_M+1}|}
\end{equation}

\noindent where $\mathcal{C}_{i_M}$ denotes the block on which each avalanche of mass $M$ stops, and $0<L\leq 1$ is a constant depending on the sequence $\{\nu_i\}_{i\geq 1}\equiv\{\nu(i)\}_{i\geq 1}$. From (\ref{eq1Interlaced}), we get

\begin{equation} \label{eq2Interlaced}
\frac{L}{2}\cdot 2^{-(\nu(i_M)+\nu(i_M+1))} \leq\mathbb{P}_{\mu_n}(Mav_{H_n}(\cdot,\xi_n)=M)\leq 2\cdot 2^{-(\nu(i_M)+\nu(i_M+1))}.
\end{equation}

Observe that, for any $n\geq 1$ and $1\leq k\leq n$, the cardinality of a $k$-decoration in $\widetilde{\Gamma}_n$ (see Definition \ref{DefDeco}) is equal to $1/2(3^k+1)$. It follows that the mass of an avalanche which stops on $\mathcal{C}_{i_M}$ is bounded by

\begin{displaymath}
1/2(3^{\nu(i_M-1)}+1)<M<1/2(3^{\nu(i_M)}+1).
\end{displaymath}

\noindent Since $\nu$ is increasing, this leads to

\begin{displaymath}
\left\{\begin{array}{c}
i_M < \nu^{-1}(\log_3(2M-1))+1,\\
i_M > \nu^{-1}(\log_3(2M-1)).
\end{array}\right.
\end{displaymath}

\noindent As $i_M$ is an integer, we have $i_M=\lceil\nu^{-1}(\log_3(2M-1))\rceil$.

\begin{lem}\label{lemnu}
For almost every $\xi\in\tilde{E_1}$, $\lim_{i\to\infty}\frac{\nu(i+1)}{\nu(i)}=1$.
\end{lem}

\begin{proof}[Proof of the lemma:]
By (\ref{eqnu}), $\{\nu(i)\}_{i\geq 1}$ satisfies

\begin{displaymath}
\nu(i+1)-\nu(i)=\left\{\begin{array}{cc}
|A_{i/2+1}| & \textrm{if $i$ is even,}\\
|B_{(i+1)/2}| & \textrm{if $i$ is odd.}
\end{array}\right.
\end{displaymath}

For every $k\geq 1$, define the event $E_k:=\{\xi\in\{0,1,2\}^\omega|\hspace{1mm}|A_k|\geq k\}$ (respectively $\bar{E}_k:=\{\xi\in\{0,1,2\}^\omega|\hspace{1mm}|B_k|\geq k\}$). As $\mathbb{P}(E_k)\leq \left(\frac{2}{3}\right)^k$ and $\sum_{k\geq 1}\left(\frac{2}{3}\right)^k<\infty$, it follows from Borel-Cantelli Lemma, that $\mathbb{P}(\limsup_{k\to\infty}E_k)=0$. Identically, $\mathbb{P}(\limsup_{k\to\infty}\bar{E}_k)=0$. In other words, there almost surely exists $i_0\geq 1$ such that, for all $i>i_0$, $|A_{i/2+1}|<i/2+1$ (respectively $|B_{(i+1)/2}|<(i+1)/2$).\\
\indent We have, for $i$ even,

\begin{displaymath}
1\leq\frac{\nu(i+1)}{\nu(i)}=\frac{\nu(i)+|A_{i/2+1}|}{\nu(i)}\leq 1+\frac{i}{2\nu(i)}+\frac{1}{\nu(i)},
\end{displaymath}

\noindent where the last inequality holds almost surely for any $i$ sufficiently large. The same bound holds for $i$ odd. Using a similar argument than in the proof of Theorem \ref{Main}, we check that $i/\nu(i)$, which is non-increasing, tends to $0$ as $i\to\infty$.
\end{proof}

Let $x_M=\nu^{-1}(\log_3(2M-1))+1$, so that $i_M=\lfloor x_M\rfloor$. By Lemma \ref{lemnu}, for any $\epsilon >0$, there exists $M_0$ such that for any $M>M_0$, $\nu(i_M+1)\leq (1+\epsilon)\nu(i_M)$. From (\ref{eq2Interlaced}), we get

\begin{displaymath}
\frac{L}{2}\cdot 2^{-(2+\epsilon)\nu(i_M)} \leq\mathbb{P}_{\mu_n}(Mav_{H_n}(\cdot,\xi_n)=M)
\leq 2\cdot 2^{-\frac{2+\epsilon}{1+\epsilon}\nu(i_M+1)}.
\end{displaymath}

\noindent As $i_M\leq x_M\leq i_M+2$ and $\nu$ is increasing, we have

\begin{displaymath}
\frac{L}{2}\cdot 2^{-(2+\epsilon)\nu(x_M)} \leq\mathbb{P}_{\mu_n}(Mav_{H_n}(\cdot,\xi_n)=M)
\leq 2\cdot 2^{-\frac{2+\epsilon}{1+\epsilon}\nu(x_M-1)}.
\end{displaymath}

\noindent Using, for the lower bound, the fact that for any $\epsilon' >0$, there is $M'_0$ such that for any $M>M'_0$, $\nu(\nu^{-1}(\log_3(2M-1))+1)\leq (1+\epsilon')\log_3(2M-1)$, we get

\begin{displaymath}
\frac{L}{2}\cdot 2^{-(2+\epsilon)(1+\epsilon')\log_3(2M-1)} \leq\mathbb{P}_{\mu_n}(Mav_{H_n}(\cdot,\xi_n)=M)
\leq 2\cdot 2^{-\frac{2+\epsilon}{1+\epsilon}\log_3(2M-1)}
\end{displaymath}

\noindent which is equivalent to

\begin{displaymath}
\frac{L}{2}\cdot (2M-1)^{-\frac{(2+\epsilon)(1+\epsilon')}{\log_2(3)}} \leq\mathbb{P}_{\mu_n}(Mav_{H_n}(\cdot,\xi_n)=M)
\leq 2\cdot (2M-1)^{-\frac{2+\epsilon}{(1+\epsilon)\log_2(3)}}.
\end{displaymath}

\noindent Thus, almost surely, $\lim_{n\to\infty}\mathbb{P}_{\mu_n}(Mav_{H_n}(\cdot,\xi_n)=M)\sim M^{-\frac{2\log 2}{\log 3}}$.
\end{proof}

\section{Growth of Orbital Schreier Graphs and Critical Exponent of the ASM} \label{LastSection}

In this section we show that, under some conditions, the critical exponent of the ASM on a finite approximation of an infinite one-ended cactus is related to the growth of that graph. Then, we exhibit a family of iterated monodromy groups of quadratic polynomials such that the ASM on the corresponding sequences of Schreier graphs is critical in the random weak limit, with arbitrarily small critical exponent.

\subsection{Degree of polynomial growth of orbital Schreier graphs and critical exponent}

\indent Given a locally finite graph $\Gamma$ and $v\in V(\Gamma)$, we say that $\Gamma$ has polynomial growth of degree $\alpha$ if the quantity

\begin{displaymath}
\alpha:=\limsup_{r\to\infty}\frac{\log(|B_\Gamma(v,r)|)}{\log r}
\end{displaymath}

\noindent is finite. Note that $\alpha$ does not depend on the choice of $v$.\\
\indent Let $(\Gamma,v)$ be an infinite one-ended cactus rooted at $v$. Let $\mathcal{CP}_{v}=\mathcal{C}_1\mathcal{C}_2\dots$ be the unique block-path of infinite length in $\Gamma$ starting at $v$. Recall from Subsection \ref{SubsecCP} that, for each $i\geq 1$, $p_i$ denotes the cut vertex between $\mathcal{C}_i$ and $\mathcal{C}_{i+1}$, and that $D(p_i)$ denotes the subgraph of $\Gamma$ consisting of the union of all finite connected components remaining when removing $p_i$, together with $p_i$. Finally, recall that $d_i$ denotes the number of vertices in $D(p_i)$.

\begin{thm} \label{Growth}
Let $(\Gamma,v)$ be an infinite one-ended cactus rooted at $v$. Let $\{H_n\}_{n\geq 1}$ be an exhaustion of $(\Gamma,v)$ as in Convention \ref{RemOnCPinf} and, for any $n\geq 1$, let $p^{(n)}$ be the dissipative vertex in $H_n$. Denote by $\mathcal{CP}^n_{v}=\mathcal{C}_1\dots \mathcal{C}_{r_n}\subset\mathcal{CP}_{v}$ the finite block-path in $H_n$ joining vertex $v$ to $p^{(n)}$. Suppose that $\sum_{j=1,|\mathcal{C}_j|>2}^{r_n}\frac{1}{|\mathcal{C}_j|}$ converges as $r_n\to\infty$. Suppose moreover that the subgraphs $D(p_i)$, $i\geq 1$, satisfy the following requirements:

\begin{enumerate}
\item there exists a constant $c>0$ such that, for any $i$ sufficiently large, $Diam(D(p_i))\leq c|\mathcal{C}_i|$;
\item $\lim_{i\to\infty}\frac{\log d_{i-1}}{\log d_i}=1$.
\end{enumerate}

\noindent Then, for any $\epsilon>0$, there exists $M_0$ such that, for any $M>M_0$

\begin{displaymath}
C_1\cdot M^{-\frac{2}{\beta'-\epsilon}} \leq\lim_{n\to\infty}\mathbb{P}_{\mu_n}(Mav_{H_n}(\cdot,v)=M)\leq C_2\cdot M^{-\frac{2}{\beta+\epsilon}},
\end{displaymath}

\noindent where $C_1,C_2>0$, $\beta:=\limsup_{i\to\infty}\frac{\log d_i}{\log Diam(D(p_i))}$ and $\beta':=\liminf_{i\to\infty}\frac{\log d_i}{\log Diam(D(p_i))}$.\\
\indent In particular, if $\beta=\beta'$, then the ASM on the sequence $\{H_n\}_{n\geq 1}$ approximating $(\Gamma,v)$ is critical (in the sense of Definition \ref{DefCrit}) with critical exponent equal to $\delta=2/\beta$.
\end{thm}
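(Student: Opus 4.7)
The plan is to feed into Theorem \ref{ThmCP} two successive geometric translations: first from the block length $|\mathcal{C}_i|$ to the size $d_i$, and then from $d_{i_M}$ and $d_{i_M+1}$ into a power of $M$. Recall that Theorem \ref{ThmCP} provides, under the summability hypothesis already included in the present assumptions, the two-sided estimate
\begin{displaymath}
\frac{L}{2\,|\mathcal{C}_{i_M}|\,|\mathcal{C}_{i_M+1}|}\ \leq\ \lim_{n\to\infty}\mathbb{P}_{\mu_n}(Mav_{H_n}(\cdot,v)=M)\ \leq\ \frac{2}{|\mathcal{C}_{i_M}|\,|\mathcal{C}_{i_M+1}|},
\end{displaymath}
with $i_M$ the unique index satisfying $d_{i_M-1}\leq M<d_{i_M}$. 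The whole task thus reduces to bounding the product $|\mathcal{C}_{i_M}|\,|\mathcal{C}_{i_M+1}|$ from above and below by the appropriate powers of $M$.

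For the first translation, the cactus structure forces a shortest path between any two vertices of the cycle $\mathcal{C}_i$ to remain inside $\mathcal{C}_i$, whence $|\mathcal{C}_i|/2\leq \mathrm{Diam}(D(p_i))$; together with hypothesis (1) this gives $|\mathcal{C}_i|\asymp \mathrm{Diam}(D(p_i))$ for all $i$ sufficiently large. Unwinding the definitions of $\beta$ and $\beta'$, for every $\eta>0$ and every large enough $i$,
\begin{displaymath}
d_i^{1/(\beta+\eta)}\ \leq\ \mathrm{Diam}(D(p_i))\ \leq\ d_i^{1/(\beta'-\eta)},
\end{displaymath}
so that $d_i^{1/(\beta+\eta)}\lesssim |\mathcal{C}_i|\lesssim d_i^{1/(\beta'-\eta)}$ up to absolute constants.

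For the second translation, the trivial inequality $M<d_{i_M}\leq d_{i_M+1}$ already yields $|\mathcal{C}_{i_M}|\,|\mathcal{C}_{i_M+1}|\gtrsim M^{2/(\beta+\eta)}$, from which the announced upper estimate on the probability follows after renaming $\eta$ as $\epsilon$. In the other direction, from $d_{i_M-1}\leq M$ and hypothesis (2) one derives $\log d_{i_M}\leq (1+o(1))\log M$, and a second application of hypothesis (2) to the ratio $\log d_{i_M}/\log d_{i_M+1}$ gives $\log d_{i_M+1}\leq (1+o(1))\log M$ as well. Substituting these into the upper bound on the cycle sizes produces $|\mathcal{C}_{i_M}|\,|\mathcal{C}_{i_M+1}|\lesssim M^{2/(\beta'-\epsilon)}$ for any prescribed $\epsilon>0$, which is exactly the lower bound on the probability. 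Specialising to the case $\beta=\beta'$ then collapses both exponents, up to arbitrarily small errors, into $2/\beta$, so that the ASM is critical with $\delta=2/\beta$.

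The main subtlety will lie in the bookkeeping for the lower bound on the probability: two independent $o(1)$ errors produced by successive uses of $\log d_{i-1}/\log d_i\to 1$ on the pairs $(d_{i_M-1},d_{i_M})$ and $(d_{i_M},d_{i_M+1})$ multiply into the final exponent and must be reconciled with the auxiliary parameter $\eta$ arising from the definition of $\beta'$. Nothing here is conceptually deep, but one has to choose the error parameters in the right order, leaving enough room for the prescribed $\epsilon$ to absorb all three sources of slack.
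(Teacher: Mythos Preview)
Your proposal is correct and follows essentially the same route as the paper: invoke Theorem~\ref{ThmCP} to reduce to bounding $|\mathcal{C}_{i_M}|\cdot|\mathcal{C}_{i_M+1}|$, use condition~(1) together with the trivial lower bound $|\mathcal{C}_i|/2\leq\mathrm{Diam}(D(p_i))$ to pass between $|\mathcal{C}_i|$ and $\mathrm{Diam}(D(p_i))$, and use condition~(2) to absorb the index shifts $i_M-1\to i_M\to i_M+1$ when converting the sandwich $d_{i_M-1}\leq M<d_{i_M}$ into powers of $M$. The paper organizes the bookkeeping slightly differently (normalizing $\log M$ directly by $\log\mathrm{Diam}(D(p_{i_M}))$ and then by $\log\mathrm{Diam}(D(p_{i_M+1}))$), but the ingredients and logic are the same.
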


\begin{cor}
Let $(\Gamma,v)$ be as in Theorem \ref{Growth}. Suppose that $\Gamma$ has polynomial growth and that its degree of growth $\alpha$ is given by the quantity $\lim_{i\to\infty}\frac{\log d_i}{\log Diam(D(p_i))}$. Then, the critical exponent $\delta$ is related to the growth degree $\alpha$ of $\Gamma$ by $\delta=2/\alpha$.
\end{cor}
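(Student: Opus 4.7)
The corollary is an immediate application of Theorem \ref{Growth}. The plan is to verify that the hypothesis of polynomial growth of degree $\alpha$, combined with the identification $\alpha = \lim_{i\to\infty} \log d_i / \log Diam(D(p_i))$, forces $\beta = \beta' = \alpha$; the conclusion $\delta = 2/\alpha$ then follows from the \emph{in particular} clause of Theorem \ref{Growth}.

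First I would observe that the statement of the corollary implicitly assumes the existence of the full (two-sided) limit $\lim_{i\to\infty} \log d_i / \log Diam(D(p_i))$, since it is set equal to a single number $\alpha$. From the general inequality $\liminf \leq \limsup$ and the fact that both coincide with this limit, one obtains $\beta = \beta' = \alpha$. Plugging into Theorem \ref{Growth} yields, for every $\epsilon > 0$ and every $M$ sufficiently large,
\begin{displaymath}
C_1 \cdot M^{-\frac{2}{\alpha - \epsilon}} \leq \lim_{n\to\infty} \mathbb{P}_{\mu_n}(Mav_{H_n}(\cdot,v)=M) \leq C_2 \cdot M^{-\frac{2}{\alpha + \epsilon}},
\end{displaymath}
which, in the sense of Definition \ref{DefCrit}, is exactly critical behaviour with exponent $\delta = 2/\alpha$.

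The only point deserving further comment is the compatibility between the \emph{a priori} geometric definition of $\alpha$ via $|B_\Gamma(v,r)|$ and the sequential quantity $\lim_i \log d_i / \log Diam(D(p_i))$; the hypothesis elevates this to an assumption, but it is automatic in natural examples. Indeed, because $\Gamma$ is one-ended, the nested subgraphs $D(p_i)$ form a cofinal exhaustion along the block-path $\mathcal{CP}_v$, so for each radius $r$ there is a unique index $i(r)$ with $Diam(D(p_{i(r)-1})) < r \leq Diam(D(p_{i(r)}))$, and condition $(1)$ of Theorem \ref{Growth} forces $Diam(D(p_{i(r)})) \asymp r$. Sandwiching $B_\Gamma(v,r)$ between $D(p_{i(r)-1})$ and $D(p_{i(r)})$ and invoking condition $(2)$ of Theorem \ref{Growth} to absorb the jump $d_{i(r)-1}/d_{i(r)}$ makes $\log |B_\Gamma(v,r)|/\log r$ converge to the same $\alpha$. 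Since all the analytic work has already been carried out in Theorem \ref{Growth}, this corollary reduces to the elementary identification of $\beta$ and $\beta'$ with $\alpha$; the main obstacle is not technical but rather just the conceptual step of recognising that the growth exponent of $\Gamma$ and the sequential ratio along the exhaustion coincide under the stated hypothesis.
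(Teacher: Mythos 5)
Your proof is correct and matches the paper's (implicit) argument: the corollary is an immediate consequence of the \emph{in particular} clause of Theorem \ref{Growth}, since the hypothesis that the two-sided limit $\lim_{i\to\infty}\log d_i/\log Diam(D(p_i))$ exists and equals $\alpha$ forces $\beta=\beta'=\alpha$, whence $\delta=2/\beta=2/\alpha$. Your closing paragraph attempting to derive the identification of the ball-growth exponent with the sequential ratio from conditions \emph{1.} and \emph{2.} is not needed (the corollary assumes this identification) and is somewhat loose as stated --- for instance, $B_\Gamma(v,r)\subseteq D(p_{i})$ requires $r<d(v,p_{i})$ rather than $r\leq Diam(D(p_{i}))$ --- but since it is offered only as a side remark it does not affect the validity of the proof.
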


\begin{proof}
From Theorem \ref{ThmCP}, for any integer $M$ large enough that occurs as the mass of an avalanche, we have

\begin{equation} \label{eq1Growth}
\frac{L}{2\cdot|\mathcal{C}_{i_M}|\cdot|\mathcal{C}_{i_M+1}|} \leq\mathbb{P}_{\mu_n}(Mav_{H_n}(\cdot,v)=M)\leq \frac{2}{|\mathcal{C}_{i_M}|\cdot|\mathcal{C}_{i_M+1}|}
\end{equation}

\noindent where the index $i_M$ is uniquely determined by the condition $d_{i_M-1}\leq M<d_{i_M}$ and $0<L\leq 1$ is a constant. Applying logarithm to these inequalities and normalizing, we get

\begin{equation} \label{eqNormalizing}
\frac{\log d_{i_M-1}}{\log Diam(D(p_{i_M}))}\leq\frac{\log M}{\log Diam(D(p_{i_M}))}<\frac{\log d_{i_M}}{\log Diam(D(p_{i_M}))}.
\end{equation}

\noindent By condition \emph{2.}, we have

\begin{displaymath}
\liminf_{M\to\infty}\frac{\log d_{i_M-1}}{\log Diam(D(p_{i_M}))}=\liminf_{M\to\infty}\frac{\log d_{i_M}}{\log Diam(D(p_{i_M}))}=\beta'.
\end{displaymath}

\noindent On the other hand, condition \emph{1.} implies that for any $M$ sufficiently large,

\begin{displaymath}
\frac{\log M}{\log \left(c|\mathcal{C}_{i_M}|\right)}\leq\frac{\log M}{\log Diam(D(p_{i_M}))}\leq\frac{\log M}{\log\left( \tilde{c}|\mathcal{C}_{i_M}|\right)}
\end{displaymath}

\noindent (the upper bound follows from the fact that, by definition, $\mathcal{C}_{i}\subset D(p_{i})$ for any $i\geq 1$.) Hence, for any $\epsilon>0$, there exists $M_0$ such that for any $M>M_0$,

\begin{displaymath}
\beta'-\epsilon<\frac{\log M}{\log \left(c|\mathcal{C}_{i_M}|\right)}<\beta+\epsilon
\end{displaymath}

\noindent which is equivalent to

\begin{displaymath}
\frac{1}{c}M^{\frac{1}{\beta+\epsilon}}<|\mathcal{C}_{i_M}|<\frac{1}{c}M^{\frac{1}{\beta'-\epsilon}}.
\end{displaymath}

\noindent If we normalize in (\ref{eqNormalizing}) by $\log Diam(D(p_{i_M+1}))$, we obtain similarly

\begin{displaymath}
\frac{1}{c}M^{\frac{1}{\beta+\epsilon}}<|\mathcal{C}_{i_M+1}|<\frac{1}{c}M^{\frac{1}{\beta'-\epsilon}}.
\end{displaymath}

\noindent Thus, we have

\begin{displaymath}
c^2M^{\frac{-2}{\beta'-\epsilon}}<\frac{1}{|\mathcal{C}_{i_M}|\cdot|\mathcal{C}_{i_M+1}|}
<c^2M^{\frac{-2}{\beta+\epsilon}}
\end{displaymath}

\noindent and replacing in (\ref{eq1Growth}), we get the result.

\end{proof}

\subsection{Examples with arbitrarily small critical exponent} \label{k(v)}

We will now consider a particular family of self-similar groups of automorphisms of the binary rooted tree that gives rise to Schreier graphs of bigger and bigger degree and of bigger and bigger polynomial growth. These graphs satisfy the conditions of our Theorem \ref{Growth}, and thus provide examples of criticality with critical exponent arbitrarily close to $0$.\\
\indent The groups we are going to consider are realized as iterated monodromy groups of quadratic polynomials $z^2+c$, where the parameter $c$ is chosen to be the center of one of the secondary $p/q$-components of the Mandelbrot set, so that the critical point $0$ of the polynomial $z^2+c$ belongs to a super-attracting cycle of length $q\geq 2$. The case $q=2$ corresponds
to the Basilica group, see Figure \ref{JULIABASILICA}, and the case $q=3$ is the so-called Douady rabbit, see Figure \ref{JULIADOUADY}.  \\
\indent If the orbit of $0$ under iterations of the polynomial $z^2+c$ is a finite cycle, one can associate to the polynomial a \emph{kneading} automaton $\mathcal{A}_v$, where $v$ is a finite binary word, and the self-similar group $\mathcal{K}(v)$ generated by $\mathcal{A}_v$ is the iterated monodromy group of $z^2+c$ (see Chapters 6.6-6.11 in \cite{NekBook}). The length of the word $v$ is equal to the size of the orbit of $0$ under iterations of the polynomial. For a word $v=x_1x_2\dots x_{k-1}\in\{0,1\}^{k-1}$, $k>1$, the automaton $\mathcal{A}_v$ has $k+1$ states (including the identity state) and its Moore diagram is pictured in Figure \ref{AUTOMATKV} (for $x\in\{0,1\}$, we write $\bar{x}:=1-x$):

\begin{figure}[H]
\begin{center}
\begin{picture}(0,0)%
\includegraphics{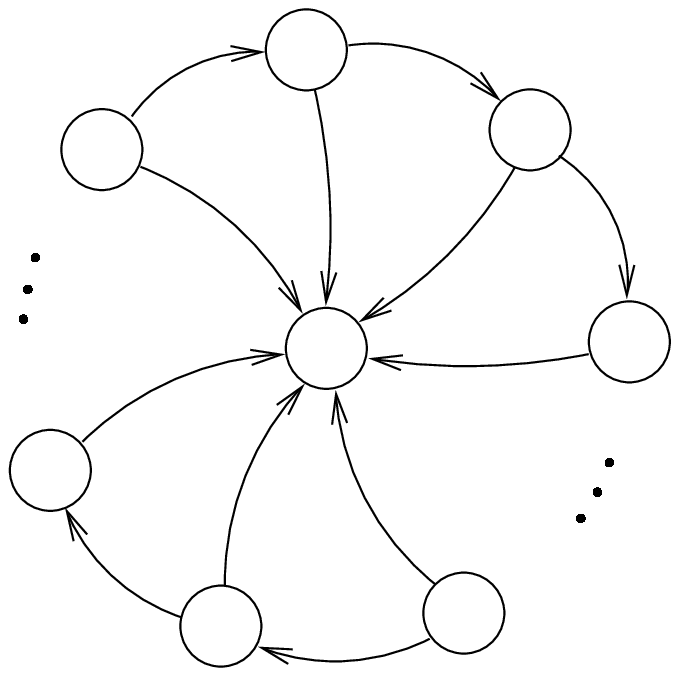}%
\end{picture}%
%
%
\setlength{\unitlength}{4736sp}%
\begingroup\makeatletter\ifx\SetFigFont\undefined%
\gdef\SetFigFont#1#2#3#4#5{%
  \reset@font\fontsize{#1}{#2pt}%
  \fontfamily{#3}\fontseries{#4}\fontshape{#5}%
  \selectfont}%
\fi\endgroup%
\begin{picture}(2842,2837)(1374,-2516)
\put(3891,-1054){\makebox(0,0)[lb]{\smash{{\SetFigFont{11}{13.2}{\rmdefault}{\mddefault}{\updefault}{\color[rgb]{0,0,0}$a_{k-1}$}%
}}}}
\put(1587,-1553){\makebox(0,0)[lb]{\smash{{\SetFigFont{11}{13.2}{\rmdefault}{\mddefault}{\updefault}{\color[rgb]{0,0,0}$a_{i-1}$}%
}}}}
\put(3563,-186){\makebox(0,0)[lb]{\smash{{\SetFigFont{11}{13.2}{\rmdefault}{\mddefault}{\updefault}{\color[rgb]{0,0,0}$a_k$}%
}}}}
\put(2757,-1079){\makebox(0,0)[lb]{\smash{{\SetFigFont{11}{13.2}{\rmdefault}{\mddefault}{\updefault}{\color[rgb]{0,0,0}$id$}%
}}}}
\put(3252,-2140){\makebox(0,0)[lb]{\smash{{\SetFigFont{11}{13.2}{\rmdefault}{\mddefault}{\updefault}{\color[rgb]{0,0,0}$a_{i+1}$}%
}}}}
\put(2332,-2180){\makebox(0,0)[lb]{\smash{{\SetFigFont{11}{13.2}{\rmdefault}{\mddefault}{\updefault}{\color[rgb]{0,0,0}$a_i$}%
}}}}
\put(2674,134){\makebox(0,0)[lb]{\smash{{\SetFigFont{11}{13.2}{\rmdefault}{\mddefault}{\updefault}{\color[rgb]{0,0,0}$a_1$}%
}}}}
\put(1867,-269){\makebox(0,0)[lb]{\smash{{\SetFigFont{11}{13.2}{\rmdefault}{\mddefault}{\updefault}{\color[rgb]{0,0,0}$a_2$}%
}}}}
\put(3237,184){\makebox(0,0)[lb]{\smash{{\SetFigFont{11}{13.2}{\rmdefault}{\mddefault}{\updefault}{\color[rgb]{0,0,0}$0|1$}%
}}}}
\put(2905,-411){\makebox(0,0)[lb]{\smash{{\SetFigFont{11}{13.2}{\rmdefault}{\mddefault}{\updefault}{\color[rgb]{0,0,0}$1|0$}%
}}}}
\put(4025,-510){\makebox(0,0)[lb]{\smash{{\SetFigFont{11}{13.2}{\rmdefault}{\mddefault}{\updefault}{\color[rgb]{0,0,0}$x_{k-1}|x_{k-1}$}%
}}}}
\put(2806,-2456){\makebox(0,0)[lb]{\smash{{\SetFigFont{11}{13.2}{\rmdefault}{\mddefault}{\updefault}{\color[rgb]{0,0,0}$x_i|x_i$}%
}}}}
\put(1949,122){\makebox(0,0)[lb]{\smash{{\SetFigFont{11}{13.2}{\rmdefault}{\mddefault}{\updefault}{\color[rgb]{0,0,0}$x_1|x_1$}%
}}}}
\put(1389,-2065){\makebox(0,0)[lb]{\smash{{\SetFigFont{11}{13.2}{\rmdefault}{\mddefault}{\updefault}{\color[rgb]{0,0,0}$x_{i-1}|x_{i-1}$}%
}}}}
\put(3039,-1665){\makebox(0,0)[lb]{\smash{{\SetFigFont{11}{13.2}{\rmdefault}{\mddefault}{\updefault}{\color[rgb]{0,0,0}$\bar{x}_i|\bar{x}_i$}%
}}}}
\put(2316,-405){\makebox(0,0)[lb]{\smash{{\SetFigFont{11}{13.2}{\rmdefault}{\mddefault}{\updefault}{\color[rgb]{0,0,0}$\bar{x}_1|\bar{x}_1$}%
}}}}
\put(3319,-788){\makebox(0,0)[lb]{\smash{{\SetFigFont{11}{13.2}{\rmdefault}{\mddefault}{\updefault}{\color[rgb]{0,0,0}$\bar{x}_{k-1}|\bar{x}_{k-1}$}%
}}}}
\put(2462,-1908){\makebox(0,0)[lb]{\smash{{\SetFigFont{11}{13.2}{\rmdefault}{\mddefault}{\updefault}{\color[rgb]{0,0,0}$\bar{x}_{i-1}|\bar{x}_{i-1}$}%
}}}}
\end{picture}%
\end{center}
\caption{The automaton $\mathcal{A}_v$ corresponding to the word $v=x_1x_2\dots x_{k-1}$.}
\label{AUTOMATKV}
\end{figure}

Consequently, the generators $\{a_1,\dots,a_k\}$ of the group $\mathcal{K}(v)$ generated by $\mathcal{A}_v$ have the following self-similar structure:

\begin{displaymath}
a_1=(0 \ 1)(a_k,id),
\hspace{2cm}
a_{i+1}=\left\{\begin{array}{cc}
e(a_i,id) & \textrm{if $x_i=0$,} \\
e(id,a_i) & \textrm{if $x_i=1$,}
\end{array}\right.
\hspace{5mm}\textrm{for $i=1,\dots,k-1$.}
\end{displaymath}

We can, for example, consider the family of groups $\mathcal{K}(0^{k-1})$ for $k>1$. The group $\mathcal{K}(0)$ is the Basilica group that we have already studied in Sections \ref{Basilica} and \ref{AvOnBasilica}, whereas $\mathcal{K}(00)$ is the group $IMG(z^2+c)$ where $c\approx-0.1225+0.7448i$. The Julia set of this group, called the \emph{Douady Rabbit}, is represented in Figure \ref{JULIADOUADY}.

\begin{figure}[H]
\begin{center}
\includegraphics[scale=0.23]{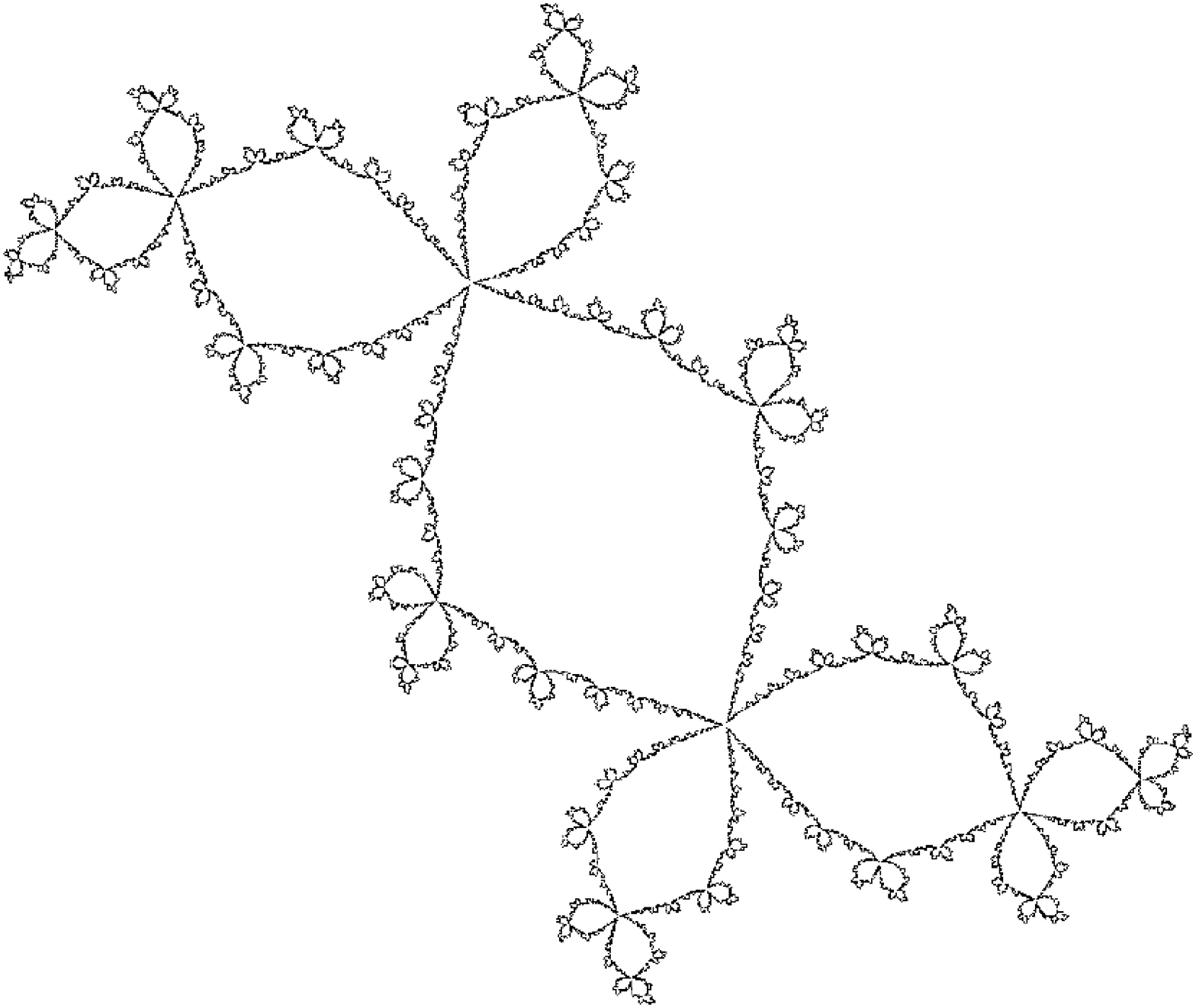}
\end{center}
\caption{The Julia set of the Douady Rabbit.}
\label{JULIADOUADY}
\end{figure}

For any $k>1$, the group $\mathcal{K}(0^{k-1})$ is the iterated monodromy group of a post-critically finite polynomial and, by Theorem \ref{ThmNekrashevych}, the Schreier graphs of the action of $\mathcal{K}(0^{k-1})$ on the levels of the binary rooted tree are cacti. By extending to the groups $\mathcal{K}(0^{k-1})$, for any $k>1$, the analysis done for the Basilica group, we obtain the following description of the finite Schreier graphs:

\begin{prop} \label{PropSchreierKv}
Let $k\geq 2$, and consider the Schreier graphs $\Gamma_n:=\Gamma(\mathcal{K}(0^{k-1}),\{a_1,\dots,a_k\},\{0,1\}^n)$ of the action of $\mathcal{K}(0^{k-1})$ on the levels of the binary rooted tree. Given $\xi_n\in\Gamma_n$, let $\mathcal{CP}_{\xi_n}=\mathcal{C}_1\dots\mathcal{C}_{r_n}$ be the block-path joining $\xi_n$ to the vertex $0^n$ in $\Gamma_n$. Then, $r_n\leq n$ and the sizes of the blocks of $\mathcal{CP}_{\xi_n}$ are given by $|\mathcal{C}_j|=2^{b_j}$, where the sequence $\{b_j\}_{j=1}^{r_n}$ is a non-decreasing sequence of positive integers with no constant segments of length greater than $k$.
\end{prop}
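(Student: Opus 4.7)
The plan is to extend to the groups $\mathcal{K}(0^{k-1})$ the analysis of the Basilica case ($k=2$) carried out in Section \ref{Basilica}. The first step is to derive substitutional rules analogous to Proposition \ref{rulesBas} describing how $\Gamma_{n+1}$ is obtained from $\Gamma_n$. These are read off directly from the self-similar structure of the generators: from $a_{i+1}=e(a_i,id)$ one gets $a_{i+1}(0w)=0\cdot a_i(w)$ and $a_{i+1}(1w)=1w$, so every vertex of the form $1w$ in $\Gamma_{n+1}$ carries loops labeled $a_2,\dots,a_k$; from $a_1=(0\ 1)(a_k,id)$ one gets $a_1(0w)=1\cdot a_k(w)$ and $a_1(1w)=0w$, so an $a_i$-edge in $\Gamma_n$ ($i\geq 2$) lifts to an $a_{i+1}$-edge inside the $0$-subtree of $\Gamma_{n+1}$, while $a_1$-loops or short $a_1$-cycles get expanded, via $a_k$, into cycles of doubled length.

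Next, by induction on $n$ using these rewriting rules, one verifies that $\Gamma_n$ is a $2k$-regular cactus (which is also a direct consequence of Theorem \ref{ThmNekrashevych}), and moreover that every cycle in $\Gamma_n$ has length equal to a power of $2$; indeed, each substitution either leaves cycle lengths unchanged or exactly doubles them. In particular, for the block-path $\mathcal{CP}_{\xi_n}=\mathcal{C}_1\dots\mathcal{C}_{r_n}$ joining $\xi_n$ to $0^n$, each block $\mathcal{C}_j$ is either a cycle of length $2^{b_j}$ for some $b_j\geq 1$, or a single edge (a cycle of length $2$ corresponds to $b_j=1$), which establishes the statement $|\mathcal{C}_j|=2^{b_j}$.

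The core of the argument is then to track the evolution of $\mathcal{CP}_{\xi_n}$ under the substitutional rules. The key observation is that, on passing from level $n$ to level $n+1$, the block-path joining the relevant preimage of $\xi_n$ to $0^{n+1}$ is obtained from $\mathcal{CP}_{\xi_n}$ by adjoining \emph{at most one} new block at its end (the one touching the root). This immediately yields $r_n\leq n$. Moreover, deeper blocks in $\mathcal{CP}_{\xi_n}$ correspond to deeper nestings of the self-similar construction, hence their sizes can only be equal to or larger than those of blocks further from $0^n$; this gives the monotonicity of $\{b_j\}$. Finally, the constraint ``no constant segment of length greater than $k$'' reflects the rotational structure of the kneading automaton: since the kneading word is $0^{k-1}$, the transitions $a_{i+1}\overset{0}{\to}a_i$ chain the generators into a cycle $a_1\to a_k\to a_{k-1}\to\dots\to a_2\to a_1$ of length $k$, and after $k$ consecutive blocks of equal size the rotation brings $a_1$ back into play, whose restriction $a_k$ to the $0$-subtree doubles the length of the following cycle.

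The main obstacle will be the careful combinatorial bookkeeping in the third step: formally translating the recursion on the rooted graphs $(\Gamma_n,\xi_n)$ into a recursion on the integer sequence $\{b_j\}$, and pinpointing exactly when the sequence is forced to strictly increase. Verifying that the sharp bound on constant segments is precisely $k$ (rather than $k-1$ or $k+1$) requires tracking the states of the automaton along the branch of the tree corresponding to $\xi_n$ and identifying, for each position in $\xi_n$, which generator is responsible for creating the next block of $\mathcal{CP}_{\xi_n}$; once this correspondence is made explicit, statements $(a)$, $(b)$ and $(c)$ of the proposition reduce to elementary counting, as in Lemma \ref{remonai}.
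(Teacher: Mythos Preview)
Your proposal is correct and follows precisely the approach indicated by the paper, which does not give a detailed proof but merely states that the result is obtained ``by extending to the groups $\mathcal{K}(0^{k-1})$, for any $k>1$, the analysis done for the Basilica group.'' Your sketch fleshes out exactly this extension: deriving the analogue of the substitutional rules in Proposition \ref{rulesBas} from the self-similar recursions $a_1=(0\ 1)(a_k,id)$, $a_{i+1}=e(a_i,id)$, and then tracking the block-path $\mathcal{CP}_{\xi_n}$ inductively as in Lemmas \ref{PropAlg} and \ref{remonai}.
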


By \cite{BondDanNag11}, almost all orbital Schreier graphs $\Gamma_\xi:=\Gamma(\mathcal{K}(0^{k-1}),\{a_1,\dots,a_k\},\mathcal{K}(0^{k-1})\cdot\xi)$ (with respect to the uniform distribution on the boundary $\partial T$ of the tree) have one end. Denote by $E_1\subset \partial T$ the set of full measure constituted of infinite words $\xi$ such that the corresponding orbital Schreier graph $\Gamma_\xi$ has one end. For $\xi\in E_1$, as in the case of Basilica one-ended Schreier graphs, the limit in $\mathcal{X}$ $\lim_{n\to\infty}(\mathcal{CP}_{\xi_n},\xi_n)$ is isomorphic to $\mathcal{CP}_{\xi}$, the unique block-path of infinite length in $\Gamma_\xi$ starting at $\xi$. Similarly than in Subsection \ref{SubSectOneEnd}, for any $n\geq 1$, we set $p^{(n)}:=0^n$ in $\Gamma_n$ to be dissipative. The infinite graph $(\Gamma_\xi,\xi)$ is exhausted by the subgraphs $H_n$ that are isomorphic, for each $n$, to the connected component of $\xi_n$ in $\Gamma_n$ remaining when removing vertex $0^n$, together with $0^n$ (see Remark \ref{remsubgraphsH_n}). Our choice of subgraphs $H_n$ corresponds to Convention \ref{RemOnCPinf}. It thus follows from Proposition \ref{PropSchreierKv}, that the orbital rooted Schreier graph $(\Gamma_\xi,\xi)$ satisfies the assumptions of Theorem \ref{ThmCP}.\\
\indent On the other hand, the orbital Schreier graphs $\Gamma_\xi$ have polynomial growth, and by applying an algorithm from \cite{Bond07}, we show that the degree of polynomial growth grows with $k$ (essentially this is due to the fact that the graphs are $2k$-regular.)

\begin{prop}
The degree of polynomial growth of the orbital Schreier graphs $\Gamma_\xi$ of the action of $\mathcal{K}(0^{k-1})$ on $\partial T$ is at least $k/2$.
\end{prop}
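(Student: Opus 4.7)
The strategy is to follow the algorithmic approach of \cite{Bond07} for the growth of Schreier graphs of self-similar groups, specialised to the family $\mathcal{K}(0^{k-1})$. The essential input is the cactus structure from Proposition \ref{PropSchreierKv}: for $\xi \in E_1$, the block-path $\mathcal{CP}_\xi = \mathcal{C}_1\mathcal{C}_2\ldots$ has block sizes $|\mathcal{C}_j|=2^{b_j}$ with $\{b_j\}$ non-decreasing and with no constant segment of length greater than $k$. The first consequence I would extract is that between any $k+1$ consecutive indices there is at least one strict increase, so $b_{j+k}\geq b_j+1$; hence there exists $j_0$ such that $b_j \geq \lfloor (j-j_0)/k\rfloor$ for all $j\geq j_0$, and in particular $|\mathcal{C}_j|\geq 2^{\lfloor(j-j_0)/k\rfloor}$.

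Second, I would estimate the diameter of $D(p_j)$ from above. Since $D(p_j)$ is a finite cactus made of the cycles $\mathcal{C}_1,\ldots,\mathcal{C}_j$ together with the decorations glued to their non-cut vertices, any two vertices of $D(p_j)$ are joined by a path using (portions of) at most $j$ cycles of $\mathcal{CP}_\xi$ and one descent into a decoration. Using the self-similar structure of $\mathcal{K}(0^{k-1})$, a decoration attached to a vertex of $\mathcal{C}_i$ is itself a cactus whose diameter is bounded by $|\mathcal{C}_i|$ (this is the analogue for general $k$ of the Basilica statement that a $k$-decoration has diameter comparable to the length of the cycle it hangs off). Summing, I obtain $\mathrm{diam}(D(p_j)) \leq C\sum_{i\leq j}|\mathcal{C}_i| \leq C'|\mathcal{C}_j|$, which in the slowest-growth case gives $\mathrm{diam}(D(p_j))\leq C''\, 2^{j/k}$.

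Third, I would lower-bound $d_j = |V(D(p_j))|$. This is the main point. Each non-cut vertex of $\mathcal{C}_i$ lies in $k-1$ cycles other than $\mathcal{C}_i$ (since the graph is $2k$-regular and every vertex belongs to $k$ blocks, each contributing $2$ to its degree). Extending the combinatorial accounting of part (2) of Proposition \ref{PropertiesGamma_n} to general $k$ by induction on $n$ and the substitution rules analogous to Proposition \ref{rulesBas}, the total number of vertices attached below a cycle of size $2^b$ is at least of order $2^{kb/2}$. Plugging in $b=b_j$, this gives $d_j \geq c\cdot 2^{kb_j/2}\geq c\cdot 2^{j/2}$ in the slowest case.

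Combining the two estimates yields $d_j \gtrsim \mathrm{diam}(D(p_j))^{k/2}$; and since $B_{\Gamma_\xi}(\xi,r)\supseteq D(p_j)$ whenever $r\geq \mathrm{diam}(D(p_j))$, this proves $|B_{\Gamma_\xi}(\xi,r)|\gtrsim r^{k/2}$, that is, polynomial growth of degree at least $k/2$. The main obstacle is the vertex count in the third step: one must track how many decorations of which heights are attached to each non-cut vertex of $\mathcal{C}_j$, and this accounting for arbitrary $k$ requires extending the Basilica-specific analysis of \cite{DanDonMat09} via substitution rules derived from the wreath recursion $a_1=(0\,1)(a_k,id)$, $a_{i+1}=e(a_i,id)$; once these rules are written down, an induction on the level $n$ delivers the bound $2^{kb/2}$ above and completes the proof.
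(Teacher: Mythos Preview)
The paper does not actually prove this proposition in the text; it simply states that the bound follows ``by applying an algorithm from \cite{Bond07}''. Bondarenko's method computes the growth degree of Schreier graphs of contracting self-similar groups directly from the automaton (via matrices attached to the nucleus), not through combinatorial estimates on the graph. So although you open by invoking \cite{Bond07}, the argument you then sketch is \emph{not} that algorithm: it is a direct diameter--volume comparison on the cactus using Proposition \ref{PropSchreierKv}. That is a genuinely different route.

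Your outline is internally consistent --- from $d_j\gtrsim 2^{kb_j/2}$ and $\mathrm{diam}(D(p_j))\lesssim 2^{b_j}$ one indeed gets $d_j\gtrsim \mathrm{diam}(D(p_j))^{k/2}$ and hence growth $\geq k/2$ --- but the third step, the volume lower bound $d_j\geq c\cdot 2^{kb_j/2}$, is only asserted. You correctly identify it as the main obstacle: establishing it requires a recursive count of decoration sizes for $\mathcal{K}(0^{k-1})$ generalising part 2 of Proposition \ref{PropertiesGamma_n}, which in turn requires writing down the substitution rules for general $k$ from the wreath recursion and carrying out the induction. None of that is done here, so as it stands your proposal is a plausible strategy with its central computation missing, on roughly the same footing as the paper's bare citation. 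If you want a self-contained proof along these lines you must perform that induction (and note that the true asymptotic is $d_j\sim 2^{kb_j}$, consistent with the paper's later claim $\beta=\beta'=k$; your $2^{kb_j/2}$ is what throws away the factor $2$ in the exponent). Alternatively, carrying out Bondarenko's matrix computation for the automaton $\mathcal{A}_{0^{k-1}}$ is the mechanical route the paper has in mind.
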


\indent One also verifies that, for almost every $\xi\in E_1$, the orbital rooted Schreier graph $(\Gamma_\xi,\xi)$ satisfies the assumptions of Theorem \ref{Growth} with $\beta=\beta'=k$, which then implies the following:

\begin{thm}
For $k\geq 2$, the ASM on the sequence $\{\Gamma_n\}_{n\geq 1}$ of Schreier graphs of the action of
$\mathcal{K}(0^{k-1})$ is critical in the random weak limit (in the sense of Definition \ref{defCritRWL}) with critical exponent $\delta=2/k$.
\end{thm}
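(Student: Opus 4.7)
The plan is to deduce the theorem from Theorem \ref{Growth} applied $\rho$-almost surely, where $\rho$ denotes the random weak limit of $\{\Gamma_n\}_{n\geq 1}$. By \cite{BondDanNag11} this random weak limit is concentrated on one-ended orbital Schreier graphs, so I restrict to $\xi\in E_1$, introduce the exhaustion $\{H_n\}_{n\geq 1}$ of $(\Gamma_\xi,\xi)$ provided by Convention \ref{RemOnCPinf}, and note that by Proposition \ref{PropErgodicity} it suffices to verify the hypotheses of Theorem \ref{Growth} and identify $\beta=\beta'=k$ for a single $\rho$-typical root.

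The summability hypothesis common to Theorems \ref{ThmCP}--\ref{Growth} follows at once from Proposition \ref{PropSchreierKv}: along $\mathcal{CP}_\xi$ one has $|\mathcal{C}_j|=2^{b_j}$ with $\{b_j\}$ non-decreasing and no constant run longer than $k$, hence $b_j\geq\lceil j/k\rceil$ and $\sum_j 2^{-b_j}<\infty$. For the two geometric conditions in Theorem \ref{Growth}, I would argue that the diameter of $D(p_i)$ is realised, up to a multiplicative constant depending only on $k$, by a path running along the block-path $\mathcal{C}_1\mathcal{C}_2\dots\mathcal{C}_i$; since the block sizes form a non-decreasing geometric-like sequence, summing contributions yields $\mathrm{Diam}(D(p_i))\leq c\cdot 2^{b_i}=c|\mathcal{C}_i|$. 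In the same vein, passing from $D(p_{i-1})$ to $D(p_i)$ multiplies the vertex count $d_i$ by a factor bounded between two positive constants, which forces $\log d_{i-1}/\log d_i\to 1$.

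The crux is the identification $\beta=\beta'=k$. Following the template of Section \ref{Basilica} (Propositions \ref{rulesBas}--\ref{PropertiesGamma_n}), I would first derive a substitutional rule for building $\Gamma_{n+1}$ from $\Gamma_n$ under $\mathcal{K}(0^{k-1})$ and, by induction on $n$, establish an explicit asymptotic $|\mathcal{D}(0^n)|\asymp 2^n$ for $n$-decorations. This gives $d_i\asymp 2^{n(i)}$, where $n(i)$ is the tree-level at which $D(p_i)$ first appears as a decoration. Next, a Borel--Cantelli argument directly analogous to Lemma \ref{LemExpDecay}, applied to the uniform measure $\lambda$ on $\partial T$, should show that for $\lambda$-almost every $\xi$ the sequence $\{b_j\}$ has asymptotic slope exactly $1/k$, i.e.\ $b_j/j\to 1/k$ (reflecting that maximal constant runs of length $k$ occur with density one). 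Combining $\mathrm{Diam}(D(p_i))\asymp 2^{b_i}$ with $d_i\asymp 2^{k b_i}$ then yields $\beta=\beta'=k$.

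Applying Theorem \ref{Growth} and Proposition \ref{PropErgodicity}, the critical exponent in the random weak limit is $\delta=2/\beta=2/k$, which by Definition \ref{defCritRWL} is exactly the statement of the theorem. The principal obstacle lies in the two combinatorial inputs of the third paragraph: rederiving for $\mathcal{K}(0^{k-1})$ the substitutional rules and the explicit decoration count that Proposition \ref{PropertiesGamma_n} provides for the Basilica, and turning the statistics of the canonical decomposition of a random $\xi$ (the analogue of Lemma \ref{PropAlg}) into the almost-sure linear rate $b_j\sim j/k$. Once these are in place, the rest of the argument is a direct application of the machinery of Section \ref{LastSection}.
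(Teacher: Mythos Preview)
Your overall strategy---verify the hypotheses of Theorem \ref{Growth} with $\beta=\beta'=k$ for $\lambda$-almost every $\xi\in E_1$, then invoke Proposition \ref{PropErgodicity}---is exactly the route the paper takes (indeed the paper says no more than ``one also verifies that\dots''). However, two of your verification steps are misassembled.

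First, your argument for condition~\emph{2.} of Theorem \ref{Growth} is wrong as stated. You claim that $d_i/d_{i-1}$ is bounded between two positive constants; but Proposition \ref{PropSchreierKv} only bounds the length of constant runs in $\{b_j\}$, not the size of its jumps. A jump $b_i-b_{i-1}$ can be arbitrarily large (just as in the Basilica case $a_{i+1}-a_i=m_j+1$ is unbounded, cf.\ \eqref{eqDiffAi}), so $d_i/d_{i-1}$ is \emph{not} bounded. What is needed is the weaker statement $\log d_{i-1}/\log d_i\to 1$, equivalently $b_{i+1}/b_i\to 1$ almost surely, and \emph{this} is precisely where the Borel--Cantelli argument analogous to Lemma \ref{LemExpDecay} belongs: one shows that the jumps $b_{i+1}-b_i$ are almost surely $o(b_i)$.

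Second, the identification $\beta=\beta'=k$ does not come from the slope statement $b_j/j\to 1/k$ (which in any case does not follow from your Borel--Cantelli set-up, and need not hold). It is a \emph{deterministic structural} fact: the substitutional rules for $\mathcal{K}(0^{k-1})$ give, exactly as Lemma \ref{remonai} does for $k=2$, that the $i$-th block of $\mathcal{CP}_\xi$ has size $2^{\lceil n(i)/k\rceil}$, i.e.\ $b_i=\lceil n(i)/k\rceil$. Combined with $d_i\asymp 2^{n(i)}$ and $\mathrm{Diam}(D(p_i))\asymp 2^{b_i}$ this yields $\log d_i/\log\mathrm{Diam}(D(p_i))\to k$ for every $\xi\in E_1$, with no randomness required. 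So swap the roles you assigned: the Borel--Cantelli goes into condition~\emph{2.}, and $\beta=k$ follows directly from the $k$-periodic recursive structure of the graphs.
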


\begin{cor}
$\{\mathcal{K}(0^{k-1})\}_{k\geq 2}$ is a family of self-similar groups such that the ASM on the associated sequences of Schreier graphs is critical in the random weak limit, and the critical exponent $\delta>0$ can be arbitrarily small.
\end{cor}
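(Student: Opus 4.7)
The plan is to reduce the corollary to the immediately preceding theorem, which asserts that the ASM on the Schreier graphs of $\mathcal{K}(0^{k-1})$ is critical in the random weak limit with critical exponent $\delta = 2/k$. Granting the theorem, the corollary is a one-line consequence: for any target $\epsilon > 0$, choose an integer $k > 2/\epsilon$ to obtain a group $\mathcal{K}(0^{k-1})$ in the family whose critical exponent $\delta = 2/k$ is smaller than $\epsilon$. Since $k$ can be taken arbitrarily large, the realised critical exponents contain the set $\{2/k : k \geq 2\}$, which accumulates at $0$.

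The substantive task is therefore the preceding theorem. I would proceed as in the Basilica case (Theorem \ref{Main}) and in Theorem \ref{IAM} for the interlaced adding machines, namely by applying Theorem \ref{Growth} to $\rho$-almost every one-ended orbital Schreier graph $(\Gamma_\xi,\xi)$, where $\rho$ is the random weak limit, and then invoking an analogue of Proposition \ref{PropErgodicity} to promote the almost-sure critical exponent to a critical exponent in the random weak limit. The first step is to verify the convergence of $\sum_j |\mathcal{C}_j|^{-1}$ along the distinguished block-path $\mathcal{CP}_\xi$. Proposition \ref{PropSchreierKv} identifies $|\mathcal{C}_j| = 2^{b_j}$ with a non-decreasing sequence of positive integers having no constant segment longer than $k$, and a Borel--Cantelli argument in the spirit of Lemma \ref{LemExpDecay} shows that, for $\lambda$-almost every $\xi \in E_1$, the $b_j$ diverge at least linearly in $j/k$, yielding the desired summability.

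Next I would check the two geometric hypotheses of Theorem \ref{Growth}. For condition (1), the self-similar structure of $\mathcal{K}(0^{k-1})$ guarantees that every side decoration attached to a vertex of $\mathcal{C}_i$ is a $j$-decoration with $j$ controlled by $b_i$, whose diameter is bounded by a universal constant times $|\mathcal{C}_i|$; therefore $Diam(D(p_i)) \leq c\,|\mathcal{C}_i|$ for some $c$ depending only on $k$. Condition (2), the asymptotic $\log d_{i-1}/\log d_i \to 1$, follows from the fact that successive exponents $b_j$ differ by at most $1$, so $d_i$ grows smoothly. The crucial quantitative step is the identification $\beta = \beta' = k$: using an inductive description of cardinalities and diameters of $k$-decorations analogous to Proposition \ref{PropertiesGamma_n}, one expects $d_i$ to grow like $|\mathcal{C}_i|^k$ while $Diam(D(p_i))$ grows like $|\mathcal{C}_i|$, which would give the ratio $\log d_i / \log Diam(D(p_i)) \to k$ and hence $\delta = 2/\beta = 2/k$.

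The hardest part is precisely this combinatorial identification $\beta = k$. The proposition stated just above the theorem gives only a lower bound of $k/2$ on the polynomial growth degree of $\Gamma_\xi$, whereas the theorem requires the sharper equality $\log d_i / \log Diam(D(p_i)) \to k$. I expect to establish this by recursively expressing the $k$-decoration in terms of decorations of lower height via a substitution rule on the Schreier graphs generalising Proposition \ref{rulesBas}, and then counting vertices and estimating diameters level by level. Once $\beta = k$ is in place, Theorem \ref{Growth} yields $\delta = 2/k$ for $\rho$-almost every rooted graph; the ergodicity of the change-of-root equivalence relation on the random weak limit (as in Proposition \ref{PropErgodicity}, whose proof is purely measure-theoretic and transports verbatim to $\mathcal{K}(0^{k-1})$) then promotes this to critical behaviour in the random weak limit, and the corollary follows as described in the first paragraph.
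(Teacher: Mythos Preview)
Your proposal is correct and matches the paper's approach: the corollary is stated without proof in the paper because it is an immediate consequence of the preceding theorem, exactly as you observe in your first paragraph. Your additional sketch of how to establish the theorem itself (verifying the hypotheses of Theorem \ref{Growth} with $\beta=\beta'=k$) goes beyond what the corollary requires, but is in the same spirit as the paper's brief remark that ``one also verifies'' these conditions.
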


\begin{rem} \label{RemOnDiam} \rm
Another quantity related to the size of avalanches, the diameter of the subgraph spanned by vertices touched by the avalanche, can be studied in a very similar way than the mass. For all examples of Schreier graphs we consider in this paper, one can slightly modify the proof of Theorem \ref{ThmCP} to get bounds for the probability distribution of the diameter of avalanches, instead of the mass. Since the examples we consider satisfy, almost surely, the assumptions of Theorem \ref{Growth}, one can deduce that the critical exponent $\delta'>0$ defined with respect to the diameter of avalanches (see Definition \ref{DefCrit}) is related to the growth degree $\alpha$ of the graph by $\delta'=1/\alpha$.
\end{rem}

\section*{Acknowledgements}

Figure \ref{JULIADOUADY} is reproduced under the terms of Creative Commons Attribution-ShareAlike 3.0 license. Figures \ref{AUTOMATBASILICA}, \ref{SUBSTRULES}, \ref{FINITESCHREIER} and \ref{ONEENDEDGRAPH} were published for the first time in \cite{DanDonMat09}.


\end{document}